\theoremstyle{plain}
\newtheorem{theorem}{Theorem}
\newtheorem{lemma}[theorem]{Lemma}
\newtheorem{corollary}[theorem]{Corollary}
\newtheorem{proposition}[theorem]{Proposition}
\newtheorem{definition}{Definition}
\newtheorem{example}{Example}
\newtheorem{remark}{Remark}
\newcommand{\Reel}{\mathbb{R}}
\newcommand{\R}{\mathbb{R}}
\newcommand{\N}{\mathbb{N}}
\newcommand{\E}{\mathbb{E}}
\newcommand{\Natural}{\mathbb{N}}
\newcommand{\dd}{{\mathrm{d}}}
\DeclareMathOperator*{\argmin}{arg\,min}
\definecolor{applegreen}{rgb}{0.55, 0.71, 0.0}
\definecolor{green-yellow}{rgb}{0.68, 1.0, 0.18}
\definecolor{darkpastelgreen}{rgb}{0.01, 0.75, 0.24}
\definecolor{chartreuse(web)}{rgb}{0.5, 1.0, 0.0}
\newcommand{\Esp}[1]{\mathbb{E}\left( #1 \right)}
\newcommand{\Prob}[2][]{ \ifthenelse{\equal{#1}{}}
				    {\mathbb{P} \left( #2 \right)}
				    {\mathbb{P} \left( #2 \mid #1 \right)}
			  }
\newcommand{\Ind}[1]{\mathbf{1}_{#1}}
\newcommand{\cX}{\mathcal{X}}
\newcommand{\cY}{\mathcal{Y}}
\newcommand{\cP}{\mathcal{P}}
\newcommand{\cF}{\mathcal{F}}
\newcommand{\cK}{\mathcal{K}}
\newcommand{\cS}{\mathcal{S}}
\newcommand{\bN}{\mathbb{N}}
\newcommand{\bP}{\mathbb{P}}
\newcommand{\bE}{\mathbb{E}}
\numberwithin{equation}{subsection}
\numberwithin{figure}{subsection}
\numberwithin{algorithm}{section}
\numberwithin{theorem}{section}
\numberwithin{definition}{section}
\numberwithin{property}{section}
\numberwithin{remark}{section}
\numberwithin{example}{section}
\title{
	Approximation of Optimal Transport problems with marginal moments constraints
}
\date{\today}
\author{
	Aur\'elien Alfonsi \and Rafa\"el Coyaud \and Virginie Ehrlacher \and Damiano Lombardi 
}
\begin{document}

\maketitle

\noindent {\bf Abstract: } Optimal Transport (OT) problems arise in a wide range of applications, from physics to economics. Getting numerical approximate solution of these problems is a challenging issue of practical importance. 
In this work, we investigate the relaxation of the OT problem when the marginal constraints are replaced by some moment constraints. Using Tchakaloff's theorem, we show that the Moment Constrained Optimal 
Transport problem (MCOT) is achieved by a finite discrete measure. Interestingly, for multimarginal OT problems, the number of points weighted by this measure scales linearly with the number of marginal laws, which is encouraging to
bypass the curse of dimension. This approximation method is also relevant for Martingale OT problems. We show the convergence of the MCOT problem toward the corresponding OT problem. 
In some fundamental cases, we obtain rates of convergence in $O(1/n)$ or $O(1/n^2)$ where $n$ is the number of moments, which illustrates the role of the moment functions. Last, we present algorithms exploiting the fact that the MCOT is 
reached by a finite discrete measure and provide numerical examples of approximations. 

\section{Introduction}

The aim of this paper is to investigate a new direction to approximate optimal transport problems~\cite{villani2008optimal,santambrogio2015optimal}. Such problems arise in a variety of application fields ranging from economy~\cite{galichon2017survey,carlier2012optimal} to quantum chemistry~\cite{cotar2015infinite} or machine learning~\cite{peyre2019computational} for instance. 
The simplest prototypical example of optimal transport problem is the two-marginal Kantorovich problem, which reads as follows: for some $d\in \mathbb{N}^*$, let $\mu$ and $\nu$ be two probability measures on $\R^d$ and consider the optimization problem 
\begin{equation}\label{eq:two}
\inf \int_{\R^d\times \R^d} c(x,y) \,\dd \pi(x,y)
\end{equation}
where $c$ is a non-negative lower semi-continuous cost function defined on $\R^d\times \R^d$ and where the infimum runs on the set of probability measures $\pi$ on $\R^d\times \R^d$ with marginal laws $\mu$ and $\nu$. 

The most straightforward approach for the resolution of problems of the form (\ref{eq:two}) consists in introducing discretizations of the state spaces, which are fixed a priori. 
More precisely, $N$ points $x^1,\cdots, x^N \in \R^d$ are chosen a priori and fixed, marginal laws $\mu$ and $\nu$ are approximated by discrete measures of the form $\mu \approx \sum_{i=1}^N \mu_i \delta_{x^i}$ and $\nu \approx \sum_{i=1}^N \nu_i \delta_{x^i}$ 
with some non-negative coefficients $\mu_i$ and $\nu_i$ for $1\leq i \leq N$. An optimal measure $\pi$ minimizing (\ref{eq:two}) is then approximated by a discrete measure $\pi \approx \sum_{1\leq i,j \leq N} \pi_{ij} \delta_{x^i, x^j}$
where the non-negative coefficients $(\pi_{ij})_{1\leq i,j\leq N}\in \R_+^{N^2}$ are solution to the optimization problem
\begin{equation}\label{eq:disclin}
\inf \sum_{1\leq i,j\leq N} \pi_{ij} c(x^i,x^j)
\end{equation}
and satisfy the following discrete marginal constraints: 
$$
\forall 1\leq i,j \leq N,\quad \sum_{j=1}^N \pi_{ij} = \mu_i \quad \mbox{ and } \sum_{i=1}^N \pi_{ij} = \nu_j, 
$$
which boils down to a classical linear programming problem, which becomes computationally prohibitive when $N$ is large.

\medskip

Several numerical methods have already been proposed in the literature for the resolution of optimal transport problems at a lower computational cost. Most of them rely on an a priori discretization of the state spaces
as presented above. One of the most successful approach consists in minimizing a regularized cost involving the Kullback-Leibler divergence (or relative entropy)
via iterative Bregman projections: the so-called Sinkhorn algorithm~\cite{benamou2015iterative, nenna2016numerical,sharify2011solution}. Let us also mention other approaches such as the auction algorithm~\cite{bertsekas1989auction},
numerical methods based on Laguerre cells~\cite{gallouet2016lagrangian}, multiscale algorithms~\cite{merigot2011multiscale,schmitzer2016sparse} and augmented Lagrangian methods using the Benamou-Brenier dynamic 
formulation~\cite{benamou2000computational,benamou2015augmented}.  

\medskip

In this work, we are also interested in studying multi-marginal and martingale-constrained optimal transport problems. 

\medskip

Multimarginal optimal transport problems arise in a wide variety of contexts~\cite{villani2008optimal,santambrogio2015optimal}, 
like for instance the computation of Wasserstein barycenters~\cite{agueh2011barycenters} or the approximation of the correlation energy for strongly correlated systems 
in quantum chemistry~\cite{seidl2007strictly,colombo2015multimarginal,cotar2015infinite}. Such problems read as follows: let $M\in \N^*$ and $\mu_1,\cdots, \mu_M$ be $M$ probability measures on $\R^d$ and consider the optimization problem 
\begin{equation}\label{eq:multi}
\inf \int_{(\R^d)^M} c(x_1,\cdots,x_M) \,\dd \pi(x_1,\cdots,x_M)
\end{equation}
where $c$ is a lower semi-continuous cost function defined on $(\R^d)^M$ and where the infimum runs on the set of probability measures $\pi$ on $(\R^d)^M$ with marginal laws given by $\mu_1,\cdots,\mu_M$. Approximations of such multi-marginal problems 
on discrete state spaces can be introduced in a similar way to (\ref{eq:disclin}), leading to a linear programming problem of size $N^M$. For large values of $M$, such discretized problems become intractable numerically. 
The most successful method up to now for solving such problems, which avoids this curse of dimensionality, is based on an entropic regularization together with the Sinkhorn algorithm~\cite{benamou2015iterative, nenna2016numerical}.

\medskip

Constrained martingale transport arise in problems related to finance~\cite{beiglbock2017monotone}. Few numerical methods have been proposed so far for the resolution of such problems. In~\cite{ACJ1,ACJ2}, algorithms using sampling techniques preserving the convex order is proposed, which enables then to use linear programming solvers. Algorithms making use of an entropy regularization and the Sinkhorn algorithm have been studied in~\cite{de2018entropic,GuOb}.

\medskip

In this paper, we consider an alternative direction to approximate optimal transport problems, with a view to the design of numerical schemes. In this approach, the state space is \itshape not \normalfont discretized, but the 
approximation consists in relaxing the marginal laws constraints (or the martingale constraint) of the original problem into a finite number of moment constraints against some well-chosen test functions. More precisely, in the case of Problem~(\ref{eq:two}), 
let us introduce some real-valued bounded functions $\phi_1,\cdots, \phi_N$ defined on $\R^d$, which are called hereafter \itshape test functions\normalfont, and consider the approximate optimal transport problem, 
called hereafter the Moment Constrained Optimal Transport (MCOT) problem:
$$
\inf \int_{\R^d\times \R^d} c(x,y) \,d\pi(x,y)
$$
where the infimum runs over the set of probability measures $\pi$ on $\R^d\times \R^d$ satisfying for all $1\leq i,j \leq N$, 
$$
\int_{\R^d\times \R^d} \phi_i(x)\,\dd \pi(x,y) = \int_{\R^d} \phi_i(x)\,\dd \mu(x) \quad \mbox{ and }\int_{\R^d\times \R^d} \phi_j(y)\,\dd \pi(x,y) = \int_{\R^d} \phi_j(y)\,\dd \nu(y). 
$$
The aim of this paper is to study the properties of this alternative approximation of optimal transport problems, and its generalization for multi-marginal and martingale-constrained optimal transport problems. 
A remarkable feature of this approximation is that it circumvents the curse of dimensionality with respect to the number of marginal laws in the case of multimarginal optimal transport problems. 
Note that in the martingale constrained case, our method enables to consider the original formulation of the financial problem that has moment constraints (see for instance Example~2.6 of~\cite{henry2017model}), 
which is not the case of the previous methods. 

\medskip

Our first contribution in this paper is to characterize some minimizers of the MCOT problem. 
Thanks to the Tchakaloff theorem, we prove that, under suitable assumptions, there exists at least one minimizer which is a discrete measure charging a finite number of points. 
Interestingly, in the multi-marginal case, the number of charged points 
scales at most linearly in the number of marginals. In the particular case of problems issued from quantum chemistry applications, due to the inherent symmetries of the system, 
the number of charged points is independent from the number of marginals, and only scales linearly with the number of imposed moments. 
This formulation of the multimarginal optimal transport problem thus does not suffer from the curse of dimensionality. 
The result obtained in the quantum chemistry case is close in spirit to the one of~\cite{friesecke2018breaking} where the authors studied a multimarginal Kantorovich problem with Coulomb cost on finite state spaces. 

These considerations motivate us to consider a new family of algorithms for the resolution of multi-marginal and martingale constrained optimal transport problems, in which an optimal measure is approximated by a discrete measure charging 
a relatively low number of points. The points and weights of this discrete measure are then optimized in order to satisfy a finite number of moment constraints and to minimize the cost of the original optimal transport problem. 

\medskip

Of course, another interesting issue consists in determining how the choice of the particular test functions influences the quality of the approximation with respect to the exact optimal transport problem. 
In this paper, 
we prove interesting one-dimensional results in this direction. More precisely, for piecewise affine test functions defined on a compact interval, and for the two-marginal optimal transport problems involved in the computation of the $W_2$ or the $W_1$ distance 
between two sufficiently regular measures, the convergence of the approximate optimal cost with respect to the optimal cost scales like $\mathcal{O}\left( \frac{1}{N^2}\right)$ where $N$ is the number of test functions. 
These results indicate that the choice of appropriate test functions has an influence on the rate of convergence of the approximate problem to the exact problem. Besides, there is very few results, up to our knowledge, 
concerning the speed of convergence 
of approximations of optimal transport problems. The study of these rates of convergence for more general set of test functions and 
of optimal transport problems is an interesting issue which is left for future research.

\medskip

The article is organized as follows. Some preliminaries, including the Tchakaloff theorem, are recalled in Section~\ref{sec:prel}. In Section~\ref{sec:MCOT}, we introduce the approximate MCOT problem and prove under suitable assumptions 
that one of its minimizers reads as a discrete measure whose number of charged points is estimated depending on the number of moment constraints and on the nature of the optimal transport problem considered. Under additional assumptions, we prove that the MCOT problem converges to the exact optimal transport problem as the number of test functions grows in Section~\ref{sect:cvgMCOTtoOT}. Rates of convergence of the approximate problem to the exact problem depending on the choice 
of test functions are proved in Section~\ref{Sec:rateCV}. Finally, algorithms which exploits the particular structure of the MCOT problem are proposed in Section~\ref{sect:algoMkngUseOfPropTchack} and tested on some numerical examples.

\section{Preliminaries}\label{sec:prel}

\subsection{Presentation of the problem and notation}

We begin this section by recalling the classical form of the 2-marginal optimal transport (OT) problem, which will be the prototypical problem considered in this paper, and introduce the notation used in the sequel. 

\medskip

Let $d_x,d_y\in \bN^*$. We assume that $\cX \subset \Reel^{d_x}$ (resp. $\cY \subset \Reel^{d_y}$) is a $G_\delta$-set, i.e. a countable intersection of open sets. This property ensures by Alexandroff's lemma (see e.g.~\cite{Aliprantis}, p.~88) that $\cX$ (resp. $\cY$) is a Polish space for a metric which is equivalent to the original one on $\R^{d_x}$ (resp. $\R^{d_y}$). In particular, $\cX$ can either be a closed or an open set of  $\R^{d_x}$.

\medskip

Let $\mu \in \cP(\cX)$ and $\nu \in \cP(\cY)$ and let us define 
$$
\Pi(\mu,\nu):= \left\{ \pi \in \cP(\cX \times \cY): \; \int_\cX \dd \pi(x,y) = \dd\nu(y), \; \int_\cY \dd\pi(x,y) = \dd\mu(x) \right\},
$$
the set of probability couplings between $\mu$ and $\nu$. We consider a non-negative cost function
$c: \cX\times \cY \to \Reel_+ \cup\{+\infty\}$, which we assume to be lower semi-continuous (l.s.c.).
The Kantorovich optimal transport (OT) problem with the two marginal laws $\mu$ and $\nu$ associated to the cost function $c$ is the following minimization problem: 
\begin{equation}\label{eqn:OTDef}
I=\inf \left\{ \int_{\cX \times \cY} c(x,y) \dd \pi(x, y) : \pi \in \Pi(\mu,\nu) \right\}.
\end{equation}
Under our assumptions, it is known (see e.g. Theorem 1.7 in~\cite{santambrogio2015optimal}) that there exists $\pi^*\in \Pi(\mu,\nu)$ such that $I=\int_{\cX \times \cY} c(x,y) \dd \pi^*(x, y) $. 
Problem (\ref{eqn:OTDef}) will be referred hereafter as the \itshape exact \normalfont OT problem, with respect to the \itshape approximate \normalfont problem which we define hereafter.

\medskip

The aim of this paper is to study a relaxation of Problem~(\ref{eqn:OTDef}) with a view to the design of numerical schemes to approximate the exact OT problem. 
More precisely, the \itshape approximate \normalfont problem considered in this paper consists in relaxing the marginal constraints
into a \itshape finite number of \normalfont moments constraints. Let $ M, N\in\mathbb{N}^*$ and  $(\phi_m)_{1\leq m \leq M} \subset L^1(\cX,\mu;\R)$ (respectively $(\psi_n)_{1\leq n \leq N} \subset L^1(\cY,\nu;\R)$) measurable real-valued functions that are integrable with respect to $\mu$ (resp. $\nu$).  The functions $(\phi_m)_{1\leq m \leq M}$ and $(\psi_n)_{1\leq n \leq N}$ will be called \itshape test functions \normalfont hereafter. We define for such families of functions
\begin{align}
  &\Pi(\mu,\nu;(\phi_m)_{1\leq m \leq M},(\psi_n)_{1\leq n \leq N}) := \Bigg\{ \pi \in \cP(\cX \times \cY) :  \label{coupl_mom} \\
  & \forall 1\le m \le M, 1\le n\le N,\notag \int_{\cX\times \cY} |\phi_m(x)|+|\psi_n(y)| \dd \pi(x,y) <\infty, \\
  &\int_{\cX\times \cY} \phi_m(x) \dd \pi(x,y) = \int_{\cX} \phi_m(x) \dd \mu(x) , \;  \int_{\cX\times \cY} \psi_n(y) \dd \pi(x,y) = \int_{\cX} \psi_n(y) \dd \mu(x)  \Bigg\}, \notag
\end{align}
which is the set of probability measures on $\cX \times \cY$ that have the same moments as $\mu$ and $\nu$ for the test functions. We are then interested in the moment constrained optimal transport (MCOT) problem, which we defined as the following minimization problem :
\begin{equation}\label{eqn:MCOTDef}
  I^{M,N}=\inf \left\{ \int_{\cX \times \cY} c(x,y) \dd \pi(x, y) : \pi \in \Pi(\mu,\nu;(\phi_m)_{1\leq m \leq M},(\psi_n)_{1\leq n \leq N}) \right\}.
\end{equation}
Since $\Pi(\mu,\nu) \subset \Pi(\mu,\nu;(\phi_m)_{1\leq m \leq M},(\psi_n)_{1\leq n \leq N})$, we clearly have $ I^{M,N} \le I$. In this paper, we are interested in the following question.
\begin{itemize}
\item Is the MCOT problem attained by some probability measure $\pi^* \in \Pi(\mu,\nu;(\phi_m)_{1\leq m \leq M},(\psi_n)_{1\leq n \leq N})$?
\item Under which assumptions does it hold: $I^{M,N}\to_{M,N\to +\infty} I$? Can the speed of convergence be estimated? 
\end{itemize}

For simplicity, we will assume that $M=N$ in the whole paper and we will denote for $1\le m,n\le N$:
\begin{equation}\label{eq:defmumnun}
  \mu_m := \int_\cX \phi_m \dd \mu \quad \text{and} \quad
  \nu_n := \int_\cY \psi_n \dd \nu.
\end{equation}
For all $x\in \cX$ (respectively for all $y\in \cY$), we define $\phi(x) := (\phi_1(x), ..., \phi_N(x)) \in \Reel^N$ (respectively $\psi(y):= (\psi_1(y), ..., \psi_N(y)) \in \Reel^N$) and 
$\Phi(x):=(1, \phi(x))\in \Reel^{N+1}$ (respectively $\Psi(y):= (1,\psi(y)) \in \Reel^{N+1}$).

\subsection{Tchakaloff's theorem}

In this section, we present a corollary of the Tchakaloff theorem which is the backbone of our results concerning the existence of a minimizer to the MCOT problem. 
A general version of the Tchakaloff theorem has been proved by Bayer and Teichmann~\cite{bayer2006proof} and Bisgaard~\cite{berschneider2012theorem}. The next proposition is rather an immediate consequence of Tchakaloff's theorem, see Corollary~2 in~\cite{bayer2006proof}. We recall first that 

\begin{proposition}\label{cor:Tchakaloff}
Let $\pi$ be a measure on $\R^d$ concentrated on a Borel set $A \in \cF$, i.e. $\pi(\R^d \setminus A) = 0$. 
Let $N_0\in \bN^*$ and $\Lambda : \R^d \to \Reel^{N_0}$ a Borel measurable map.
Assume that the first moments of $\Lambda \# \pi$ exist, i.e. 
\[
\int_{\Reel^{N_0}} \Vert u \Vert \dd \Lambda \# \pi(u) =\int_{\R^d} \Vert \Lambda(z) \Vert \dd \pi(z) < \infty,
\]
where $\|\cdot\|$ denotes the Euclidean norm of $\Reel^{N_0}$. Then, there exist an integer $1 \leq K \leq N_0$, points $z_1, ..., z_K \in A$ and weights $p_1, ..., p_K > 0$ such that
\[
\forall 1\leq i \leq N_0, \quad \int_{\R^d} \Lambda_i(z) \dd \pi(z) = \sum_{k = 1}^K p_k \Lambda_i(z_k),
\]
where $\Lambda_i$ denotes the $i$-th component of $\Lambda$.
\end{proposition}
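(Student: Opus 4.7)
The plan is to apply the measure-theoretic form of Tchakaloff's theorem proved in~\cite{bayer2006proof,berschneider2012theorem} directly, with the $N_0$ components of $\Lambda$ playing the role of the scalar test functions. The setup matches exactly: each component $\Lambda_i : \R^d \to \R$ is Borel measurable, and the hypothesis $\int_{\R^d} \|\Lambda(z)\| \dd \pi(z) < \infty$ dominates $\int |\Lambda_i| \dd \pi \leq \int \|\Lambda\| \dd \pi < \infty$, so each $\Lambda_i$ is $\pi$-integrable.

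Corollary~2 of~\cite{bayer2006proof} then yields an integer $K' \leq N_0$, points $z_1, \ldots, z_{K'}$ in the Borel set $A$ (the hypothesis $\pi(\R^d \setminus A) = 0$ is exactly what allows the atoms to be taken in $A$), and non-negative weights $p'_1, \ldots, p'_{K'} \geq 0$ such that $\int \Lambda_i \dd \pi = \sum_{k=1}^{K'} p'_k \Lambda_i(z_k)$ for every $1\leq i\leq N_0$. Discarding any index with $p'_k = 0$ leaves $K \leq K' \leq N_0$ strictly positive weights, which gives the statement.

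Even though the cited result can be treated as a black box, I would record the underlying geometric mechanism, since that is what makes the bound $K\leq N_0$ transparent. Set $C := \cone\{\Lambda(z) : z \in A\} \subset \R^{N_0}$ and $v := \int \Lambda \dd \pi$. Once one establishes $v \in C$, Carathéodory's theorem for positive cones in $\R^{N_0}$ immediately expresses $v$ as a positive combination of at most $N_0$ generators $\Lambda(z_k)$ with $z_k \in A$, which is precisely the required representation. The inclusion $v \in \overline{C}$ is a straightforward Hahn-Banach separation argument: any linear functional $\ell \in (\R^{N_0})^*$ that is nonnegative on $C$ satisfies $\ell\circ\Lambda \geq 0$ pointwise on $A$, whence $\ell(v) = \int \ell\circ\Lambda \,\dd\pi \geq 0$.

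The main technical obstacle, and truly the heart of Tchakaloff's theorem, is upgrading $v \in \overline{C}$ to $v \in C$ (since a convex cone in $\R^{N_0}$ need not be closed). This is the step I would not try to reprove from scratch; in~\cite{bayer2006proof} it is handled by an extreme-point analysis of the convex set of positive Borel measures on $A$ whose $\Lambda$-moments equal $v$, showing that any such extreme point must be purely atomic with at most $N_0$ atoms, after which dropping vanishing atoms yields the positive weights claimed in the proposition.
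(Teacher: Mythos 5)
Your proposal is correct and takes the same route as the paper: the paper simply states that the proposition is an immediate consequence of Corollary~2 in Bayer and Teichmann~\cite{bayer2006proof} (together with the observation that integrability of $\|\Lambda\|$ against $\pi$ gives $\pi$-integrability of each component $\Lambda_i$), and you invoke exactly that corollary. The additional paragraphs you give on the Carath\'eodory-type cone decomposition and the closure difficulty are an accurate summary of the mechanism behind the cited result, but the paper does not include them, treating the corollary as a black box.
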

We recall here that $\Lambda \# \pi$ is the push-forward of $\pi$ through $\Lambda$, and is defined as  $\Lambda \# \pi (A)=\pi(\Lambda^{-1}(A))$ for any Borel set $A\subset \R^{N_0} $. Let us note here that even if $\pi$ is a probability measure, we may have $\sum_{k = 1}^K p_k \not = 1$. In the sequel, we will apply this proposition to functions $\Lambda$ such that the constant~$1$ is a coordinate of $\Lambda$, which will ensure $\sum_{k = 1}^K p_k  = 1$.

Last, let us remark that the number of points $K$ needed may be significantly different from $N_0$. Lemma~\ref{lem:cvgP1_W2} gives, for any $N\in \N^*$, an example with $N_0=2N+1$ and $K=N+1$.

\subsection{An admissibility property}\label{sect:genProp}

A natural requirement for the MCOT Problem~\eqref{eqn:MCOTDef} to have a sense is to assume that it has finite value. This is precisely our definition of admissibility.

\begin{definition}[Admissibility]\label{propt:admissibility}
Let $\mu \in \cP(\cX), \ \nu \in \cP(\cY)$ and a l.s.c. cost function 
$c : \cX \times \cY \to \Reel_+ \cup \{ \infty \}$. Then, a set of test functions $\left( (\phi_m)_{1\leq m \leq N}, (\psi_n)_{1\leq n \leq N}\right) \in L^1(\cX,\mu;\R)^N \times L^1(\cY,\nu;\R)^N $ is said to be admissible for $(\mu,\nu,c)$ if
\begin{equation} \label{eq:finitecost}
  \exists \gamma \in \Pi(\mu,\nu;(\phi_m)_{1\leq m \leq M},(\psi_n)_{1\leq n \leq N}), \int_{\cX \times \cY} c(x,y) \dd \gamma(x, y) < \infty.
\end{equation}
\end{definition}

Thanks to Tchakaloff's theorem, the admissibility can be checked on finite probability measure as stated in the next Lemma.

\begin{lemma}\label{lem:admissibleProprtyCharact}
  Let $\mu \in \cP(\cX)$, $\nu \in \cP(\cY)$ and $c:\cX\times \cY \to \Reel_+\cup\{+\infty\}$ a l.s.c. function. A set $\left( (\phi_m)_{1\leq m \leq N}, (\psi_n)_{1\leq n \leq N}\right) \in L^1(\cX,\mu;\R)^N \times L^1(\cY,\nu;\R)^N$ is admissible for $(\mu,\nu,c)$ if, and only if, there exist weights $w_1,\dots,w_{2N+1}\ge 0$ and points $(x_1,y_1),\dots,(x_{2N+1},y_{2N+1}) \in \cX\times \cY$ such that
  $$\sum_{k=1}^{2N+1} w_k \delta_{(x_k,y_k)} \in \Pi(\mu,\nu;(\phi_m)_{1\leq m \leq N},(\psi_n)_{1\leq n \leq N}) \text{ and } \sum_{k=1}^{2N+1} w_k c(x_k,y_k) <\infty  .$$
In particular, if $c$ is finite valued (i.e.  $c:\cX\times \cY \to \Reel_+$), any set of test functions $\left( (\phi_m)_{1\leq m \leq N}, (\psi_n)_{1\leq n \leq N}\right) \in L^1(\cX,\mu;\R)^N \times L^1(\cY,\nu;\R)^N$ is admissible for $(\mu,\nu,c)$ in the sense of Definition~\ref{propt:admissibility}. 
\end{lemma}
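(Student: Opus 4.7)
The plan is to derive both directions from Proposition~\ref{cor:Tchakaloff}. The direction $(\Leftarrow)$ is immediate: the discrete probability measure $\sum_{k=1}^{2N+1} w_k \delta_{(x_k,y_k)}$ provided by the hypothesis is itself a valid $\gamma$ in Definition~\ref{propt:admissibility}, since it lies in $\Pi(\mu,\nu;(\phi_m)_{1\leq m \leq N},(\psi_n)_{1\leq n \leq N})$ by assumption and has finite cost by assumption.

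For the nontrivial direction $(\Rightarrow)$, I would start from any admissible $\gamma \in \Pi(\mu,\nu;(\phi_m)_{1\leq m \leq N},(\psi_n)_{1\leq n \leq N})$ with $\int c\, \dd\gamma < \infty$. The first observation is that since $c$ is l.s.c. and $\R_+\cup\{+\infty\}$-valued, the set $A := \{(x,y) \in \cX\times \cY \,:\, c(x,y) < \infty\}$ is Borel, and the finiteness of $\int c\, \dd\gamma$ forces $\gamma(\cX\times\cY \setminus A) = 0$. Next, I would introduce the map
\[
\Lambda : (x,y) \mapsto (1, \phi_1(x), \dots, \phi_N(x), \psi_1(y), \dots, \psi_N(y)) \in \R^{2N+1}.
\]
The integrability hypothesis $\int \|\Lambda\|\, \dd\gamma < \infty$ of Proposition~\ref{cor:Tchakaloff} is immediate from $\int (|\phi_m(x)|+|\psi_n(y)|)\, \dd\gamma < \infty$, which is built into the very definition of $\Pi(\mu,\nu;\dots)$. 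Applying that proposition to $\gamma$, $\Lambda$ and the Borel set $A$ then produces an integer $K\leq 2N+1$, points $(x_k,y_k) \in A$ and strictly positive weights $w_k$ reproducing each coordinate integral of $\Lambda$. The first coordinate being the constant $1$ yields $\sum_k w_k = 1$, and the remaining coordinates yield exactly the two families of moment constraints, so $\sum_k w_k \delta_{(x_k,y_k)}$ belongs to $\Pi(\mu,\nu;(\phi_m),(\psi_n))$. Padding with at most $2N+1-K$ zero-weighted copies of any fixed point of $A$ delivers the required $2N+1$ atoms, and the cost $\sum_k w_k c(x_k,y_k)$ is finite as a finite sum of finite non-negative terms because each $(x_k,y_k)\in A$.

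For the final ``In particular'' statement, the same Tchakaloff step would be applied to the product measure $\mu \otimes \nu$, which is trivially in $\Pi(\mu,\nu;(\phi_m),(\psi_n))$ (by Fubini) and satisfies $\int \|\Lambda\|\, \dd(\mu\otimes\nu) < \infty$ by Fubini and the $L^1$ assumption on the test functions. This produces a discrete probability measure with the correct moments, whose cost is automatically finite as a finite sum of finite terms whenever $c$ is everywhere finite, even though $\mu\otimes\nu$ itself need not have finite cost. The one subtle point, which is also what keeps the bound to $2N+1$ rather than $2N+2$, is that $c$ does \emph{not} appear as a coordinate of $\Lambda$: the finiteness of the cost is handled \emph{after} Tchakaloff, either through the restriction of the support to $A=\{c<\infty\}$ in the general case or through the finite-valuedness of $c$ in the special case, rather than through an additional moment constraint on $c$.
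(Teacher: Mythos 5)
Your proof is correct and follows essentially the same route as the paper: apply Proposition~\ref{cor:Tchakaloff} to the admissible $\gamma$ with $\Lambda = (1,\phi_1,\dots,\phi_N,\psi_1,\dots,\psi_N)$ and the Borel set $\{c<\infty\}$ on which $\gamma$ is concentrated, then observe that the resulting atoms automatically have finite cost; the reverse direction is trivial, and the finite-valued case is handled by applying the same argument to any coupling (the paper leaves the choice open; your explicit choice of $\mu\otimes\nu$ is a valid instance). Your added remarks — the padding with zero weights to reach exactly $2N+1$ atoms, and the explanation of why $c$ is deliberately omitted from $\Lambda$ so that the bound is $2N+1$ rather than $2N+2$ — are sound and in fact make the argument more transparent than the paper's terse version.
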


\begin{proof}
  Let $\Lambda:\cX\times \cY \rightarrow \R^{2N+1}$ be defined by $\Lambda_m(x,y)=\phi_m(x)$ and $\Lambda_{m+N}(x,y)=\psi_m(y)$ for $m\in \{1,\dots,N\}$, $\Lambda_{2N+1}(x,y)=1$. Let $A=\{(x,y)\in \cX\times \cY: c(x,y)=+\infty\}$.
  Since the set of test function is admissible, there exists $\gamma\in \Pi(\mu,\nu;(\phi_m)_{1\leq m \leq N},(\psi_n)_{1\leq n \leq N})$ such that $\int_{\cX\times \cY} c(x,y) \dd \gamma(x,y)<\infty$. In particular, $\gamma(A)=0$. We can thus apply Proposition~\ref{cor:Tchakaloff}, which gives the implication. The reciprocal result is obvious.
  
Last, when $c$ is finite valued ($A=\emptyset$), any $\gamma \in \Pi(\mu,\nu;(\phi_m)_{1\leq m \leq N},(\psi_n)_{1\leq n \leq N})$ satisfies $\gamma(A)=0$ and the claim follows by using again Proposition~\ref{cor:Tchakaloff}. 
\end{proof}

\section{Existence of discrete minimizers for MCOT problems}\label{sec:MCOT}

\subsection{General case}

When Definition \ref{propt:admissibility} is satisfied, in order to have the existence of a minimizer for the MCOT problem, we make two further assumptions.
\begin{itemize}
\item We assume that the test function are continuous.
\item We add to the MCOT problem \eqref{eqn:MCOTDef}
  a moment inequality constraint.
\end{itemize}
 The additional moment constraint will ensure the tightness of
a minimizing sequence satisfying the moment equality and inequality constraints, while the continuity of the test functions will ensure that any limit  satisfies the moment constraints. Our main result is stated in Theorem~\ref{prop:approxProblmDiscreteMeasGeneral} thereafter.

\begin{theorem}\label{prop:approxProblmDiscreteMeasGeneral}
Let $\mu \in \cP(\cX)$, $\nu \in \cP(\cY)$ and $c: \cX \times \cY\to \Reel_+ \cup \{ +\infty\}$ a l.s.c. function. Let $\Sigma_\mu\subset \cX ,\Sigma_\nu\subset \cY$ be Borel sets such that $\mu(\Sigma_\mu)=\nu(\Sigma_\nu)=1$. Let $N\in \mathbb{N}^*$ and 
let $\left( (\phi_m)_{1\leq m \leq N}, (\psi_n)_{1\leq n \leq N}\right) \in L^1(\cX,\mu;\R)^N \times L^1(\cY,\nu;\R)^N$ be an admissible set of test functions for $(\mu,\nu,c)$ in the 
sense of Definition~\ref{propt:admissibility}. We assume that
\begin{enumerate}
\item For all $n\in \{1,\dots,N\}$, the functions $\phi_n$ and $\psi_n$ are continuous.
\item There exist $\theta_\mu:\Reel_+ \to \Reel_+$ and $\theta_\nu: \Reel_+ \to \Reel_+$ two non-negative non-decreasing continuous functions such that $\displaystyle \theta_\mu(r)\mathop{\longrightarrow}_{r\to +\infty} +\infty$ 
  and $\displaystyle \theta_\nu(r) \mathop{\longrightarrow}_{r\to +\infty} +\infty$, and such that
there exist $C >0$ and $0 < s < 1$ such that for all $1\leq n\leq N$, and all $(x,y)\in \cX\times \cY$,
\begin{equation}\label{eq:ass1}
|\phi_n(x)| \leq C(1 + \theta_\mu(|x|))^s \quad 
	\mbox{ and }\quad |\psi_n(y)| \leq C(1 + \theta_\nu(|y|))^s.
\end{equation}
  
\end{enumerate}For all $A>0$, let us introduce
\begin{equation}\label{eqn:approxProblemGeneral}
I^N_{A} = \inf_{
			\substack{ 	\pi \in \Pi(\mu,\nu;(\phi_m)_{1\leq m \leq N},(\psi_n)_{1\leq n \leq N})
			 \\
				\int_{\cX \times \cY} (\theta_\mu(|x|) + \theta_\nu(|y|))\dd \pi(x,y) \leq A
			}
		}
			\int_{\cX \times \cY} c(x, y) \dd \pi( x, y). 
\end{equation}
Then, there exists $A_0>0$ such that for all $A \geq A_0$, $I^N_{ A}$ is finite and is a minimum. Moreover, for all $A \ge A_0$, there exists a minimizer 
$\pi^N_A$ for Problem~\eqref{eqn:approxProblemGeneral} such that $\pi^{N}_{A} = \sum_{k = 1}^{K} p_k \delta_{x_k, y_k}$, 
for some $0 < K \leq 2 N + 2$, with $p_k\geq 0$, $x_k \in \Sigma_\mu$ and $y_k \in \Sigma_\nu$ for all $1\leq k \leq K$.
\end{theorem}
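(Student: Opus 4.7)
The plan is a three-stage argument: (a) use admissibility to upper-bound $I^N_A$ and fix $A_0$, (b) extract a minimizer by tightness and lower semi-continuity, (c) compress this minimizer to a discrete measure with at most $2N+2$ atoms via Tchakaloff's theorem.

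For (a), I would invoke Lemma~\ref{lem:admissibleProprtyCharact} to obtain a finitely supported $\gamma \in \Pi(\mu, \nu; (\phi_m)_{1 \leq m \leq N}, (\psi_n)_{1 \leq n \leq N})$ with $\int c \, \dd\gamma < \infty$, concentrated on $\Sigma_\mu \times \Sigma_\nu$ (achievable since $\mu(\Sigma_\mu)=\nu(\Sigma_\nu)=1$, so any initial admissible measure may be restricted to this full-measure set before applying Tchakaloff). Setting $A_0 := \int (\theta_\mu(|x|) + \theta_\nu(|y|)) \, \dd\gamma < \infty$ (the sum is finite because $\gamma$ has finite support and $\theta_\mu, \theta_\nu$ are continuous), the measure $\gamma$ is feasible for (\ref{eqn:approxProblemGeneral}) whenever $A \geq A_0$, so $I^N_A \leq \int c \, \dd\gamma < \infty$.

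For (b), I would take a minimizing sequence $(\pi_k)_k$ of (\ref{eqn:approxProblemGeneral}). Tightness follows from the uniform $\theta$-moment bound together with the coercivity $\theta_\mu, \theta_\nu \to +\infty$, via Markov's inequality, and Prokhorov extracts a subsequence with $\pi_k \rightharpoonup \pi^*$. The delicate step is that the moment equalities pass to the limit: by assumption (\ref{eq:ass1}) with $s<1$, the families $(\phi_m)$ and $(\psi_n)$ are uniformly integrable under $(\pi_k)$ (by the de la Vall\'ee Poussin criterion applied via $|\phi_n|^{1/s}, |\psi_n|^{1/s} \lesssim 1+\theta_\mu+\theta_\nu$ and the uniform bound $\int (\theta_\mu + \theta_\nu) \, \dd\pi_k \leq A$); continuity of the test functions then yields $\int \phi_m \, \dd\pi^* = \mu_m$ and $\int \psi_n \, \dd\pi^* = \nu_n$. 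The $\theta$-inequality and the cost inequality pass to the $\liminf$ by Portmanteau applied to the non-negative lsc integrands $\theta_\mu + \theta_\nu$ and $c$; hence $\pi^*$ is feasible and $\int c \, \dd\pi^* = I^N_A$.

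For (c), I would apply Proposition~\ref{cor:Tchakaloff} to $\pi^*$ with the Borel map $\Lambda(x,y) = (1, \phi_1(x), \ldots, \phi_N(x), \psi_1(y), \ldots, \psi_N(y), c(x,y)) \in \R^{2N+2}$, on a Borel set contained in $\Sigma_\mu \times \Sigma_\nu \cap \{c < \infty\}$ of full $\pi^*$-measure (obtainable by restricting the minimization to probability measures on $\Sigma_\mu \times \Sigma_\nu$ from the outset, which does not alter $I^N_A$ since $\mu_m, \nu_n$ only depend on the values of $\mu, \nu$ on $\Sigma_\mu, \Sigma_\nu$). The integrability hypothesis of Tchakaloff's theorem is ensured by (\ref{eq:ass1}) and $\int c \, \dd\pi^* = I^N_A < \infty$. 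This produces $\pi^N_A = \sum_{k=1}^K p_k \delta_{(x_k, y_k)}$ with $K \leq 2N+2$, atoms $(x_k, y_k) \in \Sigma_\mu \times \Sigma_\nu$, the correct total mass, the correct moments $\mu_m$ and $\nu_n$, and the optimal cost $I^N_A$.

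The main obstacle I anticipate is reconciling the sharp bound $K \leq 2N+2$ with the $\theta$-inequality constraint, since $\theta_\mu + \theta_\nu$ is \emph{not} a coordinate of $\Lambda$, so $\int (\theta_\mu + \theta_\nu) \, \dd\pi^N_A \leq A$ is not automatic. Enlarging $\Lambda$ to include $\theta_\mu + \theta_\nu$ would give only $K \leq 2N+3$. I would bridge this gap by a Krein--Milman type extremality argument: among all discrete measures matching the $2N+1$ equality moments and the cost $I^N_A$ and satisfying $\int(\theta_\mu + \theta_\nu)\, \dd\pi \leq A$, select an extreme point; at such an extreme point, if the $\theta$-inequality is slack the count $2N+2$ follows directly, while if it is active one may trade the cost-equality against the $\theta$-equality to keep the effective number of binding linear constraints at $2N+2$, so that a final Tchakaloff-style reduction gives the desired atom count. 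The remaining technical points (ensuring $\pi^*$ is concentrated on $\Sigma_\mu \times \Sigma_\nu$ through the weak-limit passage, and handling the possibly non-closed Polish space $\cX \times \cY$) are routine in view of the $G_\delta$ assumption and the fact that the minimizing sequence can be chosen to live on $\Sigma_\mu \times \Sigma_\nu$.
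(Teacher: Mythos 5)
Your stages (a) and (b) reproduce the paper's argument in substance: the paper also bootstraps $A_0$ from a Tchakaloff-compressed admissible $\gamma$, and the paper also extracts $\pi_\infty$ via tightness, passes the moment equalities through uniform integrability (your de la Vall\'ee Poussin reasoning is exactly what the paper's $s<1$ hypothesis is for, though the paper phrases the limit passage through Skorokhod representation plus Fatou rather than Portmanteau -- same content). Where your proposal genuinely diverges is stage (c), and the divergence is in your favour: the paper simply applies Proposition~\ref{cor:Tchakaloff} with $\Lambda=(\phi,\psi,1,c)\in\R^{2N+2}$ to $\pi_\infty$ and asserts the resulting discrete measure is a minimizer of \eqref{eqn:approxProblemGeneral}, without checking the $\theta$-inequality. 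As you observe, Tchakaloff only preserves the $\Lambda$-moments; it gives no bound on $\int(\theta_\mu+\theta_\nu)\,\dd\pi^N_A$, so the compressed measure may well fail feasibility for \eqref{eqn:approxProblemGeneral}. Your fix is sound and can be made crisp: apply Tchakaloff with the augmented $\Lambda'=(\phi,\psi,1,c,\theta_\mu+\theta_\nu)\in\R^{2N+3}$ to get a feasible discrete $\pi'$ with at most $2N+3$ atoms of the right cost, then solve the finite LP $\min_p\sum_k p_k c(x_k,y_k)$ over $p\ge 0$, $\sum_k p_k=1$, the $2N$ moment equalities, and $\sum_k p_k(\theta_\mu(|x_k|)+\theta_\nu(|y_k|))\le A$ on the support of $\pi'$; a basic feasible optimal solution has at most $(2N+1)+1=2N+2$ nonzero weights (the slack variable for the $\theta$-inequality absorbs the extra count), and its value is exactly $I^N_A$ by optimality in both directions. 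That recovers the claimed bound cleanly.

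One point where you move too fast, however, is the constraint $x_k\in\Sigma_\mu$, $y_k\in\Sigma_\nu$. You assert that restricting the infimum to measures concentrated on $\Sigma_\mu\times\Sigma_\nu$ ``does not alter $I^N_A$,'' and that the limit passage is ``routine in view of the $G_\delta$ assumption.'' Neither is routine: a feasible $\pi$ for \eqref{eqn:approxProblemGeneral} need not be concentrated on $\Sigma_\mu\times\Sigma_\nu$ (the constraints only constrain integrals of $\phi_m,\psi_n$ over all of $\cX\times\cY$, not where $\pi$ lives), so the restricted infimum is a priori only $\ge I^N_A$; and since $\Sigma_\mu\times\Sigma_\nu$ need not be closed, a weak limit of measures concentrated there can escape it. Tchakaloff (Proposition~\ref{cor:Tchakaloff}) does place the atoms in any Borel set of full $\pi_\infty$-measure, but you first have to exhibit a minimizer that actually charges $\Sigma_\mu\times\Sigma_\nu$, which your argument does not provide. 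To be fair, the paper's own proof is equally silent on this; but since you explicitly flag it and then wave it away, it is worth saying that this is the one step that still needs an idea.
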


\begin{remark}\label{rk:Ifinite} When $I$ defined by~\eqref{eqn:OTDef} is finite and $$A'_0=\int_{\cX } \theta_\mu(|x|) \dd \mu(x) + \int_{\cY} \theta_\nu(|y|) \dd \nu(y)<\infty, $$ we have for all $A\ge A'_0$, $I^N_{A}\le I <\infty$.
\end{remark}

\begin{remark}\label{Rk:prop31}
  When the functions $\phi_m$ and $\psi_n$ are bounded continuous (which holds automatically when $\cX$ and $\cY$ are compact), Assumption~\eqref{eq:ass1} is obviously satisfied. Besides, when $\cX$ and $\cY$ are compact sets, we can then take $C=\max_{1\le n\le N}(\max(\|\phi_n\|_\infty,\|\psi_n\|_\infty))$ and $\theta_\mu=\theta_\nu=0$, and therefore we get for all $A>0$, $I^N_A=I^N$ with
  $$I^N_{A} = \inf_{			\substack{ 	\pi \in \Pi(\mu,\nu;(\phi_m)_{1\leq m \leq N},(\psi_n)_{1\leq n \leq N})			
			}
		}
  \int_{\cX \times \cY} c(x, y) \dd \pi( x, y).   $$
\end{remark}

\begin{proof}[Proof of Theorem~\ref{prop:approxProblmDiscreteMeasGeneral}]
Let us introduce the function
  \begin{equation}\label{eqn:proofPropGenMeasPhi}
      \Lambda : \left\{
      \begin{array}{ccc} \cX \times \cY & \to & \Reel^{2N +2} \\
        (x,y) & \mapsto & \begin{pmatrix}
          \phi(x) \\
          \psi(y) \\
          1 \\
          c(x,y)
        \end{pmatrix}\\
    \end{array} 
    \right.
  \end{equation}
and let us denote by $\Lambda_i$ the $i^{th}$ component of $\Lambda$ for all $1 \leq i \leq 2N + 2$. By assumption there exists $\gamma \in \Pi(\mu,\nu;(\phi_m)_{1\leq m \leq N},(\psi_n)_{1\leq n \leq N})$ such that $\int_{\cX \times \cY} c(x,y) \dd \gamma(x, y) < \infty$. We apply Proposition~\ref{cor:Tchakaloff} with $N_0 = 2 N + 2$ and get that there exist $K\in\{1,\dots,2N+2\}$, $x_1, ..., x_K \in \cX$, $y_1, ..., y_K \in \cY$ and weights $w_1, ..., w_K \in \Reel^*_+$ 
such that
  \begin{equation}\label{eqn:PhiMomentConserved}
    \int_{\cX \times \cY} \Lambda(x,y) \dd \gamma(x,y) = \sum_{k=1}^K w_k \Lambda(x_k, y_k).
  \end{equation}
Denoting by $\widetilde{\gamma}:= \sum_{k=1}^K w_k \delta_{x_k, y_k}$, we have that
	\[
		\int_{\cX \times \cY} \left( \theta_\mu(|x|) + \theta_\nu(|y|) \right) \dd \tilde{\gamma}(x, y) < \infty.
	\]
We thus get that, for all $A\geq A_0:= \int_{\cX \times \cY} (\theta_\mu(|x|) + \theta_\nu(|y|) ) \dd \tilde{\gamma}(x,y)$, $I^N_{A}$ is finite, since we have $\tilde{\gamma}\in  \Pi(\mu,\nu;(\phi_m)_{1\leq m \leq N},(\psi_n)_{1\leq n \leq N})$.

\medskip
Let us now assume that $A\geq A_0$ and 
let us prove that this infimum is a minimum. Let $(\pi_l)_{l \in \Natural}$ be a minimizing sequence for the minimization problem
	\eqref{eqn:approxProblemGeneral}. We first prove the tightness of this sequence.
	For $M_1, M_2 > 0$, let us introduce the compact sets
	\begin{align*}
		\cK_1 = \left\{ x \in \cX, \, \text{s.t.} \, | x | \leq M_1 \right\}, \ \cK_2 = \left\{ y \in \cY, \, \text{s.t.} \, | y | \leq M_2 \right\}.
	\end{align*}	
	Then, we have
	\begin{equation*}
 		\begin{split}
 		\pi_l((\cK_1 \times \cK_2)^c) &= \int_{\cX \times \cY} \mathbf{1}_{(x,y)\not \in \cK_1 \times \cK_2}\dd \pi_l(x,y) 
\le  \int_{\cX \times \cY} \mathbf{1}_{x \not \in \cK_1 }+\mathbf{1}_{y \not \in \cK_2}\dd \pi_l(x,y)		\\
			&\le  \int_{\cX \times \cY} \frac{\theta_\mu(|x|)}{\theta_\mu(M_1)} + \frac{\theta_\nu(|y|)}{\theta_\nu(M_2)}\dd \pi_l(x,y) \le \frac{A}{\min(\theta_\mu(M_1),\theta_\nu(M_2))}, 		
 		\end{split}
 	\end{equation*}
	which implies the tightness of the sequence $(\pi_l)_{l \in \Natural}$. We can thus extract a subsequence that weakly converges. For notational simplicity, we still denote $(\pi_l)_{l \in \Natural}$ this subsequence, and there exists 
    $\pi_\infty \in \cP(\cX \times \cY)$ such that $\pi_l \xrightharpoonup[l \to \infty]{} \pi_\infty$.

    By Skorokhod's representation theorem (see e.g. Theorem 4.30~\cite{Kallenberg}), there exists a probability space
	$(\Omega, \mathcal{F}, \bP)$ and  random variables $(X_l, Y_l)_{l \in \N \cup \{\infty\}}$ on this probability space such that
    $(X_l, Y_l)$  is distributed according to $\pi_l$ and  $(X_l, Y_l) \to (X_\infty, Y_\infty)$, $\bP$-a.s. From $\sup_{l \in \N } \E[\theta_\mu(|X_l|)+\theta_\nu(|Y_l|) ]\le A$ and~\eqref{eq:ass1}, we deduce that the families $(\phi_m(X_l))_{l\in \N}$ and  $(\psi_n(X_l))_{l\in \N}$ uniformly integrable. Therefore, we get from the continuity of $\phi_m$ and $\psi_n$ that $\E[\phi_m(X_\infty)]=\lim_{l\rightarrow  \infty}\E[\phi_m(X_l)]=\mu_m$ and $\E[\psi_n(X_\infty)]=\lim_{l\rightarrow  \infty}\E[\psi_n(X_l)]=\nu_n$  (see e.g. Lemma 4.11~\cite{Kallenberg}). Fatou's lemma also gives
    $\E[\theta_\mu(|X_\infty|)+\theta_\nu(|Y_\infty|) ]\le A$, which shows that $\pi_\infty$ satisfies the constraints of Problem~\eqref{eqn:approxProblemGeneral} and thus
    $$I^N_A\le \int_{\cX \times \cY} c(x,y) \dd \pi_\infty(x, y).$$

    We now show the converse inequality. Since $c$ is l.s.c, $\liminf_{l \to \infty} c(X_l, Y_l) \geq c(X_\infty, Y_\infty)$, $\bP\mathrm{-a.s.}$ and	Fatou's lemma yields that	
	\begin{equation}\label{eqn:proofPropGenMeasFatouCsq}
		\liminf_{l \to \infty} \bE\left[c(X_l, Y_l)\right]  \geq \bE\left[c(X_\infty, Y_\infty)\right].
	\end{equation}
As $(\pi_l)_{l\in \bN}$ is a minimizing sequence associated to Problem~\eqref{eqn:approxProblemGeneral}, it holds that
$\bE\left[c(X_l, Y_l)\right] \xrightarrow[l \to \infty]{} I^N_{A}$.
 Then, by using \eqref{eqn:proofPropGenMeasFatouCsq}, one gets
\begin{equation*}
I^N_{A} \geq \bE\left[c(\tilde{X}_\infty, \tilde{Y}_\infty)\right] = \int_{\cX \times \cY} c(x,y) \dd \pi_\infty(x, y).
\end{equation*}
Thus, $\pi_\infty$ is a minimizer of Problem~\eqref{eqn:approxProblemGeneral}.

Last, we apply Proposition~\ref{cor:Tchakaloff} to the measure $\pi_\infty$ and the application
$\Lambda$ defined in~\eqref{eqn:proofPropGenMeasPhi} and get the existence of $\pi^N_A$.
\end{proof}

Example \ref{exmpl:necessityOfInequalityMomentConstraint} below shows that the MCOT problem may not be a minimum if 
we remove the constraint $\int_{\cX \times \cY} (\theta_\mu(|x|) + \theta_\nu(|y|))\dd \pi(x,y) \leq A$.

\begin{example}\label{exmpl:necessityOfInequalityMomentConstraint}
	Let
	\begin{equation*}
		c: \left\{\begin{array}{ccl}
			\Reel \times \Reel & \to & \Reel_+ \\
			(x,y) & \mapsto & (x-y)^2 + \varphi(|x|) + \varphi(|y|),
		\end{array}\right.
	\end{equation*}
	where for $r \in \Reel_+$, $\varphi(r)=\mathbf{1}_{ 0 \leq r \leq 1} r + \mathbf{1}_{ 1< r} e^{1-r}$. 
	Let us consider the problem
	\begin{equation*}
		I = \inf_{\substack{
			\pi \in \cP(\Reel \times \Reel) \\
			\int_\Reel x \dd \pi(x, y) = 1 \\
			\int_\Reel y \dd \pi(x, y) = 1
		}}
		\left\{
			\int_{\Reel \times \Reel} c(x,y) \dd \pi(x,y)
		\right\}.
	\end{equation*}
	The sequence defined for $l \in \Natural^*$ by $\pi_l =  \left( 1 - \frac{1}{l} \right) \delta_{(0,0)}+ \frac{1}{l} \delta_{(l,l)}$
	is a minimizing sequence since $\int_{\Reel \times \Reel} x \dd \pi_l(x, y)=\int_{\Reel \times \Reel} y \dd \pi_l(x, y)= 1$, $c\ge 0$ 	and 
	\begin{equation*}
		\int_{\Reel \times \Reel} c(x,y) \dd \pi_l(x, y) = \frac{2}{l} e^{1-l}
		\xrightarrow[l \to \infty]{} 0.
	\end{equation*}

	Hence, $I = 0$. Now, since $\varphi(r)>0$ for $r>0$, the only probability measure $\pi\in \cP(\R \times \R)$ such that $\int c \dd \pi =0$ is $\delta_{(0,0)}$. Since this probability measure does not satisfy the constraints ($\int_{\Reel \times \Reel} x \dd \delta_{(0,0)}(x, y) =\int_{\Reel \times \Reel} y \dd \delta_{(0,0)}(x, y) = 0$),  this shows that $I$ is not a minimum.
\end{example}

Let us also note here that the test functions $(\phi_m)_{1\leq m \leq N}$ and $(\psi_n)_{1\leq n \leq N}$ are needed to be continuous
to guarantee the existence of a minimum in Theorem~\ref{prop:approxProblmDiscreteMeasGeneral}
as Example \ref{exmpl:necessityOfContinuity} shows.

\begin{example}\label{exmpl:necessityOfContinuity}
 Let $\cX = \cY =[0,1]$,  $\,\dd\nu(x) = \left(\frac{1}{2} \Ind{(0, \frac{1}{2})}(x) + \frac{3}{2} \Ind{(\frac{1}{2}, 1)}(x)\right)\,\dd x$, $\,\dd \mu(x) = \,\dd x$
   and $c(x,y)=(y-x)^2$.
Let $N=4$, $\phi_1 = \Ind{[0,\frac{1}{4}]}$, $\phi_m = \Ind{(\frac{m-1}{4}, \frac{m}{4}]}$ for $2 \le m \le 4$ and $\psi_m=\phi_m$ for $1\le m \le 4$, so that 
$$
   \mu_1 = \mu_2 = \mu_3 = \mu_4 = \frac{1}{4}, \;  \nu_1 = \nu_2 = \frac{1}{8} \mbox{ and } \nu_3 = \nu_4 = \frac{3}{8}.
$$
For $l \in \Natural$, $l >4$, let
  \begin{equation}
    \gamma_l = \frac{1}{8} \delta_{\frac{1}{8}, \frac{1}{8}}
      + \frac{1}{8} \delta_{\frac{1}{4} - \frac{1}{l}, \frac{1}{4} + \frac{1}{l}}
      + \frac{1}{4} \delta_{\frac{1}{2} - \frac{1}{l}, \frac{1}{2} + \frac{1}{l}}
      + \frac{1}{8} \delta_{\frac{5}{8}, \frac{5}{8}}
      + \frac{1}{8} \delta_{\frac{3}{4} - \frac{1}{l}, \frac{3}{4}+ \frac{1}{l}}
      + \frac{1}{4} \delta_{\frac{7}{8}, \frac{7}{8}}.
  \end{equation}
  For all $l > 4$, $\gamma_l$ satisfies the constraints of the MCOT problem,
  and
  \begin{equation*}
    \int_0^1 \int_0^1 |x-y|^2 \dd \gamma_l(x,y) =
      \left( \frac{1}{8} + \frac{1}{4} + \frac{1}{8} \right) \frac{4}{l^2}
      = \frac{2}{ l^2} \xrightarrow[l \to + \infty]{} 0.
  \end{equation*}
  Thus, the infimum value of the associated MCOT problem is $0$. Now, let $\pi \in \cP(\cX\times \cY)$ be such that $\int c \dd \pi=0$. We have $\pi(\{(x,y)\in \cX\times \cY : y=x \})=1$ and thus
  $$\forall m, \ \int_{\cX\times \cY} \phi_m(x)\, \dd {\pi}(x,y)=\int_{\cX\times \cY} \phi_m(y)\, \dd {\pi}(x,y).$$
Therefore, we cannot have the left hand side equal to $\mu_m$ and the right hand side equal to $\nu_m$, which shows that there does not exist any minimizer to the MCOT problem.
\end{example}

\subsection{Compactly supported test functions}\label{sect:refinOfGenProp}

An alternative statement of Theorem~\ref{prop:approxProblmDiscreteMeasGeneral} that avoids to impose the constraint $\int_{\cX \times \cY} (\theta_\mu(|x|) + \theta_\nu(|y|))\dd \pi(x,y) \leq A$ can be obtained under stronger assumptions on
the test functions and the cost.  In all Subsection~\ref{sect:refinOfGenProp}, we consider  the case $$\cX=\R^{d_x} \text{ and } \cY=\R^{d_y},$$
for some $d_x,d_y \in \N^*$, and assume that the cost $c$ is continuous and satisfies:
\begin{align}\label{cond_infty}&\forall x\in \cX, \ c(x,y)\underset{|y|\to +\infty} \longrightarrow +\infty, \ \forall y\in \cY, c(x,y) \underset{|x|\to +\infty} \longrightarrow +\infty, \\
 & \exists (x_n) \in \cX^\N , (y_n) \in \cY^\N, |x_n|\rightarrow +\infty, |y_n|\rightarrow +\infty \text{ and } c(x_n,y_n)=0.\label{cond_infty2}
\end{align}
This condition is satisfied for example when $d_x=d_y$ and $c(x,y)=H(|x-y|)$, with $H$ continuous satisfying $H(0)=0$ and $H(r)\underset{r \to + \infty}\to +\infty$. 
We assume also that the test functions 
$\phi_m, \psi_n$, $1 \leq m,n \leq N$ are continuous with compact support, and define their compact support as follows
\begin{align*}
\cS_\cX & :=\overline{\left\{ x\in\cX, \quad \exists 1\leq m \leq N, \; \phi_m(x)\neq 0\right\}},\\
\cS_\cY & :=\overline{\left\{ y\in\cY, \quad \exists 1\leq n \leq N, \; \psi_n(y)\neq 0\right\}}.\\
\end{align*}
Let $M = \max_{x, y \in \cS_\cX \times \cS_\cY}c(x,y)$ and let us define
\begin{align}
  \widetilde{\cS}_\cX &= \left\{ x \in \cX :  \exists y \in \cS_\cY, \,  c(x,y) \leq M + 1 \right\} \\
  \widetilde{\cS}_\cY &= \left\{ y \in \cY :  \exists x \in \cS_\cX, \, c(x,y) \leq M + 1 \right\}
\end{align}
together with
\begin{equation*}
  \cK = \left( \cS_\cX \times \tilde{\cS}_\cY \right)
    \cup \left(\tilde{\cS}_\cX \times \cS_\cY \right).
\end{equation*}
It can be easily seen that $\tilde{\cS}_\cX$ (resp. $\tilde{\cS}_\cY$) is a compact set that contains $\cS_\cX$ (resp. ${\cS}_\cY$), and thus the set $\cK$ is compact.
Then, from~\eqref{cond_infty2}, we take an arbitrary point $(\bar{x},\bar{y}) \notin \cK $ such that $c(\bar{x},\bar{y})=0$, and we define
\begin{equation}\label{eq:defK0}
\bar{\cK} = \cK \cup \{ (\bar{x}, \bar{y}) \}.
\end{equation}

\begin{lemma}\label{lem:refinCvxCostCmctMeasExistence}
Let $K\in \bN^*$, and for all $1\leq k \leq K$, $x_k \in \cX$, $y_k\in \cY$, $p_k\geq 0$ such that $\sum_{k=1}^K p_k = 1$. 
  If $\gamma = \sum_{k=1}^K p_k \delta_{x_k, y_k} \in  \Pi(\mu,\nu;(\phi_m)_{1\leq m \leq N},(\psi_n)_{1\leq n \leq N}) $  then there exist $K$ points $(\tilde{x}_k, \tilde{y}_k) \in \bar{\cK}$ for $1\leq k \leq K$
  such that the discrete probability measure
  $\tilde{\gamma} = \sum_{k=1}^K p_k \delta_{\tilde{x}_k, \tilde{y}_k}\in  \Pi(\mu,\nu;(\phi_m)_{1\leq m \leq N},(\psi_n)_{1\leq n \leq N}) $
  and 
  \begin{equation*}
    \sum_{k=1}^K p_k c(\tilde{x}_k, \tilde{y}_k) =
    \int_{\cX \times \cY} c(x,y) \dd \tilde{\gamma}(x, y)
    \leq \int_{\cX \times \cY} c(x,y) \dd \gamma(x, y)
    = \sum_{k=1}^K p_k c(x_k, y_k).
  \end{equation*}
\end{lemma}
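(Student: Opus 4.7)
The plan is to keep the weights $(p_k)_{1\le k\le K}$ fixed and to replace each atom $(x_k,y_k)$ by a point $(\tilde{x}_k,\tilde{y}_k)\in\bar{\cK}$ satisfying, \emph{atom by atom}, the pointwise identities $\phi_m(\tilde{x}_k)=\phi_m(x_k)$, $\psi_n(\tilde{y}_k)=\psi_n(y_k)$ for all $m,n$, together with $c(\tilde{x}_k,\tilde{y}_k)\le c(x_k,y_k)$. Pointwise preservation of the test-function values is stronger than the lemma requires, but it automatically enforces the moment constraints once summed in $k$, and reduces the proof to an index-by-index construction.

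A useful preliminary I would establish first is that $\bar{x}\notin\cS_\cX$ and $\bar{y}\notin\cS_\cY$. Indeed, if $\bar{x}$ lay in $\cS_\cX$, then $c(\bar{x},\bar{y})=0\le M+1$ would yield $\bar{y}\in\tilde{\cS}_\cY$ and hence $(\bar{x},\bar{y})\in\cS_\cX\times\tilde{\cS}_\cY\subset\cK$, contradicting the choice of $(\bar{x},\bar{y})$; the other inclusion is symmetric. In particular $\phi_m(\bar{x})=\psi_n(\bar{y})=0$ for every $m,n$, so $(\bar{x},\bar{y})$ is an admissible (and cost-free) replacement whenever the original atom already has $\phi(x_k)=\psi(y_k)=0$.

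Given $(x_k,y_k)$ I would then distinguish four exhaustive cases. Case (A): $(x_k,y_k)\in\cK$, in which case I keep the atom. When $(x_k,y_k)\notin\cK$, since $x_k\in\cS_\cX$ and $y_k\in\cS_\cY$ together would place the atom in $\cK$, exactly one of the following holds: (B1) $x_k\in\cS_\cX$ and $y_k\notin\tilde{\cS}_\cY$, (B2) $y_k\in\cS_\cY$ and $x_k\notin\tilde{\cS}_\cX$ (symmetric to B1), or (B3) $x_k\notin\cS_\cX$ and $y_k\notin\cS_\cY$. In case (B3), both $\phi(x_k)$ and $\psi(y_k)$ vanish by definition of the supports, and I would set $(\tilde{x}_k,\tilde{y}_k):=(\bar{x},\bar{y})$, which gives $c(\bar{x},\bar{y})=0\le c(x_k,y_k)$ and preserves the moments.

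The bulk of the work is in case (B1). The key step is to set $\tilde{x}_k:=x_k$ and to choose $\tilde{y}_k$ on the boundary of $A_k:=\{y\in\R^{d_y}:c(x_k,y)\le M+1\}$. By continuity of $c$ and the coercivity~\eqref{cond_infty}, $A_k$ is compact; it is non-empty because $c(x_k,y')\le M$ for any $y'\in\cS_\cY$, and a proper subset of $\R^{d_y}$ because $y_k\notin\tilde{\cS}_\cY$ forces $c(x_k,y_k)>M+1$. Hence $\partial A_k\neq\emptyset$ and $c(x_k,\cdot)\equiv M+1$ on it. For any such $\tilde{y}_k$ one gets $c(x_k,\tilde{y}_k)=M+1<c(x_k,y_k)$, the inclusion $\tilde{y}_k\in\tilde{\cS}_\cY$ (from $x_k\in\cS_\cX$ and $c(x_k,\tilde{y}_k)\le M+1$), and $\tilde{y}_k\notin\cS_\cY$ (else $c(x_k,\tilde{y}_k)\le M<M+1$); this last property secures $\psi_n(\tilde{y}_k)=0=\psi_n(y_k)$ and places $(\tilde{x}_k,\tilde{y}_k)$ in $\cK\subset\bar{\cK}$. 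Assembling the four cases then yields $\tilde{\gamma}\in\Pi(\mu,\nu;(\phi_m)_{1\le m\le N},(\psi_n)_{1\le n\le N})$ with $\sum_k p_k c(\tilde{x}_k,\tilde{y}_k)\le\sum_k p_k c(x_k,y_k)$. The main obstacle, and the reason for introducing the $+1$ buffer in the definition of $\tilde{\cS}_\cX,\tilde{\cS}_\cY$, is precisely the boundary argument in (B1): the strict gap between $M$ and $M+1$ is what guarantees that $\tilde{y}_k$ lies simultaneously in $\tilde{\cS}_\cY$ and outside $\cS_\cY$, which is what both the admissibility in $\bar{\cK}$ and the pointwise moment matching require.
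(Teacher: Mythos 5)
Your proof is correct and follows essentially the same three-case decomposition as the paper (keep atoms in $\cK$; send atoms with $x_k\notin\cS_\cX$ and $y_k\notin\cS_\cY$ to $(\bar{x},\bar{y})$; and in the mixed case replace $y_k$ by a point of $\tilde{\cS}_\cY\setminus\cS_\cY$ while keeping $x_k$). The only technical variation is in that last case: the paper applies the intermediate value theorem to $\lambda\mapsto c(x_k,(1-\lambda)y^*+\lambda y_k)$ to hit the level $M+\tfrac12$, whereas you pick any point on $\partial\{y:c(x_k,y)\le M+1\}$, which is non-empty by coercivity, compactness and connectedness of $\R^{d_y}$; both arguments are valid. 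Your preliminary observation that $\bar{x}\notin\cS_\cX$ and $\bar{y}\notin\cS_\cY$ is a correct and helpful clarification that the paper leaves implicit.
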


\begin{proof}
  Consider a measure $\gamma = \sum_{k=1}^K p_k \delta_{x_k, y_k}$ satisfying the assumptions of Lemma~\ref{lem:refinCvxCostCmctMeasExistence}.
  We construct $\tilde{\gamma} = \sum_{k=1}^K p_k \delta_{\tilde{x}_k, \tilde{y}_k}$
  using the following procedure.
  \begin{itemize}
  \item[\textbf{Case 1:}] If $(x_k, y_k) \in \cK$, then we define $(\tilde{x}_k, \tilde{y}_k) = (x_k, y_k)$.

  \item[\textbf{Case 2:}] If $x_k \notin {\cS}_\cX$ and $y_k \notin {\cS}_\cY$,
  then we define $(\tilde{x}_k, \tilde{y}_k) = (\bar{x}, \bar{y})$.

  \item[\textbf{Case 3:}] Let us suppose $x_k \in \cS_\cX$ and $y_k \notin \tilde{\cS}_\cY$ (the case $y_k\in \cS_\cY$ and $x_k \not \in  \tilde{\cS}_\cX$ is treated symmetrically).
  By definition of $\tilde{\cS}_\cY$, it holds that
  \begin{equation*}
    \forall x \in \cS_\cX, \quad c(x,y_k) > M + 1.
  \end{equation*}
  In particular, we have $c(x_k,y_k) > M + 1$.
  Let $y^* \in \cS_\cY$. Then,
  \begin{equation*}
    c(x_k,y^*) \le \max_{x,y \in \cS_\cX \times \cS_\cY} c(x,y) = M
  \end{equation*}
 Let $y_\lambda := (1- \lambda)y^* + \lambda y_k$ for $\lambda\in[0,1]$.
  As $c$ is continuous, there exists $\lambda^*$ such that
  $c(x_k,y_{\lambda^*}) = \frac{2M + 1}{2}$.
  Then, $y_{\lambda^*} \notin \cS_\cY$ because $\frac{2M + 1}{2} > M$,
  and $y_{\lambda^*} \in \widetilde{\cS}_\cY$. Then, we define $(\tilde{x}_k, \tilde{y}_k) = (x_k, y_{\lambda^*} )$.
  \end{itemize}
  This construction preserves the points in the supports $\cS_\cX$ and $\cS_{\cY}$, and the points outside the supports are replaced by other points outside the supports. Thus, we have
  \begin{align*}
    \forall 1  \le m \le N, \quad \sum_{k=1}^K p_k \phi_m(\tilde{x}_k) &= \sum_{k=1}^N p_k \phi_m(x_k) \\
    \forall 1  \le n \le N, \quad \sum_{k=1}^K p_k \psi_n(\tilde{y}_k) &= \sum_{k=1}^N p_k \psi_n(y_k),
  \end{align*}
  and the moment constraints are satisfied by~$\tilde{\gamma}$. 
 In addition, it is clear that the cost does not change in Case~1 and is lowered in
  Cases 2 and 3.
\end{proof}

\begin{proposition}\label{prop:refinCvxCostCmpctSuppTestFn}
Let us assume that $\cX = \Reel^{d_x} $, $ \cY = \Reel^{d_y}$ and   $c: \Reel^{d_x}\times  \Reel^{d_y} \rightarrow \R_+$ is continuous and satisfies~\eqref{cond_infty},~\eqref{cond_infty2}.

Let us assume that for all $1\leq m,n\leq N$, $\phi_m$ and $\psi_n$ are compactly supported real-valued continuous functions defined on $\Reel^d$. 
 Then, there exists at least one minimizer to the minimization problem
  \begin{equation}\label{eqn:approxProblemRefinementCompactSupport}
  	I^N = \inf_{\substack{
  \pi \in	\Pi(\mu,\nu;(\phi_m)_{1\leq m \leq N},(\psi_n)_{1\leq n \leq N})
  	}}
  		\int_{\cX \times \cY} c(x,y) \dd \pi(x, y).
  \end{equation}
  Moreover, there exists $K\in \bN$ such that $K \leq 2N + 2$, and for all $1\leq k \leq K$, $(x_k,y_k)\in \bar{\cK}$, $p_k\geq 0$ such that $\sum_{k=1}^K p_k = 1$ such that 
  $\widetilde{\pi}:= \sum_{k=1}^K p_k \delta_{x_k, y_k}$ is a minimum.
\end{proposition}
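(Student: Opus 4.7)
The plan is to leverage Proposition~\ref{cor:Tchakaloff} (Tchakaloff) together with Lemma~\ref{lem:refinCvxCostCmctMeasExistence} and a compactness argument on the (now compact) set $\bar{\cK}$. Compared with Theorem~\ref{prop:approxProblmDiscreteMeasGeneral}, the key point is that the compact support of the test functions, combined with the coercivity assumption~\eqref{cond_infty}, makes $\bar{\cK}$ compact, so the auxiliary moment inequality used there is no longer needed for tightness. As a preliminary remark, the test functions are bounded (being continuous and compactly supported), so admissibility is automatic by Lemma~\ref{lem:admissibleProprtyCharact}, and in particular $I^N < \infty$.

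First I would transform an arbitrary minimizing sequence $(\pi_l)_{l \in \N}$ for \eqref{eqn:approxProblemRefinementCompactSupport} into a minimizing sequence supported in $\bar{\cK}$ with at most $2N+2$ atoms each. For each $l$, apply Proposition~\ref{cor:Tchakaloff} to $\pi_l$ with
\[
\Lambda(x,y) = (\phi_1(x),\ldots,\phi_N(x),\psi_1(y),\ldots,\psi_N(y), 1, c(x,y))^T \in \R^{2N+2},
\]
which produces a discrete probability measure $\tilde{\pi}_l = \sum_{k=1}^{K_l} w_k^l \delta_{(x_k^l, y_k^l)}$ with $K_l \le 2N+2$ whose integrals of $\phi_m$, $\psi_n$, $1$ and $c$ agree with those of $\pi_l$; the ``$1$'' coordinate guarantees that $\tilde{\pi}_l$ is a probability measure, and the ``$c$'' coordinate yields $\int c\,\dd\tilde{\pi}_l = \int c\,\dd\pi_l$. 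Applying Lemma~\ref{lem:refinCvxCostCmctMeasExistence} to $\tilde{\pi}_l$ then gives a discrete probability measure $\bar{\pi}_l$, still with at most $2N+2$ atoms and still satisfying the moment constraints, but now supported in $\bar{\cK}$ and with $\int c \,\dd \bar{\pi}_l \le \int c \,\dd \tilde{\pi}_l$. Hence $(\bar{\pi}_l)$ is a minimizing sequence supported in the compact set $\bar{\cK}$.

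Tightness of $(\bar{\pi}_l)$ is immediate from the compactness of $\bar{\cK}$, so by Prokhorov's theorem I can extract a subsequence converging weakly to some $\pi_\infty \in \cP(\cX \times \cY)$, supported in $\bar{\cK}$. Since the $\phi_m$ and $\psi_n$ are continuous and bounded, the moment constraints pass to the limit: $\int \phi_m\,\dd \pi_\infty = \mu_m$ and $\int \psi_n\,\dd \pi_\infty = \nu_n$. As $c$ is lower semicontinuous, the Portmanteau theorem (or, as in the proof of Theorem~\ref{prop:approxProblmDiscreteMeasGeneral}, Skorokhod representation combined with Fatou's lemma) yields
\[
\int_{\cX \times \cY} c\,\dd \pi_\infty \le \liminf_l \int_{\cX\times\cY} c\,\dd \bar{\pi}_l = I^N,
\]
so $\pi_\infty$ is a minimizer. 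Applying Proposition~\ref{cor:Tchakaloff} one last time to $\pi_\infty$, which is concentrated on the Borel set $\bar{\cK}$, with the same $\Lambda$, produces the claimed discrete minimizer $\widetilde{\pi} = \sum_{k=1}^K p_k \delta_{(x_k, y_k)}$ with $K \le 2N+2$ and $(x_k, y_k)\in \bar{\cK}$.

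The real work is carried by Lemma~\ref{lem:refinCvxCostCmctMeasExistence}, which relies on the intermediate value theorem for $c$ on segments between a support point and a point in the support of the test functions; once this lemma is available the rest is a standard Tchakaloff-plus-compactness argument. The main conceptual point to emphasize is that the moment inequality needed in Theorem~\ref{prop:approxProblmDiscreteMeasGeneral} can be dropped here \emph{only} because $\bar{\cK}$ is compact, which uses~\eqref{cond_infty} for the compactness of $\tilde{\cS}_\cX$ and $\tilde{\cS}_\cY$ and~\eqref{cond_infty2} to provide the escape point $(\bar{x},\bar{y})$ needed in the construction of $\bar{\cK}$.
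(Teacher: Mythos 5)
Your proof is correct and follows essentially the same route as the paper's: reduce a minimizing sequence to discrete measures via Proposition~\ref{cor:Tchakaloff}, relocate the atoms into the compact set $\bar{\cK}$ via Lemma~\ref{lem:refinCvxCostCmctMeasExistence} to obtain tightness, extract a weak limit, pass the (bounded, continuous) moment constraints and the l.s.c.\ cost to the limit, and apply Tchakaloff once more to the limit to obtain the discrete minimizer with $K \le 2N+2$ atoms in $\bar{\cK}$. The only difference is expository detail — you spell out the role of the $1$ and $c$ coordinates in $\Lambda$ and the fact that admissibility is automatic for bounded $c$; the paper's proof compresses the tail end into ``following the same lines as in Theorem~\ref{prop:approxProblmDiscreteMeasGeneral}.''
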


\begin{proof}
	Let us consider a minimizing sequence $(\pi_l)_{l \in \Natural}$ for 
	Problem~\eqref{eqn:approxProblemRefinementCompactSupport}.
	For all $l \in \Natural$, we will denote by $\gamma_l$ a
  finite discrete measure which has the same cost and same moments than
	$\pi_l$, with at most $2N + 2$ points, which exists thanks to Proposition \ref{cor:Tchakaloff}, and the fact that the test functions are compactly supported.
  Then, using Lemma \ref{lem:refinCvxCostCmctMeasExistence},
  for all $l \in \Natural$, one can define a measure $\tilde{\gamma}_l$
  which satisfies the moment constraints,
  has a support contained in the set
  $\bar{\cK}$ defined in~\eqref{eq:defK0}, and such that,
  \begin{equation*}
    \int_{\cX \times \cY} c(x,y) \dd \tilde{\gamma}_l(x, y)
    \leq \int_{\cX \times \cY} c(x,y) \dd \gamma_l( x, y).
  \end{equation*}
  Thus, $(\tilde{\gamma}_l)_{l \in \Natural}$ is a minimizing sequence. Besides, $(\tilde{\gamma}_l)_{l \in \Natural}$  is tight since the support of $\widetilde{\gamma}_l$ is included in the compact set $\bar{\cK}$ for all $l\in \bN$.
	Then, following the same lines as in the proof of Theorem~\ref{prop:approxProblmDiscreteMeasGeneral},
  one can extract a weakly converging subsequence,
	the cost of the limit of which is equal to $I^N$.
	The fact that there exists a finite discrete measure charging at most
	$K \leq 2N+2$ points can be deduced from Proposition \ref{cor:Tchakaloff}, following the same lines as in the proof of Theorem~\ref{prop:approxProblmDiscreteMeasGeneral}.
\end{proof}

\subsection{Multimarginal and martingale OT problem}

In this section, two important extensions of the previous problem are introduced,
the multimarginal problem and the martingale problem.
Alike Problem~\eqref{eqn:approxProblemGeneral},
several formulations and refinements can be established. We only keep here the
more general ones for conciseness.

\subsubsection{Multimarginal problem}\label{sect:multimargCase}

The propositions introduced until now for two marginal laws can be extended
to an arbitrary (finite) number of marginal laws. The proof can be straightforwardly
adapted from the one of Theorem~\ref{prop:approxProblmDiscreteMeasGeneral}.
 For all $1 \le i \le M$,  we consider $\cX_i = \Reel^{d_i}$ with $d_i \in \N^*$ or more generally a $G_\delta$-set $\cX_i\subset \R^{d_i}$. We consider $M$ probability measures $\mu_1 \in \cP(\cX_1), ..., \mu_M \in \cP(\cX_M)$ and an l.s.c. cost function $c: \cX_1 \times ... \times \cX_M \to \Reel_+ \cup \{ \infty \}$. We consider the following multimarginal optimal transport problem
\begin{equation}\label{eq:multimarg}
I=	\inf_{
		\pi \in \Pi(\mu_1, ..., \mu_M) \\
	}
	\left\{
		\int_{\cX_1 \times ... \times \cX_M}
		c(x_1, ..., x_M)
		\dd \pi(x_1, ..., x_M)
	\right\},
\end{equation}
where
$\Pi(\mu_1, ..., \mu_M) =
\{ \pi \in \cP(\cX_1 \times ... \times \cX_M) \, \text{s.t.}
\forall 1 \le i \le M, \, \int_{\cX_i} \dd \pi = \dd \mu_i
\}$.

In order to build the moments constrained optimal transport problem,
we introduce, for each $i$, $N_i\in \N^*$ test functions $(\phi^i_{n})_{1\le n\le N_i} \in L^1(\cX_i,\mu_i;\R)^{N_i}$.
We say that this set of test functions is admissible for $(\mu_1,\dots,\mu_M,c)$ if there exists $\gamma \in  \cP(\cX_1 \times ... \times \cX_M)$ such that
$$\forall i\in \{1,\dots,M\}, \forall n\in \{1,\dots,N_i\}, \ \int_{\cX_1 \times ... \times \cX_M}  \phi^i_n(x_i) \dd\gamma(x_1,\dots,x_M)=\int_{\cX_i}  \phi^i_n(x) \dd\mu_i(x)$$
and $\int_{\cX_1 \times ... \times \cX_M} c(x_1,\dots,x_M)\dd \gamma(x_1,\dots,x_M) <\infty$. 
We can now state the analogous of  Theorem~\ref{prop:approxProblmDiscreteMeasGeneral} for the multimarginal case.

\begin{proposition}\label{prop:multimarginal}
  For $i\in \{1,\dots,M\}$, let $\mu_i \in \cP(\cX_i)$ and $\Sigma_{\mu_i}\subset \cX_i$ a Borel set such that $\mu_i(\Sigma_{\mu_i})=1$. We assume that $c: \cX_1 \times ... \times \cX_M \to \Reel^+ \cup \{ \infty \}$ is a l.s.c. cost function, and that the set of test functions $\phi^i_n\in  L^1(\cX_i,\mu_i;\R)$ for $i\in \{1,\dots,M\}$ and $n\in\{1,\dots,N_i\}$ is admissible for $(\mu_1,\dots,\mu_M,c)$. We make the following assumptions.
  \begin{enumerate}
  \item For all $i$ and $n$, the function  $\phi^i_n$ is continuous.
  \item For all $i$, there exists $\theta_i:\R_+\rightarrow \R_+$ a non-decreasing continuous function such that $\theta_i(r)\underset{r\to +\infty}\to +\infty$  and such that
there exist $C >0$ and $0 < s < 1$ such that for all $1\leq n\leq N_i$, we have
\begin{equation}
\forall x\in \cX_i , \ |\phi^i_n(x)| \leq C(1 + \theta_i(|x|))^s  .
\end{equation}
    \end{enumerate}
We note $\mathbf{N}=(N_1,\dots,N_M)$, $\cX=\cX_1\times\dots\times\cX_M$ and consider the following problem
	\begin{equation}\label{eq:MCOTmulti}
		I^{\mathbf{N}}_{ A} =
		\inf_{
			\substack{
				\pi \in \cP(\cX) \\
				\forall i, n, \,
				\int_{\cX}
				\phi^i_{n}(x_i) \dd \pi(x_1, ..., x_M) \\
				= \int_{\cX_i} \phi^i_{n} (x) \dd \mu_i( x) \\
				\int_{\cX} \sum_{i=1}^M \theta_i(|x_i|) \dd \pi(x_1, ..., x_M) \leq A
			}
		}
		\left\{
			\int_{\cX} c(x_1, ..., x_M) \dd \pi( x_1, ... x_M)
		\right\}.
	\end{equation}
Then, there exists $A_0>0$ such that for all $A \geq A_0$, $I^{\mathbf{N}}_{ A}$ is finite and is a minimum. Moreover, for all $A \ge A_0$, there exists a minimizer 
$\pi^{\mathbf{N}}_A$ for the problem \eqref{eqn:approxProblemGeneral} such that $\pi^{\mathbf{N}}_{A} = \sum_{k = 1}^{K} p_k \delta_{x^k_1,\dots,x^k_M}$, 
for some $0 < K \leq \sum_{i=1}^M N_i + 2$, with $p_k\geq 0$ and  $x^k_i \in \Sigma_{\mu_i}$ for all $1\le i\le M$ and $1\leq k \leq K$.
\end{proposition}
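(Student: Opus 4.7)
The plan is to mimic the proof of Theorem~\ref{prop:approxProblmDiscreteMeasGeneral}, replacing the pair $(\cX,\cY)$ by the product $\cX_1\times\cdots\times\cX_M$ and the growth functions $(\theta_\mu,\theta_\nu)$ by the family $(\theta_i)_{1\le i\le M}$. The only quantitative change lies in the dimension of the target space of the auxiliary map used when invoking Tchakaloff's theorem, which goes from $2N+2$ to $\sum_{i=1}^M N_i+2$, directly yielding the announced upper bound on the number of atoms.

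First I would introduce the auxiliary map
$$\Lambda\colon\cX_1\times\cdots\times\cX_M\to\R^{\sum_{i=1}^M N_i+2},\ (x_1,\dots,x_M)\mapsto\bigl((\phi^i_n(x_i))_{1\le i\le M,\,1\le n\le N_i},\,1,\,c(x_1,\dots,x_M)\bigr).$$
By admissibility there exists $\gamma\in\cP(\cX_1\times\cdots\times\cX_M)$ matching the $\mu_i$-moments for all test functions and satisfying $\int c\,\dd\gamma<\infty$, hence $\int\|\Lambda\|\,\dd\gamma<\infty$. Applying Proposition~\ref{cor:Tchakaloff} to $\gamma$ and $\Lambda$ produces a discrete measure $\tilde\gamma=\sum_{k=1}^K w_k\,\delta_{(x^k_1,\dots,x^k_M)}$ with $K\le\sum_i N_i+2$ sharing the same moments and the same cost as $\gamma$. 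Since $\tilde\gamma$ is finitely supported, $A_0:=\int\sum_{i=1}^M\theta_i(|x_i|)\,\dd\tilde\gamma$ is finite, and for every $A\ge A_0$ the measure $\tilde\gamma$ is feasible in~\eqref{eq:MCOTmulti}, so $I^{\mathbf{N}}_A<+\infty$.

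Fix $A\ge A_0$ and let $(\pi_l)_{l\in\N}$ be a minimizing sequence for~\eqref{eq:MCOTmulti}. For radii $R_1,\dots,R_M>0$, a Markov-type estimate exploiting the monotonicity of each $\theta_i$ gives
$$\pi_l\bigl(\{(x_1,\dots,x_M):\,\exists i,\,|x_i|>R_i\}\bigr)\le \sum_{i=1}^M\int\frac{\theta_i(|x_i|)}{\theta_i(R_i)}\,\dd\pi_l\le \sum_{i=1}^M \frac{A}{\theta_i(R_i)},$$
which tends to $0$ as $\min_i R_i\to+\infty$, proving tightness. Extracting a weakly convergent subsequence $\pi_l\rightharpoonup\pi_\infty$ and invoking Skorokhod's representation theorem provides $(X^1_l,\dots,X^M_l)\to(X^1_\infty,\dots,X^M_\infty)$ almost surely. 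The growth bound $|\phi^i_n(x)|\le C(1+\theta_i(|x|))^s$ with $s<1$, combined with $\sup_l\E[\theta_i(|X^i_l|)]\le A$, renders each family $(\phi^i_n(X^i_l))_l$ uniformly integrable; continuity of the test functions then lets the moment equalities pass to the limit. Fatou's lemma applied to $\sum_i\theta_i$ yields $\int\sum_i\theta_i(|x_i|)\,\dd\pi_\infty\le A$, and Fatou's lemma applied to the l.s.c.\ cost $c$ yields $\int c\,\dd\pi_\infty\le\liminf_l\int c\,\dd\pi_l=I^{\mathbf{N}}_A$, so $\pi_\infty$ attains the infimum.

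Finally, a second application of Proposition~\ref{cor:Tchakaloff} to $\pi_\infty$ with the same map $\Lambda$, using as concentration set a full $\pi_\infty$-measure Borel subset of $\Sigma_{\mu_1}\times\cdots\times\Sigma_{\mu_M}$ (obtained exactly as in the proof of Theorem~\ref{prop:approxProblmDiscreteMeasGeneral}), produces the announced discrete minimizer $\pi^{\mathbf{N}}_A=\sum_{k=1}^K p_k\,\delta_{(x^k_1,\dots,x^k_M)}$ with $K\le\sum_{i=1}^M N_i+2$ atoms lying in $\prod_{i=1}^M\Sigma_{\mu_i}$. The main technical obstacle of the whole argument is the uniform integrability step, which is precisely why the strict sub-linearity assumption $s<1$ is essential: it is what allows the moment equalities against the test functions to survive the passage to the weak limit.
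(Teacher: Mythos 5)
Your proposal is correct and follows exactly the route the paper indicates (the paper's proof of Proposition~\ref{prop:multimarginal} simply states that the argument is a straightforward adaptation of the proof of Theorem~\ref{prop:approxProblmDiscreteMeasGeneral}). You have carried out that adaptation faithfully: the $\Lambda$ map with codomain $\R^{\sum_i N_i+2}$, the two applications of Tchakaloff, the Markov-type tightness bound via the $\theta_i$, Skorokhod plus uniform integrability from the $s<1$ growth condition, and Fatou for both the $\theta_i$-constraint and the l.s.c.\ cost all match the paper's argument.
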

An interesting point to remark in Proposition~\ref{prop:multimarginal} is that the number of weighted points of the discrete measure $\pi^{\mathbf{N}}_{ A}$ is linear with respect to the number of moment constraints. In particular, if we take the same number of moments $N_i=N$ for each marginal, the number of weighted points is equal to $2+MN$ and thus grows linearly with respect to~$M$. Since each points has $dM$ coordinates, the dimension of the discrete measure is in $O(M^2)$. For this reason, the development of algorithms for minimizing $\pi^{\mathbf{N}}_{ A}$ by using finite discrete measures may be a way to avoid the curse of dimensionality when $M$ is getting large. 

\medskip

We make here a specific focus on the multimarginal optimal transport problem which arises in quantum chemistry applications~\cite{seidl2007strictly,cotar2015infinite}. In this particular case, 
the multi-marginal optimal transport of interest reads as~(\ref{eq:multimarg}), with $\cX_1 = \cdots \cX_M = \R^3$, $N_1 = \cdots= N_M = N$ for some $N\in \N^*$, $\mu_1 = \cdots  = \mu_M = \mu$ for some $\mu \in \cP(\R^3)$ 
and $c$ is given by the Coulomb cost
$$c(x_1,\cdots, x_M):= \sum_{1\leq i < j \leq M} \frac{1}{|x_i -x_j|}.$$
The integer $M$ represents here the number of electrons in the system of interest. The inherent symmetries of the system yield interesting 
consequences on the MCOT problem~(\ref{eq:MCOTmulti}), which are summarized in the following proposition.

\begin{proposition}\label{prop:quantum}
Let $M\in \N^*$, $N\in \N^*$, $\mu \in \cP(\cX)$ and $\Sigma_{\mu}\subset \cX$ a Borel set such that $\mu(\Sigma_{\mu})=1$. We assume that $c: \cX^M \to \Reel^+ \cup \{ \infty \}$ is a symmetric l.s.c. cost function. More precisely, we denote 
by $\mathcal{S}_M$ the set of permutations of the set $\{1,\cdots,M\}$ and assume that
$$\forall \sigma \in \mathcal{S}_M, \quad c(x_{\sigma(1)}, \cdots, x_{\sigma(M)}) = c(x_1,\cdots,x_M), \quad \mbox{ for almost all }x_1,\cdots, x_M\in \cX.$$
For all $1\leq n \leq N$, let $\phi_n\in  L^1(\cX,\mu;\R)$. We define $\phi_n^i:= \phi_n$ for all $1\leq i \leq M$ and assume the set of test functions $\phi_n^i$ for $n\in\{1,\dots ,N\}$ and $i\in \{1,\cdots,M\}$ 
is admissible for $(\mu,\dots,\mu,c)$. We make the following assumptions.
  \begin{enumerate}
  \item For all $n$, the function  $\phi_n$ is continuous.
  \item There exists $\theta:\R_+\rightarrow \R_+$ a non-decreasing continuous function such that $\theta(r)\underset{r\to +\infty}{\mathop{\longrightarrow}} +\infty$  and such that
there exist $C >0$ and $0 < s < 1$ such that for all $1\leq n\leq N$, we have
\begin{equation}
\forall x\in \cX , \ |\phi_n(x)| \leq C(1 + \theta(|x|))^s  .
\end{equation}
    \end{enumerate}
We consider the following problem
	\begin{equation}\label{eq:MCOTmultisym}
		I^{N}_{ A} =
		\inf_{
			\substack{
				\pi \in \cP(\cX^M) \\
				\forall n,i, \,
				\int_{\cX^M} \phi_{n}(x_i) \dd \pi(x_1, ..., x_M) \\
				= \int_{\cX} \phi_{n} (x) \dd \mu( x) \\
				\int_{\cX^M} \sum_{i=1}^M \theta(|x_i|) \dd \pi(x_1, ..., x_M) \leq A
			}
		}
		\left\{
			\int_{\cX} c(x_1, ..., x_M) \dd \pi( x_1, ... x_M)
		\right\}.
	\end{equation}
Then, it holds that
\begin{equation}\label{eq:MCOTmultisym22}
I^{N}_{ A} =
		\inf_{
			\substack{
				\pi \in \cP(\cX^M) \\
				\forall n, \,
				\int_{\cX^M} \left(\frac{1}{M}\sum_{i=1}^M \phi_{n}(x_i)\right) \dd \pi(x_1, ..., x_M) \\
				= \int_{\cX} \phi_{n} (x) \dd \mu( x) \\
				\int_{\cX^M} \sum_{i=1}^M \theta(|x_i|) \dd \pi(x_1, ..., x_M) \leq A
			}
		}
		\left\{
			\int_{\cX} c(x_1, ..., x_M) \dd \pi( x_1, ... x_M)
		\right\},
\end{equation}
and there exists $A_0>0$ such that for all $A \geq A_0$, $I^{N}_{ A}$ is finite and is a minimum. Moreover, for all $A \ge A_0$, there exists a minimizer $\pi^{N}_A$ for the problem~\eqref{eq:MCOTmultisym22} such that $\pi^{N}_{A} = \sum_{k = 1}^{K} p_k \delta_{x^k_1,\dots,x^k_M}$, 
for some $0 < K \leq N + 2$, with $p_k\geq 0$ and  $x^k_i \in \Sigma_{\mu}$ for all $1\le i\le M$ and $1\leq k \leq K$. Besides, the symmetric measure
\begin{equation}\label{eq:defpisym}
\pi_{\rm sym,A}^{N} := \frac{1}{M!}\sum_{\sigma \in \mathcal{S}_M}\sum_{k = 1}^{K} p_k \delta_{x^k_{\sigma(1)},\dots,x^k_{\sigma(M)}}
\end{equation}
is a minimizer to (\ref{eq:MCOTmultisym}) and (\ref{eq:MCOTmultisym22}).
\end{proposition}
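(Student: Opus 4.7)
The strategy is to reduce the $MN$-constraint problem \eqref{eq:MCOTmultisym} to the $N$-constraint problem \eqref{eq:MCOTmultisym22} by a symmetrization trick, to apply the argument of Theorem~\ref{prop:approxProblmDiscreteMeasGeneral} on the product space $\cX^M$ to \eqref{eq:MCOTmultisym22}, and finally to symmetrize the resulting discrete minimizer to obtain \eqref{eq:defpisym}.

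First I would prove that the values of \eqref{eq:MCOTmultisym} and \eqref{eq:MCOTmultisym22} coincide. The inequality ``value of \eqref{eq:MCOTmultisym22} $\le$ value of \eqref{eq:MCOTmultisym}'' is immediate, since the feasible set of \eqref{eq:MCOTmultisym} is included in that of \eqref{eq:MCOTmultisym22}. For the converse, given any $\pi$ feasible for \eqref{eq:MCOTmultisym22}, I would introduce its symmetrization
\[
\pi_{\rm sym}:=\frac{1}{M!}\sum_{\sigma\in\mathcal{S}_M} S_\sigma\#\pi, \qquad S_\sigma(x_1,\ldots,x_M):=(x_{\sigma(1)},\ldots,x_{\sigma(M)}).
\]
Since $c$ and $\sum_{i=1}^M\theta(|x_i|)$ are $\mathcal{S}_M$-invariant, $\pi_{\rm sym}$ has the same cost and satisfies the same $A$-bound as $\pi$, while the key computation
\[
\int\phi_n(x_i)\dd\pi_{\rm sym}=\frac{1}{M!}\sum_{\sigma}\int\phi_n(x_{\sigma(i)})\dd\pi=\frac{1}{M}\sum_{j=1}^M\int\phi_n(x_j)\dd\pi=\int\phi_n\dd\mu
\]
(using that for fixed $i$, each value $j\in\{1,\dots,M\}$ is attained by $\sigma(i)$ exactly $(M-1)!$ times) shows that $\pi_{\rm sym}$ is feasible for \eqref{eq:MCOTmultisym} with the same cost. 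Taking the infimum over $\pi$ gives the reverse inequality.

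Second, I would prove existence and the $N+2$-atom discreteness for a minimizer of \eqref{eq:MCOTmultisym22}. Viewed as an MCOT problem on $\cX^M$, it has only $N$ continuous test functions $\widetilde{\phi}_n(x_1,\ldots,x_M):=\frac{1}{M}\sum_{i=1}^M\phi_n(x_i)$, each inheriting the growth estimate $|\widetilde{\phi}_n|\le C(1+\sum_i\theta(|x_i|))^s$ from assumption~(ii). The proof of Theorem~\ref{prop:approxProblmDiscreteMeasGeneral} then transposes verbatim: admissibility yields some $\gamma\in\cP(\cX^M)$ with $\int c\dd\gamma<\infty$; Proposition~\ref{cor:Tchakaloff} applied to the $\R^{N+3}$-valued map $(\widetilde{\phi}_1,\ldots,\widetilde{\phi}_N,1,c,\sum_i\theta(|x_i|))$ produces a finitely supported competitor that fixes a threshold $A_0$; for $A\ge A_0$, tightness of a minimizing sequence follows from $\pi(\{\max_i|x_i|>R\})\le A/\theta(R)$, Skorokhod's theorem together with uniform integrability of the $\widetilde{\phi}_n$'s delivers an admissible limit $\pi^\infty_A$, and Fatou's lemma applied to the l.s.c. $c$ shows it is a minimizer. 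A final application of Proposition~\ref{cor:Tchakaloff} to $\pi^\infty_A$ with the $\R^{N+2}$-valued map $(\widetilde{\phi}_1,\ldots,\widetilde{\phi}_N,1,c)$ extracts a discrete minimizer $\pi^N_A=\sum_{k=1}^K p_k\delta_{(x^k_1,\ldots,x^k_M)}$ with $K\le N+2$ atoms located in $\Sigma_\mu^M$.

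Finally, applying the symmetrization of the first step to $\pi^N_A$ yields $\pi^N_{\rm sym,A}$ of the form \eqref{eq:defpisym}, feasible for \eqref{eq:MCOTmultisym} with cost equal to that of $\pi^N_A$, hence minimizing both \eqref{eq:MCOTmultisym} and \eqref{eq:MCOTmultisym22}. The main subtlety I anticipate lies in the existence step: although \eqref{eq:MCOTmultisym22} looks like an instance of Proposition~\ref{prop:multimarginal}, its test functions $\widetilde{\phi}_n$ depend on all coordinates rather than only on $x_i$, so one must re-verify continuity, integrability and the product-space growth/tightness estimates before recycling the Skorokhod and Tchakaloff machinery; the two symmetrization arguments are, by contrast, entirely routine once the symmetry of $c$ is exploited.
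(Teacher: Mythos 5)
Your proposal is correct and follows essentially the same route as the paper: observe the trivial inclusion of feasible sets, apply the Theorem~\ref{prop:approxProblmDiscreteMeasGeneral} machinery (Tchakaloff + tightness + Skorokhod + Fatou) to the $N$-averaged-constraint problem on $\cX^M$ to produce a discrete $(N+2)$-atom minimizer, and then symmetrize to obtain feasibility for the fully constrained problem. The only cosmetic difference is that you establish equality of the two values by symmetrizing an arbitrary feasible measure before turning to existence, whereas the paper bypasses this and gets the reverse inequality for free by symmetrizing the discrete minimizer at the end; the symmetrization identity and the growth estimate you verify for $\widetilde{\phi}_n$ are exactly the points the paper treats as "the same arguments as in the proof of Theorem~\ref{prop:approxProblmDiscreteMeasGeneral}."
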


\begin{proof}
It is obvious that the right hand side of~\eqref{eq:MCOTmultisym22} is smaller than the right hand side of~\eqref{eq:MCOTmultisym}. By using the same arguments as in the proof of Theorem~\ref{prop:approxProblmDiscreteMeasGeneral}, there exists $A_0>0$ such that for all $A \geq A_0$ the infimum of~\eqref{eq:MCOTmultisym22} is finite, is a minimum that is attained by some discrete probability measure  $\pi^{N}_{A} = \sum_{k = 1}^{K} p_k \delta_{x^k_1,\dots,x^k_M}$, 
for some $0 < K \leq N + 2$ with  $x^k_i \in \Sigma_{\mu}$ for all $1\le i\le M$ and $1\leq k \leq K$. Then, since $c$ is symmetric and the set of constraints is also symmetric, we get that $\pi_{\rm sym,A}^{N}$ also realizes the minimum. Besides, it satisfies $\int_{\cX^M} \phi_{n}(x_i) \dd \pi_{\rm sym,A}^{N}(x_1, ..., x_M) = \int_{\cX} \phi_{n} (x) \dd \mu( x)$ for all $n,i$, which shows that it is also the minimizer of~\eqref{eq:MCOTmultisym}. 
\end{proof}

Proposition~\ref{prop:quantum} is particularly interesting for the design of numerical schemes for the resolution of multimarginal optimal transport problems with Coulomb cost arising in quantum chemistry applications. Indeed, 
the latter read as (\ref{eq:MCOTmultisym}) and the number of charged points of the minimizer $\pi^N_A$ of~\eqref{eq:MCOTmultisym22} only scales at most like $N+2$, and that the dimension of the optimal discrete measure is in $dM(N+2)$. This result states that such formulation of the multimarginal optimal transport problem does not suffer from the curse of dimensionality. Let us mention that this result is close in spirit to the recent work~\cite{friesecke2018breaking}, where multimarginal optimal transport problems with Coulomb cost are studied on finite state spaces.

\subsubsection{Martingale OT problem}\label{sect:martCase}

In this paragraph, we assume  $\cX = \cY = \Reel^d$ with $d \in \Natural^*$, and consider two probability measures $\mu,\nu \in \cP(\R^d)$ such that
$$\int_{\R^d} |y|\dd \nu(y)< \infty $$ and
$\mu$ is lower than $\nu$ for the convex order, i.e. \begin{equation}\int_{\R^d} \varphi(x)\dd \mu(x)\le \int_{\R^d} \varphi(y)\dd \nu(y),\label{cvx_order}
\end{equation}
for any convex function $\varphi:\R^d\rightarrow \R$ non-negative or integrable with respect to $\mu$ and $\nu$. This latter condition is equivalent, by Strassen's theorem~\cite{Strassen}, to the existence of a martingale coupling between $\mu$ and $\nu$, i.e.
$$\exists \pi \in \Pi(\mu,\nu), \  \forall x \in {\R^d}, \  \int_{\R^d} y \dd \pi(x, y) = x. $$
The original martingale optimal transport consists then in the minimization problem
\begin{equation}\label{OT_MG_PB}
\inf_{
		\substack{
			\pi \in \Pi(\mu,\nu) \\
			\forall x \in {\R^d}, \, \int_{\R^d} y \dd \pi(x, y) = x
		}}
		\left\{
			\int_{{\R^d} \times {\R^d}} c(x,y) \dd \pi(x, y)
		\right\},
\end{equation}
with $c:\R^d \times \R^d\rightarrow \R_+\cup\{\infty\}$ being a l.s.c. cost function. This problem has recently got a great attention in mathematical finance since the work of Beiglb\"ock et al.~\cite{BeHLPe}, because it is related to the calculation of model-independent option price bounds on an arbitrage free market.

We consider a set of test functions $(\phi_m)_{1\leq m \leq N}\in L^1({\R^d},\mu;\R)^N $ and $(\psi_n)_{1\leq n \leq N} \in  L^1({\R^d},\nu;\R)^N$, and the following problem:
$$I^{N}=\inf_{		\substack{
			\pi \in \Pi(\mu,\nu;(\phi_m)_{1\leq m \leq N}, (\psi_n)_{1\leq n \leq N} ) \\
			\forall x \in {\R^d}, \, \int_{\R^d} y \dd \pi(x, y) = x
		}} 		\left\{
			\int_{{\R^d} \times {\R^d}} c(x,y) \dd \pi(x, y)
		    \right\}.$$
           Suppose for simplicity that there exist some minimizer to this problem~$\pi^*$. Then, by using Theorem~5.1 in Beiglb\"ock and Nutz~\cite{BeNu} that is an extension of Tchakaloff's theorem to the martingale case, there exists a probability measure $\tilde{\pi}^*$ weighting at most $(d+2N+2)^2$ points such that $\tilde{\pi}^*\in  \Pi(\mu,\nu;(\phi_m)_{1\leq m \leq N}, (\psi_n)_{1\leq n \leq N} )$, $$\forall x \in {\R^d}, \, \int_{\R^d} y \dd \tilde{\pi}^*(x, y) = x$$ and $$\int_{{\R^d} \times {\R^d}} c(x,y) \dd \tilde{\pi}^*(x, y)=\int_{{\R^d} \times {\R^d}} c(x,y) \dd \pi^*(x, y)=I^{N}.$$
However, the minimization problem for $I^N$ still has the martingale constraints. To get a problem that is similar to the MCOT, we then relax in addition the martingale constraint.  This constraint is equivalent to have
\begin{equation*}
	\int_{{\R^d} \times {\R^d}} f(x)(y-x) \dd \pi(x, y) = 0,
\end{equation*}
for all bounded measurable functions $f:\R^d\to \R$, and also for all function $f:\R^d\to \R$ such that $\int_{\R^d} |xf(x)|\dd \mu(x)<\infty$. Then, it is natural to consider $N'$ test functions
$\chi_l : \R^d \to \R$,  $1 \leq l \leq N'$, such that
\begin{equation}\label{integrabilite_chi}
	\int_{\R^d} | x \chi_l(x) | \dd \mu(x) < \infty,
\end{equation}
and then to consider the following minimization problem
\begin{equation}\label{MgINN'}
  I^{N,N'}=\inf_{		\substack{
			\pi \in \Pi(\mu,\nu;(\phi_m)_{1\leq m \leq N}, (\psi_n)_{1\leq n \leq N} ) \\
			\forall l, \, \int_{\R^d \times \R^d }y \chi_l(x) \dd \pi(x, y) = \int_{\R^d } x \chi_l(x) \dd \mu(x)
		}} 		\left\{
			\int_{\R^d \times \R^d} c(x,y) \dd \pi(x, y)
		    \right\}.
\end{equation}
We will say that the test functions $(\phi_m)_{1\leq m \leq N}$, $(\psi_n)_{1\leq n \leq N}$ and $(\chi_l)_{1\leq l \leq N'}$ are admissible for the martingale problem of $(\mu,\nu,c)$ if $I^{N,N'}<\infty$. Similarly to Theorem~\ref{prop:approxProblmDiscreteMeasGeneral}, we get the following result. 

\begin{proposition}\label{prop:generalUnboundedMartProblem}
Let $\mu \in \cP(\R^d)$, $\nu \in \cP(\R^d)$ and $c: \R^d \times \R^d \to \R_+ \cup \{ +\infty\}$ a l.s.c. function. Let $\Sigma_\mu,\Sigma_\nu\subset \R^d$ be Borel sets such that $\mu(\Sigma_\mu)=\nu(\Sigma_\nu)=1$. Let $N\in \mathbb{N}^*$ and 
let $ (\phi_m)_{1\leq m \leq N} \in L^1(\R^d,\mu;\R)^N$, $(\psi_n)_{1\leq n \leq N} \in L^1(\R^d,\nu;\R)^N$ and $(\chi_l)_{1\leq l \leq N'}$ satisfying~\eqref{integrabilite_chi} be an admissible set of test functions for the martingale problem of $(\mu,\nu,c)$. We make the following assumptions.
\begin{enumerate}
\item For all $n\in \{1,\dots,N\}$, $l\in \{1,\dots,N'\}$, the functions $\phi_n$, $\psi_n$ and $\chi_l$ are continuous.
\item There exist $\theta_\mu:\Reel_+ \to \Reel_+$ and $\theta_\nu: \Reel_+ \to \Reel_+$ two non-negative non-decreasing continuous functions such that $\displaystyle \theta_\mu(r)\mathop{\longrightarrow}_{r\to +\infty} +\infty$ 
  and $\displaystyle \theta_\nu(r) \mathop{\longrightarrow}_{r\to +\infty} +\infty$, and such that
there exist $C >0$ and $0 < s < 1$ such that for all $1\leq n\leq N$, $1\leq l\leq N'$, and all $(x,y)\in {\R^d}\times {\R^d}$,
\begin{equation}\label{eq:ass_mom}
|\phi_n(x)| +  |\psi_n(y)| + |y\chi_l(x)| \leq C(1 + \theta_\mu(|x|) + \theta_\nu(|y|))^s.
\end{equation}
 
\end{enumerate}
For all $A>0$, let us introduce
 \begin{equation}\label{eqn:approxProblemGeneral_MG}
   I^{N,N'}_{A}=\inf_{		\substack{
			\pi \in \Pi(\mu,\nu;(\phi_m)_{1\leq m \leq N}, (\psi_n)_{1\leq n \leq N} ) \\
			\forall l, \, \int_{\R^d \times \R^d }y \chi_l(x) \dd \pi(x, y) = \int_{\R^d } x \chi_l(x) \dd \mu(x)
	 \\
				\int_{{\R^d} \times {\R^d}} (\theta_\mu(|x|) + \theta_\nu(|y|))\dd \pi(x,y) \leq A	}} 		\left\{
			\int_{\R^d \times \R^d} c(x,y) \dd \pi(x, y)
		    \right\}.
\end{equation}
Then, there exists $A_0>0$ such that for all $A \geq A_0$, $I^{N,N'}_{ A}$ is finite and is a minimum. Moreover, for all $A \ge A_0$, there exists a minimizer 
$\pi^{N,N'}_A$ for Problem~\eqref{eqn:approxProblemGeneral_MG} such that $\pi^{N,N'}_A = \sum_{k = 1}^{K} p_k \delta_{x_k, y_k}$, 
for some $0 < K \leq 2 N +N' + 2$, with $p_k\geq 0$, $x_k \in \Sigma_\mu$ and $y_k \in \Sigma_\nu$ for all $1\leq k \leq K$.
\end{proposition}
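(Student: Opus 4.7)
The plan is to adapt the proof of Theorem~\ref{prop:approxProblmDiscreteMeasGeneral} almost verbatim, the only substantive novelty being the treatment of the relaxed martingale-type constraints $\int y\chi_l(x)\,\dd\pi(x,y) = \int x\chi_l(x)\,\dd\mu(x)$. First, from the admissibility assumption pick $\gamma \in \Pi(\mu,\nu;(\phi_m),(\psi_n))$ with $\int c\,\dd\gamma < \infty$ satisfying the $\chi_l$ constraints. Introduce the measurable map
\begin{equation*}
\Lambda(x,y) := \bigl(\phi_1(x),\ldots,\phi_N(x),\, \psi_1(y),\ldots,\psi_N(y),\, y\chi_1(x),\ldots,y\chi_{N'}(x),\, 1,\, c(x,y)\bigr),
\end{equation*}
whose coordinates are integrable against $\gamma$ thanks to~\eqref{eq:ass_mom}, \eqref{integrabilite_chi}, and $\int c\,\dd\gamma<\infty$. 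Proposition~\ref{cor:Tchakaloff} applied to $\gamma$ with $A = \Sigma_\mu\times\Sigma_\nu$ produces a discrete measure $\tilde\gamma = \sum_{k=1}^K w_k \delta_{(x_k,y_k)}$ with $K \leq 2N+N'+2$, $(x_k,y_k)\in \Sigma_\mu\times \Sigma_\nu$, preserving all components of $\Lambda$ — hence all the moment equalities, the probability normalization, and the total cost. Setting $A_0 := \int (\theta_\mu(|x|)+\theta_\nu(|y|))\,\dd\tilde\gamma(x,y) < \infty$, the measure $\tilde\gamma$ is feasible for~\eqref{eqn:approxProblemGeneral_MG} whenever $A\geq A_0$, proving finiteness of $I^{N,N'}_A$.

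Next, I would show the infimum is attained by following the argument of Theorem~\ref{prop:approxProblmDiscreteMeasGeneral} line by line. Take a minimizing sequence $(\pi_l)$; tightness follows from the bound $\int (\theta_\mu+\theta_\nu)\,\dd\pi_l \leq A$ together with $\theta_\mu,\theta_\nu\to +\infty$, via the Markov-type estimate used in that proof. Extract a weakly convergent subsequence $\pi_l \rightharpoonup \pi_\infty$ and invoke Skorokhod's representation to obtain $(X_l,Y_l)\sim \pi_l$ with $(X_l,Y_l)\to (X_\infty,Y_\infty)$ almost surely. The growth condition~\eqref{eq:ass_mom} combined with $s<1$ and $\sup_l \E[\theta_\mu(|X_l|)+\theta_\nu(|Y_l|)]\leq A$ yields uniform integrability of $\phi_m(X_l)$, $\psi_n(Y_l)$ \emph{and} of each component of $Y_l\chi_l(X_l)$; using the continuity of $\phi_m,\psi_n,\chi_l$ we pass to the limit in every moment equality. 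Fatou's lemma gives the inequality $\int(\theta_\mu+\theta_\nu)\,\dd\pi_\infty \leq A$, and l.s.c.\ of $c$ gives $\int c\,\dd\pi_\infty \leq \liminf_l \int c\,\dd\pi_l = I^{N,N'}_A$, so $\pi_\infty$ is a minimizer.

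Finally, I would apply Proposition~\ref{cor:Tchakaloff} once more to $\pi_\infty$ with the same map $\Lambda$: since $\pi_\infty$ is concentrated on $\Sigma_\mu\times\Sigma_\nu$ up to a $\pi_\infty$-null set, we obtain a discrete measure $\pi^{N,N'}_A = \sum_{k=1}^K p_k \delta_{(x_k,y_k)}$ with $K\leq 2N+N'+2$, $p_k\geq 0$, $(x_k,y_k)\in\Sigma_\mu\times\Sigma_\nu$, that has the same integrals as $\pi_\infty$ for every coordinate of $\Lambda$; in particular it is feasible and realizes the same cost $I^{N,N'}_A$.

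The main obstacle is really the passage to the limit in the martingale-type constraint $\int y\chi_l(x)\,\dd\pi_l = \int x\chi_l(x)\,\dd\mu$: whereas in Theorem~\ref{prop:approxProblmDiscreteMeasGeneral} uniform integrability was only needed for $\phi_m(x)$ and $\psi_n(y)$ separately, here we must control the mixed product $y\chi_l(x)$ jointly in both variables. This is precisely why~\eqref{eq:ass_mom} bounds $|y\chi_l(x)|$ by $(1+\theta_\mu(|x|)+\theta_\nu(|y|))^s$ with $s<1$, so that the $A$-bound on $\theta_\mu+\theta_\nu$ is enough to guarantee uniform integrability; once this point is settled the rest of the proof is an immediate transcription of the two-marginal argument.
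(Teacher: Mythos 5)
Your proof is correct and is precisely what the paper intends: the paper's ``proof'' of this proposition is a single sentence deferring to the proof of Theorem~\ref{prop:approxProblmDiscreteMeasGeneral}, and you have filled in that deferral faithfully, in particular by correctly identifying the uniform integrability of the mixed terms $y\chi_l(x)$ (guaranteed by~\eqref{eq:ass_mom} together with $s<1$ via de la Vall\'ee Poussin) as the one genuinely new ingredient needed to pass to the limit in the relaxed martingale constraints. One small caveat, which applies equally to the paper's own statement: for $d>1$ each constraint $\int y\chi_l(x)\,\dd\pi = \int x\chi_l(x)\,\dd\mu$ is $\R^d$-valued, so the Tchakaloff map $\Lambda$ actually takes values in $\R^{2N+dN'+2}$ rather than $\R^{2N+N'+2}$, and the resulting bound on the number of atoms should read $K\le 2N + dN' + 2$; it coincides with the stated $2N+N'+2$ only when $d=1$.
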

The proof of Proposition~\ref{prop:generalUnboundedMartProblem} follows exactly the same line as the proof of Theorem~\ref{prop:approxProblmDiscreteMeasGeneral}, since the relaxation of the martingale moment constraints only brings new moment constraints. Let us stress that the minimizer $\pi^{N,N'}_A$ does not satisfy in general the martingale constraint. Also, we do not impose in Proposition~\ref{prop:generalUnboundedMartProblem} to have~\eqref{cvx_order}, i.e. $\mu$ smaller than $\nu$ for the convex order. In fact, the admissibility condition already ensures that $I^{N,N'}<\infty$ and thus, by using Proposition~\ref{cor:Tchakaloff} that $I^{N,N'}_A<\infty$ for $A$ large enough. Nonetheless, if we assume in addition that $\mu$ smaller than $\nu$ for the convex order and that $I$, the infimum of Problem~\ref{OT_MG_PB}, is finite, then we have $I^{N,N'}_A<\infty$ and $I^{N,N'}_A\le I$ for any $A\ge \int_{{\R^d} \times {\R^d}} (\theta_\mu(|x|) + \theta_\nu(|y|))\dd \pi^{1}(x,y)$, where $\pi^1\in\Pi(\mu,\nu)$ is such that $\int_{\R^d}y\dd \pi^1(x,y)=x$ and $\int_{\R^d \times \R^d} c(x,y) \dd \pi^1(x, y)\le I+1$.

\section{Convergence of the MCOT problem towards the OT problem}\label{sect:cvgMCOTtoOT}

The aim of this section is to prove that when the number of test functions $N \to +\infty$, the minimizer of the MCOT problem converges towards a minimizer of
the OT problem, under appropriate assumptions and up to the extraction of a subsequence.

\subsection{Continuous test functions on unbounded domains}

Let us consider two sequences of continuous real-valued test functions $(\phi_m)_{m \in \Natural^*}$ and $(\psi_n)_{n \in \Natural^*}$ defined on $\cX$ (resp. $\cY$) and make the following assumptions.

Let us first assume that there exist
continuous non-decreasing non-negative
functions $\theta_\mu : \Reel_+ \to \Reel_+ $ and $\theta_\nu : \Reel_+ \to \Reel_+$
such that
\begin{equation}\label{eqn:nonCmpctCvgHypOnTheta}
 \theta_\mu(|x|) \xrightarrow[|x| \to + \infty]{} +\infty \quad \text{and} \quad
 \theta_\nu(|y|) \xrightarrow[|y| \to + \infty]{} +\infty
\end{equation}
and
\begin{equation}\label{eqn:nonCmpctCvgHypFiniteMoment}
  \int_\cX \theta_\mu(|x|) \dd \mu(x) < \infty \quad \text{and} \quad
  \int_\cY \theta_\nu(|y|) \dd \nu(y) < \infty.
\end{equation}
In the sequel, we set \begin{equation}\label{def_A0}
  A_0:= \int_\cX \theta_\mu(|x|) \dd \mu(x) + \int_\cY \theta_\nu(|y|) \dd \nu(y).
\end{equation}
We assume moreover that there exist $(s^\mu_m)_{m\in \bN^*}, (s^\nu_n)_{n\in \bN^*} \in (0,1)^{\bN^*}$ and $(C^\mu_m)_{m\in \bN^*}, (C^\nu_n)_{n\in \bN^*} \in (\Reel_+^*)^{\bN^*}$ such that 
\begin{align}\label{eq:dom1}
  \forall m \in \Natural^*, \ \forall x \in \cX, \quad | \phi_m(x) | &\le C^\mu_m (1+ \theta_\mu(|x|))^{s^\mu_m}, \\\label{eq:dom2}
  \forall n \in \Natural^*, \ \forall y \in \cY, \quad | \psi_n(y) | &\le C^\nu_n (1 + \theta_\nu(|y|))^{s^\nu_n}.\\\nonumber
\end{align}
Last, we assume that the probability measures $\mu$ and $\nu$ are fully characterized by their moments:
\begin{align}
&  \forall \eta\in \cP(\cX), \left( \forall m\in \N^*, \int_{\cX}\phi_m(x)\dd \eta(x)=\mu_m \right) \implies \eta=\mu, \label{carmu} \\
 &  \forall \eta\in \cP(\cY), \left( \forall n\in \N^*, \int_{\cY}\psi_n(x)\dd \eta(x)=\nu_n \right) \implies \eta=\nu. \label{carnu}
\end{align}

We consider the optimal cost for the OT problem~\eqref{eqn:OTDef} that we restate here for convenience
\begin{equation}\label{eqn:nonCmpctCvgOTOptCost}
  I = \inf_{\pi \in \Pi(\mu, \nu)}
    \left\{ \int_{\cX \times \cY} c(x,y) \dd \pi(x, y) \right\},
\end{equation}
and for all $N\in \bN^*$, we define
the $N^{th}$ MCOT problem,
\begin{equation}\label{eqn:nonCmpctCvgApproxProbOptCost}
  I^N_{A_0} = \min_{
		\substack{
			\pi \in \Pi(\mu,\nu;(\phi_m)_{1\le m \le N},(\psi_n)_{1\le n \le N} )\\
      \int_{\cX \times \cY} (\theta_\mu(|x|) + \theta_\nu(|y|)) \dd \pi(x, y) \le A_0 
		}
	}
	\left\{
		\int_{\cX \times \cY} c(x,y) \dd \pi(x, y)
	\right\}.
\end{equation}

\begin{theorem}\label{prop:cvgApproxPbtoOTPb}
Let $\mu \in \cP(\cX)$ and $\nu \in \cP(\cY)$ satisfying (\ref{eqn:nonCmpctCvgHypFiniteMoment}) for some continuous non-decreasing functions $\theta_\mu: \Reel_+ \to \Reel_+$ and $\theta_\nu: \Reel_+ \to \Reel_+$ satisfying 
(\ref{eqn:nonCmpctCvgHypOnTheta}). Let $c: \cX \times \cY \to \Reel_+ \cup \{+\infty\}$ a l.s.c. function. Let $(\phi_m)_{m \in \bN^*} \subset L^1(\cX,\mu;\R) $ and $(\psi_n)_{n \in \bN^*}\subset L^1(\cY,\nu;\R)$ be continuous functions satisfying \eqref{eq:dom1}, \eqref{eq:dom2}, \eqref{carmu} and~\eqref{carnu}.
Let us finally assume that $I$, defined by (\ref{eqn:nonCmpctCvgOTOptCost}) is finite. 

Then, for all $N\in \bN^*$, there exist at least one minimizer for Problem (\ref{eqn:nonCmpctCvgApproxProbOptCost}) and 
$$
I^N_{A_0} \mathop{\longrightarrow}_{N\to +\infty} I. 
$$
Besides, from every sequence $(\pi^N)_{N \in \bN^*}$ such that
  for all $N$, $\pi^N\in \cP(\cX\times \cY)$ is a minimizer for~\eqref{eqn:nonCmpctCvgApproxProbOptCost},
  one can extract a subsequence which converges towards a minimizer $\pi^\infty\in \cP(\cX\times \cY)$ to problem (\ref{eqn:nonCmpctCvgOTOptCost}).
\end{theorem}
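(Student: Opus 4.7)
The overall strategy is to combine the upper bound $I^N_{A_0}\le I$ with a compactness argument that extracts a weak limit of minimizers, identifying that limit as an exact OT minimizer by passing to the limit in both the countably many moment equalities and in the cost functional. First I would establish existence of a minimizer $\pi^N$ for each $N$ together with the upper bound. Since $c$ is l.s.c.\ and $I<\infty$, the exact OT problem~\eqref{eqn:nonCmpctCvgOTOptCost} admits a minimizer $\pi^\star\in\Pi(\mu,\nu)$, which automatically satisfies every moment equality against $(\phi_m)$ and $(\psi_n)$ and, by~\eqref{eqn:nonCmpctCvgHypFiniteMoment}--\eqref{def_A0}, has $\int(\theta_\mu(|x|)+\theta_\nu(|y|))\dd\pi^\star=A_0$. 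Hence $\pi^\star$ lies in the feasible set of~\eqref{eqn:nonCmpctCvgApproxProbOptCost}, so the admissibility hypothesis of Definition~\ref{propt:admissibility} holds. Applying Theorem~\ref{prop:approxProblmDiscreteMeasGeneral} with $s:=\max_{1\le m,n\le N}(s^\mu_m\vee s^\nu_n)\in(0,1)$ and $C:=\max_{1\le m,n\le N}(C^\mu_m\vee C^\nu_n)$ in~\eqref{eq:ass1} then yields a minimizer $\pi^N$ for every $N$; by the construction above (and Remark~\ref{rk:Ifinite}) we have $I^N_{A_0}\le I$.

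Next I would extract a weak limit of any such sequence of minimizers and identify its marginals. The uniform bound $\int(\theta_\mu(|x|)+\theta_\nu(|y|))\dd\pi^N\le A_0$, combined with $\theta_\mu(r),\theta_\nu(r)\to+\infty$, yields tightness of $(\pi^N)_N$ via the same Markov-type estimate used in the proof of Theorem~\ref{prop:approxProblmDiscreteMeasGeneral}. Extract a weakly convergent subsequence (still denoted $(\pi^N)$) with limit $\pi^\infty\in\cP(\cX\times\cY)$, and invoke Skorokhod's representation theorem to build $(X_N,Y_N)\to(X_\infty,Y_\infty)$ almost surely. For each fixed $m\in\bN^*$ and every $N\ge m$, the moment equality gives $\bE[\phi_m(X_N)]=\mu_m$. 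The subpolynomial growth~\eqref{eq:dom1} with exponent $s^\mu_m<1$ and the bound $\sup_N\bE[\theta_\mu(|X_N|)]\le A_0$ yield $\sup_N\bE[|\phi_m(X_N)|^{1/s^\mu_m}]<\infty$ with $1/s^\mu_m>1$, so $(\phi_m(X_N))_N$ is uniformly integrable; combined with the almost sure convergence $\phi_m(X_N)\to\phi_m(X_\infty)$ (from continuity of $\phi_m$), this gives $\bE[\phi_m(X_\infty)]=\mu_m$ for every $m$. The moment-determinacy hypothesis~\eqref{carmu} then forces the first marginal of $\pi^\infty$ to equal $\mu$, and symmetrically~\eqref{carnu} gives $\nu$ for the second marginal, so $\pi^\infty\in\Pi(\mu,\nu)$.

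Finally, the non-negativity and lower semi-continuity of $c$ together with Fatou's lemma applied to $c(X_N,Y_N)$ give $\int c\dd\pi^\infty\le\liminf_N\int c\dd\pi^N=\liminf_N I^N_{A_0}\le I$, while $\pi^\infty\in\Pi(\mu,\nu)$ forces $\int c\dd\pi^\infty\ge I$; hence $\pi^\infty$ is a minimizer of~\eqref{eqn:nonCmpctCvgOTOptCost} and $\liminf_N I^N_{A_0}=I$. Combined with $I^N_{A_0}\le I$ this yields $I^N_{A_0}\to I$; applying the same extraction argument to any subsequence of the original $(\pi^N)_N$ gives the stated convergence-up-to-subsequence statement for minimizers. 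The main technical obstacle is the passage to the limit in the infinitely many marginal moment equalities: mere tightness only provides weak convergence, and the bound on $\int(\theta_\mu+\theta_\nu)\dd\pi^N$ does not by itself pass through the test functions. It is precisely the strict inequalities $s^\mu_m,s^\nu_n<1$ in~\eqref{eq:dom1}--\eqref{eq:dom2} that upgrade this bound to uniform integrability of each $\phi_m(X_N)$ and $\psi_n(Y_N)$, and the moment-determinacy assumptions~\eqref{carmu}--\eqref{carnu} that turn the identified limiting moments into the measures $\mu$ and $\nu$ themselves.
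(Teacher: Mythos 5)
Your proposal follows essentially the same strategy as the paper: existence of minimizers via Theorem~\ref{prop:approxProblmDiscreteMeasGeneral} and Remark~\ref{rk:Ifinite}, tightness from the $\theta$-moment bound, Skorokhod representation, uniform integrability of $\phi_m(X_N)$ and $\psi_n(Y_N)$ from the strict growth exponents $s^\mu_m,s^\nu_n<1$, moment determinacy to identify the marginals of the weak limit, and Fatou with lower semi-continuity of $c$ for the cost. The only place where you are looser than the paper is the final passage from a subsequential statement to full-sequence convergence of $I^N_{A_0}$: the extraction gives $\lim_k I^{N_k}_{A_0}=I$ along a subsequence, and the inequality $I^N_{A_0}\le I$ alone does not yield $\liminf_N I^N_{A_0}=I$. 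The paper closes this cleanly by observing that the feasible sets are nested in $N$, so $(I^N_{A_0})_N$ is non-decreasing and bounded above by $I$, hence already converges to some $I^\infty\le I$; the subsequential argument then forces $I^\infty=I$. Your gesture at applying the extraction to arbitrary subsequences would also work (via the standard subsequence-of-subsequences criterion), but the monotonicity observation is the economical route and worth making explicit.
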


\begin{proof}
  
From Theorem~\ref{prop:approxProblmDiscreteMeasGeneral} and Remark~\ref{rk:Ifinite},  We know that there exists at least one minimizer $\pi^N\in \cP(\cX\times \cY)$ to \eqref{eqn:nonCmpctCvgApproxProbOptCost}. Since we have
$$\forall N,  \int_{\cX \times \cY} (\theta_\mu(|x|) + \theta_\nu(|y|)) \dd \pi^N(x, y) \le A_0,$$
and~\eqref{eqn:nonCmpctCvgHypOnTheta}, we get that the sequence $(\pi^N)_{N\in\bN^*}$ is tight. Thus, up to the extraction of a subsequence, still denoted $(\pi^N)_{N \in \bN^*}$ for the sake of simplicity,
  there exists a measure $\pi^\infty \in \cP(\cX \times \cY)$ such that
  $\pi^N \xrightharpoonup[N \to \infty]{} \pi^\infty$. With the same argument as in the proof of Theorem~\ref{prop:approxProblmDiscreteMeasGeneral}, we get that  for all $m,n \in \Natural^*$,
  \begin{equation*}
    \int_{\cX\times \cY} \phi_m(x) \dd \pi^\infty(x,y) = \mu_m \quad \text{and} \quad
    \int_{\cX\times \cY} \psi_n(x) \dd \pi^\infty(x,y) = \nu_n.
  \end{equation*}
  Then, Properties~\eqref{carmu} and~\eqref{carnu}  give $\pi^\infty \in \Pi(\mu, \nu)$.
  Therefore,
  \begin{equation}\label{eqn:cvgApproxProb1stIneg}
    \int_{\cX \times \cY} c(x,y) \dd \pi^\infty(x, y) \ge I.
  \end{equation}

  We now establish that $\int_{\cX \times \cY} c(x,y) \dd \pi^\infty(x, y) \le I$ which concludes the proof.
  The Skorokhod representation theorem states that there exist a space
  $(\Omega, \mathcal{F}, \bP)$
  and random variables $(X_N, Y_N)_{N\in \N\cup\{\infty\}}$ 
  such that  $(X_N, Y_N)$ is distributed according to $\pi^N$ and $(X_N, Y_N) \to (\tilde{X}, \tilde{Y})$
  $\bP$-a.s.

  Furthermore, as $c$ is l.s.c,
  \begin{equation*}
    \liminf_{N \to \infty} c(X_N, Y_N) \geq c(X_\infty, Y_\infty), \, \bP\mathrm{-a.s.}
  \end{equation*}
  and by Fatou's lemma we get
  $\liminf_{N \to \infty} \Esp{c({X}_N, {Y}_N)} \geq \Esp{c(X_\infty, Y_\infty)}$,
  i.e.
  \begin{equation}\label{eqn:cvgApproxProbFatouSkor}
    \liminf_{N \to \infty} \int_{\cX \times \cY} c(x,y) \dd \pi^N(x, y) \geq
      \int_{\cX \times \cY} c(x,y) \dd \pi^\infty( x, y),
  \end{equation}

  Furthermore, note that $(I^N)_{N \in \Natural}$ is a non-decreasing sequence
  and that for all $N \in \bN^*$, $I^N \le I$.
  Thus, there exists $I^\infty \le I$ such that
  $I^N \xrightarrow[N \to \infty]{} I^\infty$.
  Recall that
  \begin{equation*}
    \liminf_{N \to \infty} \int_{\cX \times \cY} c(x,y) \dd \pi^N(x, y)
    = \lim_{N \to \infty} \int_{\cX \times \cY} c(x,y) \dd \pi^N(x, y)
    = I^\infty,
  \end{equation*}
  then \eqref{eqn:cvgApproxProbFatouSkor} implies,
  \begin{equation}\label{eqn:cvgApproxProb2ndIneg}
    I \ge I^\infty \ge \int_{\cX \times \cY} c(x,y) \dd \pi^\infty(x, y).
  \end{equation}

  Using equations \eqref{eqn:cvgApproxProb1stIneg} and \eqref{eqn:cvgApproxProb2ndIneg}, one gets
  \begin{equation*}
    \int_{\cX \times \cY} c(x,y) \dd \pi^\infty(x, y) = I
    \quad \text{and} \quad
    I^\infty = I.
  \end{equation*}
  Thus, as $\pi^\infty \in \Pi(\mu, \nu)$,
  $\pi^\infty \in \argmin_{
  		\pi \in \Pi(\mu, \nu)
  	}
  	\left\{
  		\int_{\cX \times \cY} c(x,y) \dd \pi(x, y)
  	\right\}$,
  and $I^N \xrightarrow[N \to \infty]{}  I$.
\end{proof}

\subsection{Bounded test functions on compact sets}\label{sect:discTestFnCmpctSet}

We now assume that $\cX$ and $\cY$ are compact subsets of $\Reel^{d_x}$ and $\Reel^{d_y}$. We state a result analogous to Theorem~\ref{prop:cvgApproxPbtoOTPb} that holds without the additional moment constraint and for possibly discontinuous test functions. We consider  two sequences of bounded measurable real-valued test functions $(\phi_m)_{m \in \Natural^*}\subset L^\infty(\cX)$ and $(\psi_n)_{n \in \Natural^*} \subset L^\infty(\cY)$ that satisfy
\begin{equation}\label{eqn:CmpctCvgHypOnDiscTestFncSpanMu}
\forall f \in C^0(\cX), \mathop{\inf}_{v_N \in  \mathrm{Span}\left\{ \phi_m, \, 1\le m\le N \right\}} \| f - v_N\|_\infty \mathop{\longrightarrow}_{N\to +\infty} 0 
\end{equation}
and
\begin{equation}\label{eqn:CmpctCvgHypOnDiscTestFncSpanNu}
\forall f \in C^0(\cY),  \mathop{\inf}_{v_N \in  \mathrm{Span}\left\{ \psi_n, \, 1\le n\le N \right\}} \| f- v_N\|_\infty  \mathop{\longrightarrow}_{N\to +\infty} 0.
\end{equation}
It is easy then to see that the properties~\eqref{carmu} and~\eqref{carnu} are satisfied for any $\mu\in \cP(\cX)$ and $\nu \in \cP(\cY)$.
For any $N\ge 1$, we consider  the following MCOT problem:
\begin{equation}\label{eqn:nonCmpctCvgApproxProbOptCostDisc}
  I^N = \inf_{
		\substack{
			\pi \in \Pi(\mu,\nu;(\phi_m)_{1\le m \le N}, (\psi_n)_{1\le n \le N})		
	}}
	\left\{
		\int_{\cX \times \cY} c(x,y) \dd \pi(x, y)
	\right\}.
\end{equation}

\begin{proposition}\label{prop:cvgApproxPbtoOTPbDiscTestFn}
Let us assume that $\cX$ and $\cY$ are compact sets and let $\mu\in \cP(\cX)$ and $\nu \in \cP(\cY)$. 
Let $(\phi_m)_{m\in\bN^*}\subset L^\infty(\cX)$ and $(\psi_n)_{n\in\bN^*} \subset L^\infty(\cY)$ satisfying \eqref{eqn:CmpctCvgHypOnDiscTestFncSpanMu} and \eqref{eqn:CmpctCvgHypOnDiscTestFncSpanNu}.
Let us assume that $I<+\infty$. Then, it holds that $I^N\le I$ and
  \begin{equation*}
    I^N \xrightarrow[N \to \infty]{} I.
  \end{equation*}
Moreover, from every sequence $(\pi^N)_{N \in \Natural}$ such that
  for all $N\in \bN^*$, $\pi^N\in \Pi(\mu,\nu;(\phi_m)_{1\le m \le N}, (\psi_n)_{1\le n \le N}) $ satisfies 
\begin{equation}\label{eq:presque}
\int_{\cX\times \cY} c(x,y)\,d\pi^N(x,y) \leq I_N + \epsilon_N,
\end{equation}
with   $\epsilon_N\underset{n\to +\infty}\longrightarrow 0$,
one can extract a subsequence which converges towards a measure $\pi^\infty \in \cP(\cX\times \cY)$ which is a minimizer to Problem \eqref{eqn:nonCmpctCvgOTOptCost}.
\end{proposition}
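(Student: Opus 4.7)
The plan is to mirror the strategy used for Theorem~\ref{prop:cvgApproxPbtoOTPb}, replacing its $\theta_\mu, \theta_\nu$-based tightness argument by the automatic tightness given by compactness of $\cX\times\cY$, and using the density assumptions \eqref{eqn:CmpctCvgHypOnDiscTestFncSpanMu}--\eqref{eqn:CmpctCvgHypOnDiscTestFncSpanNu} in place of the moment-characterization properties \eqref{carmu}--\eqref{carnu}. The inequality $I^N\le I$ is immediate from the inclusion $\Pi(\mu,\nu)\subset\Pi(\mu,\nu;(\phi_m)_{1\le m\le N},(\psi_n)_{1\le n\le N})$. Given any near-minimizing sequence $(\pi^N)_{N\in\bN^*}$ satisfying \eqref{eq:presque}, since $\cX\times\cY$ is compact, $\cP(\cX\times\cY)$ is tight, so by Prokhorov's theorem we extract a subsequence (still denoted $\pi^N$) that converges weakly to some $\pi^\infty\in\cP(\cX\times\cY)$.

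The core step is to identify the marginals of $\pi^\infty$ as $\mu$ and $\nu$. Fix $f\in C^0(\cX)$, and regard it as a continuous bounded function on $\cX\times\cY$ via $(x,y)\mapsto f(x)$. For any $\varepsilon>0$, assumption \eqref{eqn:CmpctCvgHypOnDiscTestFncSpanMu} yields some $N_0\in\bN^*$ and some $v\in\mathrm{Span}\{\phi_1,\dots,\phi_{N_0}\}$ with $\|f-v\|_\infty\le\varepsilon$. For every $N\ge N_0$, the function $v$ belongs to $\mathrm{Span}\{\phi_1,\dots,\phi_N\}$, so by linearity of the moment equality constraints defining $\Pi(\mu,\nu;(\phi_m)_{1\le m\le N},(\psi_n)_{1\le n\le N})$ one has $\int v\,\dd\pi^N=\int v\,\dd\mu$, which combined with $\|f-v\|_\infty\le\varepsilon$ gives $|\int f\,\dd\pi^N-\int f\,\dd\mu|\le 2\varepsilon$. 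Weak convergence then yields $|\int f\,\dd\pi^\infty-\int f\,\dd\mu|\le 2\varepsilon$, and since $\varepsilon$ is arbitrary the two integrals coincide for every $f\in C^0(\cX)$. By the Riesz representation theorem, the first marginal of $\pi^\infty$ is $\mu$; the symmetric argument using \eqref{eqn:CmpctCvgHypOnDiscTestFncSpanNu} gives the second marginal to be $\nu$, so $\pi^\infty\in\Pi(\mu,\nu)$.

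Finally, since $c$ is non-negative and l.s.c.\ on the compact set $\cX\times\cY$, one proceeds exactly as in the proof of Theorem~\ref{prop:approxProblmDiscreteMeasGeneral}, invoking Skorokhod's representation theorem followed by Fatou's lemma, to obtain $\int c\,\dd\pi^\infty\le\liminf_N\int c\,\dd\pi^N\le\liminf_N(I^N+\varepsilon_N)=\liminf_N I^N$. Since $\pi^\infty\in\Pi(\mu,\nu)$, we also have $\int c\,\dd\pi^\infty\ge I$, and combined with $I^N\le I$ for all $N$ this forces $I^N\to I$, $\int c\,\dd\pi^\infty=I$, and $\pi^\infty$ a minimizer of \eqref{eqn:nonCmpctCvgOTOptCost}.

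The only non-routine step is the marginal identification, where one must carefully sequence three approximations: the uniform approximation of $f$ by $v$ (which costs $\varepsilon$), the exact moment equality $\int v\,\dd\pi^N=\int v\,\dd\mu$ (valid as soon as $N\ge N_0$, i.e.\ once the span is large enough to contain $v$), and the limit along the weakly convergent subsequence (valid because $f$ is continuous and bounded). Everything else is a direct transposition of the arguments already written for Theorems~\ref{prop:approxProblmDiscreteMeasGeneral} and~\ref{prop:cvgApproxPbtoOTPb}.
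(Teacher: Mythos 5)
Your proposal follows the paper's proof essentially line by line: compactness gives tightness, Prokhorov gives a weakly convergent subsequence, the density assumptions combined with the exact matching of moments against each $\phi_i$ for large $N$ identify the marginals of $\pi^\infty$ as $\mu$ and $\nu$, and Skorokhod plus Fatou gives the reverse inequality for the cost. The only small imprecision is at the very end: from the extracted subsequence you get $\liminf_k I^{N_k}\ge I$, but to conclude $I^N\to I$ along the full sequence you should also note that $(I^N)_N$ is non-decreasing (the constraint set $\Pi(\mu,\nu;(\phi_m)_{1\le m\le N},(\psi_n)_{1\le n\le N})$ shrinks with $N$), so that $(I^N)_N$ converges and its limit coincides with the subsequential limit; the paper supplies this by referring to the argument in the proof of Theorem~\ref{prop:cvgApproxPbtoOTPb}.
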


\begin{remark} From Proposition~\ref{cor:Tchakaloff},  there exists $0\leq K_N \leq 2N+2$, $x_1,\cdots, x_{K_N}\in \cX$, $y_1, \cdots, y_{K_N}\in \cY$ and $w_1, \cdots, w_{K_N}\geq 0$ 
such that $\gamma^N:= \sum_{k=1}^{K_N}w_k \delta_{(x_k, y_k)} \in \Pi(\mu,\nu;(\phi_m)_{1\le m \le N}, (\psi_n)_{1\le n \le N})		$
and 
\begin{equation}\label{eq:presque_disc}
\int_{\cX\times \cY} c(x,y)\,d\gamma^N(x,y) = \int_{\cX\times \cY} c(x,y)\,d\pi^N(x,y) \leq I_N + \epsilon_N.
\end{equation}
In other words, any sequence $(\pi^N)_{N\in\bN^*}$ satisfying the assumptions of Proposition~\ref{prop:cvgApproxPbtoOTPbDiscTestFn} can be chosen as a discrete measure charging at most $2N+2$ points.
\end{remark}

\begin{proof}[Proof of Proposition \ref{prop:cvgApproxPbtoOTPbDiscTestFn}]
  Since $\cX$ and $\cY$ are compact, the sequence $(\pi^N)$ is tight and we can assume, up to the extraction of a subsequence, that it weakly converges to $\pi^\infty$.
For $N\in \N^*\cup \{\infty\}$, we denote the marginal laws of $\pi^N$ respectively
  by $\dd \mu^N(x) := \int_\cY \dd \pi^N(x, y)$ and
  $\dd \nu^N(y) := \int_\cX \dd \pi^N(x, y)$. For $f \in C^0(\cX)$, it holds that
  \begin{equation*}
    \int_\cX f \dd \mu^N \xrightarrow[N \to \infty]{} \int_\cX f \dd  \mu^\infty.
  \end{equation*}
  Let $\epsilon > 0$. Using the density condition \eqref{eqn:CmpctCvgHypOnDiscTestFncSpanMu}, one can find
  $M \in \bN^*$ and  $\lambda_1, ..., \lambda_M \in \Reel$
  such that $\sup_{x \in \cX} \left|f(x) - \sum_{i=1}^M \lambda_i \phi_i (x)\right| \le \epsilon$.
  Thus,
  \begin{equation}\label{eqn:cvgApproxPbtoOTDiscrProof1}
    \left| \int_\cX f \dd \mu - \sum_{i=1}^M  \lambda_i \mu_i \right| \le \epsilon
  \end{equation}
  and for $K > M$, $ \left| \int_\cX f \dd \mu^K - \sum_{i=1}^M  \lambda_i \int_\cX \phi_i \dd \mu^K \right| \le \epsilon$, 
  i.e.
  \begin{equation}\label{eqn:cvgApproxPbtoOTDiscrProof2}
    \left| \int_\cX f \dd \mu^K - \sum_{i=1}^M  \lambda_i \mu_i \right| \le \epsilon.
  \end{equation}
  Then, \eqref{eqn:cvgApproxPbtoOTDiscrProof1} and \eqref{eqn:cvgApproxPbtoOTDiscrProof2}
  imply that $    \left| \int_\cX f \dd \mu^K - \int_\cX f \dd \mu \right| \le 2 \epsilon$,
  and taking $K \to \infty$ leads to
  \begin{equation}\label{eqn:cvgApproxPbtoOTDiscrProof3}
    \left| \int_\cX f \dd \mu^\infty - \int_\cX f \dd \mu \right| \le 2 \epsilon.
  \end{equation}
  As \eqref{eqn:cvgApproxPbtoOTDiscrProof3} holds for any $\epsilon > 0$, one gets
  that for any $f \in C^0(\cX)$,
  \begin{equation*}
    \int_\cX f \dd \mu^\infty = \int_\cX f \dd \mu,
  \end{equation*}
which yields that $\mu^\infty = \mu$. Similarly, it holds that $\nu^\infty = \nu$. Therefore, $\pi^\infty \in \Pi(\mu, \nu)$
  and
  \begin{equation}
    \int_{\cX \times \cY} c(x,y) \dd \pi^\infty(x, y) \ge I.
  \end{equation}
Now, we use the same arguments as in the proof of Proposition~\ref{prop:cvgApproxPbtoOTPb} to get $\int_{\cX \times \cY} c(x,y) \dd \pi^\infty(x, y) \le I$, which gives the result.
\end{proof}

\subsection{Convergence for Martingale Optimal Transport problems}

In this subsection, we study  the convergence of $I^{N,N'}_A$ defined by~\eqref{MgINN'} when the number of test functions for the martingale condition $N'\rightarrow +\infty$ towards the following minimization problem:
\begin{equation}\label{MgINmg}
  I^{N,mg}_A=\inf_{		\substack{
			\pi \in \Pi(\mu,\nu;(\phi_m)_{1\leq m \leq N}, (\psi_n)_{1\leq n \leq N} ) \\
		\forall x \in \R^d, 	\int_{\R^d }y \dd \pi(x, y) = x\\
      \int_{\cX \times \cY} (\theta_\mu(|x|) + \theta_\nu(|y|)) \dd \pi(x, y) \le A_0  
		}} 		\left\{
			\int_{\R^d \times \R^d} c(x,y) \dd \pi(x, y)
		    \right\}.
\end{equation}
This convergence is particularly interesting for the practical application in finance: the marginal laws $\mu,\nu$ are in general not observed and market data 
only provide some moments. For $d=1$, market data give the prices of European put (or call) options that corresponds to $\phi_m(x)=(K_m-x)^+$ and $\psi_n(y)=(K'_m-y)^+$. Let us assume 
for simplicity zero interest rates. Then, by taking $\theta_\mu(|x|)=\theta_\nu(|x|)=|x|$, we have from the martingale assumption  $\int_{\cX \times \cY} (|x|+ |y|) \dd \pi(x, y) = 2 S_0$, where $S_0>0$ is the 
current price of the underlying asset. Then, a natural choice would be to take $A_0=2S_0$. Therefore, the convergence stated in Proposition~\ref{prop:CVMartProblem} gives a way to approximate option price bounds by taking into account that only some moments are known, while the few existing numerical methods for Martingale Optimal Transport in the literature assume that the marginal laws are known~\cite{ACJ1,ACJ2,GuOb}.

\begin{proposition}\label{prop:CVMartProblem}
  Let $\mu \in \cP(\R^d)$ lower than $\nu \in \cP(\R^d)$ for the convex order and $c: \R^d \times \R^d \to \R_+ \cup \{ +\infty\}$ a l.s.c. function. We assume $|x|\le \theta_\mu(|x|)$, $|y|\le \theta_\nu(|y|)$ and suppose  $A_0<\infty$ 
  with $A_0$ defined by~\eqref{def_A0}.  We assume that the test functions $(\chi_l,l\in \N^*)$ are bounded and such that for any function $f:\R^d\rightarrow \R$ continuous with compact support, we have
\begin{equation}\label{eqn:Cond_chi}
\mathop{\inf}_{g \in  \mathrm{Span}\left\{ \chi_l, \, 1\le l\le N' \right\}} \| f - g\|_\infty \mathop{\longrightarrow}_{N'\to +\infty} 0.
\end{equation}
 Let the assumptions of Proposition~\ref{prop:generalUnboundedMartProblem} hold for any $N'\ge 1$. Then, we have $I_{A_0}^{N,N'}\underset{N'\to +\infty}{\longrightarrow} I^{N,mg}_{A_0}<\infty$.
\end{proposition}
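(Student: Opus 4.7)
The plan is to observe first that $(I^{N,N'}_{A_0})_{N' \ge 1}$ is nondecreasing, since enlarging $N'$ only appends moment-martingale constraints. Hence $L := \lim_{N' \to \infty} I^{N,N'}_{A_0}$ exists in $[0,+\infty]$, and the goal is to identify $L$ with $I^{N,mg}_{A_0}$ and to prove that this common value is finite.

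For the upper bound $L \le I^{N,mg}_{A_0}$, I would exploit the fact that any genuine martingale coupling $\pi$ with first marginal $\mu$---for instance one of those in $\Pi(\mu,\nu)$ produced by Strassen's theorem, which applies since $\mu$ is smaller than $\nu$ in the convex order---satisfies $\int y \chi_l(x)\,\dd\pi = \int x \chi_l(x)\,\dd\mu$ automatically by the martingale property, and is therefore feasible for every $I^{N,N'}_{A_0}$. Restricting the infimum defining $I^{N,mg}_{A_0}$ to such couplings (and approximating a feasible $\pi$ for $I^{N,mg}_{A_0}$ by such couplings when needed) then gives $I^{N,N'}_{A_0} \le I^{N,mg}_{A_0}$ uniformly in $N'$, so $L \le I^{N,mg}_{A_0}$; the finiteness of $I^{N,mg}_{A_0}$ will follow \emph{a posteriori} from the reverse bound.

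For the lower bound $L \ge I^{N,mg}_{A_0}$, I would apply Proposition~\ref{prop:generalUnboundedMartProblem} to select, for each $N'$, a minimizer $\pi^{N'}$ of $I^{N,N'}_{A_0}$. The uniform bound $\int(\theta_\mu(|x|)+\theta_\nu(|y|))\,\dd\pi^{N'}\le A_0$ together with the coercivity~\eqref{eqn:nonCmpctCvgHypOnTheta} of $\theta_\mu,\theta_\nu$ yields tightness of $(\pi^{N'})_{N'}$, exactly as in the proof of Theorem~\ref{prop:approxProblmDiscreteMeasGeneral}; extract a weakly convergent subsequence $\pi^{N'} \rightharpoonup \pi^\infty$. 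Using Skorokhod's representation theorem and the uniform integrability implied by the growth condition~\eqref{eq:ass_mom} with $s<1$, one passes to the limit in every linear constraint: $\int \phi_m\,\dd\pi^\infty = \mu_m$, $\int \psi_n\,\dd\pi^\infty = \nu_n$, and, for each fixed $l$, $\int y \chi_l(x)\,\dd\pi^\infty = \int x\chi_l(x)\,\dd\mu$. Fatou's lemma controls the $A_0$-constraint and, $c$ being l.s.c., gives $\int c\,\dd\pi^\infty \le \liminf_{N'} \int c\,\dd\pi^{N'} = L$.

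The delicate final step is to upgrade the countable family of moment-martingale equalities into the full martingale property required by $I^{N,mg}_{A_0}$. Using the density condition~\eqref{eqn:Cond_chi}, the equality extends from each $\chi_l$ to every $f \in C_c(\R^d)$: $\int y f(x)\,\dd\pi^\infty(x,y) = \int x f(x)\,\dd\mu(x)$. Disintegrating $\pi^\infty = \pi^\infty_X \otimes K$ and setting $M(x) := \int y\,K(x,\dd y)$, this reads as the identity of $\R^d$-valued measures $M(x)\,\dd\pi^\infty_X(x) = x\,\dd\mu(x)$, which---combined with the matching of the $\phi_m$-moments of $\pi^\infty_X$ with those of $\mu$ and a Lebesgue-decomposition argument exploiting the finite $\theta_\mu$-moment of $\pi^\infty_X$---identifies $\pi^\infty_X$ with $\mu$ and $M(x)$ with $x$ for $\mu$-almost every $x$. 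Then $\pi^\infty$ is feasible for $I^{N,mg}_{A_0}$, so $I^{N,mg}_{A_0} \le \int c\,\dd\pi^\infty \le L$, closing the loop. I expect this last extraction of the pointwise martingale identity on the non-compact space $\R^d$, from test functions only dense in $C_c(\R^d)$, to be the main obstacle of the proof.
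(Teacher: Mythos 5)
Your proof follows the paper's skeleton through the first three steps: monotonicity of $I^{N,N'}_{A_0}$ in $N'$, finiteness via Tchakaloff and its martingale version (Theorem~5.1 of Beiglb\"ock--Nutz), tightness of the minimizers $\pi^{N'}$ from the $A_0$-constraint, extraction of a weak limit $\pi^\infty$, and passage to the limit in the linear constraints and the cost via Skorokhod/Fatou. You also correctly record the two-sided bound $I^{N,N'}_{A_0} \le I^{N,mg}_{A_0}$ and the need to prove feasibility of $\pi^\infty$ for the martingale problem. The divergence --- which you flag as the main obstacle --- is the last step, and your proposed resolution cannot work.

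Your step~(5) tries to extract $\pi^\infty_X = \mu$ from the vector-measure identity $M(x)\,\dd\pi^\infty_X(x) = x\,\dd\mu(x)$ combined with ``the matching of the $\phi_m$-moments of $\pi^\infty_X$ with those of $\mu$.'' But $N$ is \emph{fixed} as $N'\to\infty$: only the $N$ equalities $\int\phi_m\,\dd\pi^\infty_X = \int\phi_m\,\dd\mu$, $1\le m\le N$, together with $\int\theta_\mu\,\dd\pi^\infty_X\le A_0$, constrain $\pi^\infty_X$. This set of constraints is satisfied by a large (generically infinite-dimensional) family of probability measures and cannot characterize $\mu$. No Lebesgue-decomposition argument changes this, and the vector-measure identity only gives information on $M\pi^\infty_X$ versus $x\mu$, not on $\pi^\infty_X$ versus $\mu$ separately. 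So the claimed identification fails, and the feasibility of $\pi^\infty$ does not follow from your chain of arguments.

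The paper sidesteps the need to identify $\pi^\infty_X$ by performing the density approximation \emph{before} taking $N'\to\infty$, which is where the hypotheses $|x|\le\theta_\mu(|x|)$, $|y|\le\theta_\nu(|y|)$ enter decisively. They yield the uniform bound $\int |y-x|\,\dd\pi^{N'}(x,y)\le A_0$. Then for $f\in C_c(\R^d)$, $g=\sum_{l\le M}\lambda_l\chi_l$ with $\|f-g\|_\infty\le\epsilon$, and $N'\ge M$, one controls
\[
\Bigl|\int_{\R^d\times\R^d} (y-x)\,f(x)\,\dd\pi^{N'}(x,y)\Bigr|
= \Bigl|\int_{\R^d\times\R^d} (y-x)\bigl(f(x)-g(x)\bigr)\,\dd\pi^{N'}(x,y)\Bigr| \le \epsilon A_0,
\]
and then one sends $N'\to\infty$ (using weak convergence plus uniform integrability of $(y-x)f(x)$ from~\eqref{eq:ass_mom}) and $\epsilon\to0$ to get $\int(y-x)f(x)\,\dd\pi^\infty = 0$, which is exactly the martingale feasibility condition. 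This pre-limit approximation plus the $A_0$-bound is the ingredient you did not use; together they make any identification of $\pi^\infty_X$ with $\mu$ unnecessary. You should also note that the displayed equality implicitly uses that the $\chi_l$-terms drop out, which, with the relaxed constraint $\int y\chi_l\,\dd\pi^{N'} = \int x\chi_l\,\dd\mu$ stated in~\eqref{MgINN'}, is a subtle point worth spelling out when you rewrite the argument.
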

\begin{proof}
  Since $A_0<\infty$, any martingale coupling between $\mu$ and $\nu$ satisfies the constraints of $I_{A_0}^{N,N'}$. By using Tchakaloff's theorem and the fact that $c$ is finite-valued, we get that $I^{N,N'}_{A_0}$ is finite for any $N'$ and is attained by 
  a measure denoted by $\pi^{N'}$ according to Proposition~\ref{prop:generalUnboundedMartProblem}. 
  Similarly, using Tchakaloff's theorem for the martingale case, Theorem 5.1~\cite{BeNu}, we get that $I^{N,mg}_{A_0}<\infty$. Note that from the inclusion of the constraints, 
  we clearly have $I^{N,N'_1}_{A_0}\le I^{N,N'_2}_{A_0} \le I^{N,mg}_{A_0}$ for $N'_1\le N'_2$. We can then repeat the arguments in the proof of Theorem~\ref{prop:cvgApproxPbtoOTPb} to get that $(\pi^{N'})$ is tight 
  and any limit~$\pi^\infty$ of a weakly converging subsequence satisfies $I^{N,mg}_{A_0}=\int_{\R^d \times \R^d} c(x,y) \dd \pi^\infty(x, y)$.

  The only thing to prove is that $\int_{\R^d \times \R^d}(y-x) f(x) d\pi^\infty(x, y) =0$ for any function $f:\R^d\rightarrow \R$ continuous with compact support. Let $\epsilon>0$. By assumption, there exists $M\in \N^*$ and $\lambda_1,\dots,\lambda_M \in \R$ such that $\sup_{x \in \R^d} |f(x)-\sum_{l=1}^M \lambda_l \chi_l(x)|\le \epsilon$. Therefore, for $N'\ge M$, we have
  \begin{align*}
    \left|\int_{\R^d \times \R^d} f(x) (y-x) d\pi^{N'}(x, y)\right| &=\left|\int_{\R^d \times \R^d} \left(f(x)-\sum_{l=1}^M \lambda_l \chi_l(x)\right)(y-x) d\pi^{N'}(x, y)\right|\\
&    \le \epsilon \int_{\R^d \times \R^d}|y-x|d\pi^{N'}(x, y)\le \epsilon A_0,
  \end{align*}
 by  using  the triangle inequality and the fact that $|x|\le \theta_\mu(|x|)$, $|y|\le \theta_\nu(|y|)$. We conclude then easily letting $N'\rightarrow \infty$. 
\end{proof}

Let us mention that we can obtain using similar arguments that $I_{A_0}^{N,mg}$ and $I_{A_0}^{N,N'}$ converge towards (\ref{OT_MG_PB}) as $N$ and $N'$ go to infinity.

\section{Rates of convergence for particular sets of test functions}\label{Sec:rateCV}

Throughout this section, we assume that $$\cX = \cY = [0,1]$$
and for all $N\in \bN^*$, we define the intervals
\begin{equation}
T^N_{1} = \left[0 , \frac{1}{N}\right], \ \forall 2 \le m \le N, \, T^N_m = \left(\frac{m-1}{N}, \frac{m}{N}\right].
\end{equation}

We investigate in this section the rate of convergence of $I^N$ defined by~\eqref{eqn:nonCmpctCvgApproxProbOptCostDisc} towards $I$ defined by~\eqref{eqn:nonCmpctCvgOTOptCost}, when the test functions are piecewise constant (resp. piecewise linear) on $T^N_m$. We obtain, under suitable assumptions a convergence rate of $O(1/N)$ (resp. $O(1/N^2)$). This shows, as one may expect, the importance of the choice of test functions to approximate the Optimal Transport problem.

\subsection{Piecewise constant test functions on compact sets}\label{subsec_P0}

In this section, we assume that the cost function $c: [0,1]^2 \to \Reel_+$ is Lipschitz:
\begin{equation}\label{Lipcost}|c(x,y)-c(x',y')|\le K \max(|x-x'|,|y-y'|).
\end{equation}
We define, for $\pi \in \cP([0,1]^2)$, $I(\pi)=\int_{\cX\times \cY} c(x,y) \dd \pi(x,y)$ and 
\begin{equation}\label{eqn:P0OT}
  I = \inf_{ \pi \in \Pi(\mu, \nu) } I( \pi).
\end{equation}
We introduce the piecewise constant test functions $$\forall N\ge 1, 1\le m\le N, \ \ \phi^N_m = \Ind{T^N_m},$$ and consider the MCOT problem:
\begin{equation}\label{eqn:P0MCOT}
	I^N = \inf_{
		\substack{
			\pi \in \Pi(\mu,\nu;(\phi^N_m)_{1\le m \le N}, (\phi^N_n)_{1\le n \le N})
		}
	}
	\left\{
		\int_{[0,1]^2} c(x, y) \dd \pi(x,y)
	\right\}.
\end{equation}
Then, Proposition~\ref{prop:cvgP0MCOTtoOT} establishes the rate of convergence of the sequence $(I^N)_{N\in\bN^*}$ to $I$ as $N$ increases. 
problem $N$ increases.

\begin{proposition}\label{prop:cvgP0MCOTtoOT}
Let $\mu,\nu \in \cP([0,1])$ and $c : [0,1]^2 \to \Reel_+$ a Lipschitz function with Lipschitz constant $K>0$.
Then, for all $N\in \bN^*$,
\begin{equation}\label{eqn:cvgMCOTtoTOP0}
I^N \le I \le I^N + \frac{K}{N}.
\end{equation}
\end{proposition}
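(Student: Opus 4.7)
The first inequality $I^N \le I$ is immediate since $\Pi(\mu,\nu) \subset \Pi(\mu,\nu;(\phi^N_m)_{1\le m\le N},(\phi^N_n)_{1\le n\le N})$, as every genuine coupling in particular satisfies the moment constraints against the indicator functions. So the whole task is to produce the upper bound $I \le I^N + K/N$.

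The plan is to show that every competitor $\pi$ of the MCOT problem can be ``projected'' onto a genuine coupling $\tilde{\pi} \in \Pi(\mu,\nu)$ without changing the cost by more than $K/N$. Fix $N\in\bN^*$ and $\pi \in \Pi(\mu,\nu;(\phi^N_m)_{1\le m\le N},(\phi^N_n)_{1\le n\le N})$. For $1\le i,j \le N$, set $a_{ij} := \pi(T^N_i \times T^N_j)$. The moment constraints against $\phi^N_m=\Ind{T^N_m}$ yield $\sum_{j=1}^N a_{ij} = \mu(T^N_i)$ and $\sum_{i=1}^N a_{ij} = \nu(T^N_j)$, so $(a_{ij})$ is a discrete coupling of $(\mu(T^N_i))_i$ with $(\nu(T^N_j))_j$. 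I then define the candidate coupling cell-by-cell by the product rule
\[
\dd \tilde{\pi}(x,y) := \sum_{\substack{1\le i,j\le N \\ \mu(T^N_i)\nu(T^N_j)>0}} \frac{a_{ij}}{\mu(T^N_i)\,\nu(T^N_j)} \, \Ind{T^N_i}(x)\Ind{T^N_j}(y)\, \dd\mu(x)\,\dd\nu(y),
\]
with the convention that cells of zero mass contribute nothing (and consistently $a_{ij}=0$ whenever $\mu(T^N_i)=0$ or $\nu(T^N_j)=0$, by the marginal identities above).

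The next step is to verify that $\tilde{\pi} \in \Pi(\mu,\nu)$. Integrating out $y$ on $T^N_i$ gives, for each $i$ with $\mu(T^N_i)>0$,
\[
\int_{[0,1]} \dd \tilde{\pi}(x,y) = \sum_{j=1}^N \frac{a_{ij}}{\mu(T^N_i)\nu(T^N_j)}\,\nu(T^N_j)\,\Ind{T^N_i}(x)\,\dd\mu(x) = \frac{\mu(T^N_i)}{\mu(T^N_i)}\Ind{T^N_i}(x)\,\dd\mu(x),
\]
whose sum over $i$ is $\dd\mu(x)$; the argument for the $y$-marginal is symmetric.

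The core estimate compares the two costs cell by cell. By the Lipschitz hypothesis \eqref{Lipcost}, $c$ oscillates by at most $K/N$ on each square $T^N_i\times T^N_j$, since $\max(|x-x'|,|y-y'|)\le 1/N$ there. Both measures $\pi$ and $\tilde{\pi}$ assign mass exactly $a_{ij}$ to that square, so
\[
\left|\int_{T^N_i\times T^N_j} c\,\dd\pi - \int_{T^N_i\times T^N_j} c\,\dd\tilde{\pi}\right| \le \frac{K}{N}\, a_{ij}.
\]
Summing over $i,j$ and using $\sum_{i,j}a_{ij}=1$ gives $|I(\pi)-I(\tilde{\pi})|\le K/N$, hence $I \le I(\tilde{\pi}) \le I(\pi) + K/N$. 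Taking the infimum over $\pi\in \Pi(\mu,\nu;(\phi^N_m)_{1\le m\le N},(\phi^N_n)_{1\le n\le N})$ (or equivalently, choosing a minimizing sequence and passing to an $\varepsilon$-optimal competitor to avoid invoking existence of a minimizer for discontinuous test functions) yields $I \le I^N + K/N$.

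The main subtlety is the careful setup of $\tilde{\pi}$ and the handling of degenerate cells where $\mu(T^N_i)$ or $\nu(T^N_j)$ vanishes; once the convention $a_{ij}=0$ is justified by the marginal constraints, the rest reduces to the elementary oscillation estimate for $c$. No compactness or weak limit argument is needed here — the rate $K/N$ is a direct consequence of the Lipschitz regularity combined with the fact that the moment constraints prescribe the total mass on each sub-square.
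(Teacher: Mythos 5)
Your proof is correct and rests on the same key construction as the paper: the per-cell product coupling $\tilde{\pi}$ is exactly the measure $\bar{\pi}^*$ built in Lemma~\ref{lem:P0OTDiscrPbEquiv}. The only difference is organizational — the paper routes through the auxiliary discrete problem $J^N$ and the map $D$ (Lemma~\ref{lem:P0MCOTDiscrPbEquiv}), comparing both $I(\pi)$ and $I(\tilde{\pi})$ to the midpoint-evaluated cost and accruing $K/(2N)$ twice, whereas you compare $I(\pi)$ and $I(\tilde{\pi})$ directly using that both assign mass $a_{ij}$ to each cell and that $c$ oscillates by at most $K/N$ there, obtaining the same constant in one step. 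Your remark about passing to an $\varepsilon$-optimal competitor (since the test functions $\Ind{T^N_m}$ are discontinuous and a minimizer of~\eqref{eqn:P0MCOT} need not exist) is a point the paper handles the same way inside Lemma~\ref{lem:P0MCOTDiscrPbEquiv}.
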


\begin{remark}
  Let us note that we are not exactly in the framework of Section~\ref{sect:cvgMCOTtoOT}, since the test functions depends on~$N$. However, we have 
$$\mathrm{Span}\left\{ \phi_m^{N}, \, 1\le m\le N \right\} \subset \mathrm{Span}\left\{ \phi_m^{2N}, \, 1\le m\le 2N \right\}$$ and thus
 Proposition~\ref{prop:cvgApproxPbtoOTPbDiscTestFn} gives for any $L\in\N^*$, 
$$
I^{L2^k} \mathop{\longrightarrow}_{k\to +\infty} I.
$$
\end{remark}

Before proving Proposition~\ref{prop:cvgP0MCOTtoOT}, we state a result that bounds the distance between an MCOT optimizer and the minimizer of the OT problem~\eqref{eqn:P0OT}. We define that for $p\ge 1$, the $W_p$-Wasserstein distance between $\eta_1,\eta_2 \in \cP(\R^d)$ as $W^p_p(\eta_1,\eta_2)=\inf_{\pi \in \Pi(\eta_1,\eta_2)} \int_{\R^d\times \R^d} \|x_1-x_2\|_p^p \dd \pi(x_1,x_2)$, i.e. we take the $\|\|_p$-norm for $W_p$.  
\begin{proposition}\label{prop_dist_optimum}
 Let $p>1$. Let $\mu \in \cP([0,1])$. If $\mu^N\in \cP([0,1])$ is such that $\int_0^1 \phi^N_m(x) \dd \mu^N(x)=\int_0^1 \phi^N_m(x) \dd \mu(x)$ for all $m\in \{1,\dots,N\}$, then
  $$W_p(\mu,\mu^N)\le \frac{1}{N}. $$
  Let us assume besides that the cost function satisfies $c(x,y)=H(y-x)$ with $H:\R \rightarrow \R_+$ strictly convex.  There exists then a unique minimizer of~\eqref{eqn:P0OT} that we denote $\pi^*$.\\ Let $\pi^N\in \Pi(\mu,\nu;(\phi^N_m)_{1\le m \le N}, (\phi^N_n)_{1\le n \le N})$, $\mu^N$ and $\nu^N$ the marginal laws of $\pi^N$ and assume that $$\int_{[0,1]^2}c(x,y) \dd \pi^N(x,y)=\min_{\pi \in \Pi(\mu^N,\nu^N)}\int_{[0,1]^2}c(x,y) \dd \pi(x,y).$$
  Then, we have  $W_p(\pi^N,\pi^*)\le \frac{2^{1/p}}{N},$ where $W_p$ is defined using the $\|\|_p$ norm on $\R^2$. 
\end{proposition}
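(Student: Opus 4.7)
The two statements are essentially transport-distance statements in one dimension, and both ultimately rest on the simple fact that mass needs only be moved inside a single interval $T^N_m$ of length $1/N$.

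For the first claim, the plan is to construct an explicit coupling between $\mu$ and $\mu^N$ whose support lies in $\bigcup_m T^N_m \times T^N_m$. Since $\int \phi^N_m\,\dd\mu^N = \int \phi^N_m\,\dd\mu$ for every $m$, the restrictions $\mu|_{T^N_m}$ and $\mu^N|_{T^N_m}$ share the same total mass $q_m := \mu(T^N_m)$. For each $m$ with $q_m>0$, pick any coupling $\gamma_m$ of $\mu|_{T^N_m}/q_m$ and $\mu^N|_{T^N_m}/q_m$ (for instance the product measure) and set $\gamma := \sum_{m=1}^N q_m \gamma_m$. Then $\gamma \in \Pi(\mu,\mu^N)$ and is supported on $\bigcup_m T^N_m \times T^N_m$, so $|x-y|\le 1/N$ $\gamma$-a.s., which immediately yields $W_p^p(\mu,\mu^N) \le \int |x-y|^p \,\dd\gamma \le N^{-p}$.

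For the second claim, the key input is the classical one-dimensional optimal transport theory: when the cost has the form $H(y-x)$ with $H$ strictly convex, the unique optimal coupling between any two probability measures on $\R$ is the comonotone coupling realized by the quantile transform. First I would invoke this to get that $\pi^*$ is the unique minimizer of~\eqref{eqn:P0OT}, realized as the pushforward of the Lebesgue measure on $(0,1)$ by $U\mapsto (F_\mu^{-1}(U),F_\nu^{-1}(U))$, and likewise that $\pi^N$ must coincide with the comonotone coupling of its marginals, namely $(F_{\mu^N}^{-1}(U),F_{\nu^N}^{-1}(U))_\#\mathrm{Leb}_{(0,1)}$. Then I would couple $\pi^*$ and $\pi^N$ via the \emph{same} uniform variable $U$: take the law of $\bigl((F_\mu^{-1}(U),F_\nu^{-1}(U)),\,(F_{\mu^N}^{-1}(U),F_{\nu^N}^{-1}(U))\bigr)$, which belongs to $\Pi(\pi^*,\pi^N)$. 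The $W_p$ cost of this coupling (with $\|\cdot\|_p$ on $\R^2$) splits as
\[
\mathbb{E}\bigl[|F_\mu^{-1}(U)-F_{\mu^N}^{-1}(U)|^p\bigr] + \mathbb{E}\bigl[|F_\nu^{-1}(U)-F_{\nu^N}^{-1}(U)|^p\bigr] = W_p^p(\mu,\mu^N) + W_p^p(\nu,\nu^N),
\]
using the quantile representation $W_p^p(\eta_1,\eta_2) = \int_0^1 |F_{\eta_1}^{-1}-F_{\eta_2}^{-1}|^p$ on $\R$. Applying the first part to $(\mu,\mu^N)$ and $(\nu,\nu^N)$ then bounds this by $2 N^{-p}$, and taking $p$-th roots gives $W_p(\pi^N,\pi^*)\le 2^{1/p}/N$.

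The only non-obvious point is justifying the use of the one-dimensional strictly convex OT theory for measures that may have atoms; in particular one must argue that the comonotone coupling is the unique optimizer even in that generality, and that the marginals of $\pi^N$ produced by the MCOT still have quantile functions on which this theory applies. Aside from that, everything reduces to the elementary coupling construction of the first part plus the quantile representation of $W_p$ on the line.
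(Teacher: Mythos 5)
Your proof is correct, and the two parts have slightly different relationships to the paper's argument.

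For the first bound $W_p(\mu,\mu^N)\le 1/N$, you take a genuinely different (and arguably more elementary) route. The paper works analytically with the quantile-function formula $W_p^p(\mu,\mu^N)=\int_0^1|F_\mu^{-1}(u)-F_{\mu^N}^{-1}(u)|^p\,\dd u$, splits the $u$-interval according to the values $F_\mu(m/N)$, and observes that on each piece both quantile functions are trapped in the same cell $T^N_m$. You instead exploit that $\phi^N_m=\Ind{T^N_m}$ forces $\mu(T^N_m)=\mu^N(T^N_m)$, glue together arbitrary couplings of the normalized restrictions on each $T^N_m$, and conclude directly since the resulting coupling is concentrated on the union of the diagonal blocks $T^N_m\times T^N_m$. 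Both arguments hinge on the same geometric fact (mass only has to move within a cell of width $1/N$); the paper's version also yields an exact quantile-level identity, while yours avoids quantile functions altogether and needs no case distinction according to whether $F_\mu(m/N)=F_\mu((m-1)/N)$.

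For the second bound your approach coincides with the paper's: invoke the one-dimensional OT theory for strictly convex costs of the form $H(y-x)$ to identify both $\pi^*$ and $\pi^N$ as the comonotone (quantile) couplings of their respective marginals, then couple $\pi^*$ with $\pi^N$ through the same uniform variable $U$, use the quantile representation to split the cost as $W_p^p(\mu,\mu^N)+W_p^p(\nu,\nu^N)$, and bound each term by $N^{-p}$ via the first part. On the ``non-obvious point'' you flag: the reference the paper uses (Theorem~2.9 of~\cite{santambrogio2015optimal}) does cover arbitrary probability measures on $\R$, including atomic ones, giving both the quantile representation of $W_p$ and uniqueness of the monotone optimal plan for strictly convex $H$. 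So this is not a gap, just a citation to supply. Note also that the statement of the proposition hands you the needed optimality of $\pi^N$ for $(\mu^N,\nu^N)$ as a hypothesis, so you never need to derive it from the MCOT formulation.
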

\begin{proof}
  For $\eta \in  \cP(\R)$, we define $F_\eta^{-1}(u)=\inf\{x \in \R: \eta((-\infty,x])\ge u \}$, that coincides with the usual inverse when $F_\eta$ is increasing. Let $p>1$. By Theorem~2.9~\cite{santambrogio2015optimal}, we have
    \begin{align*}&W^p_p(\mu,\mu^N)=\int_0^1 |F_{\mu}^{-1}(u)-F_{\mu^N}^{-1}(u)|^pdu \\&
    = \int_{0}^{F_{\mu}\left(0 \right)} |F_{\mu}^{-1}(u)-F_{\mu^N}^{-1}(u)|^pdu + \sum_{m=1}^N \int_{F_{\mu}\left(\frac{m-1}{N} \right)}^{F_{\mu}\left(\frac{m}{N} \right)} |F_{\mu}^{-1}(u)-F_{\mu^N}^{-1}(u)|^pdu. 
    \end{align*}
    If $F_{\mu}\left(\frac{m}{N} \right)=F_{\mu}\left(\frac{m-1}{N} \right)$, we clearly have $\int_{F_{\mu}\left(\frac{m-1}{N} \right)}^{F_{\mu}\left(\frac{m}{N} \right)} |F_{\mu}^{-1}(u)-F_{\mu^N}^{-1}(u)|^pdu=0$. Otherwise, we have $F_{\mu^N}\left(\frac{m-1}{N} \right)= F_{\mu}\left(\frac{m-1}{N} \right)< F_{\mu}\left(\frac{m}{N} \right)=F_{\mu^N}\left(\frac{m}{N} \right)$, and therefore
    $$\forall u\in \left( F_{\mu}\left(\frac{m-1}{N} \right),  F_{\mu}\left(\frac{m}{N} \right)\right), \quad F_{\mu}^{-1}(u),F_{\mu^N}^{-1}(u)\in \left[\frac{m-1}{N},\frac{m}{N} \right].$$
    This gives $|F_{\mu}^{-1}(u)-F_{\mu^N}^{-1}(u)|\le 1/N$. Since $F_\mu(0)=F_{\mu^N}(0)$, we get that $F_{\mu}^{-1}(u)=F_{\mu^N}^{-1}(u)=0 $ for $u\in (0,F_{\mu}\left(0 \right))$. We finally get $W^p_p(\mu,\mu^N)\le N^{-p}$. 

    Now, let $U\sim \mathcal{U}([0,1])$ be a uniform random variable on~$[0,1]$. Still by Theorem~2.9~\cite{santambrogio2015optimal}, we have $(F_\mu^{-1}(U), F_\nu^{-1}(U))\sim \pi^*$ and $(F_{\mu^N}^{-1}(U), F_{\nu^N}^{-1}(U))\sim \pi^N$. This gives a coupling between $\pi^*$ and $\pi^N$, and thus
    $$W_p^p(\pi^N,\pi^*)\le \E[|F_{\mu^N}^{-1}(U)-F_\mu^{-1}(U)|^p]+\E[|F_{\nu^N}^{-1}(U)-F_\nu^{-1}(U)|^p]\le \frac{2}{N^p}.$$
    
\end{proof}

In order to prove Proposition~\ref{prop:cvgP0MCOTtoOT},
let us introduce the following auxiliary problem.
For all $N\in \bN^*$, let us define
\begin{align*}
  \bar{\Pi}^N (\mu, \nu) : = &\bigg\{ (\bar{\pi}_{m,n})_{1\leq m,n \leq N}\, |  \, \forall 1\leq m,n\leq N, \; \bar{\pi}_{m,n} \ge 0, \\
 &   \forall m, \sum_{n=1}^{N} \bar{\pi}_{m,n} =   \mu(T^N_m), \, \forall n, \sum_{m=1}^{N} \bar{\pi}_{m,n} =  \nu(T^N_n) \bigg\}
\end{align*}
and
\begin{equation}\label{eqn:P0DiscretePbDef}
 J^N
 := \inf_{ \bar{\pi} \in \bar{\Pi}^N(\mu, \nu) } \sum_{m,n=1}^{N}
  c \left( \frac{m - \frac{1}{2}}{N}, \frac{n - \frac{1}{2}}{N} \right) \bar{\pi}_{m,n}.
\end{equation}

Let us introduce the following applications:
\begin{equation}\label{eqn:P0AppDDef}
  \begin{array}{lccl}
    D:  & \Pi(\mu, \nu) & \to & \bar{\Pi}^N(\mu,\nu) \\
     & \pi & \mapsto & (\pi(T^N_m\times T^N_n))_{1\leq m,n \leq N}
  \end{array}
\end{equation}
and
\begin{equation}\label{eqn:P0AppJDef}
  \begin{array}{lccl}
    J : & \bar{\Pi}^N(\mu,\nu) & \to & \mathbb{R}^+ \\
    & \bar{\pi} & \mapsto & \sum_{m,n=1}^{N}
    c \left( \frac{m - \frac{1}{2}}{N}, \frac{n - \frac{1}{2}}{N} \right) \bar{\pi}_{m,n}.
  \end{array}
\end{equation}

\begin{lemma}\label{lem:P0MCOTDiscrPbEquiv}
 Let $N\in \bN^*$. It holds that
\begin{equation}\label{estimIJ}
 \forall \pi \in \cP([0,1]), \  \left| I(\pi) - J(D(\pi)) \right| \le \frac{K}{2N}.
\end{equation}
Besides, we have 
  \begin{equation}
    I^N \le J^N \le I^N + \frac{K}{2N}.
  \end{equation}
\end{lemma}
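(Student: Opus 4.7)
The plan is to prove the pointwise bound (\ref{estimIJ}) by a direct Lipschitz estimate on each rectangle $T^N_m\times T^N_n$, and then derive each side of the double inequality $I^N\le J^N\le I^N+K/(2N)$ by transferring measures between the two problems in the two obvious ways. The main work is really book-keeping; the only conceptual observation is that for piecewise constant test functions the moment constraints in $\Pi(\mu,\nu;(\phi^N_m)_{m},(\phi^N_n)_{n})$ are \emph{exactly} the constraints that $\pi$ gives the correct masses on each $T^N_m\times[0,1]$ and $[0,1]\times T^N_n$, so $D$ maps this set into $\bar{\Pi}^N(\mu,\nu)$.

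For the first assertion, I would write the partition identity
\[
I(\pi)-J(D(\pi))=\sum_{m,n=1}^N\int_{T^N_m\times T^N_n}\Big(c(x,y)-c\Big(\tfrac{m-1/2}{N},\tfrac{n-1/2}{N}\Big)\Big)\,\dd\pi(x,y),
\]
note that on $T^N_m\times T^N_n$ we have $|x-(m-\tfrac12)/N|\le 1/(2N)$ and $|y-(n-\tfrac12)/N|\le 1/(2N)$, and apply (\ref{Lipcost}) to bound the integrand by $K/(2N)$; summing and using $\pi([0,1]^2)=1$ yields $|I(\pi)-J(D(\pi))|\le K/(2N)$.

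For the inequality $J^N\le I^N+K/(2N)$, I would take any $\pi\in\Pi(\mu,\nu;(\phi^N_m)_{1\le m\le N},(\phi^N_n)_{1\le n\le N})$. Since $\phi^N_m=\mathbf{1}_{T^N_m}$, the moment constraints read $\pi(T^N_m\times[0,1])=\mu(T^N_m)$ and $\pi([0,1]\times T^N_n)=\nu(T^N_n)$, so $D(\pi)\in\bar{\Pi}^N(\mu,\nu)$. Using (\ref{estimIJ}) I get $J^N\le J(D(\pi))\le I(\pi)+K/(2N)$, and taking the infimum over $\pi$ gives the claimed bound.

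For the reverse inequality $I^N\le J^N$, the natural candidate is the discrete measure concentrated at the grid centers: given $\bar{\pi}\in\bar{\Pi}^N(\mu,\nu)$, set
\[
\pi_{\bar{\pi}}:=\sum_{m,n=1}^N \bar{\pi}_{m,n}\,\delta_{\left(\frac{m-1/2}{N},\frac{n-1/2}{N}\right)}.
\]
Because the center $(m-\tfrac12)/N$ lies in $T^N_m$, one checks $\int\phi^N_{m'}(x)\,\dd\pi_{\bar{\pi}}(x,y)=\sum_n\bar{\pi}_{m',n}=\mu(T^N_{m'})$ and similarly for the $\psi$ moments, so $\pi_{\bar{\pi}}\in\Pi(\mu,\nu;(\phi^N_m)_{1\le m\le N},(\phi^N_n)_{1\le n\le N})$. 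By construction $I(\pi_{\bar{\pi}})=J(\bar{\pi})$, so $I^N\le J(\bar{\pi})$ for every $\bar{\pi}\in\bar{\Pi}^N(\mu,\nu)$, whence $I^N\le J^N$. No step here poses a real obstacle; the only subtlety to double-check is that the asymmetric endpoints in the definition of $T^N_m$ (the leftmost interval closed, the others half-open) do not spoil the identity $\phi^N_{m'}((m-\tfrac12)/N)=\delta_{m,m'}$, which they do not since the midpoints are strictly interior.
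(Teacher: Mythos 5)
Your proof is correct and follows essentially the same route as the paper's: the same cell-by-cell Lipschitz estimate at the midpoints for \eqref{estimIJ}, the same midpoint-atom construction $\sum_{m,n}\bar{\pi}_{mn}\delta_{((m-1/2)/N,(n-1/2)/N)}$ to get $I^N\le J^N$, and the same application of \eqref{estimIJ} to a near-minimizer (the paper phrases it with an $\epsilon$-argument, you take an infimum — equivalent) for $J^N\le I^N+K/(2N)$. The paper is slightly terser, omitting the explicit checks that $D$ maps $\Pi(\mu,\nu;(\phi^N_m),(\phi^N_n))$ into $\bar{\Pi}^N(\mu,\nu)$ and that the midpoint measure satisfies the moment constraints, both of which you verify correctly.
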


\begin{proof}[Proof of Lemma \ref{lem:P0MCOTDiscrPbEquiv}]
Let $\pi \in \cP([0,1]^2)$.
Then, we write
  \begin{equation*}
   \begin{split}
    I(\pi) & = \int_{[0,1]^2}  c(x,y) \dd \pi(x,y) = \sum_{m,n = 1}^{N} \int_{T_m^N \times T_n^N} c(x,y) \dd \pi (x, y )
   \\   & = \sum_{m,n = 1}^{N} c\left(\frac{m - \frac{1}{2}}{N}, \frac{n - \frac{1}{2}}{N} \right)D_{mn}(\pi)
   \\   &\quad  + \sum_{m,n =1}^{N} \int_{T_m^N \times T_n^N}
            \left(c(x,y) - c\left(\frac{m - \frac{1}{2}}{N}, \frac{n - \frac{1}{2}}{N} \right) \right)
            \dd \pi(x,y),
   \end{split}
 \end{equation*}
  and get $\left| I(\pi) - J(D(\pi)) \right| \le \frac{K}{2N}$ since $|c(x,y) - c\left(\frac{m - \frac{1}{2}}{N}, \frac{n - \frac{1}{2}}{N} \right)|\le\frac{K}{2N}$ for $(x,y)\in T_m^N \times T_n^N$.

  Let $N\in \bN^*$. For all $\bar{\pi} \in \bar{\Pi}(\mu, \nu)$, defining $\pi:= \sum_{m,n=1}^N \bar{\pi}_{mn} \delta_{\frac{m - \frac{1}{2}}{N}, \frac{n - \frac{1}{2}}{N} }$, one obtains that 
$\pi \in \cP([0,1]^2)$, $D(\pi) = \bar{\pi}$ and $I(\pi) = J(\bar{\pi})$; this implies that $I^N \leq J^N$.

  Conversely, if  $\pi \in \Pi(\mu,\nu;(\phi^N_m)_{1\le m \le N}, (\phi^N_n)_{1\le n \le N})$ is chosen to satisfy $I(\pi) \leq I^N + \epsilon$ for some $\epsilon>0$, one gets
$
J^N \le J(D(\pi)) \le I(\pi) + \frac{K}{2N} = I^N + \frac{K}{2N} + \epsilon
$.
  Letting $\epsilon \to 0$ provides the wanted result.
\end{proof}

We also need the following auxiliary lemma. 

\begin{lemma}\label{lem:P0OTDiscrPbEquiv}
  For all $\bar{\pi} \in \bar{\Pi}^N(\mu, \nu)$, there exists $\bar{\pi}^* \in \Pi(\mu,\nu)$ such that
$\bar{\pi} = D(\bar{\pi}^*)$.
\end{lemma}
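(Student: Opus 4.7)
The plan is to construct $\bar{\pi}^{*}$ explicitly as a block-structured coupling. Given $\bar{\pi}=(\bar{\pi}_{m,n})_{1\le m,n\le N}\in \bar{\Pi}^N(\mu,\nu)$, for every pair $(m,n)$ with $\mu(T^N_m)>0$ and $\nu(T^N_n)>0$ introduce the conditional probability measures $\mu_m := \mu|_{T^N_m}/\mu(T^N_m)$ and $\nu_n := \nu|_{T^N_n}/\nu(T^N_n)$, and set
\[
\bar{\pi}^{*} := \sum_{m,n} \bar{\pi}_{m,n}\, \mu_m \otimes \nu_n,
\]
with the convention that any term involving a cell of zero mass is omitted. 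Informally, the prescribed mass $\bar{\pi}_{m,n}$ is spread over the rectangle $T^N_m\times T^N_n$ according to the product of the (restricted, renormalized) marginals.

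The first (mild) issue to dispatch is the consistency of this definition in the presence of empty cells. If $\mu(T^N_m)=0$, the marginal constraint $\sum_n \bar{\pi}_{m,n}=\mu(T^N_m)=0$ together with nonnegativity forces $\bar{\pi}_{m,n}=0$ for every $n$, so those terms simply vanish; the case $\nu(T^N_n)=0$ is symmetric. Thus no positive weight is discarded by the convention, and $\bar{\pi}^{*}$ is an honest nonnegative measure. I expect this bookkeeping of degenerate cells to be the only delicate point of the argument.

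It then remains to check that $\bar{\pi}^{*}\in \Pi(\mu,\nu)$ and that $D(\bar{\pi}^{*})=\bar{\pi}$. For any Borel set $A\subset [0,1]$, using that $\mu_m$ is supported in $T^N_m$, we have $\mu_m(A)=\mu(A\cap T^N_m)/\mu(T^N_m)$, and summing over $n$ first gives
\[
\bar{\pi}^{*}(A\times[0,1])=\sum_m \left(\sum_n \bar{\pi}_{m,n}\right)\mu_m(A)=\sum_m \mu(A\cap T^N_m)=\mu(A),
\]
so the first marginal of $\bar{\pi}^{*}$ is $\mu$; the same computation yields $\nu$ for the second marginal. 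Finally, since the rectangles $(T^N_m\times T^N_n)_{m,n}$ are pairwise disjoint and $\mu_{m'}\otimes\nu_{n'}$ is supported in $T^N_{m'}\times T^N_{n'}$, we get $\bar{\pi}^{*}(T^N_m\times T^N_n)=\bar{\pi}_{m,n}$, i.e.\ $D(\bar{\pi}^{*})=\bar{\pi}$, which concludes the construction.
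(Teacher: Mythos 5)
Your construction $\bar{\pi}^{*}=\sum_{m,n}\bar{\pi}_{m,n}\,\mu_m\otimes\nu_n$ is exactly the measure the paper defines (the paper writes it asymmetrically as $\dd\mu(x)\sum_m \mathbf{1}_{T^N_m}(x)\sum_n \frac{\bar{\pi}_{m,n}}{\mu(T^N_m)}\frac{\mathbf{1}_{T^N_n}(y)\dd\nu(y)}{\nu(T^N_n)}$, which upon restriction to each block $T^N_m\times T^N_n$ coincides with your $\bar{\pi}_{m,n}\,\mu_m\otimes\nu_n$), so the proof is correct and takes essentially the same approach. Your explicit treatment of the degenerate cells with $\mu(T^N_m)=0$ or $\nu(T^N_n)=0$ is a small but welcome clarification that the paper leaves implicit.
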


\begin{proof}[Proof of Lemma \ref{lem:P0OTDiscrPbEquiv}]
  Let $\bar{\pi} \in \bar{\Pi}(\mu, \nu)$. We define $\bar{\pi}^*$ by
  $$\dd\bar{\pi}^*(x,y)=\dd \mu(x) \sum_{m=1}^N \Ind{T^N_m}(x)\sum_{n=1}^N \frac{\bar{\pi}_{m,n}}{\sum_{n'=1}^N \bar{\pi}_{m,n'}} \frac{\Ind{T^N_n}(y) \dd \nu(y)}{\nu(T^N_n)}.  $$
  Since $\sum_{n'=1}^N \bar{\pi}_{m,n'}=\mu(T^N_m)$ and $\sum_{m=1}^N \bar{\pi}_{m,n}=\nu(T^N_n)$, we have
  \begin{align*}
    \int_{\cX}\dd\bar{\pi}^*(x,y)&=\sum_{m=1}^N \mu(T^N_m)\sum_{n=1}^N \frac{\bar{\pi}_{m,n}}{\mu(T^N_m)} \frac{\Ind{T^N_n}(y) \dd \nu(y)}{\nu(T^N_n)}\\
    &=\sum_{n=1}^N \left(\sum_{m=1}^N \bar{\pi}_{m,n}\right) \frac{\Ind{T^N_n}(y) \dd \nu(y)}{\nu(T^N_n)}=\sum_{n=1}^N \Ind{T^N_n}(y) \dd \nu(y)=\dd \nu(y) .
  \end{align*}
Also, we have $    \int_{\cY}\dd\bar{\pi}^*(x,y)=\dd \mu(x) \sum_{m=1}^N \Ind{T^N_m}(x)\sum_{n=1}^N \frac{\bar{\pi}_{m,n}}{\sum_{n'=1}^N \bar{\pi}_{m,n'}}=\dd \mu(x),$
which gives $\bar{\pi}^* \in \Pi(\mu,\nu)$. Last, we have
$$\int_{T^N_m\times T^N_n}\dd\bar{\pi}^*(x,y)=\mu(T^N_m)\frac{\bar{\pi}_{m,n}}{\sum_{n'=1}^N \bar{\pi}_{m,n'}} =\bar{\pi}_{m,n},$$
which precisely gives $\bar{\pi}=D(\bar{\pi}^*)$.
\end{proof}
  
We are now in position to give the proof of Proposition~\ref{prop:cvgP0MCOTtoOT}.

\begin{proof}[Proof of Proposition \ref{prop:cvgP0MCOTtoOT}]
The inclusion $\Pi(\mu,\nu;(\phi^N_m)_{1\le m \le N}, (\phi^N_n)_{1\le n \le N}) \subset \Pi(\mu,\nu)$ gives $I^N\le I$.
 
   Lemma \ref{lem:P0OTDiscrPbEquiv} implies that for all $\bar{\pi} \in \bar{\Pi}^N(\mu, \nu)$, there exists  $\bar{\pi}^*\in \Pi(\mu,\nu)$ such that $D(\bar{\pi}^*)=\bar{\pi}$, and we get by Lemma~\ref{lem:P0MCOTDiscrPbEquiv}
$    \left| J(\bar{\pi}) - I(\bar{\pi}^*) \right| \le \frac{K}{2N}
$.
Let now $\bar{\pi}\in\bar{\Pi}^N(\mu, \nu)$ such that $J( \bar{\pi}) \leq J^N +\epsilon$ for some $\epsilon>0$. 
Then one gets that $J^N + \frac{K}{2N} +\epsilon \geq I \left(\bar{\pi}^*\right) \ge I$. Letting $\epsilon$ go to zero yields that
    \begin{equation}\label{eqn:P0PropProof2ndIneq}
  I\le  J^N + \frac{K}{2N}.
  \end{equation}
 Furthermore, Lemma \ref{lem:P0MCOTDiscrPbEquiv} gives $J^N \le I^N+ \frac{K}{N}$ and thus
$I \le I^N + \frac{K}{N}$.
\end{proof}

\begin{remark}Proposition~\ref{prop:cvgP0MCOTtoOT} can be easily extended to higher dimensions and in the multi-marginal case. Let us assume that $c:([0,1]^d)^M \to \R_+$ be such that
  $$|c(x_1,\dots,x_M)-c(x'_1,\dots,x'_M)|\le K \max_{i\in \{1,\dots, M\}}\|x_i-x'_i\|_\infty.$$ 
  For $N\in \N^*$ and ${\bf m}\in \{1,\dots,N\}^d=:\mathcal{E}_N$, we consider the test function $\phi^N_{\bf m}(x)=\prod_{i=1}^d \phi^N_{m_i}(x_i)$ for $x\in [0,1]^d$. Then, with  $I=\inf_{ \pi \in \Pi(\mu_1,\dots,\mu_M) } \int_{([0,1]^d)^M} c(x_1,\dots,x_M) \dd \pi(x_1,\dots,x_M)$ and
  $$
	I^N = \inf_{
		\substack{
		  \pi : \forall {\bf m}, k , \int_{[0,1]^d} \phi^N_{\bf m}(x) \dd \mu_k(x)=\int_{([0,1]^d)^M} \phi^N_{\bf m}(x_k) \dd \pi(x_1,\dots,x_M)
		}
	}
	\left\{ \int_{([0,1]^d)^M} c(x_1,\dots,x_M) \dd \pi(x_1,\dots,x_M)	
	\right\},$$
we get similarly (it is straightforward to generalize Proposition~\ref{prop_dist_optimum} and we can extend the result of Lemma~\ref{lem:P0OTDiscrPbEquiv} by induction on~$M$)
  \begin{equation*}
    I^N \le I^* \le I^N + \frac{K}{N}.
  \end{equation*} 
Since the number of moments (i.e. of test functions) is $MN^d$, we see that there is a curse of dimensionality with respect to~$d$, not with respect to~$M$. 
\end{remark}

\subsection{Piecewise affine test functions in dimension 1 on a compact set}

The test functions considered are discontinuous piecewise affine functions, identical on each space. For all $N\in \bN^*$ and all $1\leq m \leq N$, let us define the following discontinuous piecewise affine functions
\begin{align*}
  	\phi^N_{m,1}(x) &= \left\{
  		\begin{array}{ll}
  			N \left(x - \frac{m-1}{N} \right) & \text{if} \quad x \in T_m^N, \\
  			0 & \text{otherwise},\\
  		\end{array}
  	\right. \\
    \phi^N_{m,2}(x) &= \left\{
  		\begin{array}{ll}
  			1 - N \left( x - \frac{m-1}{N} \right)& \text{if} \quad x \in T_m^N, \\
  			0 & \text{otherwise},\\
  		\end{array}
  	\right.\\
\end{align*}
  and for all $i=1,2$,
  $$
  \mu^N_{m,i}:= \int_{\cX} \phi^N_{m,i}\,d\mu \quad \mbox{ and }   \nu^N_{m,i}:= \int_{\cY} \phi^N_{m,i}\,d\nu.
  $$

  \begin{lemma}\label{lem:P1MomentsEqConsqces}    
  Let $\mu_1, \mu_2 \in \cP([0,1])$. Let $N\in \bN^*$ and let us assume that
for all $1 \leq m \leq N$ and $i=1,2$,
$$
\int_{[0,1]}\phi^N_{m,i}(x) \dd \mu_1(x) = \int_{[0,1]} \phi^N_{m,i}(x) \dd \mu_2(x).
$$

  Then, denoting by $F_1 : [0,1] \to [0,1]$ (resp. $F_2 : [0,1] \to [0,1]$)
  the cumulative distribution function of $\mu_1$ (resp. $\mu_2$),
  one gets that
  \begin{equation}\label{eqn:P1MomentEqCsq1}
   \forall 1\leq m \leq N, \quad  \int_{T_m^N} F_1(x) \dd x = \int_{T_m^N} F_2(x) \dd x,
  \end{equation}
  and
  \begin{equation}\label{eqn:P1MomentEqCsq2}
    \forall 1\leq m \leq N,\quad F_1\left(\frac{m}{N}\right) = F_2\left(\frac{m}{N}\right).
  \end{equation}
\end{lemma}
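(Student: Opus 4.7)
The proof splits into the two conclusions, both of which follow from short manipulations of the moment equalities against the functions $\phi^N_{m,1}$ and $\phi^N_{m,2}$.

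For the pointwise equality~\eqref{eqn:P1MomentEqCsq2}, the key observation is that $\phi^N_{m,1}(x) + \phi^N_{m,2}(x) = \Ind{T_m^N}(x)$ for every $x \in [0,1]$. Summing the two moment hypotheses at a fixed $m$ therefore yields $\mu_1(T_m^N) = \mu_2(T_m^N)$. Since $F_i(m/N) = \mu_i([0, m/N]) = \sum_{k=1}^m \mu_i(T_k^N)$ is just a telescoping sum, the conclusion $F_1(m/N) = F_2(m/N)$ follows by induction on $m$.

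For the integrated equality~\eqref{eqn:P1MomentEqCsq1}, the plan is to express $\int_{[0,1]} \phi^N_{m,1} \dd \mu$ purely in terms of $F$. Writing $N(x - (m-1)/N) = N \int_{(m-1)/N}^{x} \dd t$ on $T_m^N$ and applying Fubini's theorem (equivalently, a Stieltjes integration by parts), one obtains the identity
$$ \int_{[0,1]} \phi^N_{m,1}(x) \dd \mu(x) \;=\; F(m/N) \;-\; N \int_{T_m^N} F(t)\, \dd t. $$
Substituting $\mu_1$ and $\mu_2$ into this identity and subtracting, the hypothesis $\int \phi^N_{m,1} \dd\mu_1 = \int \phi^N_{m,1} \dd\mu_2$ combined with $F_1(m/N) = F_2(m/N)$ (just established) cancels the boundary term and leaves exactly $\int_{T_m^N} F_1 = \int_{T_m^N} F_2$.

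The argument is essentially a short calculation and I do not anticipate a serious obstacle. The only mildly delicate point is the case $m = 1$, where $T_1^N = [0, 1/N]$ contains the left endpoint and $F_i$ may carry a point mass at $0$. The Fubini computation handles this case uniformly, since the singleton $\{0\}$ is Lebesgue-null in the $\dd t$ integral, so no separate treatment is required. Note also that the second moment equality (against $\phi^N_{m,2}$) is not needed once~\eqref{eqn:P1MomentEqCsq2} has been established; it is already a consequence of the first plus the partition-of-unity identity $\phi^N_{m,1} + \phi^N_{m,2} = \Ind{T_m^N}$.
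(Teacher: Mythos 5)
Your proof is correct and follows essentially the same route as the paper: the identity $\phi^N_{m,1}+\phi^N_{m,2}=\Ind{T_m^N}$ gives $\mu_1(T_m^N)=\mu_2(T_m^N)$ and hence the pointwise equality of the CDFs at grid points, and then an integration by parts (Fubini) of $\int\phi^N_{m,1}\,\dd\mu$ turns the first moment condition into the integral equality once the boundary term is cancelled. The only cosmetic differences are that you build up from $m=1$ whereas the paper works down from $F_1(1)=F_2(1)=1$, and your integration-by-parts formula correctly carries the factor $N$ that the paper's displayed computation silently drops (which does not affect the conclusion).
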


\begin{proof}
We have $\phi_{m,1}+\phi_{m,2}=\Ind{T^N_m}$ and thus, for $2\le m\le N$, $F_1\left(\frac m N \right)-F_1\left(\frac {m-1} N \right)=F_2\left(\frac m N \right)-F_2\left(\frac {m-1} N \right)$. Since $F_1(1)=F_2(1)=1$, this gives~\eqref{eqn:P1MomentEqCsq2}. Now, let $l=1,2$. 
An integration by parts yields for $1\le m\le N$
	\begin{align}\nonumber
			\int_{[0,1]} \phi^N_{m,1}(x) \dd  \mu_l(x)
			& = \int_{\frac{m-1}N}^{\frac m N} (x-\frac{m-1}N) \dd  \mu_l(x)\\ \nonumber
			&= \frac{1}{N} F_l\left(\frac{m}{N}\right) -  \int_{\frac{m-1}N}^{\frac m N} F_l(x) \dd x  \label{eq:test1}		
	\end{align}
Using~\eqref{eqn:P1MomentEqCsq2}, this gives~\eqref{eqn:P1MomentEqCsq1}.
\end{proof}
Let us remark that we may have $F_1(0)\not=F_2(0)$ under the assumptions of Lemma~\ref{lem:P1MomentsEqConsqces}, since $\mu_1$ and $\mu_2$ may charge differently $0$.

Let us now explain with a rough calculation why considering these test functions may lead to a convergence rate of $O(1/N^2)$ when $c$ is $C^1$ with a Lipschitz gradient. Let 
\begin{equation}\label{eqn:P1MCOT}
	I^N = \inf_{
		\substack{
			\pi \in \Pi(\mu,\nu;(\phi^N_{m,i}), (\phi^N_{n,i}))
		}
	}
	\left\{
		\int_{\cX \times \cY} c(x, y) \dd \pi(x,y)
	\right\}.
\end{equation}
We have $I^N\le I$ and, for any $\pi \in \Pi(\mu,\nu;(\phi^N_{m,i}), (\phi^N_{n,i}))$,
\begin{align*}
  I(\pi)= \sum_{m,n = 1}^{N} \int_{T_m^N \times T_n^N} &c\left(\frac{m - \frac{1}{2}}{N}, \frac{n - \frac{1}{2}}{N} \right) +\partial_x c\left(\frac{m - \frac{1}{2}}{N}, \frac{n - \frac{1}{2}}{N} \right)(x-\frac{m - \frac{1}{2}}{N})\\&+\partial_y c\left(\frac{m - \frac{1}{2}}{N}, \frac{n - \frac{1}{2}}{N} \right)(y-\frac{m - \frac{1}{2}}{N}) \dd x \dd y +O(1/N^2)
\end{align*}
Thus, we have \begin{align}
  I(\pi) =&\sum_{m,n = 1}^{N} \left(c- \frac 12\partial_x- \frac 12\partial_y \right) \left(\frac{m - \frac{1}{2}}{N}, \frac{n - \frac{1}{2}}{N} \right) \pi^1_{mn}\label{linearP1problem}\\
  &+\partial_x c\left(\frac{m - \frac{1}{2}}{N}, \frac{n - \frac{1}{2}}{N} \right)\pi^2_{mn}+\partial_y c\left(\frac{m - \frac{1}{2}}{N}, \frac{n - \frac{1}{2}}{N} \right)\pi^3_{mn}+O(1/N^2), \notag
\end{align}
with  $\pi^1_{mn}=\pi(T_m^N \times T_n^N)$, $N \pi^2_{mn}=\int_{T_m^N \times T_n^N}\phi^N_{m,1}(x) \dd \pi(x,y)$ and $N \pi^3_{mn}=\int_{T_m^N \times T_n^N}\phi^N_{m,1}(y) \dd \pi(x,y)$. We can thus consider the linear programming problem of minimizing the right-hand-side of~\eqref{linearP1problem} under the constraints $\sum_n \pi^1_{mn}=\mu^N_{m,1}+\mu^N_{m,2}$, $\sum_m \pi^1_{mn}=\nu^N_{m,1}+\nu^N_{m,2}$, $\sum_n \pi^2_{mn}=\mu^N_{m,1}/N$, $\sum_m \pi^3_{mn}=\nu^N_{m,1}/N$ and $\pi^i_{mn}\ge 0$. Suppose for simplicity that we can find an minimum $(\pi^{*,i}_{mn})$ to this discrete problem. If we could find (similarly as Lemma~\ref{lem:P0OTDiscrPbEquiv}) $\pi^* \in \Pi(\mu, \nu)$ such that $\pi^{*,1}_{mn}=\pi^*(T_m^N \times T_n^N)$, $N \pi^{*,2}_{mn}=\int_{T_m^N \times T_n^N}\phi^N_{m,1}(x) \dd \pi^*(x,y)$ and $N \pi^{*,3}_{mn}=\int_{T_m^N \times T_n^N}\phi^N_{m,1}(y) \dd \pi^*(x,y)$, we would get then
$$  I\le I^N+O(1/N^2).$$
Unfortunately, such kind of a result is not obvious. Besides, we see from this derivation that the smoothness of the cost function plays an important role.

Let us recall that for $p\ge 1$, the $W_p$-Wasserstein distance at the power~$p$, $W^p_p(\mu,\nu)$, corresponds to the cost function $c(x,y)=|x-y|^p$. In the following, we prove convergence results with rate $O(1/N^2)$ for $c(x,y)=|x-y|$ and $c(x,y)=|x-y|^2$. In the first case, the cost function is not smooth on the diagonal, and we need to impose an extra condition on~$\mu$ and $\nu$ to get this rate. We first state a first result, which is already interesting, but will be not sufficient to prove the desired convergence. Its proof is postponed in Appendix~\ref{sec_Appendix}.

\begin{proposition}\label{prop:P1ControlRegularMarginal}
Let $\mu_1, \mu_2\in \cP([0,1])$ be two probability measures with cumulative distribution functions $F_1$ and $F_2$, respectively, such that $F_1, F_2 \in C^2([0,1])$.
Let us assume that
for all $1 \leq m \leq N$ and $i=1,2$,
$$
\int_{[0,1]}\phi^N_{m,i}(x) \dd \mu_1(x) = \int_{[0,1]} \phi^N_{m,i}(x) \dd \mu_2(x).
$$
Then,
\begin{equation}\label{eq:W1}
W_1(\mu_1, \mu_2) \leq \frac{\| F_1^{\prime \prime} \|_\infty + \|F_2^{\prime \prime} \|_\infty }{3N^2}.
\end{equation}
In addition, let $m_1 := \min_{u \in [0,1]}F_1'(u)$ and $m_2 = \min_{u \in [0,1]}F'_2(u)$ and let us assume that $m_1 > 0$ and $m_2 >0$. Then, 
for all $p>1$, it holds that
\begin{equation}\label{eq:Wp}
W_p(\mu_1, \mu_2) 
		\leq \frac{\|F_1^{\prime \prime} \|_\infty + \|F_2^{\prime \prime} \|_\infty}{3N^2}
			(p!)^{\frac{1}{p}}
			\left(\frac{5}{2} \left(\frac{1}{m_1} + \frac{1}{m_2}\right) \right)^{\frac{p-1}{p}}.
	\end{equation}
\end{proposition}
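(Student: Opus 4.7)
My plan is to rely on the classical one-dimensional formulas $W_1(\mu_1,\mu_2) = \int_0^1 |F_1-F_2|\,dx$ and $W_p^p(\mu_1,\mu_2) = \int_0^1 |F_1^{-1}(u)-F_2^{-1}(u)|^p\,du$ (see e.g.\ Theorem~2.9 in~\cite{santambrogio2015optimal}), and to reduce the whole statement to sharp $L^1$ and $L^\infty$ estimates on $g:=F_1-F_2$. Lemma~\ref{lem:P1MomentsEqConsqces} already supplies $g(m/N)=0$ and $\int_{T_m^N} g\,dx=0$ for every $m=1,\dots,N$, and by assumption $\|g''\|_\infty \leq K := \|F_1''\|_\infty+\|F_2''\|_\infty$.

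On each interval $T_m^N$ with $m\geq 2$, $g$ vanishes at both endpoints, so the Lagrange interpolation remainder yields
\[
g(x) \;=\; \tfrac{1}{2}\,g''(\xi_x)\bigl(x-\tfrac{m-1}{N}\bigr)\bigl(\tfrac{m}{N}-x\bigr),
\]
which integrates to $\int_{T_m^N}|g|\,dx \leq K/(12 N^3)$ and bounds $\|g\|_{L^\infty(T_m^N)} \leq K/(8 N^2)$. For $T_1^N = [0, 1/N]$ only $g(1/N)=0$ is guaranteed (an atom of $\mu_i$ at $0$ could allow $F_i(0)\neq 0$), so I would Taylor-expand $g$ at $1/N$ and inject the integral constraint $\int_{T_1^N} g\,dx=0$ to obtain $|g'(1/N)| \leq K/(3N)$, hence $\int_{T_1^N}|g|\,dx \leq K/(3N^3)$ and $\|g\|_{L^\infty(T_1^N)} \leq 5K/(6N^2)$. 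Summing these bounds over $m$ and comparing coefficients (the bound already holds with equality at $N=1$ and comfortably at $N\geq 2$) gives $\|g\|_{L^1([0,1])} \leq K/(3N^2)$, which combined with the $W_1$ formula delivers~\eqref{eq:W1}.

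For~\eqref{eq:Wp}, the positivity of $m_1,m_2$ makes $F_1^{-1}$ and $F_2^{-1}$ Lipschitz, and applying the mean value theorem to $F_j$ between $F_1^{-1}(u)$ and $F_2^{-1}(u)$ yields the pointwise bounds $|F_1^{-1}(u)-F_2^{-1}(u)| \leq |g(F_j^{-1}(u))|/m_i$ for $\{i,j\}=\{1,2\}$. Averaging gives $\|F_1^{-1}-F_2^{-1}\|_\infty \leq \tfrac{1}{2}\|g\|_\infty(1/m_1+1/m_2)$, and plugging the previous $L^\infty$ estimate on $g$ into the H\"older-type inequality $W_p^p \leq \|F_1^{-1}-F_2^{-1}\|_\infty^{p-1}\,W_1$ already reproduces the right powers of $N$, $K$ and $1/m_1+1/m_2$.

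The hard part is matching the exact constant in~\eqref{eq:Wp}, in particular the combinatorial factor $(p!)^{1/p}$. A single H\"older step is too crude to produce it, and I expect that recovering the full constant requires an iterated argument: writing $u \mapsto F_1^{-1}(u)-F_2^{-1}(u)$ as a Lipschitz map with constant $\leq 1/m_1+1/m_2$ and controlling its superlevel sets via a layer-cake decomposition, or iterating a Taylor-type bound $p-1$ times so that the resulting nested integral contributes a $1/p!$ factor. The $L^1$ estimate $\|g\|_{L^1}\leq K/(3N^2)$ should then enter the iteration exactly once, producing the single $K/(3N^2)$ factor in front of the $(p!)^{1/p}$ and $(5/2)^{(p-1)/p}$ combinatorial constants.
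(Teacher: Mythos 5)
Your treatment of the $W_1$ bound is essentially correct and uses the same ingredients as the paper's proof (the two consequences of Lemma~\ref{lem:P1MomentsEqConsqces}), though you exploit the two-endpoint vanishing of $g=F_1-F_2$ via a Lagrange interpolation remainder on $T_m^N$, $m\ge 2$, while the paper Taylor-expands only at $m/N$ and re-uses the integral constraint on every interval. Both routes are valid, and yours is slightly tighter for $m\ge 2$; you correctly isolate $T_1^N$ where $g(0)$ may be nonzero.

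The $W_p$ part, however, is a genuine gap in the proposal as written: you only sketch a possible layer-cake or nested-Taylor iteration without carrying it out, and you stop short of a proof. Moreover, the reason you give for abandoning the direct route is wrong: the single H\"older step you dismiss \emph{does} suffice, and in fact yields a strictly stronger estimate than~\eqref{eq:Wp}. Using $W_p^p = \int_0^1 |F_1^{-1}-F_2^{-1}|^p \dd u$ and your own bounds $\|F_1^{-1}-F_2^{-1}\|_\infty \le \tfrac{1}{2}\|g\|_\infty\left(\tfrac{1}{m_1}+\tfrac{1}{m_2}\right)$, $\|g\|_\infty \le \tfrac{5K}{6N^2}$, $W_1 \le \tfrac{K}{3N^2}$ with $K=\|F_1''\|_\infty+\|F_2''\|_\infty$, the H\"older inequality gives
\begin{equation*}
W_p^p \;\le\; \left(\frac{5K}{12N^2}\left(\frac{1}{m_1}+\frac{1}{m_2}\right)\right)^{p-1}\frac{K}{3N^2},
\end{equation*}
which equals the paper's stated bound raised to the $p$-th power divided by $2^{p-1}p!$. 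Since~\eqref{eq:Wp} is an upper bound, there is no obligation to ``reproduce'' the $(p!)^{1/p}$ factor; a smaller bound is simply a stronger conclusion. So your approach would have worked; you just misread the target constant as something you had to reach from below. By contrast, the paper's actual proof relies on the CDF representation of $W_p^p$ from \cite{Jourdain2013}, Lemma B.3, and an induction on integer $p$ that produces the $p!$ factor; your H\"older route is more elementary, valid for all real $p>1$, and gives a sharper constant — but you needed to finish the arithmetic comparison rather than speculating about a missing combinatorial iteration.
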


\begin{remark}\label{rem:P1ControlRegularMarginal}
  The result of Proposition \ref{prop:P1ControlRegularMarginal}
  can be extended through the triangle inequality in order
  to treat regular measures with different piecewise affine moments.
  Indeed, for $p \geq 1$:
	\begin{equation*}
		W_p(\mu, \nu) \leq W_p(\mu, \tilde{\mu}) + W_p(\tilde{\mu}, \tilde{\nu}) + W_p(\tilde{\nu}, \nu),
	\end{equation*}
	thus
	\begin{equation}
		\left| 	W_p(\mu, \nu) - W_p(\tilde{\mu}, \tilde{\nu}) \right|
		\leq W_p(\mu, \tilde{\mu}) + W_p(\tilde{\nu}, \nu).
	\end{equation}
	Thus, using Proposition \ref{prop:P1ControlRegularMarginal}, one gets that
  for $\mu$, $\nu$ two measures with cumulative distribution functions
	$F$ and $G$, respectively, such that $F, G \in C^2([0,1])$
	and $\tilde{\mu}$, $\tilde{\nu}$ two measures with cumulative distribution functions
	$\tilde{F}$ and $\tilde{G}$, respectively, such that $\tilde{F}, \tilde{G} \in C^2([0,1])$;
	If $\mu$ and $\tilde{\mu}$ (respectively $\nu$ and $\tilde{\nu}$)
	have the same $2N$ piecewise affine moments of step $1/N$, then
	\begin{equation}
		\left| W_1(\mu, \nu) - W_1(\tilde{\mu}, \tilde{\nu}) \right|
		\leq \frac{\|F''\|_\infty + \|\tilde{F}''\|_\infty + \|G''\|_\infty + \|\tilde{G}''\|_\infty}{3N^2}.
	\end{equation}
	Besides, if $m_\mu = \min_{u \in [0,1]}F'(u)$,
	$m_{\tilde{\mu}} = \min_{u \in [0,1]}\tilde{F}'(u)$,
	$m_\nu = \min_{u \in [0,1]}G'(u)$ and
	$m_{\tilde{\nu}} = \min_{u \in [0,1]}\tilde{G}'(u)$,
	are positive, one has for all $p\in\N^*$,
	\begin{multline}
		\left| W_p(\mu, \nu) - W_p(\tilde{\mu}, \tilde{\nu}) \right| \\
		\leq \frac{\|F^{\prime \prime} \|_\infty + \|\tilde{F}^{\prime \prime} \|_\infty}{3N^2}
			\left(\frac{5}{2} \left(\frac{1}{m_\mu} + \frac{1}{m_{\tilde{\mu}}}\right) \right)^{\frac{p-1}{p}} (p!)^{\frac{1}{p}}
		\\ + \frac{\|G^{\prime \prime} \|_\infty + \|\tilde{G}^{\prime \prime} \|_\infty}{3N^2}
			\left(\frac{5}{2} \left(\frac{1}{m_\nu} + \frac{1}{m_{\tilde{\nu}}}\right) \right)^{\frac{p-1}{p}} (p!)^{\frac{1}{p}}.
	\end{multline}
\end{remark}

Unfortunately,   Proposition \ref{prop:P1ControlRegularMarginal} can not be extended   to non-smooth measures, as Example \ref{exmpl:RegularityForP1Control} below shows. However, the $O(1/N^2)$ convergence obtained in Remark \ref{rem:P1ControlRegularMarginal} may stay true even for non-smooth measures~$\tilde{\mu}$ and $\tilde{\nu}$.  This is important in our context to treat the case where $\tilde{\mu}$ and $\tilde{\nu}$ are not smooth since the solution of the MCOT problem may typically be a discrete measure that match respectively the moments of $\mu$ and $\nu$. We tackle this issue for $W_1$ and $W_2$ in the two following paragraphs.

\begin{example}\label{exmpl:RegularityForP1Control}
	In Proposition \ref{prop:P1ControlRegularMarginal}, if one of the
	measures (let us say $\tilde{\mu}$) is not regular enough, then the convergence in $O(1/N^2)$ may not be true, as shown thereafter.

	We consider $\mu \sim \mathcal{U}([0,1])$ and
	\begin{equation*}
		\tilde{\mu}_N = \frac{1}{N} \sum_{i = 1}^N \delta_{\frac{1}{2N} + \frac{i-1}{N}}.
	\end{equation*}
	Then, for all $1\le m\le N$, we have
	\begin{equation*}
		\tilde{F}\left(\frac{m}{N}\right) = \frac{m}{N} = F\left(\frac{m}{N}\right),
	\end{equation*}
	and
	\begin{equation*}
		\int_{T^N_m} \tilde{F} = \frac{m-1}{N^2} + \frac{1}{2N}\frac{1}{N}
		= \int_{T^N_m} F.
	\end{equation*}

	However, we have
	\begin{equation*}
		\begin{split}
			W_1(\mu, \tilde{\mu}_N) &= N \int_0^{1/N} \left| u - \frac{1}{2N} \right| \dd u 
			= 2N \left( \frac{1}{2N} \right)^2 \frac{1}{2} 
			= \frac{1}{4N}.
		\end{split}
	\end{equation*}
\end{example}

\subsubsection{Convergence speed for $W_1$}\label{sect:cvgSpeedP1UnderEsttoOT}

\begin{proposition}\label{prop:cvgP1MCOTtoOT}
Let $\mu,\nu, \tilde{\mu}, \tilde{\nu} \in \cP([0,1])$.
Let us assume that $\mu$ and $\nu$ are absolutely continuous with respect to the Lebesgue measure and let us denote by $\rho_\mu$ and $\rho_\nu$ their density probability functions.
Let us denote by $F_\mu$, $F_\nu$, $F_{\tilde{\mu}}$ and $F_{\tilde{\nu}}$ the cumulative distribution functions of $\mu$, $\nu$, $\tilde{\mu}$ and $\tilde{\nu}$ respectively.
Let $N\in \bN^*$. Let us assume that
  \begin{equation}\label{eqn:P1EqualityMoments}
    \forall 1\leq m \leq N, \quad \int_{T_m^N} F_\mu = \int_{T_m^N} F_{\tilde{\mu}}
    \quad \text{and} \quad
    \int_{T_m^N} F_\nu = \int_{T_m^N} F_{\tilde{\nu}}.
  \end{equation}
Let us assume in addition that the function $F_\mu - F_\nu$ changes sign at most $Q$ times for some $Q\in \bN$. More precisely, denoting by $G:= F_\mu - F_\nu$, we assume that there exist
  $x_0 = 0 < x_1< x_2 < \cdots < x_{Q} < x_{Q+1} = 1 \in [0,1]$ such that for all $1\leq q \leq Q+1$, 
\begin{equation}\label{eq:sign1}
\forall x,y \in [x_{q-1}, x_q],\; G(x)G(y) \geq 0,
\end{equation}
and for all $1\leq q \leq Q$, 
\begin{equation}\label{eq:sign2}
\forall x\in [x_{q-1}, x_q],\; \forall z\in [x_q, x_{q+1}], \;  G(x)G(z) \leq 0.
\end{equation}
Let us also assume that $\rho_{\mu}- \rho_\nu\in L^\infty([0,1],\dd x;\R)$.
Then,
  \begin{equation}\label{eq:toprove}
    W_1(\mu, \nu) \le W_1(\tilde{\mu}, \tilde{\nu}) + 2 \|\rho_\mu - \rho_\nu \|_\infty \frac{Q}{N^2} .
  \end{equation}
\end{proposition}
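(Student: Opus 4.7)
My plan is to use the one-dimensional representation $W_1(\eta_1,\eta_2) = \int_0^1 |F_{\eta_1}(x) - F_{\eta_2}(x)|\,\dd x$ and exploit the moment equalities interval by interval. Set $G := F_\mu - F_\nu$ and $\tilde G := F_{\tilde\mu} - F_{\tilde\nu}$. The hypothesis~\eqref{eqn:P1EqualityMoments} gives $\int_{T_m^N} G = \int_{T_m^N} \tilde G$ for every $m\in\{1,\dots,N\}$. Absolute continuity of $\mu$ and $\nu$ makes $G$ continuous on $[0,1]$ with $G' = \rho_\mu - \rho_\nu$ almost everywhere, so $\|G'\|_\infty = \|\rho_\mu - \rho_\nu\|_\infty$. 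Combined with the sign-change structure~\eqref{eq:sign1}--\eqref{eq:sign2}, continuity of $G$ forces $G(x_q)=0$ for each $q\in\{1,\dots,Q\}$, since at such a point $G$ is simultaneously a limit of nonnegative and of nonpositive values.

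Next I would classify each interval $T_m^N$ as \emph{good} if $G$ keeps a constant sign on it and \emph{bad} otherwise. Because $G$ has constant sign on each $[x_{q-1}, x_q]$, a bad interval must contain at least one crossing point $x_q$, so the number of bad intervals is at most $Q$ (actually at most $Q$, but we keep a safety factor $2$ to accommodate crossings that fall on interval endpoints). On a good interval, the constant sign of $G$ yields
\begin{equation*}
\int_{T_m^N} |G|\;=\;\left|\int_{T_m^N} G\right|\;=\;\left|\int_{T_m^N} \tilde G\right|\;\le\;\int_{T_m^N} |\tilde G|.
\end{equation*}
On a bad interval containing a crossing $x_q$, for any $x\in T_m^N$ one has $|G(x)| = |G(x)-G(x_q)| \le \|G'\|_\infty |x-x_q| \le \|\rho_\mu-\rho_\nu\|_\infty/N$, so $\int_{T_m^N}|G| \le \|\rho_\mu-\rho_\nu\|_\infty/N^2$.

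Summing over $m$, bounding the contribution of good intervals by $\sum_m \int_{T_m^N}|\tilde G| = W_1(\tilde\mu,\tilde\nu)$ and the contribution of bad intervals by $2Q\,\|\rho_\mu-\rho_\nu\|_\infty/N^2$ gives~\eqref{eq:toprove}. The main point—and the only step that is not bookkeeping—is the sign analysis: verifying $G(x_q)=0$ at each crossing via continuity of the cumulative distribution functions, and translating the hypothesis on sign changes of $G$ into a bound of $Q$ on the number of intervals where the equality $\int_{T_m^N}|G| = \bigl|\int_{T_m^N}\tilde G\bigr|$ can fail. Everything else follows from the pointwise Lipschitz estimate on $G$ and the trivial inequality $|\int f|\le \int |f|$.
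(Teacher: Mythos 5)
Your proof is correct and follows essentially the same strategy as the paper's: classify each $T_m^N$ as sign-preserving (``good'') or sign-changing (``bad'') for $G := F_\mu - F_\nu$, use $\int_{T_m^N}|G| = |\int_{T_m^N} G| = |\int_{T_m^N}\tilde G| \le \int_{T_m^N}|\tilde G|$ on good intervals, and a Lipschitz estimate anchored at a zero $x_q\in T_m^N$ on bad ones. Two small remarks on the details.

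First, the ``safety factor $2$'' is both unjustified and unnecessary. Since the $T_m^N$ are pairwise disjoint and, as you correctly argue, each bad interval must contain some $x_q$ with $1\le q\le Q$ strictly in its interior (if $G(a)>0$ and $G(b)<0$ with $a<b$ in $T_m^N$, the sign structure and $G(x_q)=0$ force $a<x_q<b$), the number of bad intervals is exactly at most $Q$ — there is nothing to accommodate. Trusting this count, your per-interval bound $\int_{T_m^N}|G|\le \|\rho_\mu-\rho_\nu\|_\infty/N^2$ together with $\sum_{\text{good }m}\int_{T_m^N}|\tilde G|\le W_1(\tilde\mu,\tilde\nu)$ yields the strictly stronger constant $Q$ rather than $2Q$. (The paper reaches $2Q$ differently: on a bad interval it writes $\int_{T_m^N}|G|=\int_{T_m^N}G+2\int_{T_m^N}G^- \le \int_{T_m^N}|\tilde G| + 2\|\rho_\mu-\rho_\nu\|_\infty/N^2$, so the factor $2$ is in the per-interval estimate and the count is $Q$. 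Your version drops the $\int_{T_m^N}|\tilde G|$ term on bad intervals, which is what buys you the improvement.) Inserting an ad hoc multiplicative ``safety factor'' without an argument is not a legitimate proof step, so you should simply state the sharp count $Q$ and observe that $Q\le 2Q$.

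Second, a minor simplification: $G(x_q)=0$ follows immediately from \eqref{eq:sign2} by taking $x=z=x_q$, which gives $G(x_q)^2\le 0$; continuity of the cdfs is not needed for this particular point (though you do need continuity of $G$ elsewhere to guarantee the Lipschitz estimate $|G(x)|\le\|\rho_\mu-\rho_\nu\|_\infty|x-x_q|$ via $G(x)=\int_{x_q}^x(\rho_\mu-\rho_\nu)$).
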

The key thing to notice is that we only assume regularity on the measures $\mu,\nu$, not on
$\tilde{\mu}, \tilde{\nu}$. The assumption that $F_\mu - F_\nu$ changes sign at most $Q$ times is related to the fact that $c(x,y)=|x-y|$ is not smooth on the diagonal: an optimal coupling is given by the inverse transform coupling, and  $F_\mu^{-1} - F_\nu^{-1}$ changes sign at most $Q$ times as well. Last, remarkably, we do not need for this result to assume $F_\mu(m/N)=F_{\tilde{\mu}}(m/N)$ and $F_\nu(m/N)=F_{\tilde{\nu}}(m/N)$. Thus, it is sufficient to work with continuous piecewise affine test functions.

More precisely, for all $N\in \bN^*$, let us define
$$
\forall x\in [0,1], \quad \psi^N_1(x) = \left\{
      \begin{array}{ll}
        1 - Nx & \text{if} \quad x \in T^N_{1} \\
        0  & \text{elsewhere},
      \end{array}       
      \right. $$
and for all $2\leq m \leq N$,
$$
\psi^N_{m}(x) = \left\{
      \begin{array}{ll}
        N \left(x - \frac{m-2}{N} \right) & \text{if} \quad x \in T^N_{m-1} \\
       1 - N\left(x - \frac{m-1}{N}\right) & \text{if} \quad x \in T^N_{m} \\
        0  & \text{elsewhere}.
        \end{array}\right.
$$
      We can check by integration by parts that $\int_{[0,1]}\psi^N_1(x) \dd \mu(x)=N \int_{T^N_1}F_\mu(x) \dd x$ and $\int_{[0,1]}\psi^N_m(x) \dd \mu(x)=N \int_{T^N_m}F_\mu(x) \dd x- N \int_{T^N_{m-1}}F_\mu(x) \dd x$ for $2\leq m \leq N$. Therefore, we get
      \begin{align}
        &\forall m\in \{1,\dots,N\}, \int_{[0,1]}\psi^N_m(x) \dd \mu(x)=\int_{[0,1]}\psi^N_m(x) \dd \tilde{\mu}(x) \notag \\
        \iff& \forall m\in \{1,\dots,N\}, \int_{T^N_m}F_\mu(x) \dd x=\int_{T^N_m}F_{\tilde{\mu}(x)} \dd x. \label{equiv_psimN}
      \end{align}
      Last, let us remark that $\psi^N_1=\phi^N_{1,2}$ and $\psi^N_m=\phi^N_{m-1,1}-\phi^N_{m,2}$ for $2\le m\le N$ so that $\mathrm{Span}\left\{ \psi^N_n, \, 1\le n\le N \right\} \subset \mathrm{Span}\left\{ \phi^N_{n,1}, \phi^N_{n,2}, \, 1\le n\le N \right\} $ and
      $$  \Pi(\mu,\nu;(\phi^N_{n,l}),(\phi^N_{n,l}))\subset \Pi(\mu,\nu;(\psi^N_n) ,(\psi^N_n) )  .$$

\begin{corollary}\label{cor:cvgP1MCOTtoOT}
Let $\mu,\nu \in \cP([0,1])$. Let us assume that $\mu$ and $\nu$ are absolutely continuous with respect to the Lebesgue measure and let us denote by $\rho_\mu$ and $\rho_\nu$ their density probability functions. 
Let $F_\mu$ and $F_\nu$ be their cumulative distribution functions.
For all $N\in \bN^*$, let us define
  \begin{equation}\label{eqn:cvgP1MCOT}
    I^N = \inf_{
      \substack{
        \pi \in \Pi(\mu,\nu;(\psi^N_m)_{1 \le m \le N},(\psi^N_n)_{1 \le n \le N})      }
    }
    \left\{
      \int_{[0,1]^2} |x-y| \dd \pi(x, y)
    \right\}.
  \end{equation}
There exists a minimizer for~\eqref{eqn:cvgP1MCOT}.   Let us assume in addition that the function $F_\mu - F_\nu$ changes sign at most $Q$ times for some $Q\in \bN$ (in the sense of Proposition~\ref{prop:cvgP1MCOTtoOT}) and that  $\rho_{\mu}- \rho_\nu\in L^\infty([0,1],\dd x;\R)$.
  Then,
  \begin{equation}\label{eqn:cvgP1MCOTtoOTRemark}
    I^N \le W_1(\mu, \nu) \le I^N + 2  \| \rho_\mu - \rho_\nu \|_\infty \frac{Q}{N^2}
  \end{equation}
\end{corollary}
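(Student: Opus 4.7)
The plan is to reduce the corollary to Proposition~\ref{prop:cvgP1MCOTtoOT} applied to the marginals of a minimizer of~\eqref{eqn:cvgP1MCOT}. First, existence of a minimizer for $I^N$ follows from Theorem~\ref{prop:approxProblmDiscreteMeasGeneral} together with Remark~\ref{Rk:prop31}: since $\cX=\cY=[0,1]$ is compact, the test functions $\psi^N_m$ are continuous and bounded, the cost $c(x,y)=|x-y|$ is continuous, and $\mu\otimes\nu$ shows admissibility with finite cost; in the compact case the constraint $\int(\theta_\mu(|x|)+\theta_\nu(|y|))\,\dd\pi\le A$ becomes vacuous so one obtains a minimizer for Problem~\eqref{eqn:cvgP1MCOT} itself.

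The lower bound $I^N\le W_1(\mu,\nu)$ is immediate from the inclusion $\Pi(\mu,\nu)\subset\Pi(\mu,\nu;(\psi^N_m)_{1\le m\le N},(\psi^N_n)_{1\le n\le N})$. For the upper bound, let $\pi^N$ be a minimizer of~\eqref{eqn:cvgP1MCOT} and let $\tilde{\mu}^N$ and $\tilde{\nu}^N$ denote its first and second marginals. The membership $\pi^N\in\Pi(\mu,\nu;(\psi^N_m),(\psi^N_n))$ yields $\int\psi^N_m\,\dd\tilde{\mu}^N=\int\psi^N_m\,\dd\mu$ and $\int\psi^N_n\,\dd\tilde{\nu}^N=\int\psi^N_n\,\dd\nu$ for all $1\le m,n\le N$. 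By the equivalence recorded in~\eqref{equiv_psimN}, this is exactly the pair of conditions
\[
\int_{T^N_m} F_{\tilde{\mu}^N}\,\dd x=\int_{T^N_m} F_\mu\,\dd x\quad\text{and}\quad\int_{T^N_m} F_{\tilde{\nu}^N}\,\dd x=\int_{T^N_m} F_\nu\,\dd x,\qquad 1\le m\le N,
\]
which are precisely the moment assumptions~\eqref{eqn:P1EqualityMoments} of Proposition~\ref{prop:cvgP1MCOTtoOT}.

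I would then apply Proposition~\ref{prop:cvgP1MCOTtoOT} with $\tilde{\mu}=\tilde{\mu}^N$ and $\tilde{\nu}=\tilde{\nu}^N$, noting that the hypotheses on the sign changes of $F_\mu-F_\nu$ and the $L^\infty$ bound on $\rho_\mu-\rho_\nu$ are assumed only on the fixed measures $\mu,\nu$ and carry over unchanged. This gives
\[
W_1(\mu,\nu)\le W_1(\tilde{\mu}^N,\tilde{\nu}^N)+2\|\rho_\mu-\rho_\nu\|_\infty\frac{Q}{N^2}.
\]
Since $\pi^N$ is itself a coupling of $\tilde{\mu}^N$ and $\tilde{\nu}^N$, the definition of $W_1$ yields $W_1(\tilde{\mu}^N,\tilde{\nu}^N)\le\int_{[0,1]^2}|x-y|\,\dd\pi^N(x,y)=I^N$, and combining the two inequalities produces~\eqref{eqn:cvgP1MCOTtoOTRemark}. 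I do not expect a real obstacle: the whole statement is essentially a packaging of Proposition~\ref{prop:cvgP1MCOTtoOT}, the only new ingredients being the compact-case existence from Section~\ref{sec:MCOT} and the elementary translation~\eqref{equiv_psimN} between moments against the hat functions $\psi^N_m$ and integrals of the cumulative distribution function on the intervals $T^N_m$.
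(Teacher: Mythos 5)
Your proof is correct and follows essentially the same route as the paper's: existence via Theorem~\ref{prop:approxProblmDiscreteMeasGeneral} and Remark~\ref{Rk:prop31}, the trivial lower bound, translation of the moment constraints into~\eqref{eqn:P1EqualityMoments} via~\eqref{equiv_psimN}, and an application of Proposition~\ref{prop:cvgP1MCOTtoOT} to the marginals of a minimizer. The only (harmless) difference is that you use the one-sided bound $W_1(\tilde{\mu}^N,\tilde{\nu}^N)\le I^N$, whereas the paper observes the slightly stronger equality $W_1(\tilde{\mu}^N,\tilde{\nu}^N)=I^N$; your version is all that is needed.
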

In fact, looking at the proof of Proposition \ref{prop:cvgP1MCOTtoOT}, it even is sufficient to assume that $\rho_\mu-\rho_\nu$ is bounded on a neighborhood of the points at which $F_\mu - F_\nu$ changes sign. For simplicity of statements, we have assumed in Proposition~\ref{prop:cvgP1MCOTtoOT} and Corollary~\ref{cor:cvgP1MCOTtoOT} that $\rho_\mu-\rho_\nu$ is bounded on~$[0,1]$.

\begin{proof}[Proof of Corollary \ref{cor:cvgP1MCOTtoOT}]
From the inclusion $ \Pi(\mu,\nu;(\psi^N_m)_{1 \le m \le N},(\psi^N_n)_{1 \le n \le N})\subset \Pi(\tilde{\mu}, \tilde{\nu})$, we clearly have $I^N\le W_1(\mu,\nu)$. Using Theorem~\ref{prop:approxProblmDiscreteMeasGeneral} together with Remark~\ref{Rk:prop31}, since the functions $\psi_m^N$ are continuous on $[0,1]$ for all $1\leq m \leq N$, there exists 
$\pi^N \in \Pi(\mu,\nu;(\psi^N_m)_{1 \le m \le N},(\psi^N_n)_{1 \le n \le N})$ which is a minimizer to Problem~(\ref{eqn:cvgP1MCOT}). Let us denote by $\tilde{\mu}$ and $\tilde{\nu}$ the marginal laws of $\pi^N$. 
First, we  remark that
$$ \Pi(\mu,\nu;(\psi^N_m)_{1 \le m \le N},(\psi^N_n)_{1 \le n \le N})= \Pi(\tilde{\mu},\tilde{\nu};(\psi^N_m)_{1 \le m \le N},(\psi^N_n)_{1 \le n \le N})\subset \Pi(\tilde{\mu}, \tilde{\nu})$$
and thus

  \begin{equation*}
    \begin{split}
    I^N =   \int_0^1 |x - y| \dd \pi^N(x, y)
      &= \min_{\pi \in \Pi(\tilde{\mu}, \tilde{\nu})}
        \left\{
          \int_0^1 |x - y| \dd \pi(x, y)
        \right\}= W_1(\tilde{\mu}, \tilde{\nu}).
    \end{split}
  \end{equation*}
Besides,  it holds that for all $1\leq m \leq N$, 
$$
\int_{[0,1]} \psi^N_m(x) \dd \tilde{\mu}(x) = \int_{[0,1]} \psi^N_m(x) \dd\mu (x), \quad \int_{[0,1]} \psi^N_m(y) \dd \tilde{\nu}(y) = \int_{[0,1]} \psi^N_m(y) \dd \nu(y),
$$
and we therefore get~\eqref{eqn:P1EqualityMoments} from~\eqref{equiv_psimN}. We can thus apply Proposition \ref{prop:cvgP1MCOTtoOT} and get the desired result.
\end{proof}

\begin{proof}[Proof of Proposition \ref{prop:cvgP1MCOTtoOT}]

Let $1\leq m \leq N$. If for all $1\leq q \leq Q$, $x_q \notin T_m^N$, then $F_\mu - F_\nu$ remains non-negative or non-positive on $T_m^N$. Thus, using \eqref{eqn:P1EqualityMoments}, it holds that
\begin{align*}
\int_{T_m^N} \left| F_\mu  - F_\nu \right|
    &= \epsilon \int_{T_m^N} \left( F_\mu - F_\nu \right)\\
   & = \epsilon \int_{T_m^N} \left( F_{\tilde{\mu}} - F_{\tilde{\nu}} \right)
    = \left| \int_{T_m^N} \left( F_{\tilde{\mu}} - F_{\tilde{\nu}}\right)  \right|
    \leq \int_{T_m^N} \left| F_{\tilde{\mu}} - F_{\tilde{\nu}} \right|,
\end{align*}
where $\epsilon = 1$ if $F_\mu - F_\nu \geq 0$ on $T_m^N$ and $\epsilon = -1$ if $F_\mu - F_\nu \leq 0$ on $T_m^N$. 
On the other hand, if there exists $1\leq q \leq Q$, such that $x_q \in T_m^N$, one gets
  \begin{equation*}
    \begin{split}
      \int_{T_m^N} \left| F_\mu  - F_\nu \right|
      &= \int_{T_m^N} \left( F_\mu - F_\nu \right) + 2 \int_{T_m^N} \left( F_\mu - F_\nu \right)^- \\
      &= \int_{T_m^N} \left( F_{\tilde{\mu}} - F_{\tilde{\nu}} \right) + 2 \int_{T_m^N} \left( F_\mu - F_\nu \right)^- \\
      &\le \int_{T_m^N} \left| F_{\tilde{\mu}} - F_{\tilde{\nu}} \right| + 2 \int_{T_m^N} \left( F_\mu - F_\nu \right)^- \\
      &\le \int_{T_m^N} \left| F_{\tilde{\mu}} - F_{\tilde{\nu}} \right|
        + 2 \| \rho_\mu - \rho_\nu \|_\infty   \frac{1}{N^2},
    \end{split}
  \end{equation*}
  since  for $x\in T^N_m$, $F_\mu(x) - F_\nu(x)=\int_{x_q}^x  \rho_\mu - \rho_\nu $ and $|x-x_q|\le 1/N$.

  Thus, as there are at most $Q$ intervals of that last type, we get
  \begin{equation*}
\int_0^1 \left| F_\mu  - F_\nu \right| \le \int_0^1 \left| F_{\tilde{\mu}} - F_{\tilde{\nu}} \right| + 2 \| \rho_\mu - \rho_\nu \|_\infty  \frac{Q}{N^2},
  \end{equation*}
  i.e. $W_1(\mu, \nu) \le W_1(\tilde{\mu}, \tilde{\nu}) + 2  \| \rho_\mu - \rho_\nu \|_\infty  \frac{Q}{N^2}$.
\end{proof}

\subsubsection{Convergence speed for $W_2$}

\begin{proposition}\label{prop:cvgP1_W2}
Let $\mu,\nu, \tilde{\mu}, \tilde{\nu} \in \cP([0,1])$.
Let us assume that $\mu(\dd x)=\rho_\mu(x)\dd x$ and $\nu(\dd x)=\rho_\nu(x)\dd x$  with $\rho_\mu,\rho_\nu \in L^\infty([0,1],\dd x;\R_+)$.
Let us denote by $F_\mu$, $F_\nu$, $F_{\tilde{\mu}}$ and $F_{\tilde{\nu}}$ the cumulative distribution functions of $\mu$, $\nu$, $\tilde{\mu}$ and $\tilde{\nu}$ respectively.
Let $N\in \bN^*$. Let us assume that
  \begin{align}
    \forall 1\leq m \leq N, F_\mu\left(\frac m N\right)=F_{\tilde{\mu}}\left(\frac m N\right)  \quad \text{and} \quad F_\nu\left(\frac m N\right)=F_{\tilde{\nu}}\left(\frac m N\right),\label{eqn:P1_EqMoments2}\\
    \label{eqn:P1_EqMoments}
    \forall 1\leq m \leq N, \quad \int_{T_m^N} F_\mu = \int_{T_m^N} F_{\tilde{\mu}}
    \quad \text{and} \quad
    \int_{T_m^N} F_\nu = \int_{T_m^N} F_{\tilde{\nu}} .   \end{align}
Then,
  \begin{equation}\label{eq:toprove_2}
    W^2_2(\mu, \nu) \le W^2_2(\tilde{\mu}, \tilde{\nu}) + \frac 73\frac{\|\rho_\mu\|_\infty+\|\rho_\nu\|_\infty }{N^2} .
  \end{equation}
\end{proposition}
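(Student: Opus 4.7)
The plan is to start from the identity
\begin{equation*}
W_2^2(\mu,\nu) = \int_0^1 (x^2-x)\,\dd\mu(x) + \int_0^1 (y^2-y)\,\dd\nu(y) + \int_0^1\int_0^1 |F_\mu(x)-F_\nu(y)|\, \dd x \,\dd y,
\end{equation*}
which I will derive as follows: on the monotone coupling $X=F_\mu^{-1}(U)$, $Y=F_\nu^{-1}(U)$, one has $W_2^2(\mu,\nu)=\int x^2\dd\mu+\int y^2\dd\nu -2\mathbb{E}[XY]$; since $X,Y\in[0,1]$, the identity $\mathbb{E}[XY]=\int_0^1\int_0^1 \mathbb{P}(X>x,Y>y)\dd x\dd y$ combined with $\mathbb{P}(X\le x,Y\le y)=\min(F_\mu(x),F_\nu(y))$ for the monotone coupling and $\min(a,b)+\max(a,b)=a+b$ yields $\mathbb{E}[XY]=\int_0^1\int_0^1[1-\max(F_\mu(x),F_\nu(y))]\dd x\dd y$; finally $2\max(a,b)=a+b+|a-b|$ together with $\int_0^1 F_\mu\dd x=1-\int x\dd\mu$ produces the claimed identity. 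Applying it to both $(\mu,\nu)$ and $(\tilde\mu,\tilde\nu)$ reduces \eqref{eq:toprove_2} to controlling three differences.

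I will first handle the two polynomial terms using \eqref{eqn:P1_EqMoments2}--\eqref{eqn:P1_EqMoments}: a straightforward integration by parts shows $\mu(T_m^N)=\tilde\mu(T_m^N)$ and $\int_{T_m^N}x\,\dd\mu=\int_{T_m^N}x\,\dd\tilde\mu$ for every $m$, so any function that is affine on $T_m^N$ integrates identically against $\mu$ and $\tilde\mu$ there. Writing $x^2-x=(x-x_m^*)^2+a_m(x)$ on $T_m^N$ with $x_m^*=(2m-1)/(2N)$ and $a_m$ affine, the affine part cancels and $\int (x^2-x)\,\dd(\mu-\tilde\mu)=\sum_{m=1}^N\int_{T_m^N}(x-x_m^*)^2\,\dd(\mu-\tilde\mu)$. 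The $\mu$-piece is bounded by $\|\rho_\mu\|_\infty\int_{T_m^N}(x-x_m^*)^2\dd x=\|\rho_\mu\|_\infty/(12N^3)$ and the $\tilde\mu$-piece by $\mu(T_m^N)/(4N^2)$; summing over $m$ and using $\|\rho_\mu\|_\infty\ge 1$ (which holds for any probability density on $[0,1]$) gives $\bigl|\int (x^2-x)\,\dd(\mu-\tilde\mu)\bigr|\le \|\rho_\mu\|_\infty/(3N^2)$, and symmetrically for $\nu$.

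The heart of the proof is the estimate on $\int_0^1\int_0^1 \bigl[|F_\mu(x)-F_\nu(y)|-|F_{\tilde\mu}(x)-F_{\tilde\nu}(y)|\bigr]\dd x\dd y$. I will partition $[0,1]^2$ into $N^2$ rectangles $R_{m,n}:=T_m^N\times T_n^N$ and treat two cases. If $F_\mu(x)-F_\nu(y)$ keeps a constant sign $\varepsilon\in\{-1,+1\}$ on $R_{m,n}$, Fubini together with \eqref{eqn:P1_EqMoments} gives $\int_{R_{m,n}}|F_\mu-F_\nu|=\varepsilon\bigl[\tfrac 1N\int_{T_m^N}F_\mu-\tfrac 1N\int_{T_n^N}F_\nu\bigr]=\varepsilon\int_{R_{m,n}}(F_{\tilde\mu}-F_{\tilde\nu})\le\int_{R_{m,n}}|F_{\tilde\mu}-F_{\tilde\nu}|$. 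If instead $F_\mu-F_\nu$ changes sign on $R_{m,n}$, then its range on that rectangle is an interval containing $0$ of length at most $(\|\rho_\mu\|_\infty+\|\rho_\nu\|_\infty)/N$ (the sum of the oscillations of $F_\mu$ on $T_m^N$ and of $F_\nu$ on $T_n^N$), so $|F_\mu(x)-F_\nu(y)|\le (\|\rho_\mu\|_\infty+\|\rho_\nu\|_\infty)/N$ on $R_{m,n}$ and $\int_{R_{m,n}}|F_\mu-F_\nu|\le (\|\rho_\mu\|_\infty+\|\rho_\nu\|_\infty)/N^3$. The key combinatorial point is that $(m,n)$ is sign-changing if and only if the open intervals $(F_\mu((m-1)/N),F_\mu(m/N))$ and $(F_\nu((n-1)/N),F_\nu(n/N))$ intersect, and by monotonicity of $F_\mu,F_\nu$ these pairs form a monotone staircase in $\{1,\dots,N\}^2$ with at most $2N-1$ elements. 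Hence $\int_0^1\int_0^1|F_\mu-F_\nu|\le\int_0^1\int_0^1|F_{\tilde\mu}-F_{\tilde\nu}|+2(\|\rho_\mu\|_\infty+\|\rho_\nu\|_\infty)/N^2$, and combining with the polynomial estimates gives the announced constant $1/3+1/3+2=7/3$.

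The part I expect to be the most delicate is the staircase count and the oscillation bound when $\rho_\mu$ or $\rho_\nu$ vanishes on some subintervals (so that $F_\mu$ has flat pieces); the argument only relies on the monotonicity of $F_\mu$ and $F_\nu$, but a few boundary cases arising from possible equalities among the values $F_\mu(m/N)$, $F_\nu(n/N)$ and from the various endpoint conventions in the definitions of $T_m^N$ need to be handled carefully.
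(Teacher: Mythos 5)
Your proof is correct, and it takes a genuinely different route from the paper's. The paper starts from the Jourdain--Reygner formula \eqref{eqn:WPInTermOfCDF} expressing $W_2^2$ as an integral of $\Ind{x<y}\bigl([F_\mu(x)-F_\nu(y)]^++[F_\nu(x)-F_\mu(y)]^+\bigr)$, splits it over rectangles $T^N_k\times T^N_l$ with $k\le l$, and compares $\alpha_{kl}$ to $\tilde{\alpha}_{kl}$ through a three-case analysis in which the diagonal cells $k=l$ require the dedicated auxiliary Lemma~\ref{lem:cvgP1_W2}. You instead derive from the comonotone coupling the symmetric identity $W_2^2(\mu,\nu)=\int_0^1(x^2-x)\,\dd\mu+\int_0^1(y^2-y)\,\dd\nu+\int_0^1\int_0^1|F_\mu(x)-F_\nu(y)|\,\dd x\,\dd y$, and then treat the two pieces separately: the constraints \eqref{eqn:P1_EqMoments2}--\eqref{eqn:P1_EqMoments} force $\mu$ and $\tilde{\mu}$ (and $\nu$, $\tilde{\nu}$) to agree against every function affine on each $T^N_m$, so only the centered residual $(x-x_m^*)^2$ contributes to $\int(x^2-x)\,\dd(\mu-\tilde{\mu})$, and this is $\mathcal{O}(1/N^2)$ immediately, making the auxiliary lemma unnecessary. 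The double integral is then bounded by a sign-constant/sign-changing dichotomy on rectangles together with the same monotone-staircase count that the paper uses to get $\sum_k n_k\le 2N$. Your approach is more self-contained and structurally cleaner (no construction of $\hat{\mu}^N$); the paper's buys a citation in place of a short derivation. One side remark worth your attention: the proof of Proposition~\ref{prop:cvgP1_W2} in the paper applies \eqref{eqn:WPInTermOfCDF} without the factor $p(p-1)=2$; with that factor included the paper's computation would produce $\tfrac{14}{3}$ rather than $\tfrac{7}{3}$, so your independent derivation serves as a useful check that the constant $\tfrac73$ claimed in \eqref{eq:toprove_2} is in fact the correct one.
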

This proposition plays the same role as Proposition~\ref{prop:cvgP1MCOTtoOT} for $W_1$. Again, the important point is that no regularity assumption is made on  $\tilde{\mu}$ and $\tilde{\nu}$. We note that we no longer have restriction on the number of points where $F_\mu-F_\nu$ changes sign, which is related to the fact that $c(x,y)=(x-y)^2$ is smooth. Contrary to Proposition~\ref{prop:cvgP1MCOTtoOT}, we need here the condition~\eqref{eqn:P1_EqMoments2}.

\begin{corollary}\label{cor:cvgP1MCOTtoOT_W2}
Let $\mu,\nu \in \cP([0,1])$. Let us assume that $\mu(\dd x)=\rho_\mu(x)\dd x$ and $\nu(\dd x)=\rho_\nu(x)\dd x$  with $\rho_\mu,\rho_\nu \in L^\infty([0,1],\dd x;\R_+)$. Let $F_\mu$ and $F_\nu$ be their cumulative distribution functions.
For all $N\in \bN^*$, let us define
  \begin{equation}\label{eqn:cvgP1MCOT_W2}
    I^N = \inf_{
      \substack{
        \pi \in \Pi(\mu,\nu;(\phi^N_{m,l})_{\scriptsize \substack{1 \le m \le N \\ 1\le l\le 2}},(\phi^N_{m,l})_{\scriptsize \substack{1 \le m \le N \\ 1\le l\le 2}})      }
    }
    \left\{
      \int_{[0,1]^2} (x-y)^2 \dd \pi(x, y)
    \right\}.
  \end{equation}
  Then,
  \begin{equation}\label{eqn:cvgP1MCOTtoOTRemark_W2}
    I^N \le W_2^2(\mu, \nu) \le I^N + \frac 73\frac{\|\rho_\mu\|_\infty+\|\rho_\nu\|_\infty }{N^2} .
  \end{equation}
\end{corollary}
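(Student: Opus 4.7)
My plan is to mirror almost line-by-line the proof strategy of Corollary~\ref{cor:cvgP1MCOTtoOT}, replacing the role of $W_1$ by $W_2^2$ and invoking Proposition~\ref{prop:cvgP1_W2} instead of Proposition~\ref{prop:cvgP1MCOTtoOT} at the end. The main advantage here is that we no longer need the sign-change assumption on $F_\mu-F_\nu$, which reflects the smoothness of the quadratic cost; on the other hand, the test-function family in the MCOT problem \eqref{eqn:cvgP1MCOT_W2} is the full piecewise affine family $(\phi^N_{m,l})_{1\le m\le N,\,1\le l\le 2}$ rather than the reduced continuous family $(\psi^N_m)$, since Proposition~\ref{prop:cvgP1_W2} requires \emph{both} \eqref{eqn:P1_EqMoments2} and \eqref{eqn:P1_EqMoments}.

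First, the lower bound $I^N \le W_2^2(\mu,\nu)$ is immediate from the inclusion $\Pi(\mu,\nu)\subset \Pi(\mu,\nu;(\phi^N_{m,l}),(\phi^N_{m,l}))$. Second, since the functions $\phi^N_{m,l}$ are continuous on the compact set $[0,1]$, Theorem~\ref{prop:approxProblmDiscreteMeasGeneral} together with Remark~\ref{Rk:prop31} yields the existence of a minimizer $\pi^N\in \Pi(\mu,\nu;(\phi^N_{m,l}),(\phi^N_{m,l}))$ for Problem~\eqref{eqn:cvgP1MCOT_W2}. I denote by $\tilde\mu$ and $\tilde\nu$ its marginals.

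Third, I argue that $I^N = W_2^2(\tilde\mu,\tilde\nu)$. On one hand, $\pi^N\in \Pi(\tilde\mu,\tilde\nu)$ gives $I^N=\int(x-y)^2 d\pi^N \ge W_2^2(\tilde\mu,\tilde\nu)$. On the other hand, the moment constraints defining the MCOT set depend only on the marginals, so
\[
\Pi(\tilde\mu,\tilde\nu)\subset \Pi(\tilde\mu,\tilde\nu;(\phi^N_{m,l}),(\phi^N_{m,l}))=\Pi(\mu,\nu;(\phi^N_{m,l}),(\phi^N_{m,l})),
\]
whence $W_2^2(\tilde\mu,\tilde\nu)\ge I^N$. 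Fourth, since $\pi^N$ satisfies $\int \phi^N_{m,l}(x)\,d\tilde\mu(x)=\int \phi^N_{m,l}(x)\,d\mu(x)$ and $\int \phi^N_{m,l}(y)\,d\tilde\nu(y)=\int \phi^N_{m,l}(y)\,d\nu(y)$ for every $1\le m\le N$ and $l\in\{1,2\}$, Lemma~\ref{lem:P1MomentsEqConsqces} applied to the pairs $(\mu,\tilde\mu)$ and $(\nu,\tilde\nu)$ delivers exactly the two hypotheses \eqref{eqn:P1_EqMoments2} and \eqref{eqn:P1_EqMoments} required by Proposition~\ref{prop:cvgP1_W2}.

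Finally, Proposition~\ref{prop:cvgP1_W2} gives
\[
W_2^2(\mu,\nu)\le W_2^2(\tilde\mu,\tilde\nu)+\frac{7}{3}\frac{\|\rho_\mu\|_\infty+\|\rho_\nu\|_\infty}{N^2}=I^N+\frac{7}{3}\frac{\|\rho_\mu\|_\infty+\|\rho_\nu\|_\infty}{N^2},
\]
which combined with the lower bound establishes \eqref{eqn:cvgP1MCOTtoOTRemark_W2}. There is no genuine obstacle in this corollary once Proposition~\ref{prop:cvgP1_W2} and Lemma~\ref{lem:P1MomentsEqConsqces} are in hand; the only care needed is to notice that including both $\phi^N_{m,1}$ and $\phi^N_{m,2}$ among the test functions is what allows Lemma~\ref{lem:P1MomentsEqConsqces} to produce simultaneously the pointwise equality of the cumulative distribution functions at the nodes $m/N$ and the equality of their integrals on each $T^N_m$.
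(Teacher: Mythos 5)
There is a genuine gap at the existence step. You assert that since "the functions $\phi^N_{m,l}$ are continuous on the compact set $[0,1]$", Theorem~\ref{prop:approxProblmDiscreteMeasGeneral} together with Remark~\ref{Rk:prop31} yields a minimizer $\pi^N$ for Problem~\eqref{eqn:cvgP1MCOT_W2}. But the $\phi^N_{m,l}$ are \emph{not} continuous: the paper explicitly calls them ``discontinuous piecewise affine functions'' (for instance $\phi^N_{m,1}$ jumps from $1$ to $0$ at $x=m/N$, and $\phi^N_{m,2}$ jumps from $0$ to $1$ at $x=(m-1)/N$). Theorem~\ref{prop:approxProblmDiscreteMeasGeneral} requires continuity of the test functions, and Example~\ref{exmpl:necessityOfContinuity} shows that with indicator-type discontinuous test functions on $[0,1]$ and quadratic cost, the MCOT infimum may fail to be attained. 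So you cannot simply carry over the existence argument from Corollary~\ref{cor:cvgP1MCOTtoOT}, where the continuity of the $\psi^N_m$ was essential. The rest of your scheme (identifying $W_2^2(\tilde\mu,\tilde\nu)$ with the cost of the coupling, using Lemma~\ref{lem:P1MomentsEqConsqces} to verify both hypotheses~\eqref{eqn:P1_EqMoments2} and~\eqref{eqn:P1_EqMoments}, and invoking Proposition~\ref{prop:cvgP1_W2}) is sound.

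The repair is small and is what the paper does: for arbitrary $\epsilon>0$, take $\pi\in\Pi\bigl(\mu,\nu;(\phi^N_{m,l}),(\phi^N_{m,l})\bigr)$ with $\int(x-y)^2\dd\pi\le I^N+\epsilon$, let $\tilde\mu,\tilde\nu$ be its marginals, observe $W_2^2(\tilde\mu,\tilde\nu)\le\int(x-y)^2\dd\pi\le I^N+\epsilon$, and apply Proposition~\ref{prop:cvgP1_W2} to get $W_2^2(\mu,\nu)\le I^N+\epsilon+\tfrac{7}{3}\tfrac{\|\rho_\mu\|_\infty+\|\rho_\nu\|_\infty}{N^2}$; then let $\epsilon\to0$. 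In particular your claim that $I^N=W_2^2(\tilde\mu,\tilde\nu)$ should be dropped (it relies on $\pi^N$ being an exact minimizer); only the one-sided bound $W_2^2(\tilde\mu,\tilde\nu)\le I^N+\epsilon$ is needed.
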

We omit the proof of Corollary \ref{cor:cvgP1MCOTtoOT_W2} since it follows the same line as the one of Corollary \ref{cor:cvgP1MCOTtoOT}. The only difference is that we do not know here if the infimum is a minimum and have to work for an arbitrary $\epsilon>0$ with  $\pi\in \Pi(\mu,\nu;(\phi^N_{m,l})_{\scriptsize \substack{1 \le m \le N \\ 1\le l\le 2}},(\phi^N_{m,l})_{\scriptsize \substack{1 \le m \le N \\ 1\le l\le 2}})$ such that $ \int_{[0,1]^2} (x-y)^2 \dd \pi(x, y)\le I^N +\epsilon$.
Let us also mention here that we can use Proposition~\ref{prop_dist_optimum} to bound the distance between  an MCOT minimizer and an OT minimizer since $\phi_{m,1}+\phi_{m,2}=\Ind{T^N_m}$.

\begin{proof}[Proof of Proposition \ref{prop:cvgP1_W2}]
  From Lemma B.3~\cite{Jourdain2013}, we have
  \begin{align*}
    W_2^2(\mu, \nu)&=\int_0^1\int_0^1 \Ind{x<y}([F_\mu(x)-F_\nu(y)]^++[F_\nu(x)-F_\mu(y)]^+) \dd x \dd y \\
    &=\sum_{k=1}^N\sum_{l=k+1}^N \int_{T^N_k}\int_{T^N_l}([F_\mu(x)-F_\nu(y)]^++[F_\nu(x)-F_\mu(y)]^+)\dd x \dd y\\
    & \ + \sum_{k=1}^N  \int_{T^N_k}\int_{T^N_k}\Ind{x<y}([F_\mu(x)-F_\nu(y)]^++[F_\nu(x)-F_\mu(y)]^+)\dd x \dd y.
  \end{align*}
  The two terms $[F_\mu(x)-F_\nu(y)]^+$ and $[F_\nu(x)-F_\mu(y)]^+$ can be analyzed in the same way by exchanging $\mu$ and $\nu$, and we focus on the first one.  Thus, we consider for $k\le l$ the term $\alpha_{kl}:=\int_{T^N_k}\int_{T^N_l}\Ind{x<y}[F_\mu(x)-F_\nu(y)]^+\dd x \dd y$ and denote $\tilde{\alpha}_{kl}=\int_{T^N_k}\int_{T^N_l}\Ind{x<y}[F_{\tilde{\mu}}(x)-F_{\tilde{\nu}}(y)]^+\dd x \dd y$.
  
  \noindent  $\bullet$ If $F_\mu(k/N)\le F_\nu((l-1)/N)$, then from~\eqref{eqn:P1_EqMoments2}, we have also $F_{\tilde{\mu}}(k/N)\le F_{\tilde{\nu}}((l-1)/N)$ (note that if $l=1$, $F_{\tilde{\nu}}(0)\ge 0=F_\nu(0)$). Thus, $\alpha_{kl}=\tilde{\alpha}_{kl}=0$.
  
 \noindent   $\bullet$ If $F_\nu(l/N)\le F_\mu((k-1)/N)$, then from~\eqref{eqn:P1_EqMoments2}, we have also $F_{\tilde{\nu}}(l/N)\le F_{\tilde{\mu}}((k-1)/N)$, and using~\eqref{eqn:P1_EqMoments} we get for $k<l$
    $$\alpha_{kl}=\int_{T^N_k}\int_{T^N_l}F_\mu(x)-F_\nu(y) \dd x \dd y=\int_{T^N_k}\int_{T^N_l}F_{\tilde{\mu}}(x)-F_{\tilde{\nu}}(y)\dd x \dd y=\tilde{\alpha}_{kl}.$$
    For $k=l$, we have by using~\eqref{eqn:P1_EqMoments} and Lemma~\ref{lem:cvgP1_W2} for the inequality
    \begin{align*}
      \alpha_{kk}&=\int_{T^N_k} \left(\int_{\frac{k-1}{N}}^xF_\mu - \int_x^{\frac{k}{N}}F_\nu\right)\dd x\\
      &=\int_{T^N_k} \left(\int_{\frac{k-1}{N}}^xF_\mu + \int_{\frac{k-1}{N}}^xF_\nu\right)\dd x -\frac 1N \int_{T^N_k}F_\nu \\
      &\le \int_{T^N_k} \left(\int_{\frac{k-1}{N}}^xF_{\tilde{\mu}} + \int_{\frac{k-1}{N}}^xF_{\tilde{\nu}}\right)\dd x -\frac 1N \int_{T^N_k}F_{\tilde{\nu}} + \frac{\|\rho_\mu\|_\infty+\|\rho_\nu\|_\infty}{6N^3} \\
      &= \tilde{\alpha}_{kk}+ \frac{\|\rho_\mu\|_\infty+\|\rho_\nu\|_\infty}{6N^3}.
    \end{align*}
    
 \noindent    $\bullet$ We now consider the case where $ F_\mu(k/N)> F_\nu((l-1)/N)$ and $F_\nu(l/N)> F_\mu((k-1)/N)$. We can thus find $x_0\in T^N_k$ and $y_0\in T^N_l$ such that $F_\mu(x_0)=F_\nu(y_0)$. We then have $\forall x\in T^N_k,y\in T^N_l, |F_\mu(x)-F_\nu(y)|\le|F_\mu(x)-F_\mu(x_0)|+|F_\nu(y_0)-F_\nu(y)|\le \|\rho_\mu\|_\infty |x-x_0|+ \|\rho_\nu\|_\infty |y-y_0|$, and thus using that $\int_{T^N_k}|x-x_0|\dd x \le \frac{1}{2N^2}$, 
    $$\alpha_{kl}\le \frac{\|\rho_\mu\|_\infty+\|\rho_\nu\|_\infty}{2N^3}\le \tilde{\alpha}_{kl}+\frac{\|\rho_\mu\|_\infty+\|\rho_\nu\|_\infty}{2N^3}.$$
    For $1\le k \le N$, we note $\{l_k,l_k+1,,\dots,l_k+n_k-1\}\subset \{1,\dots,N\}$ the set of~$l$ such that
  $F_\mu((k-1)/N)<F_{\nu}(l/N)$ and $F_\mu(k/N)> F_\nu((l-1)/N)$. We necessarily have $l_{k+1}\ge  l_k+n_k-1$ since $F_\nu((l_k+n_k-2)/N)<F_\mu(k/N)<F_\nu(l_{k+1}/N)$. Therefore, there is at most one element overlap between two consecutive sets, and thus $\sum_{k=1}^N n_k\le 2N$.

  Combining all cases, and taking into account the contribution of the symmetric term $[F_\nu(x)-F_\mu(y)]^+$ in the integral, we finally get
  \begin{align*}
    W_2^2(\mu,\nu)&\le W_2^2(\tilde{\mu},\tilde{\nu})+2\left(N \frac{\|\rho_\mu\|_\infty+\|\rho_\nu\|_\infty}{6N^3}+2N \frac{\|\rho_\mu\|_\infty+\|\rho_\nu\|_\infty}{2N^3} \right),
  \end{align*}
  which gives~\eqref{eq:toprove_2}
\end{proof}

\section{Numerical algorithms to approximate optimal transport problems}\label{sect:algoMkngUseOfPropTchack}
This section presents the implementation of two algorithms for the approximation of the Optimal Transport cost. Both algorithms rely on  Theorem~\ref{prop:approxProblmDiscreteMeasGeneral}, i.e. that the optimum of the MCOT problem is attained by a finite discrete measure $\sum_{k=1}^{2N+2}p_k\delta_{(x_k,y_k)}$. The two algorithms corresponds to the following choices:
\begin{enumerate}
\item piecewise constant test functions,
\item (regularized) piecewise linear test functions.
\end{enumerate}
In the first case, the precise positions $(x_k,y_k)$ are useless to satisfy the moment constraints: only matters in which cell $(x_k,y_k)$ belongs. Thus the optimization problem is essentially discrete on a (large) finite space, for which Metropolis-Hastings algorithms are relevant. In the second case, we implement a penalized gradient algorithm to optimize the positions $(x_k,y_k)$ and the weights~$p_k$.

The goal of these numerical tests is only illustrative to see the potential relevance of this approach. We do not claim that these algorithms are more efficient than other existing methods in the literature, and the improvement of our algorithms is left for future research.

\subsection{Metropolis-Hastings algorithm on a finite state space}

We expose in the following the principles of the Metropolis-Hastings algorithm used to compute
an approximation of the OT cost. For simplicity, we do so in the case of two uni-dimensional marginal laws. However, the algorithm principles can be adapted to solve a Multimarginal MCOT
problem with marginal laws defined on spaces of any finite dimension.

\subsubsection{Description of the algorithm}

For this algorithm, we consider the framework of Subsection~\ref{subsec_P0}, i.e. $N$ piecewise constant functions $\phi^N_m=\mathbf{1}_{T^N_m}$, $1\le m\le N$. and the MCOT problem~\eqref{eqn:P0MCOT}. As mentioned above, if $(x_k,y_k)$ belongs to the cell $T^N_i\times T^N_j$, its position in this cell is useless regarding the moment constraints. We can therefore assume that the position minimizes the cost in this cell. For $c(x,y)=|y-x|^2$, this amounts to take
\begin{equation*}
   c\left(x_k,y_k\right) = \tilde{c}(i,j) \text{ with } \tilde{c}(i,j)= \left\{
    \begin{array}{lll}
      c\left(\frac{i}{N}, \frac{j+1}{N}\right) & \text{if} & i>j \\
      c\left(\frac{i}{N}, \frac{j}{N}\right)   & \text{if} & i=j \\
      c\left(\frac{i+1}{N}, \frac{j}{N}\right) & \text{if} & i<j.
    \end{array}
  \right.
\end{equation*}

We consider then $2N +2$ distinct cells $T^N_{i_k}\times T^N_{j_k}$, $k\in\{1,\dots, 2N+2\}$. The weights associated to each cell is determined as the solution of the linear optimization of the cost associated
under the constraint that the weights satisfy the moments constraints:
\begin{equation}
  (p_1, ..., p_{2N+2})=
    \argmin_{\substack{
      p_k \ge 0, \
      \sum_{k=1}^{2N+2} p_k = 1 \\
      \forall 1 \le m \le N, \, \sum_{k=1}^{2N+2} p_k \mathbf{1}_{i_k=m} = \mu_m \\
      \forall 1 \le n \le N, \, \sum_{k=1}^{2N+2} p_k \mathbf{1}_{j_k=n} = \nu_n
    }}
    \sum_{k=1}^{2N+2} p_k \tilde{c}\left(i_k, j_k\right).
\end{equation}
Note that this set of constraints may be void. To start with an initial configuration $(i_k,j_k)_{1\le k\le 2N+2}$ that allows the existence of weights which satisfy the constraints, we use the inverse transform sampling between the distributions given by $(\mu_k)_{1\le k\le N}$ and  $(\nu_k)_{1\le k\le N}$ on $\{1,\dots,N\}$. This gives in fact the optimal solution $(p_k,(i_k,j_k))_{1\le k\le 2N+2}$ for~\eqref{eqn:P0MCOT} that satisfy thus in particular the constraints. Since we want here to test the relevance of the Metropolis-Hastings algorithm in this framework, we do not want to start from the optimal solution: thus, we consider a random permutation~$\sigma$ on $\{1,\dots,N\}$ and then the inverse transform sampling between the distributions given by $(\mu_k)_{1\le k\le N}$ and  $(\nu_{\sigma(k)})_{1\le k\le N}$ on $\{1,\dots,N\}$. This gives a configuration that satisfy the constraints and is not a priori optimal.

We now have to specify how the Markov chain defining the algorithm moves from one state $(i_k,j_k)_{1\le k\le 2N+2}$ to another. We denote by $N(i_k,j_k)=\{(i_k+1,j_k),(i_k-1,j_k),(i_k,j_k+1),(i_k,j_k-1)\}$ the neighboring cells of $(i_k,j_k)$ and $$FN(i_k,j_k)=N(i_k,j_k)\cap(\{1,\dots,N\}^2 \setminus (\cup_{k'\not=k}\{(i_{k'},j_{k'} )\}),$$
the  neighboring cells that are free, i.e. that are not in the current configuration. 
We choose randomly and uniformly a cell $l\in\{1,\dots, 2N+2\}$. If $FN(i_l,j_l)=\emptyset$, we pick randomly another one. This rejection method amounts to choose randomly  and uniformly a cell $l$ among those such that $FN(i_l,j_l)\not =\emptyset$. Then, we select $(i'_{l},j'_{l})$ uniformly on $FN(i_l,j_l)$ and set $(i'_k,j'_k)=(i_k,j_k)$ for $k\not= l$, and we accept the new configuration $(i'_k,j'_k)_{1\le k \le 2N+2}$ only if it allows to satisfy the constraints  and with an acceptance ratio described in Algorithm~\ref{algo:MetropolisHastings}. In practice, we run this Algorithm with $K\ge 2N+2$ cells, in order to increase the chance that the new configuration is compatible with the constraints. 

\begin{algorithm}[htp]
  \caption{Metropolis-Hastings algorithm \label{algo:MetropolisHastings}}
  \begin{algorithmic}
    \STATE Fix a temperature $\beta \in \Reel^+$ and take $2N+2\le K\le N^2$.
    \STATE Initialize cells $(i_k, j_k)_{1 \le k \le K}$ and compute the actual optimal cost $c_{\mathrm{actual}}= \sum_{k=1}^{K} p_k \tilde{c}\left(i_k, j_k\right)$.
    \FOR{a given number of steps}     
      \STATE Choose randomly a particle $1 \le l \le K$ such that $FN(i_l,j_l)\not = \emptyset$.
      \STATE Compute $n_{\mathrm{actual}}=\text{Card}(FN(i_l,j_l))$ the number of free cells near $(i_l, j_l)$.
      \STATE Choose randomly a new cell  $(i'_{l},j'_{l})$ in $FN(i_l,j_l)$.
      \IF{the configuration $(i'_k,j'_k)_{1\le k \le 2N+2}$ allows to satisfy the constraints}
        \STATE Compute $c_{\mathrm{newpos}}$ the optimal cost associated to the  configuration $(i'_k,j'_k)_{1\le k \le 2N+2}$.
        \STATE Compute $n_{\mathrm{newpos}}$, the number of free cells near $(i'_l,j'_l)$ in the new configuration.
        \STATE Move the particle $l$ in $(u,v)$ with probability 
          $\min \left( 1, \displaystyle\frac{e^{- c_{\mathrm{newpos}}/\beta}}{e^{- c_{\mathrm{actual}}/\beta}}
          \frac{n_{\mathrm{actual}}}{n_{\mathrm{newpos}}}\right)$. This probability is the acceptance ratio of the Metropolis-Hastings algorithm,
          as explained in Section 2.2 of \cite{delmas2006modles}.
          \STATE Update the value of $c_{\mathrm{actual}}$ to $c_{\mathrm{newpos}}$ if the move is accepted.
     \ENDIF
    \ENDFOR
    \RETURN the lowest cost encountered throughout the loop.
  \end{algorithmic}
\end{algorithm}

The state space of the Markov Chain describing Algorithm~\ref{algo:MetropolisHastings} is the set of $K$ distinct elements of $\{1,\dots,N\}^2$. Note that we can go from any points $(i,j)$ to $(i',j')$ with at most $2N-2$ moves (a move consists in adding or removing one to one of the coordinate). If we ignore the problem of satisfying the constraints, we can therefore go from a configuration  $(i_k,j_k)_{1\le k \le 2N+2}$ to another one $(i'_k,j'_k)_{1\le k \le 2N+2}$ with at most $K(2N-2)$ moves, which let think that the Doeblin condition may be satisfied. This would ensure theoretically the convergence of the algorithm converges towards the infimum
\begin{equation}
  \inf_{\substack{\pi \in \Pi(\mu,\nu;(\phi^N_m)_{1\le m \le N},(\phi^N_n)_{1\le n \le N})   }}
  \int_0^1 \int_0^1 c(x,y) \dd \pi (x,y),
\end{equation}
and that the convergence is exponentially fast (see e.g. Section 2 of \cite{delmas2006modles}).

\subsubsection{Numerical examples}

We tested the algorithm for the marginal laws with probability  density functions
\begin{equation}
  \rho_\mu(x) = 3x^2, \quad \rho_\nu(y) = 2-2y.
\end{equation}
We consider a number of particles 
$K = 3N +2$ in order at each step to have more freedom among the configurations
which satisfy the constraints.
We present two minimizations:
\begin{itemize}
  \item $N = 20$ and $\beta = 0.000075$
  \item $N = 60$ and $\beta = 0.00002$.
\end{itemize}

The evolution of the configurations through the iterations are represented
for $N = 20$ and $N = 60$ in Figure \ref{fig:configsP0N20} and \ref{fig:configsP0N60}.
The darker the cell, the more weight it has. In green (Figures \ref{fig:configsP0N20Optim} and \ref{fig:configsP0N60Optim})
are represented the optimal configuration for the given number of moment constraints.
The convergence of the numerical cost for each minimization is represented in Figure \ref{fig:costCvP0}.
The pink line represents the cost of the Optimal Transport problem we approximate,
the dark blue line the one of the cost of the current configuration and the light blue one
the minimum numerical cost encountered during the minimization. The green line
is the cost of the optimal configuration for the given number of moment constraints, that we aim to compute.

\begin{figure}[htp]
  \centering
  \begin{subfigure}{0.48\textwidth}
    \includegraphics[width=\textwidth]{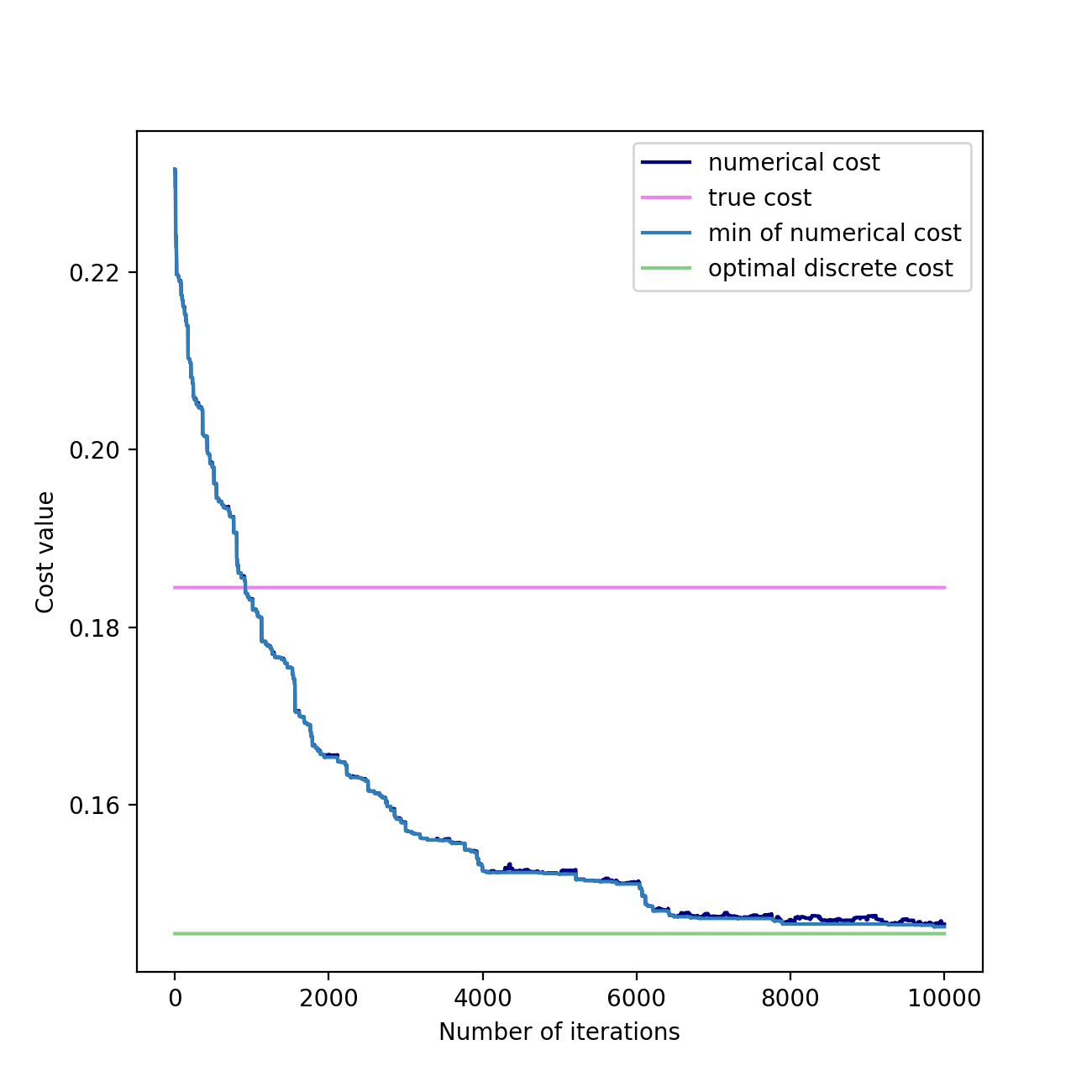}
    \caption{$N = 20$}
  \end{subfigure}
  \begin{subfigure}{0.48\textwidth}
    \includegraphics[width=\textwidth]{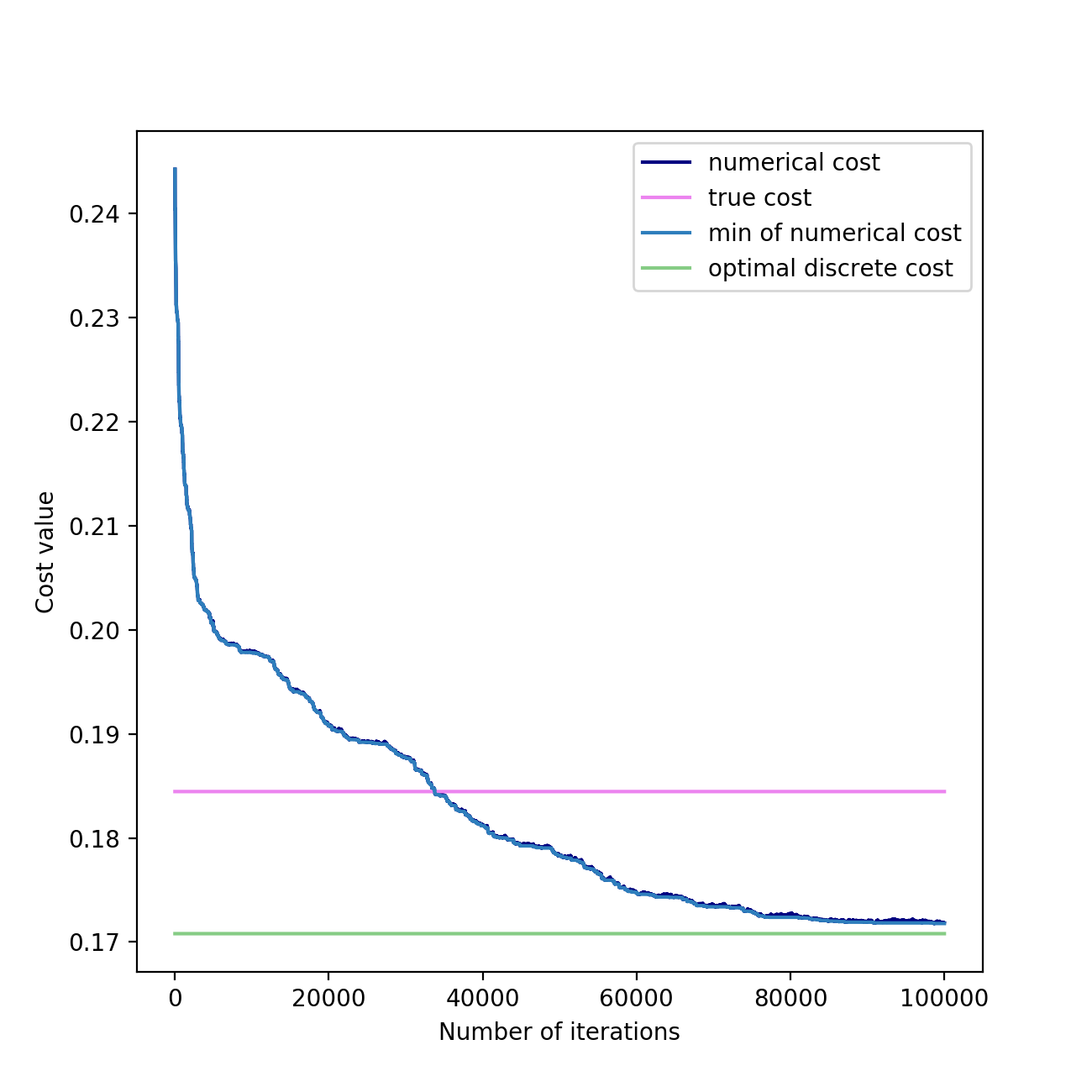}
    \caption{$N = 60$}
  \end{subfigure}
  \caption{Cost convergence \label{fig:costCvP0}}
\end{figure}

\begin{figure}[htp]
  \centering
    \begin{subfigure}[h]{0.32\textwidth}
      \includegraphics[width=\textwidth]{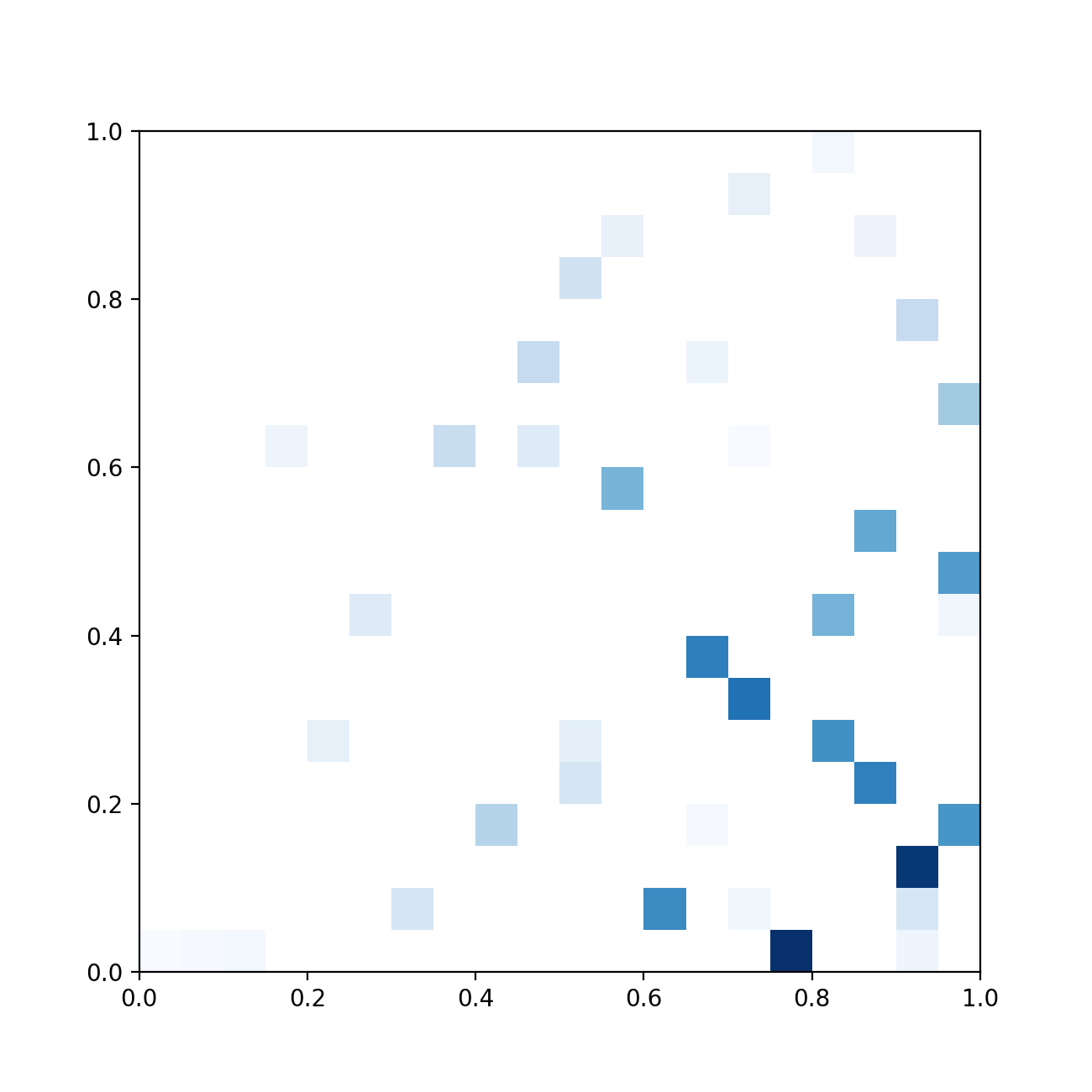}
      \caption{iteration 0}
    \end{subfigure}
    \begin{subfigure}[h]{0.32\textwidth}
      \includegraphics[width=\textwidth]{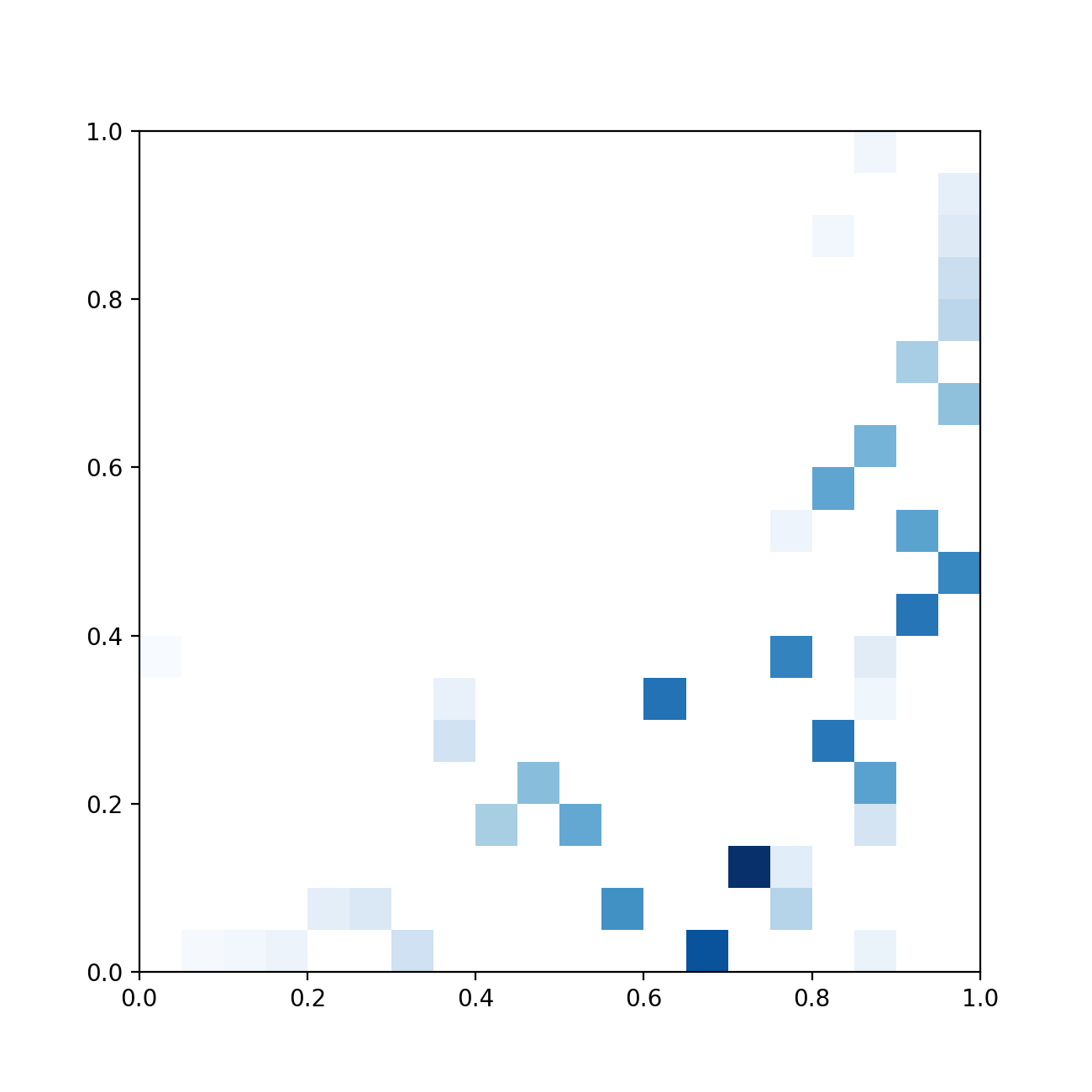}
      \caption{iteration 2000}
    \end{subfigure}
    \begin{subfigure}[h]{0.32\textwidth}
      \includegraphics[width=\textwidth]{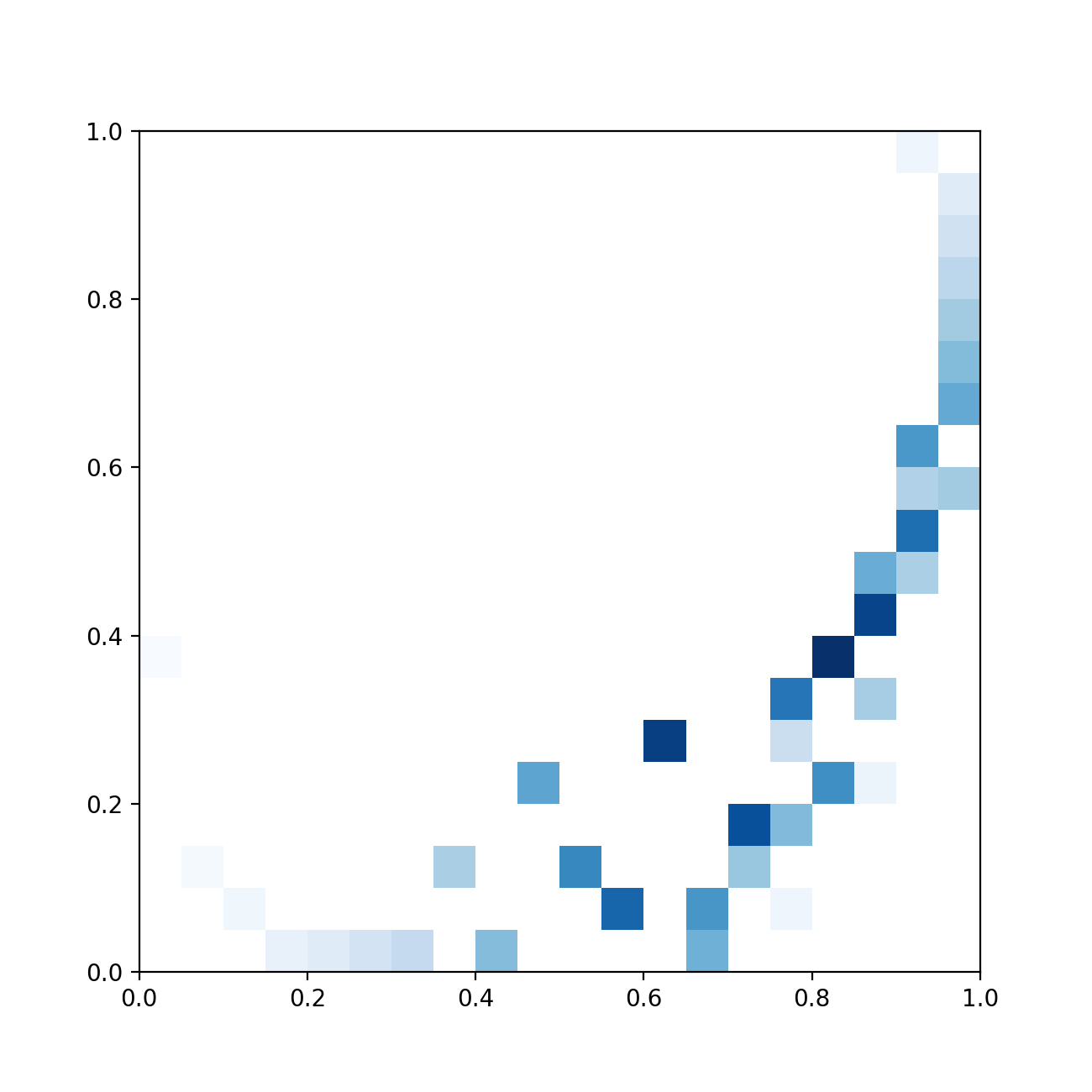}
      \caption{iteration 4000}
    \end{subfigure}
    \begin{subfigure}[h]{0.32\textwidth}
      \includegraphics[width=\textwidth]{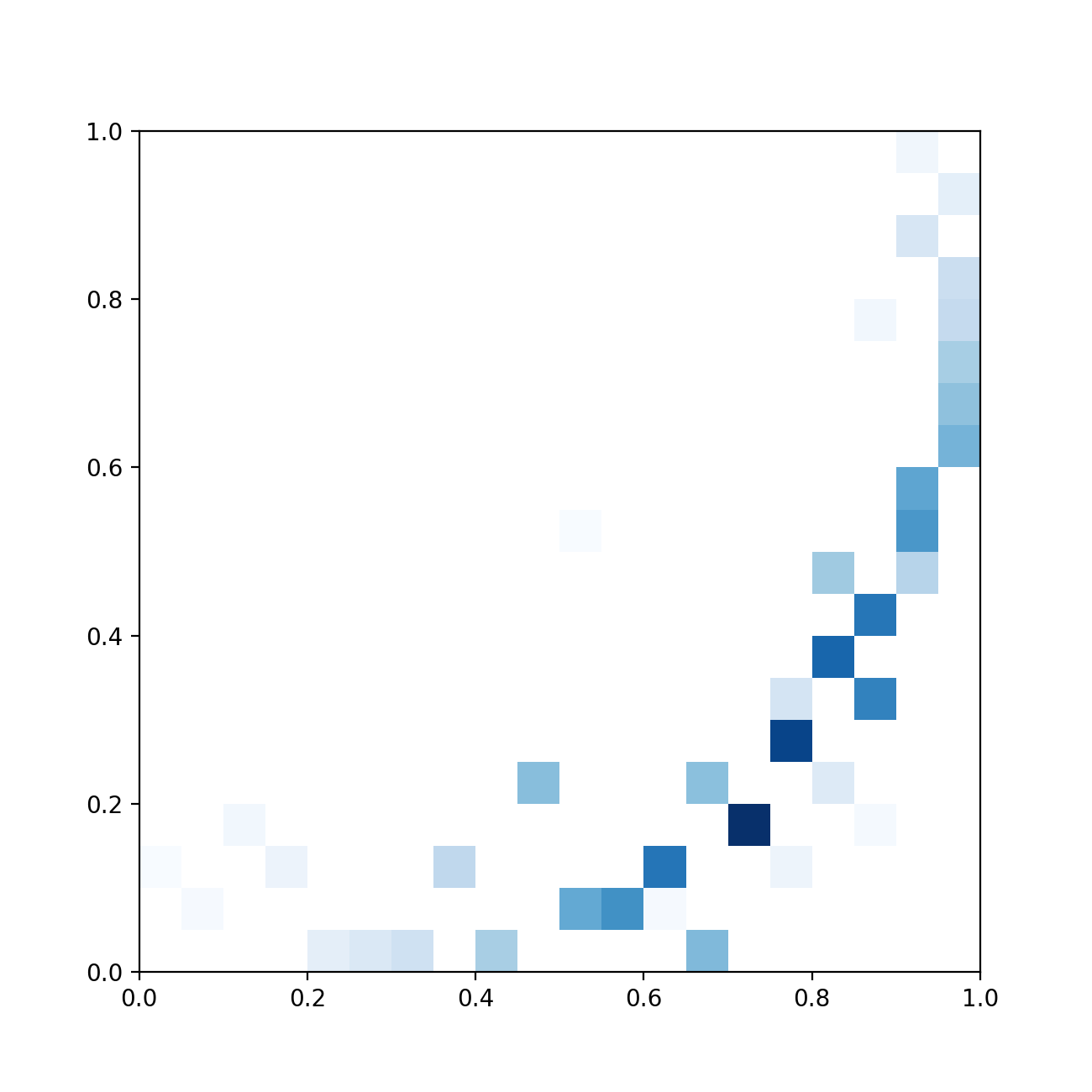}
      \caption{iteration 6000}
    \end{subfigure}
    \begin{subfigure}[h]{0.32\textwidth}
      \includegraphics[width=\textwidth]{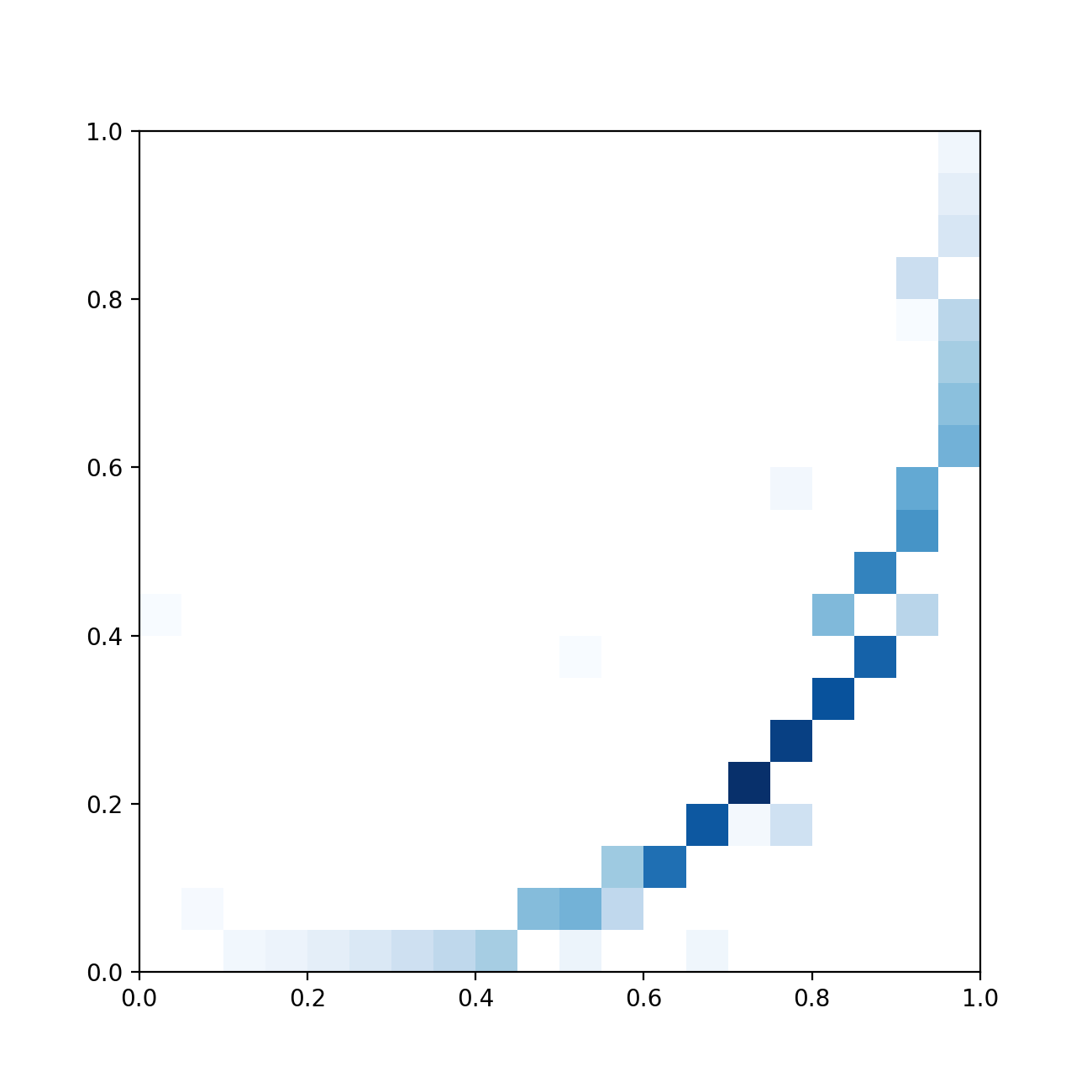}
      \caption{iteration 10000}
    \end{subfigure}
    \begin{subfigure}[h]{0.32\textwidth}
      \includegraphics[width=\textwidth]{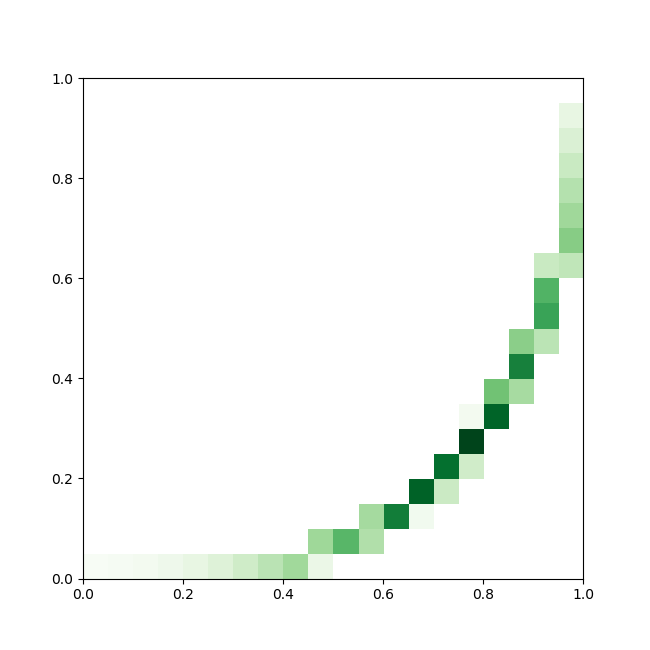}
      \caption{optimal configuration \label{fig:configsP0N20Optim}}
    \end{subfigure}
    \caption{Convergence for two 1D marginal laws with 20 test functions on each set for a quadratic cost \label{fig:configsP0N20}}
\end{figure}

\begin{figure}[htp]
  \centering
    \begin{subfigure}[h]{0.32\textwidth}
      \includegraphics[width=\textwidth]{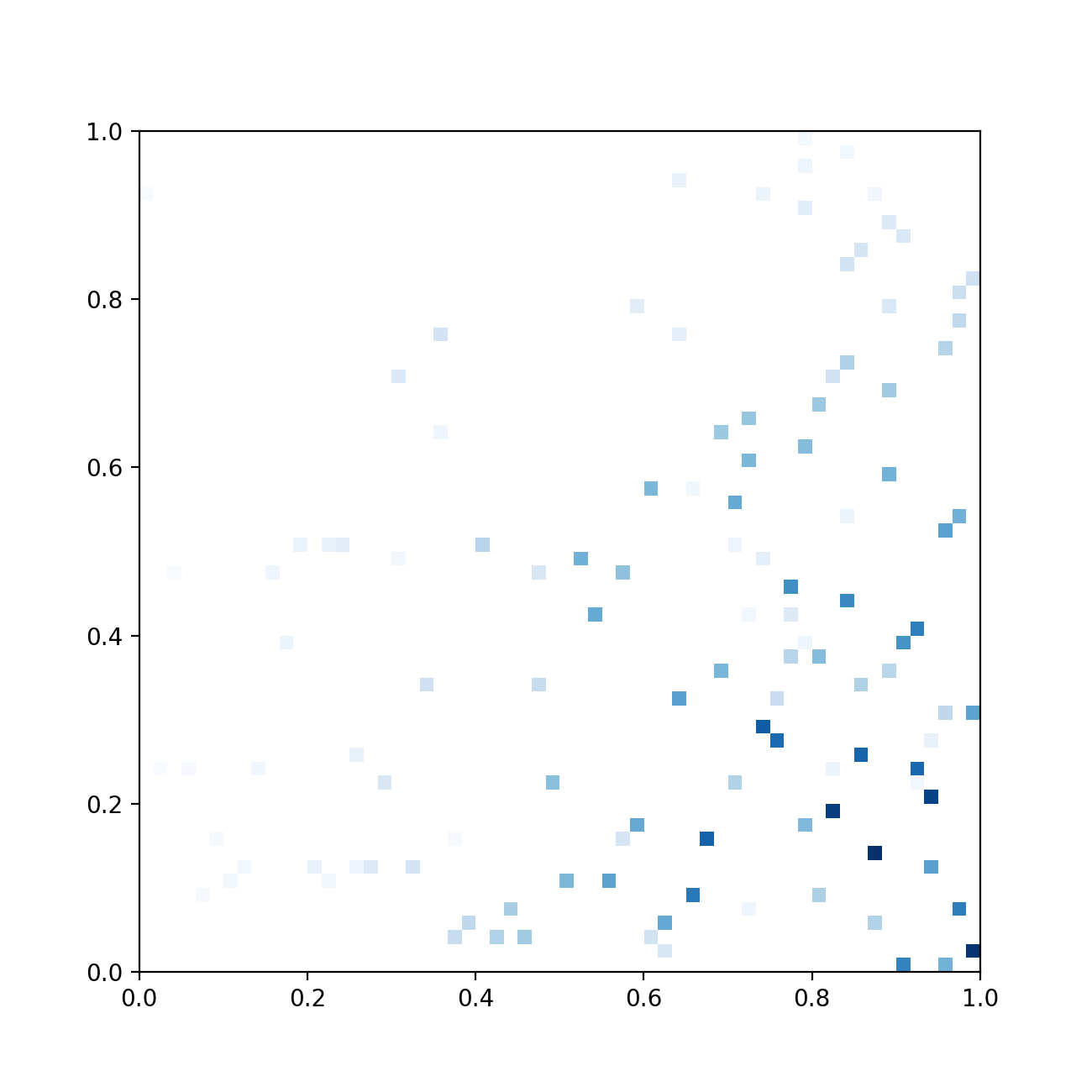}
      \caption{iteration 0}
    \end{subfigure}
    \begin{subfigure}[h]{0.32\textwidth}
      \includegraphics[width=\textwidth]{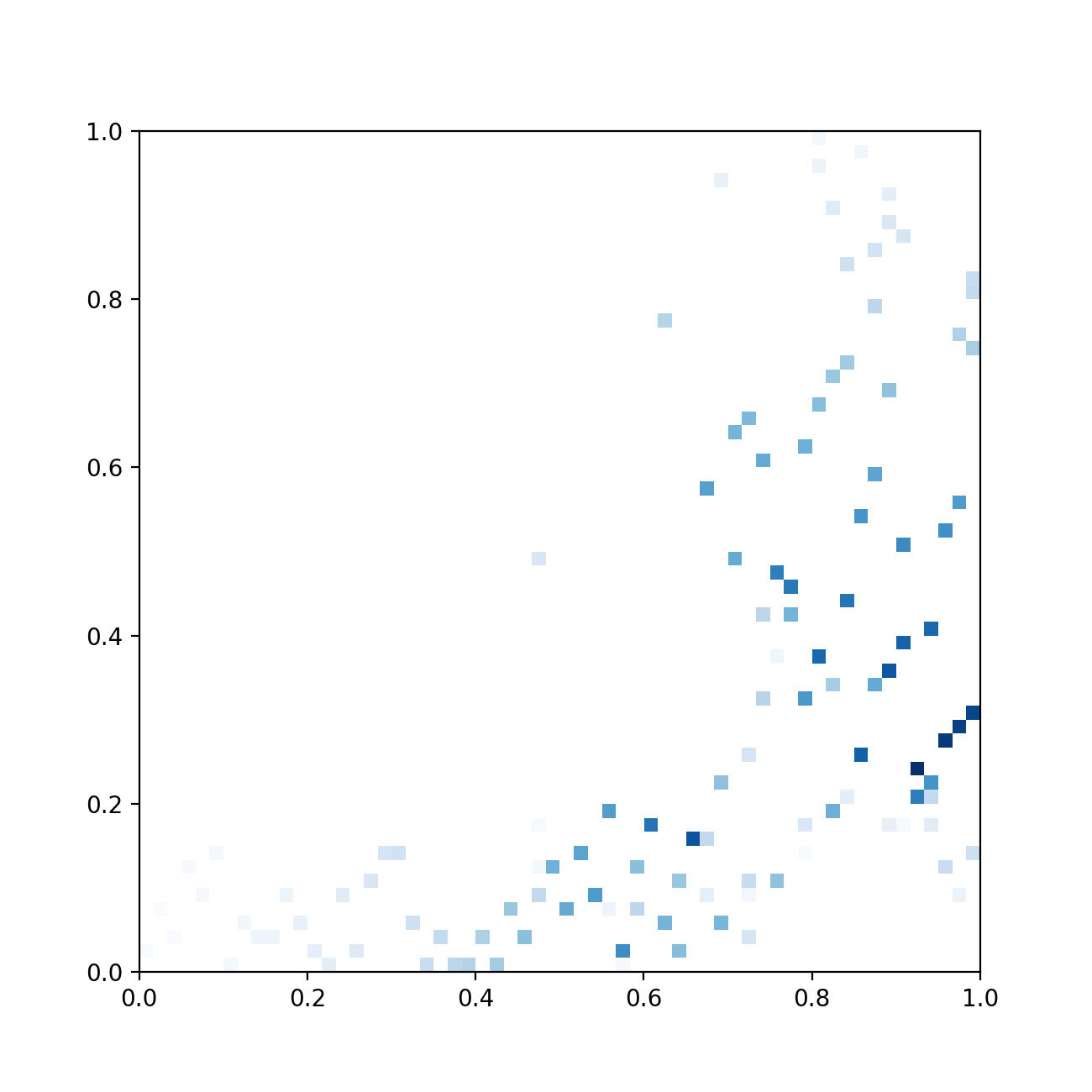}
      \caption{iteration 3000}
    \end{subfigure}
    \begin{subfigure}[h]{0.32\textwidth}
      \includegraphics[width=\textwidth]{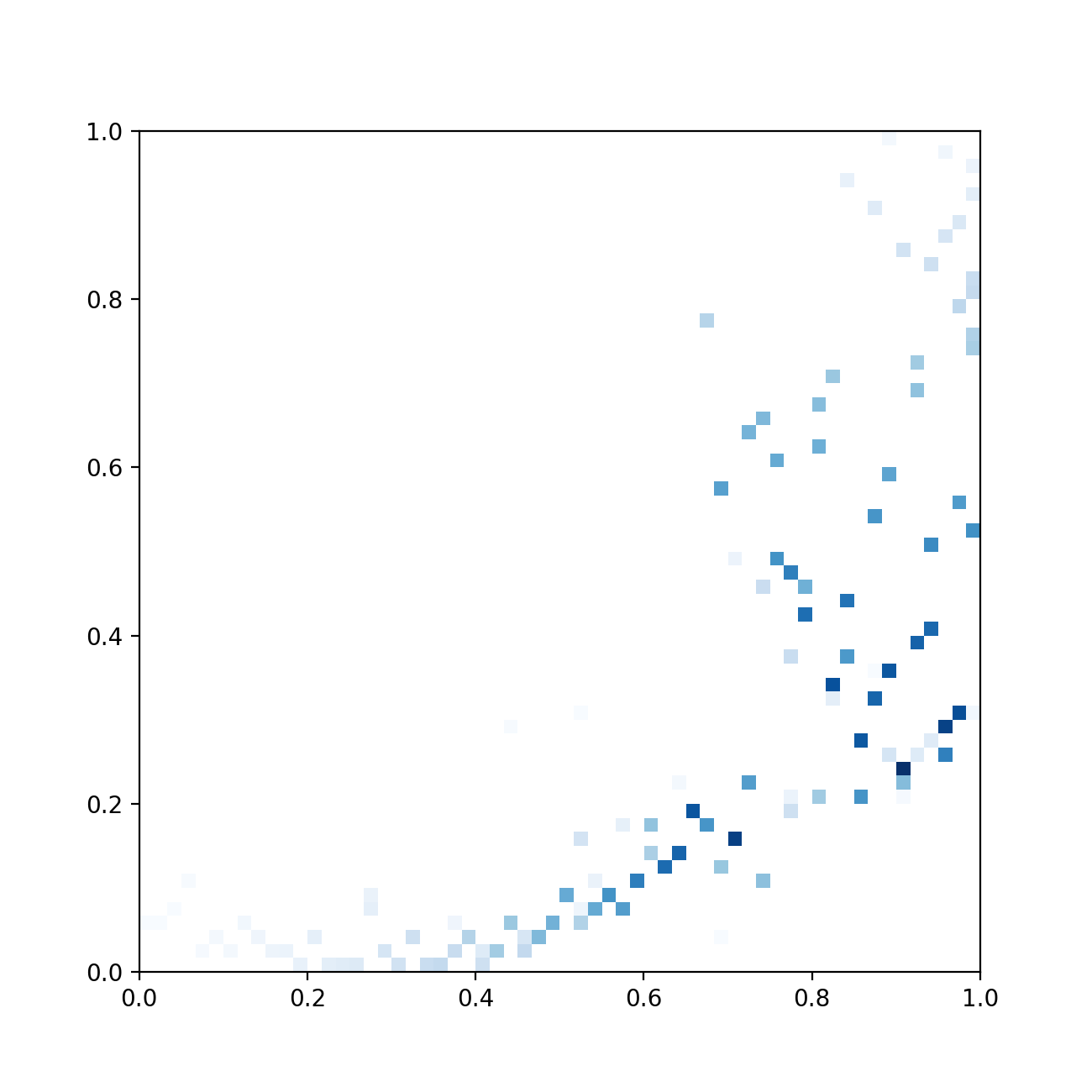}
      \caption{iteration 10000}
    \end{subfigure}
    \begin{subfigure}[h]{0.32\textwidth}
      \includegraphics[width=\textwidth]{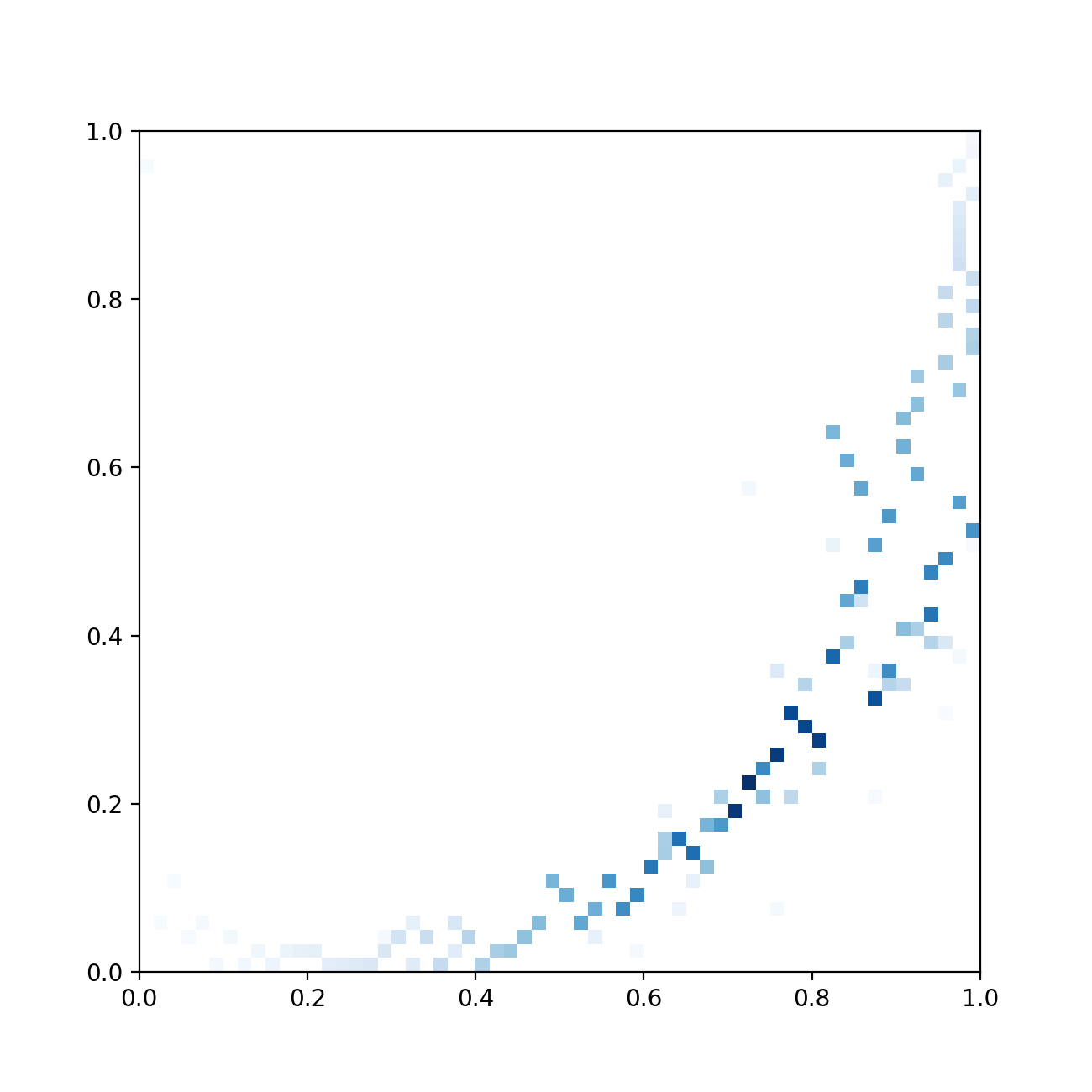}
      \caption{iteration 60000}
    \end{subfigure}
    \begin{subfigure}[h]{0.32\textwidth}
      \includegraphics[width=\textwidth]{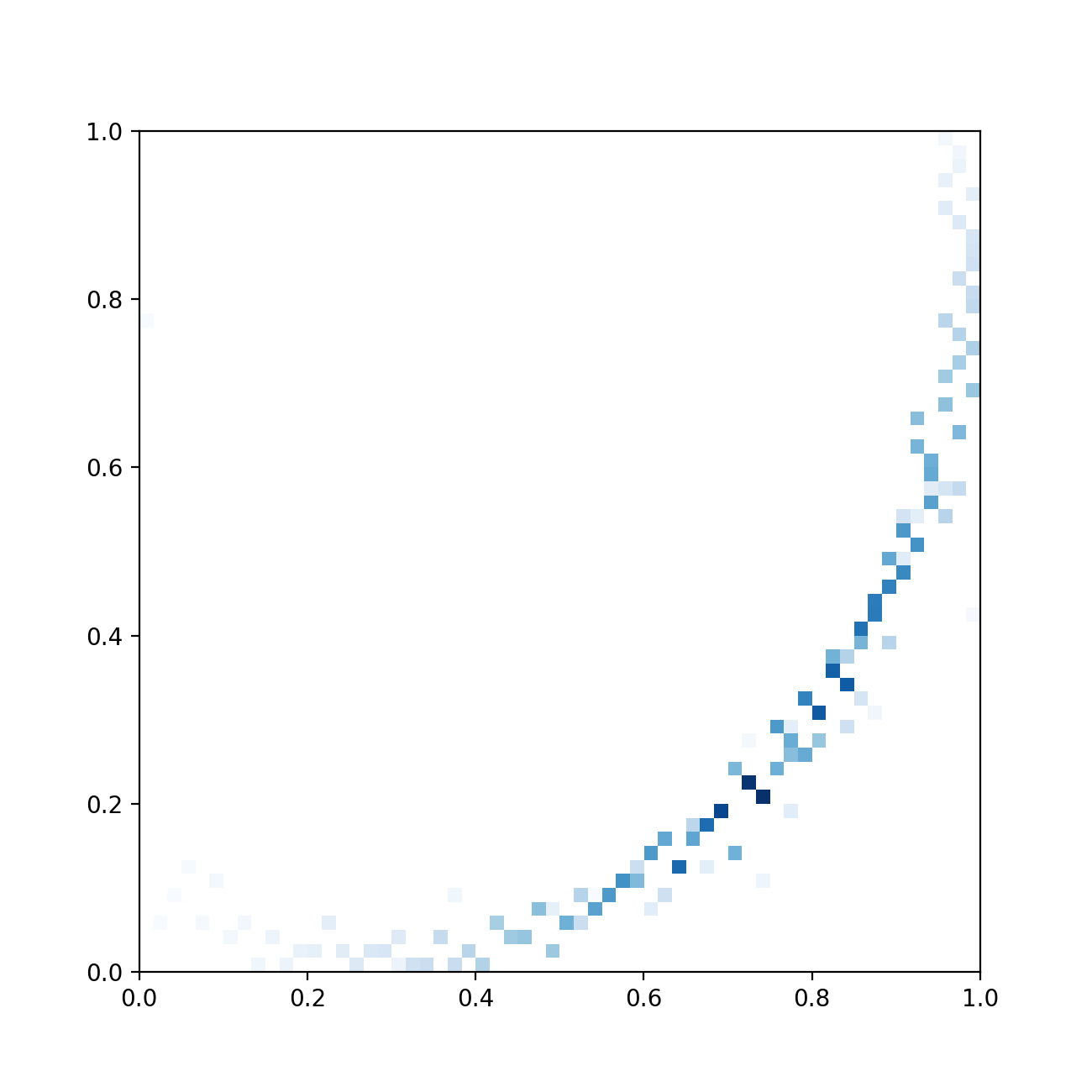}
      \caption{iteration 100000}
    \end{subfigure}
    \begin{subfigure}[h]{0.32\textwidth}
      \includegraphics[width=\textwidth]{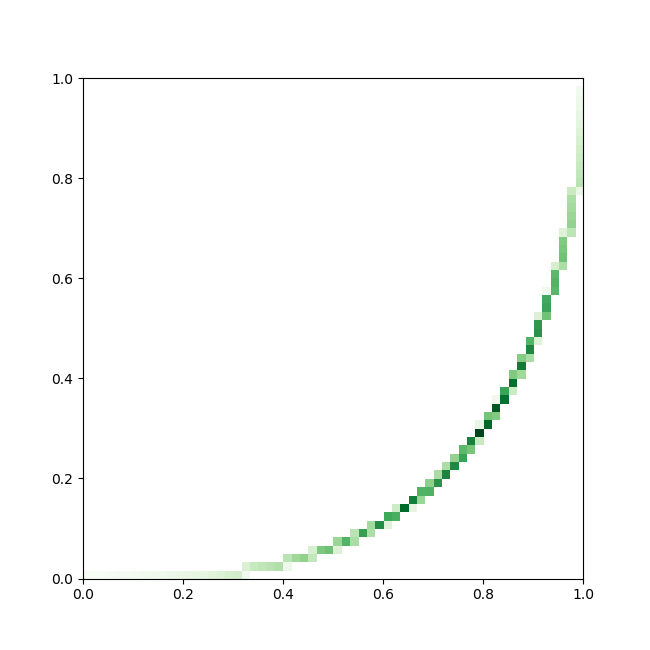}
      \caption{optimal configuration \label{fig:configsP0N60Optim}}
    \end{subfigure}
    \caption{Convergence for two 1D marginal laws with 60 test functions on each set for a quadratic cost \label{fig:configsP0N60}}
\end{figure}

\subsection{Gradient on a penalized functional \label{sect:numericsPenalizedGradient}}

\subsubsection{Principles \label{sect:gradprinciples}}

We make use of Theorem~\ref{prop:approxProblmDiscreteMeasGeneral} by searching optima of the MCOT problem with $N$ test functions on each space by looking for an optimal
probability measure which is finitely supported on at most $2N +2$ points (note that in the multimarginal case, we can look similarly for measures supported on $DN + 2$ points). 
This algorithm consists in penalizing moments constraints of the MCOT problem
for $N$ differentiable test functions on each space
($(\phi_m)_{1 \le m \le N}$ and $(\psi_n)_{1 \le n \le N}$)
and then using a gradient-type algorithm to compute the optimum.

For sake of simplicity, we consider the case of two marginal laws where the cost function~$c$ is assumed to be differentiable. Let us write the position of the $2N + 2$ particles
by $((x_k, y_k))_{1 \le k \le 2N+2}$ and their weights by $(p_k)_{1\le k \le 2N+2}$. Then, it is natural to consider the minimization of
$$  \sum_{k = 1}^{2N+2} p_k c(x_k, y_k)
    + \frac 1 \eta \left(
      \sum_{m = 1}^N \left(
        \sum_{k = 1}^{2N+2} p_k \phi_m(x_k) - \mu_m
      \right)^2 \right. \\ \left.
      + \sum_{n = 1}^N \left(
        \sum_{k = 1}^{2N +2}p_k \psi_n(y_k) - \nu_n
      \right)^2
    \right),$$
for some small parameter $\eta>0$ and under the constraints $p_k\ge0$, $\sum_{k=1}^{2N+2}p_k=1$. To avoid the handling of these latter constraints, we prefer to consider weights $p_k=\frac{e^{a_k}}{\sum_{k=1}^{2N +2} e^{a_k}}$ for some $a_k\in \R$. Since the positions $(x_k,y_k)$ are not assumed to be different from each other, the previous minimization problem is equivalent to minimize
\begin{multline}\label{eq:FfunctionGradientAlgo}
  F \left(x_1, ..., x_{2N+2}, y_1, ..., y_{2N+2}, a_1, ..., a_{2N+2} \right)\\
  = \sum_{k = 1}^{2N+2} \frac{e^{a_k}}{\sum_{l=1}^{2N +2} e^{a_l}} c(x_k, y_k)
    + \frac 1 \eta \left(
      \sum_{m = 1}^N \left(
        \sum_{k = 1}^{2N+2} \frac{e^{a_k}}{\sum_{l=1}^{2N +2} e^{a_l}} \phi_m(x_k) - \mu_m
      \right)^2 \right. \\ \left.
      + \sum_{n = 1}^N \left(
        \sum_{k = 1}^{2N +2} \frac{e^{a_k}}{\sum_{l=1}^{2N +2} e^{a_l}} \psi_n(y_k) - \nu_n
      \right)^2
    \right). 
\end{multline}
For a fixed value of $\eta>0$, we use a projected gradient algorithm (see e.g. Algorithm 1.3.16 of \cite{polak1997optimization}), to ensure that $(x_k,y_k)\in [0,1]^2$ for all $k$, together with a line search method. We implement alternated gradient steps as follows: first, a gradient step is performed on the coefficients $(a_k)_{1\le k\le 2N+2}$  with $(x_k,y_k)_{1\le k\le 2N+2}$ fixed; second, a gradient step is done on the positions $(x_k)_{1\le k\le 2N+2}$ with the other variables fixed; lastly, a gradient step is done on the positions $(y_k)_{1\le k\le 2N+2}$ with the other variables fixed. This procedure is repeated until the norm of the projected gradient is below some error threshold. The convergence of this algorithm is ensured by Wolfe theorem (see Theorem 1.2.21 of \cite{polak1997optimization}).

The example computations exposed thereafter use regularized continuous piecewise affine test functions. Remark that we do not use discontinuous piecewise affine test functions, for which we have rates of convergence for $W_1$ and $W_2$. We make this choice because the gradient algorithm that we describe above has better numerical properties for continuously differentiable test functions.

In the MCOT formulation~\eqref{eqn:MCOTDef} with $M=N$, minimizers of MCOT problems are the same if we consider test functions $(\bar{\phi}_m)_{1\le m\le N}$ and $(\bar{\psi}_m)_{1\le m\le N}$ such that ${\rm Span}((\bar{\phi}_m)_{1\le m\le N})={\rm Span}(({\phi}_m)_{1\le m\le N})$ and ${\rm Span}((\bar{\psi}_m)_{1\le m\le N})={\rm Span}(({\psi}_m)_{1\le m\le N})$. However, in the penalized version of the problem~\eqref{eq:FfunctionGradientAlgo}, the choice of the test functions has a strong impact on the convergence of the gradient algorithms. It appears that considering positive part functions (which are convex functions) greatly improves the efficiency of the procedure with respect to classical hat functions, even if both spans are identical.

Thus, for the numerical examples in 1D, we use the functions for $\epsilon > 0$ and for all $N \in \N^*$,
\begin{equation}
  \forall x \in [0,1], \quad \varphi^N_0(x) = \left\{
  \begin{array}{ll}
    - \left(x- \frac{1}{N} \right) & \text{if} \quad x - \frac{1}{N} \le - \epsilon \\
    \frac{1}{4\epsilon}\left(x - \frac{1}{N} - \epsilon\right)^2 &\text{if} \quad -\epsilon \le x - \frac{1}{N} \le \epsilon \\
    0 & \text{if} \quad x - \frac{1}{N} \ge \epsilon ,
    \end{array}\right.
\end{equation}
and for all $1 \le m \le N$,
\begin{equation}
  \forall x \in [0,1], \quad \varphi^N_m(x) = \left\{
  \begin{array}{ll}
    0 & \text{if} \quad x - \frac{m-1}{N}\le - \epsilon\\
    \frac{1}{4\epsilon} \left(x - \frac{m-1}{N} + \epsilon\right)^2 & \text{if} \quad -\epsilon \le x - \frac{m-1}{N} \le \epsilon \\
    x  - \frac{m-1}{N} & \text{if} \quad x - \frac{m-1}{N} \ge \epsilon;
    \end{array}\right.
\end{equation}
which are a regularization of the functions, for all $N \in \N^*$, and $1 \le m \le \N$,
\begin{equation*}
  \left( \cdot - \frac{1}{N} \right)^- \quad \text{and} \quad \left( \cdot - \frac{m-1}{N}\right)^+.
\end{equation*}
The vector space spanned by the restriction to~$[0,1]$ of these functions is the same as the one spanned by the classical continuous piecewise affine functions
(i.e. the functions $\psi^N_m$ introduced in Section \ref{sect:cvgSpeedP1UnderEsttoOT}).

For the example in dimension~$2$, for $N \in \N^*$, we use the following $(N + 1)^2$ test  functions defined as follows: for all $1 \le m,n \le N$ and $(x,y) \in [0,1]^2$,
\begin{equation} \label{phi_mn_1}
  \varphi^N_{m,n}(x,y) = \varphi^{2N}_{m+n-1}\left(
    \frac{x + y - \tilde{\varphi}^N_{m-n+1} (x-y) - \tilde{\varphi}^N_{n-m+1} (y-x)}{2}
  \right)
\end{equation}
where for all $q \in \mathbb{Z}$,
\begin{equation} \label{phi_mn_2}
  \forall x \in [0,1], \quad
  \tilde{\varphi}_q^N (x) = \left\{ \begin{array}{ll}
    \varphi_q^N(x) &\text{if} \quad 1 \le q \le N, \\
    0 &\text{otherwise},
  \end{array}\right.
\end{equation}
and
\begin{equation} \label{phi_mn_3}
  \varphi^N_{0,0}(x,y) = \varphi^N_{1,1}\left(\frac{1}{N}-x, \frac{1}{N}-y\right).
\end{equation}
For $1 \le m, n \le N$, we set
\begin{equation}  \label{phi_mn_4}
  \varphi^N_{m,0}(x,y) = \varphi^N_{m,1}\left(x,\frac{1}{N} - y\right) \quad \text{and} \quad \varphi^N_{0,n}(x,y) = \varphi^N_{1,n}\left(\frac{1}{N} - x,y\right).
\end{equation}
Those functions are a regularization of the functions
 $G^N_{m,n}(x,y) = \left(\min\left(x - \frac{m-1}{N}, y-\frac{n-1}{N}\right)\right)^+$ with $1 \le m, n \le N$, $G^N_{0,0}(x,y) = \left(\min\left(\frac{1}{N}- x, \frac{1}{N} - y \right)\right)^+$, $G^N_{0,m}(x,y) = \left(\min\left(x - \frac{m-1}{N}, \frac{1}{N} - y \right)\right)^+$, $G^N_{n,0}(x,y) = \left(\min\left(\frac{1}{N} - x, y-\frac{n-1}{N}\right)\right)^+$. The vector space spanned by the restriction to~$[0,1]^2$ of these functions is the same as the one spanned by the classical continuous piecewise affine functions associated to the mesh illustrated in Figure~\ref{mesh}.

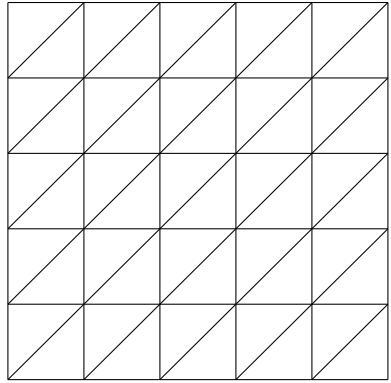
\begin{figure}[htp]
  \centering
      \begin{tikzpicture}
        \draw[-] (0,0) -- (0,5);
        \draw[-] (1,0) -- (1,5);
        \draw[-] (2,0) -- (2,5);
        \draw[-] (3,0) -- (3,5);
        \draw[-] (4,0) -- (4,5);
        \draw[-] (5,0) -- (5,5);

        \draw[-] (0,0) -- (5,0);
        \draw[-] (0,1) -- (5,1);
        \draw[-] (0,2) -- (5,2);
        \draw[-] (0,3) -- (5,3);
        \draw[-] (0,4) -- (5,4);
        \draw[-] (0,5) -- (5,5);

        \draw[-] (0,4) -- (1,5);
        \draw[-] (0,3) -- (2,5);
        \draw[-] (0,2) -- (3,5);
        \draw[-] (0,1) -- (4,5);
        \draw[-] (0,0) -- (5,5);
        \draw[-] (1,0) -- (5,4);
        \draw[-] (2,0) -- (5,3);
        \draw[-] (3,0) -- (5,2);
        \draw[-] (4,0) -- (5,1);        
      \end{tikzpicture}
  \caption{Mesh of  piecewise affine functions  on~$[0,1]^2$.}\label{mesh}
\end{figure}

\subsubsection{1D numerical example}

\paragraph{Convergence of the algorithm}
We tested the algorithm for the marginal laws with densities
\begin{equation}
  \rho_\mu(x) = 3x^2, \quad \rho_\nu(y) = 2-2y,
\end{equation}
the quadratic cost function $c(x,y)=|y-x|^2$ and a fixed penalization coefficient.
The exact optimal transport map between $\mu$ (abscissa) and $\nu$ (ordinate) is represented by the red line on Figures \ref{fig:configs1DN10} and \ref{fig:configs1DN40}.
We present two minimizations:
\begin{itemize}
  \item $N = 10$ and $1/\eta = 100$
  \item $N = 40$ and $1/\eta = 25$.
\end{itemize}
Once each minimization process has converged, the cost for $N = 10$ is 0.17805
and the one for $N = 40$ is 0.17785.
The evolution of the configurations through the iterations are represented
for $N = 10$ and $N = 40$ in Figure \ref{fig:configs1DN10} and \ref{fig:configs1DN40}.
The darker the particle $(x_k,y_k)$, the larger its weight $p_k$. 

And the convergence of the numerical cost for each one in Figure \ref{fig:costCv1D}
the pink line represents the cost of the exact Optimal Transport problem that we approximate.

\begin{figure}[htp]
  \centering
  \begin{subfigure}{0.32\textwidth}
    \includegraphics[width=\textwidth]{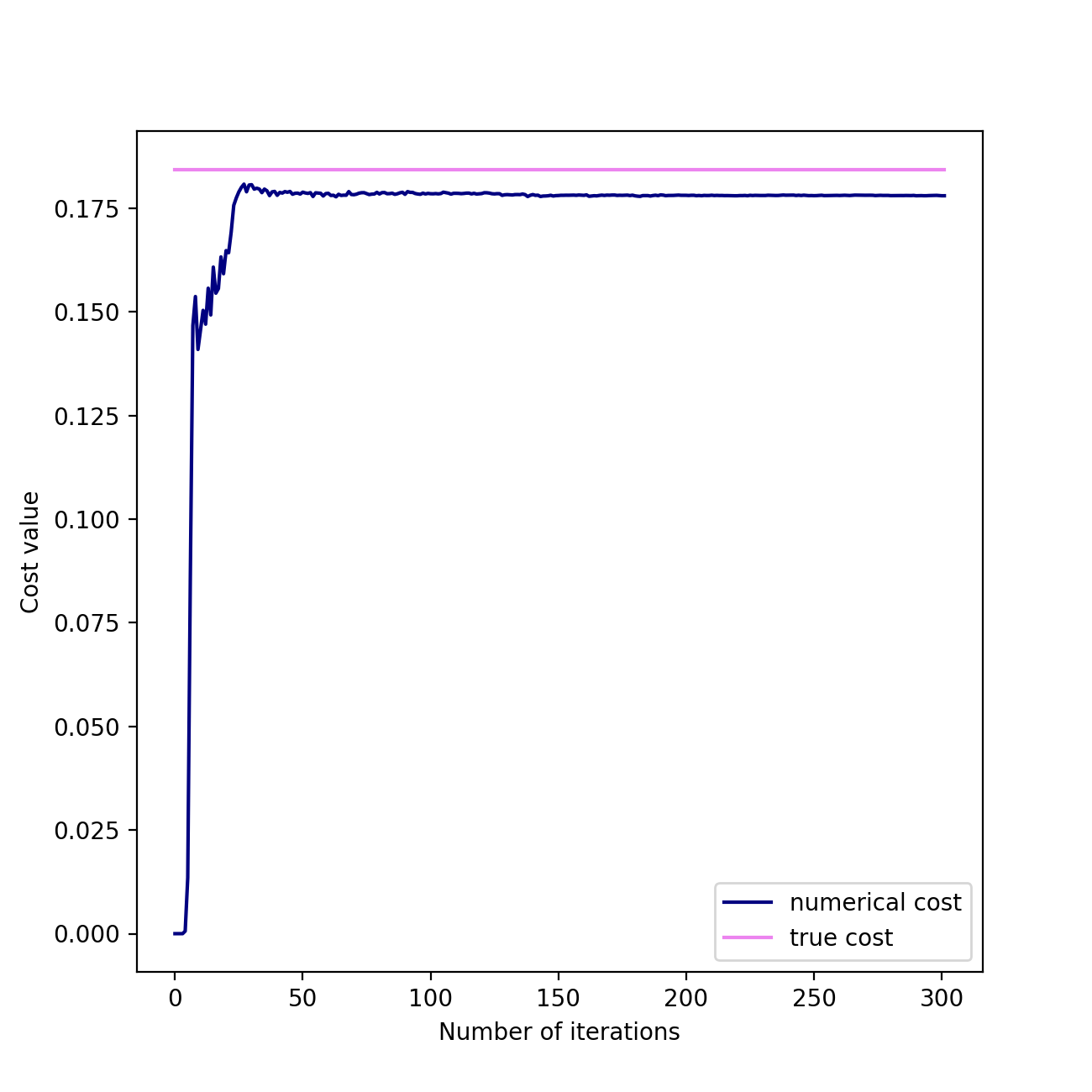}
    \caption{$N = 10$}
  \end{subfigure}
  \begin{subfigure}{0.32\textwidth}
    \includegraphics[width=\textwidth]{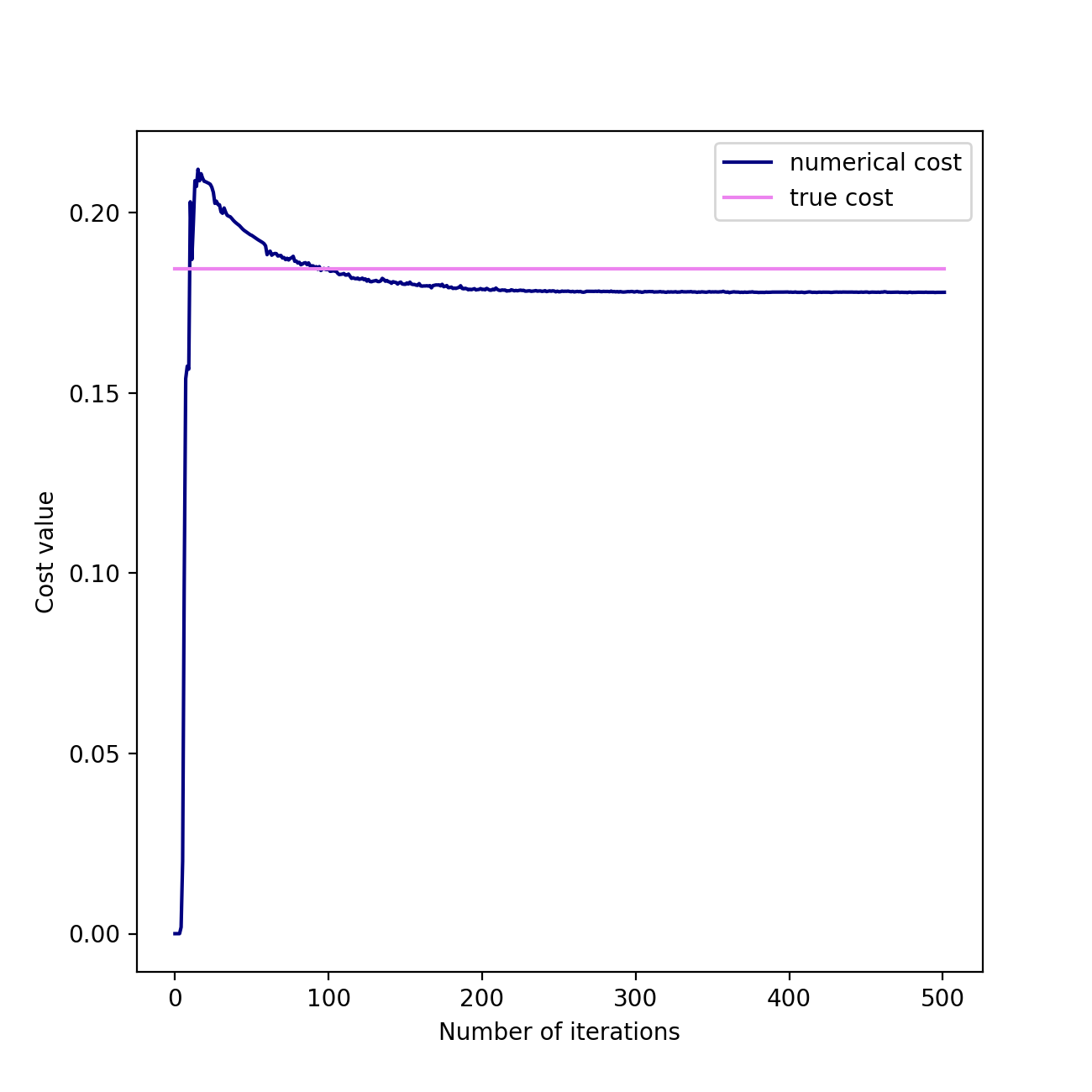}
    \caption{$N = 40$}
  \end{subfigure}
  \caption{Cost in function of the number of iterations in the gradient algorithm \label{fig:costCv1D}}
\end{figure}

\begin{figure}[htp]
  \centering
    \begin{subfigure}[h]{0.32\textwidth}
      \includegraphics[width=\textwidth]{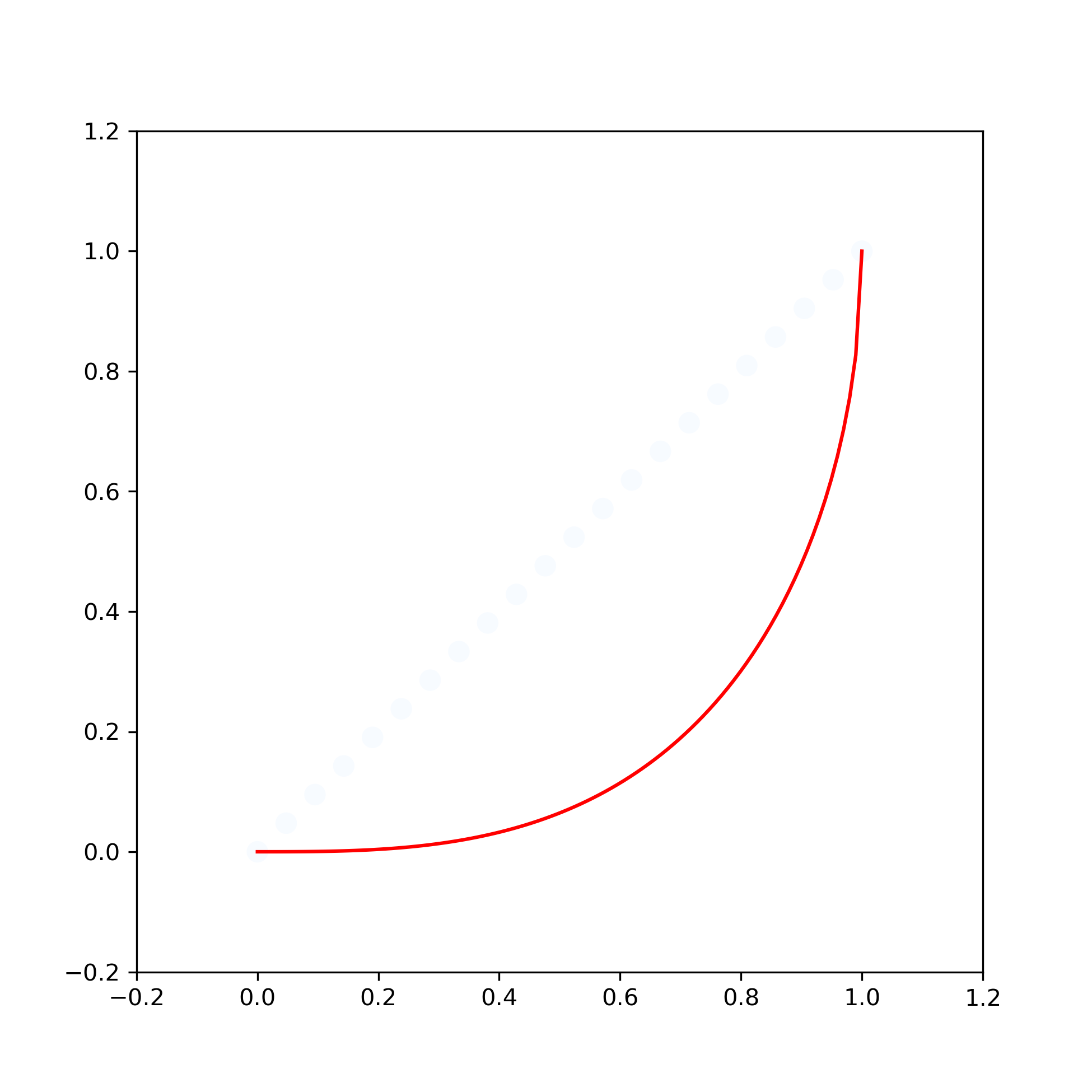}
      \caption{iteration 0}
    \end{subfigure}
    \begin{subfigure}[h]{0.32\textwidth}
      \includegraphics[width=\textwidth]{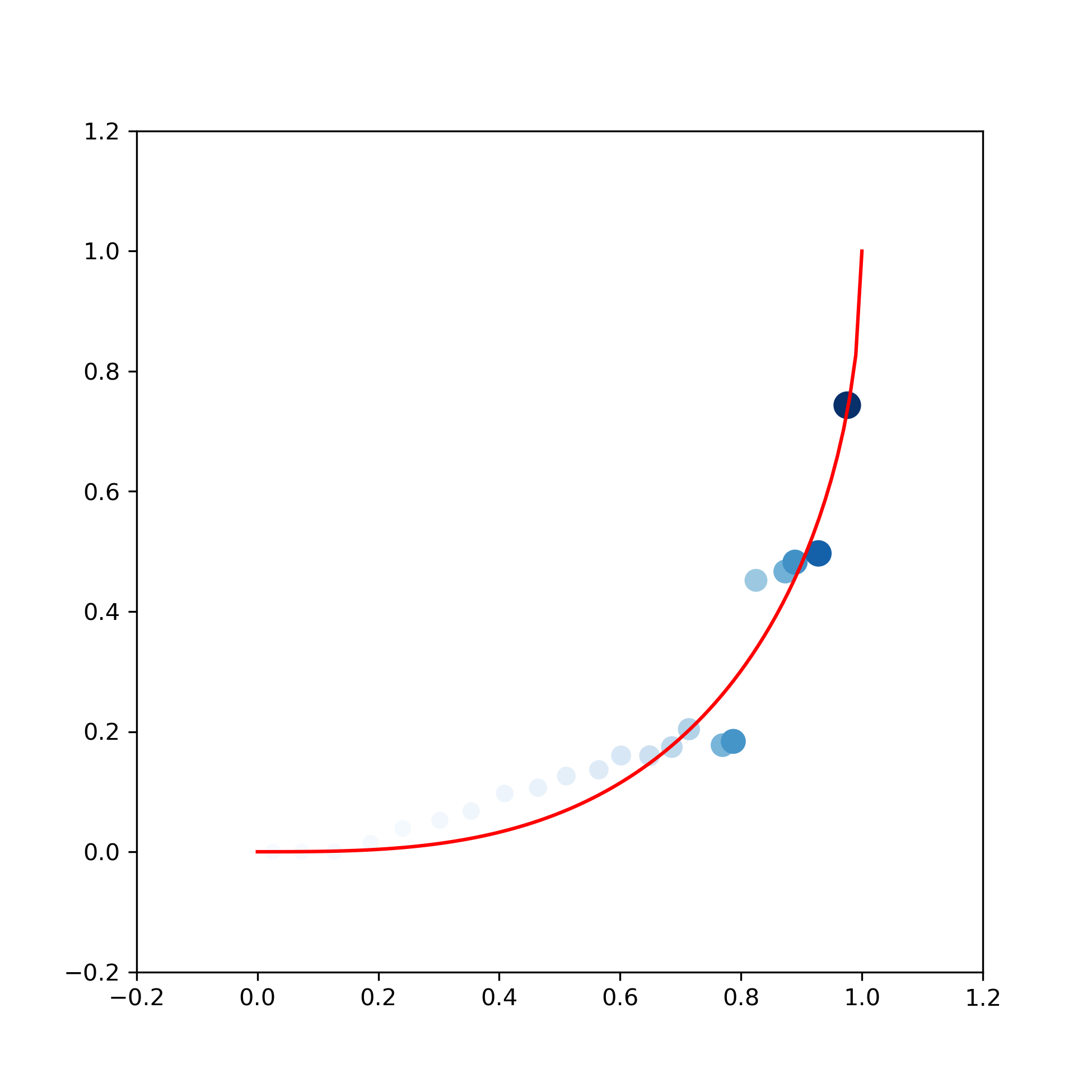}
      \caption{iteration 26}
    \end{subfigure}
    \begin{subfigure}[h]{0.32\textwidth}
      \includegraphics[width=\textwidth]{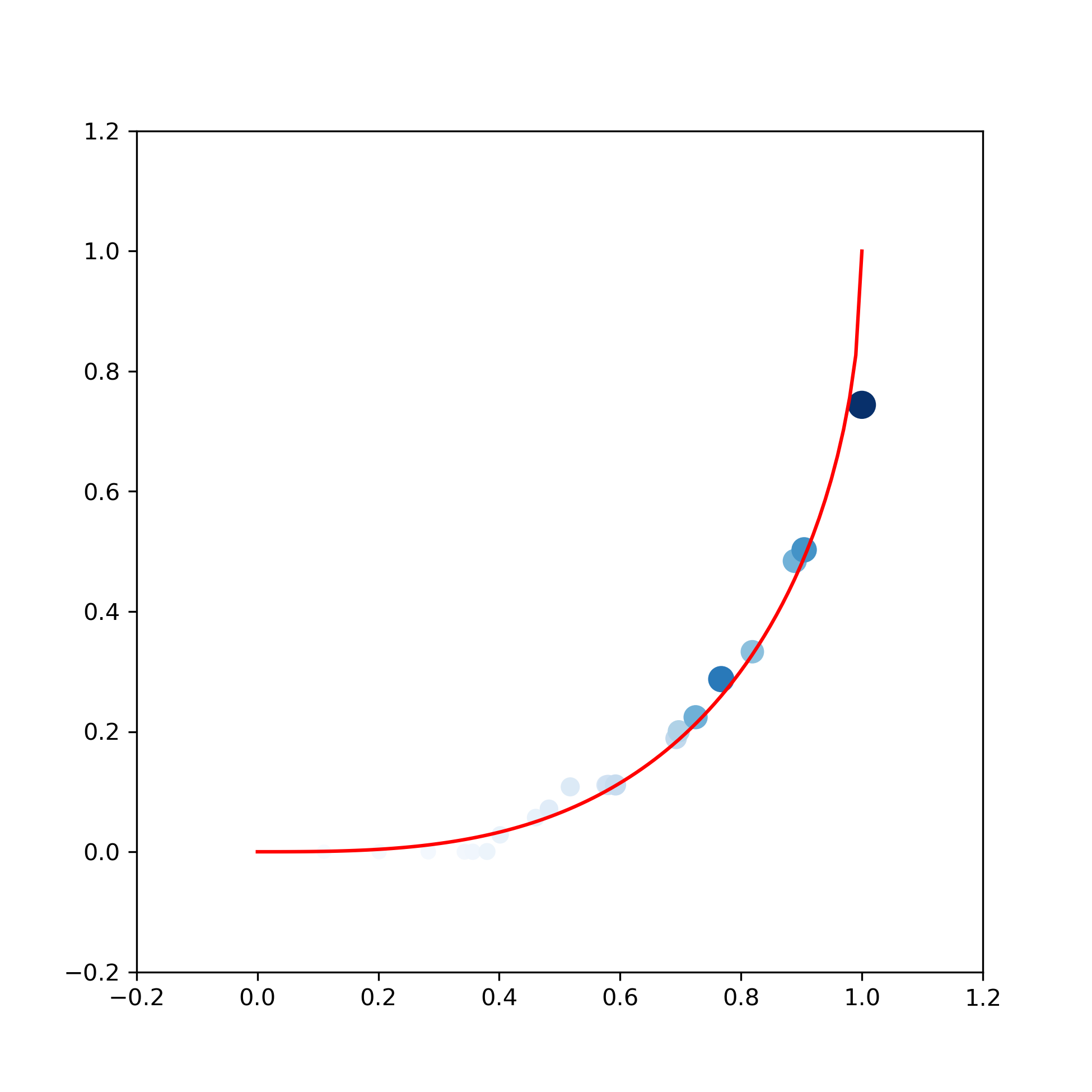}
      \caption{{iteration 301}}
    \end{subfigure}
    \caption{Convergence  with 10 test functions on each set for a quadratic cost \label{fig:configs1DN10}}
\end{figure}

\begin{figure}[htp]
  \centering
    \begin{subfigure}[h]{0.32\textwidth}
      \includegraphics[width=\textwidth]{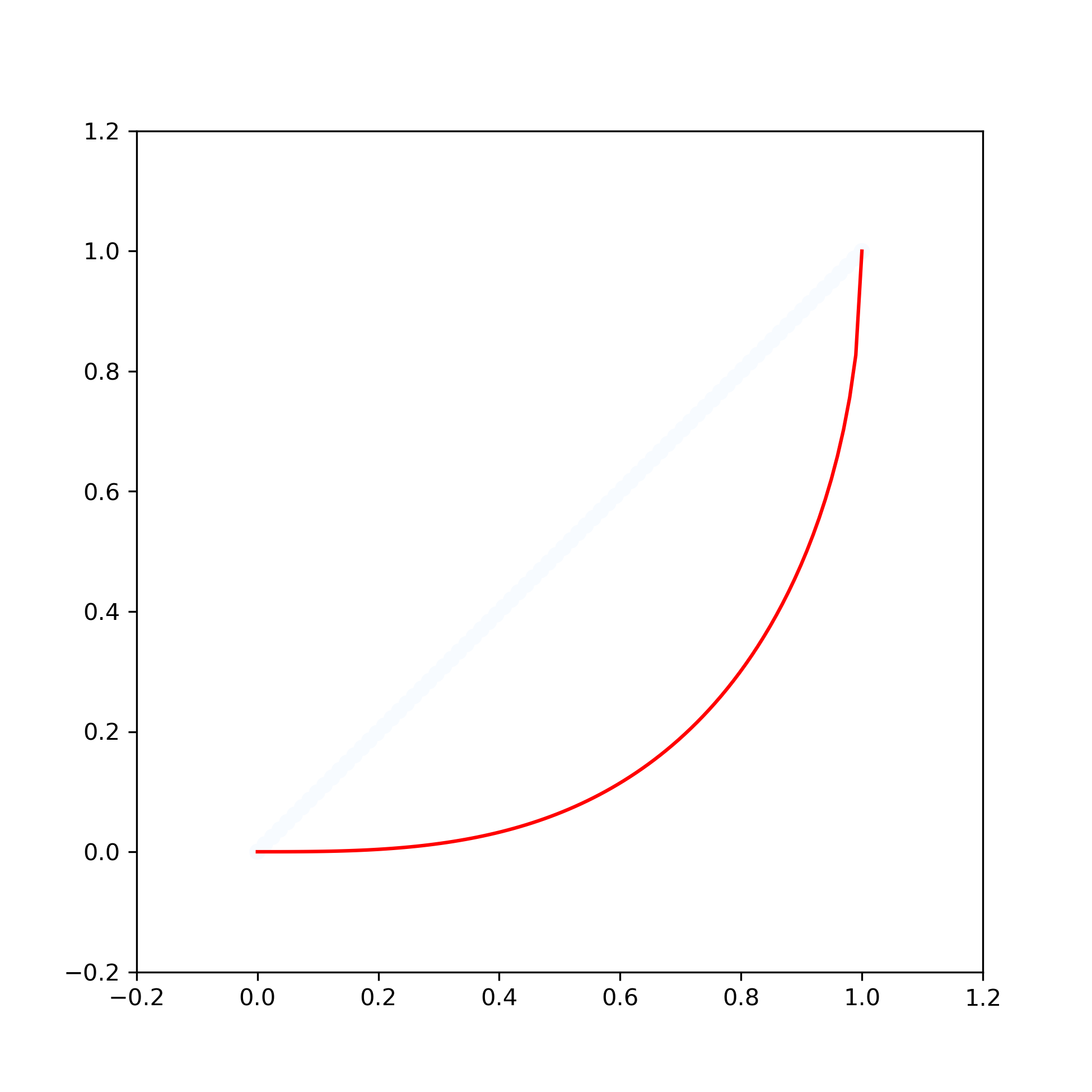}
      \caption{iteration 0}
    \end{subfigure}
    \begin{subfigure}[h]{0.32\textwidth}
      \includegraphics[width=\textwidth]{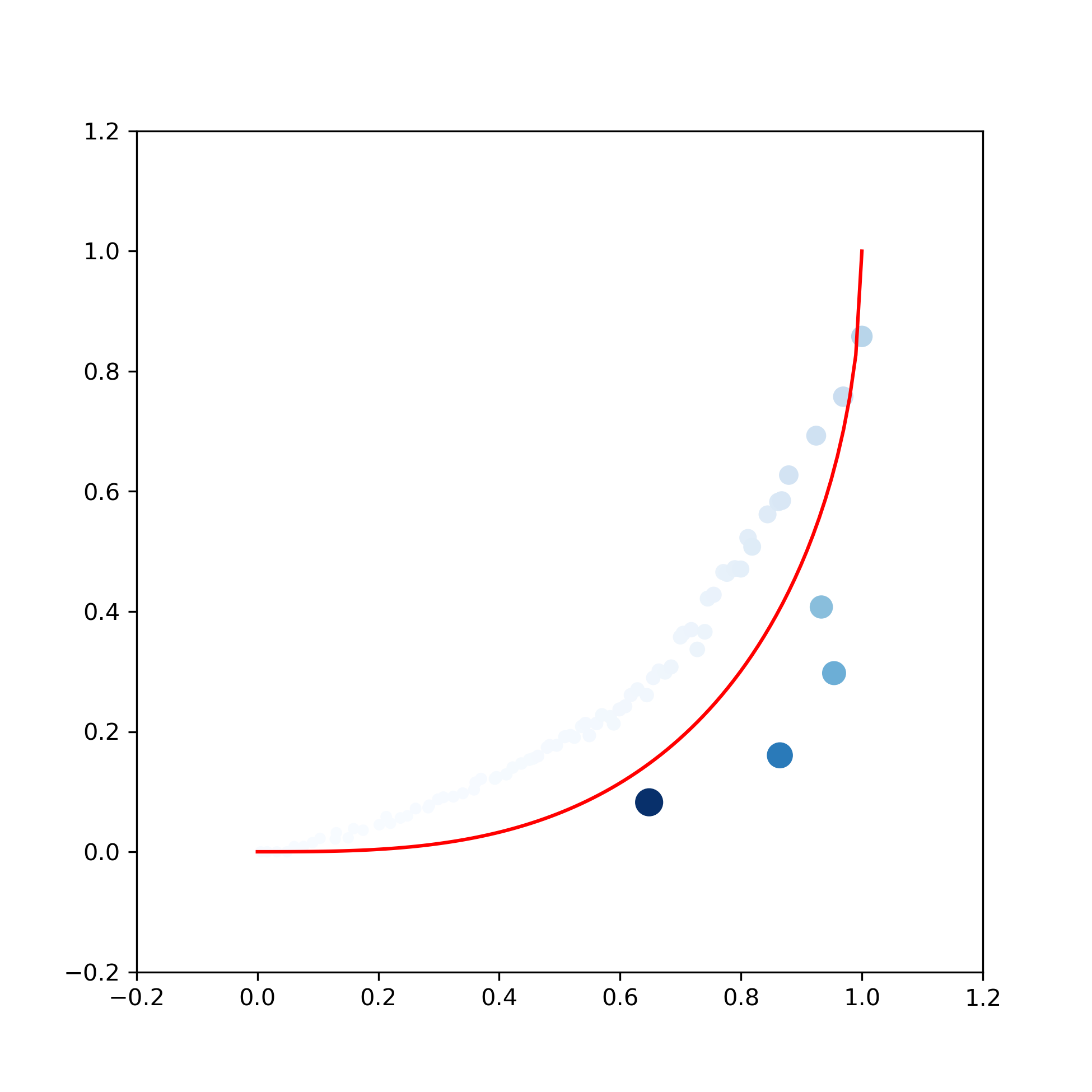}
      \caption{iteration 101}
    \end{subfigure}
    \begin{subfigure}[h]{0.32\textwidth}
      \includegraphics[width=\textwidth]{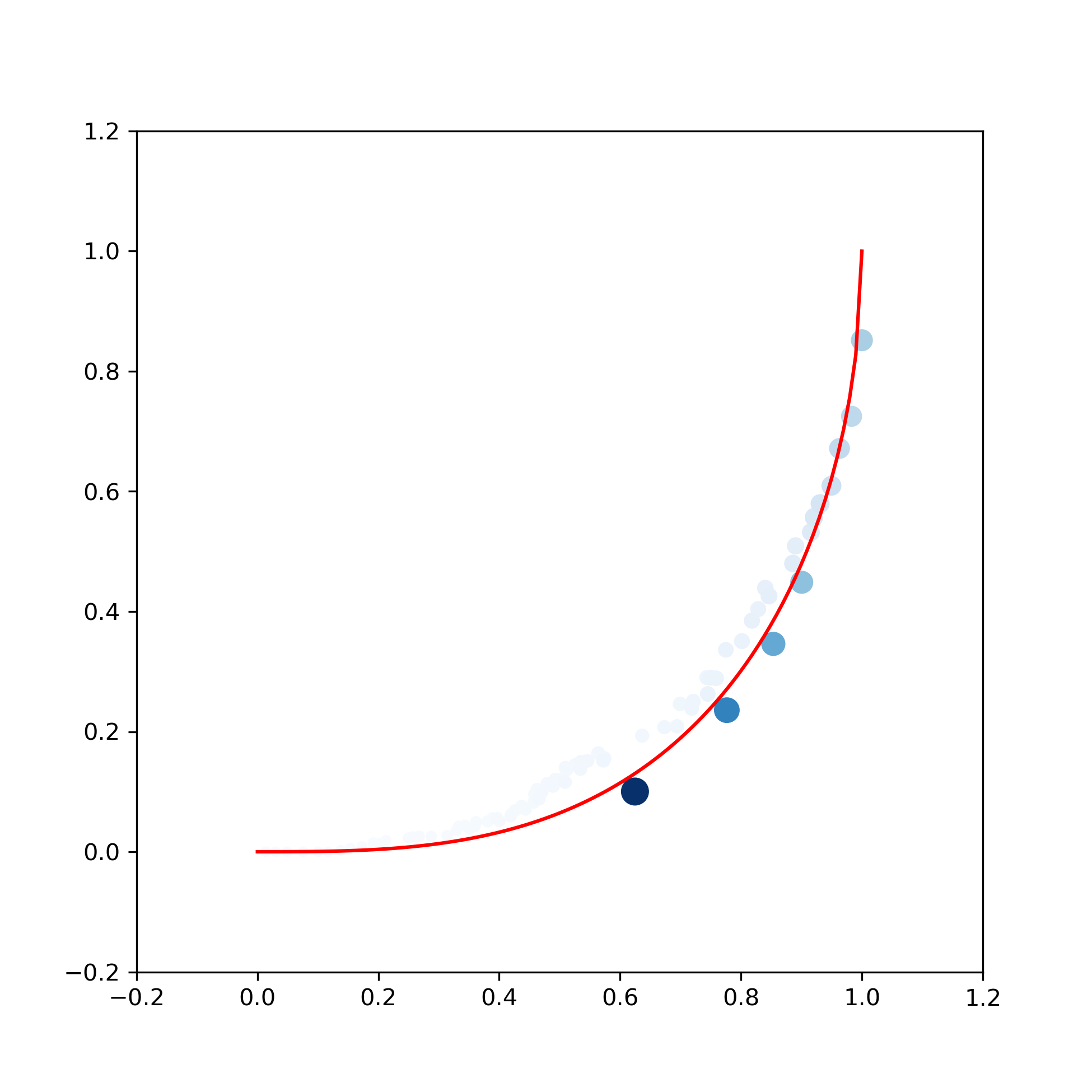}
      \caption{{iteration 501}}
    \end{subfigure}
    \caption{Convergence  with 40 test functions on each set for a quadratic cost \label{fig:configs1DN40}}
\end{figure}

We note on these examples that the particles $(x_k,y_k)$ tend to cluster in some places. This is due to the fact that the cost function is convex and that the test functions are (up to the regularization) locally linear.

\subsubsection{2D numerical example}

We consider two normal marginal laws in~$\R^2$: $\mu \sim \mathcal{N}_2(\mu,\Sigma_{\mu})$ and $\nu \sim \mathcal{N}_2(\nu,\Sigma_{\nu})$, with
\begin{equation}
  m_\mu= \begin{pmatrix}
  0 \\ 0
  \end{pmatrix}, \ \Sigma_\mu=\begin{pmatrix}
  1 & 0 \\ 0 & 1
  \end{pmatrix} , \quad
  m_\nu =\begin{pmatrix}
  1 \\ 1
  \end{pmatrix},\ \Sigma_\nu= \begin{pmatrix}
  1 & 0.7 \\ 0.7 & 1
  \end{pmatrix}, 
\end{equation}
and the quadratic cost function. In this case, it is known that the optimal cost is given by $|m_{\mu}-m_\nu|^2+\text{Tr}(\Sigma_\mu+\Sigma_\nu-2(\Sigma_\mu^{1/2} \Sigma_\nu \Sigma_\mu^{1/2})^{1/2})$ and the optimal transport map is given by $x\mapsto m_\nu+\Sigma_\mu^{-1/2} (\Sigma_\mu^{1/2} \Sigma_\nu \Sigma_\mu^{1/2})^{1/2} \Sigma_\nu^{-1/2}$, see e.g.~\cite{DoLa}. In Figures \ref{fig:configAtCV2D} and \ref{fig:configs2D},
the red density is $\mu$'s one and the blue one $\nu$'s.
We consider regularized piecewise linear test functions on $[-4,4]^2$ obtained by rescaling the functions~\eqref{phi_mn_1},~\eqref{phi_mn_2},~\eqref{phi_mn_3} and~\eqref{phi_mn_4} on $[0,1]^2$.

We represent several iterations of the optimization for $N = 36$ and $1/\eta = 2$ in Figure \ref{fig:configs2D},
where the green arrows represent the transport map computed by the algorithm
from $\mu$ (red) to $\nu$ (blue). The greener the arrow, the more weight it has.

We represent the configuration of particles at convergence on Figure \ref{fig:configAtCV2D}
where each particles consists in a red dot linked to a blue dot. The bigger, the more mass it transports.
The green dots represent the location where the red dot would have been transported if the particle
was on the transport plan. The convergence of the cost is represented in Figure \ref{fig:costCv2D}
where the pink line represents the cost of the Optimal Transport problem we approximate.
\begin{figure}[htp]
  \centering
  \begin{subfigure}[h]{0.48\textwidth}
    \includegraphics[width=\textwidth]{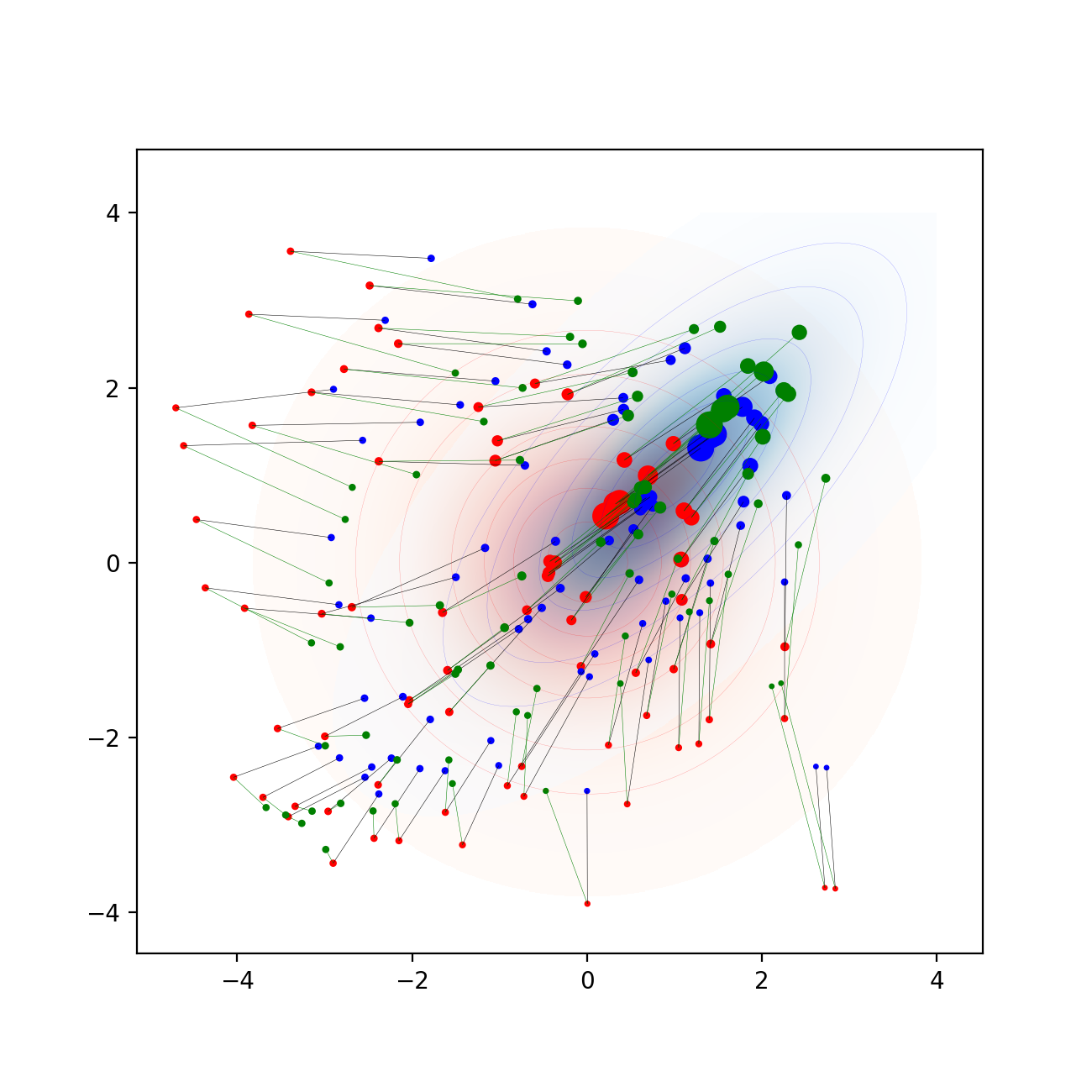}
    \caption{Transport map at convergence \label{fig:configAtCV2D}}
  \end{subfigure}
  \begin{subfigure}[h]{0.48\textwidth}
    \includegraphics[width=\textwidth]{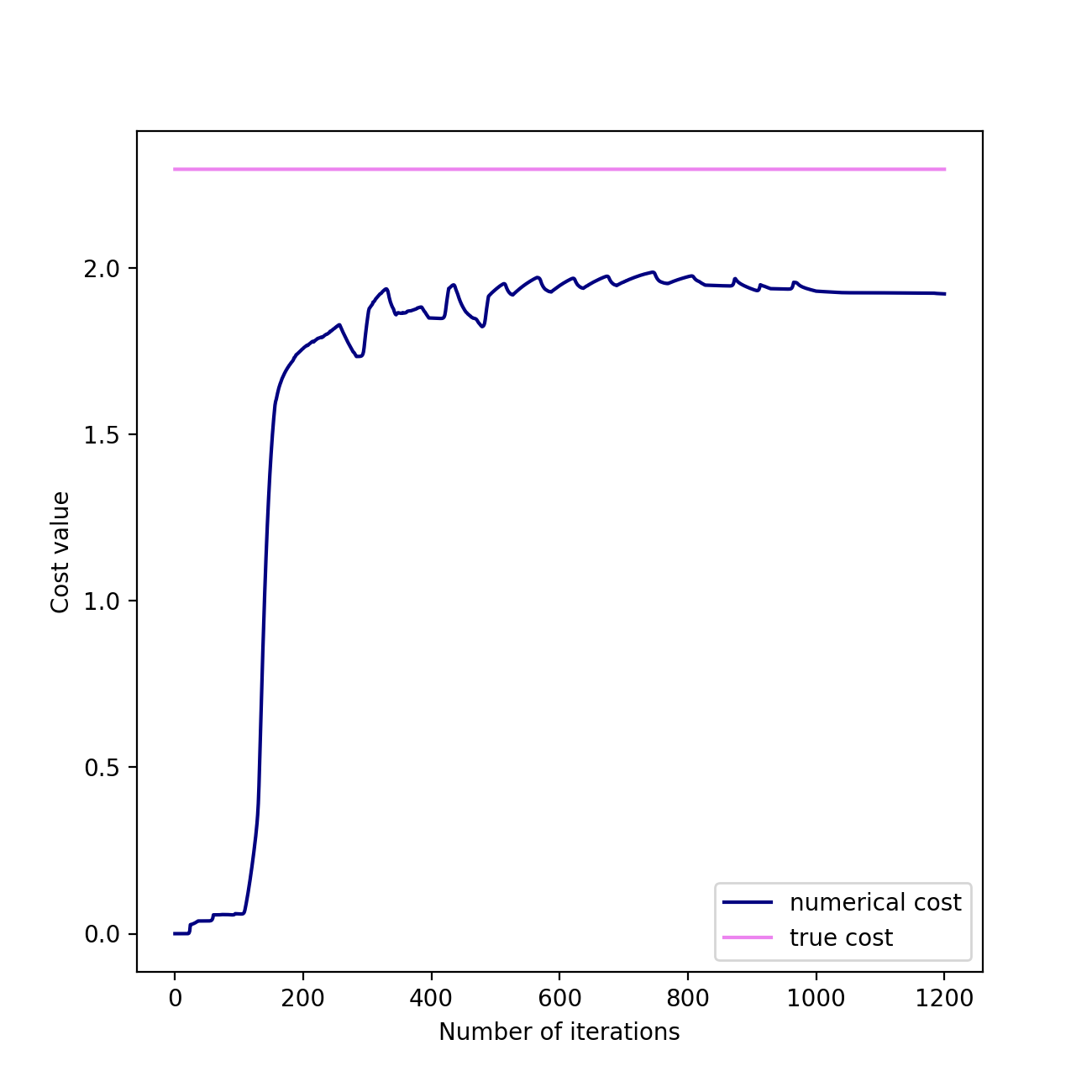}
    \caption{Cost convergence \label{fig:costCv2D}}
  \end{subfigure}
  \caption{Cost convergence and approximation of the transport plan at convergence \label{fig:TransportMapAnsCost2D}}
\end{figure}

\begin{figure}[htp]
  \centering
    \begin{subfigure}[h]{0.32\textwidth}
      \includegraphics[width=\textwidth]{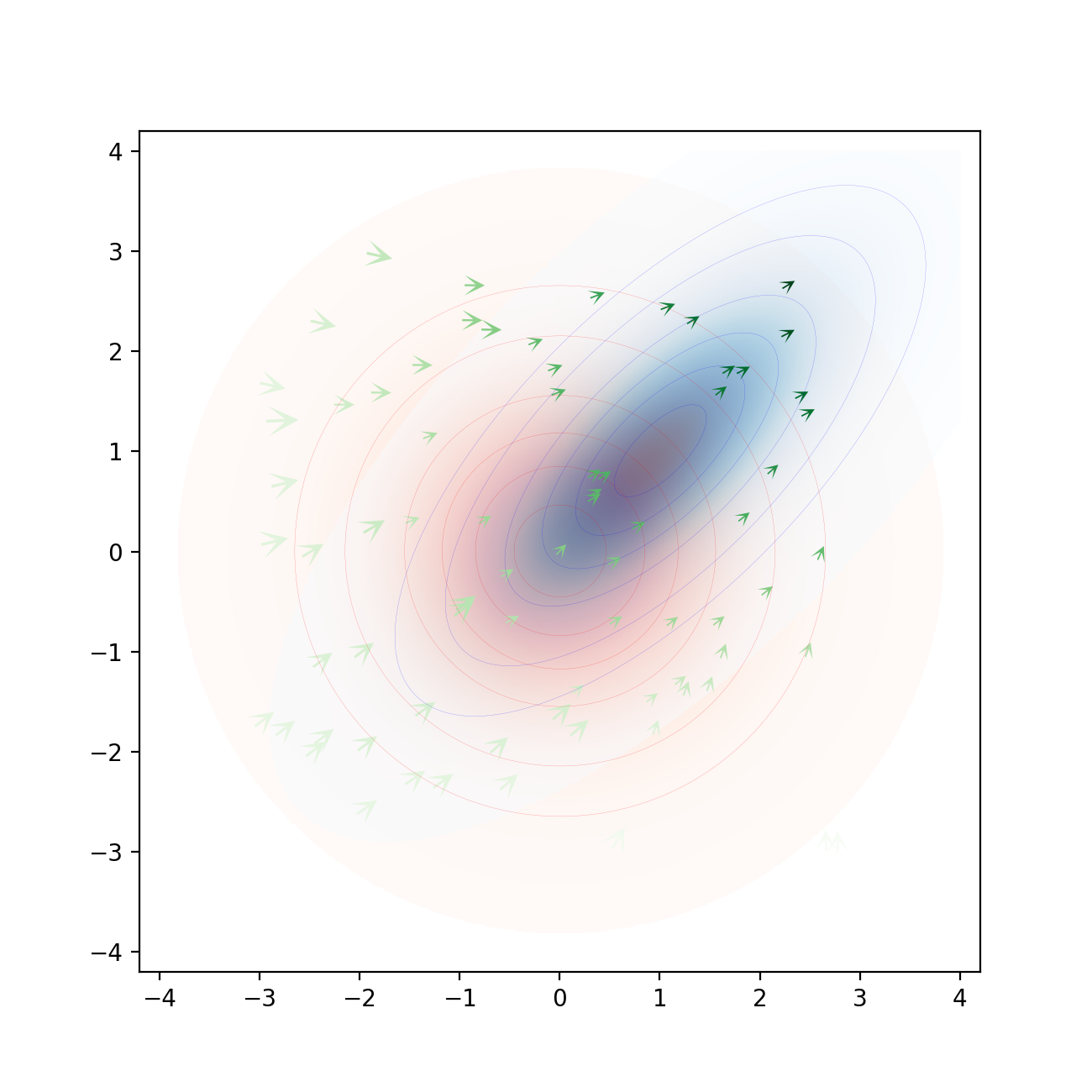}
      \caption{iteration 50}
    \end{subfigure}
    \begin{subfigure}[h]{0.32\textwidth}
      \includegraphics[width=\textwidth]{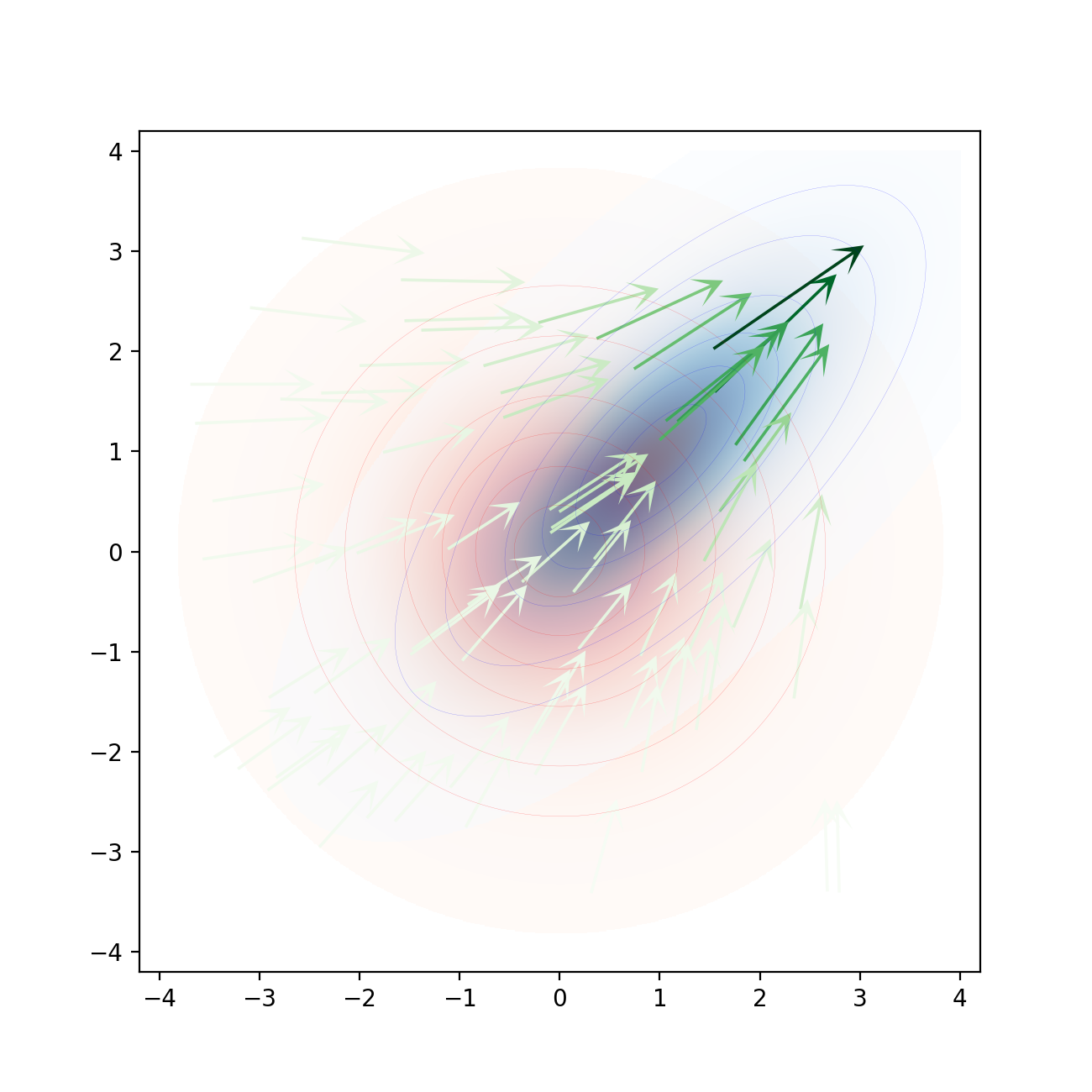}
      \caption{iteration 150}
    \end{subfigure}
    \begin{subfigure}[h]{0.32\textwidth}
      \includegraphics[width=\textwidth]{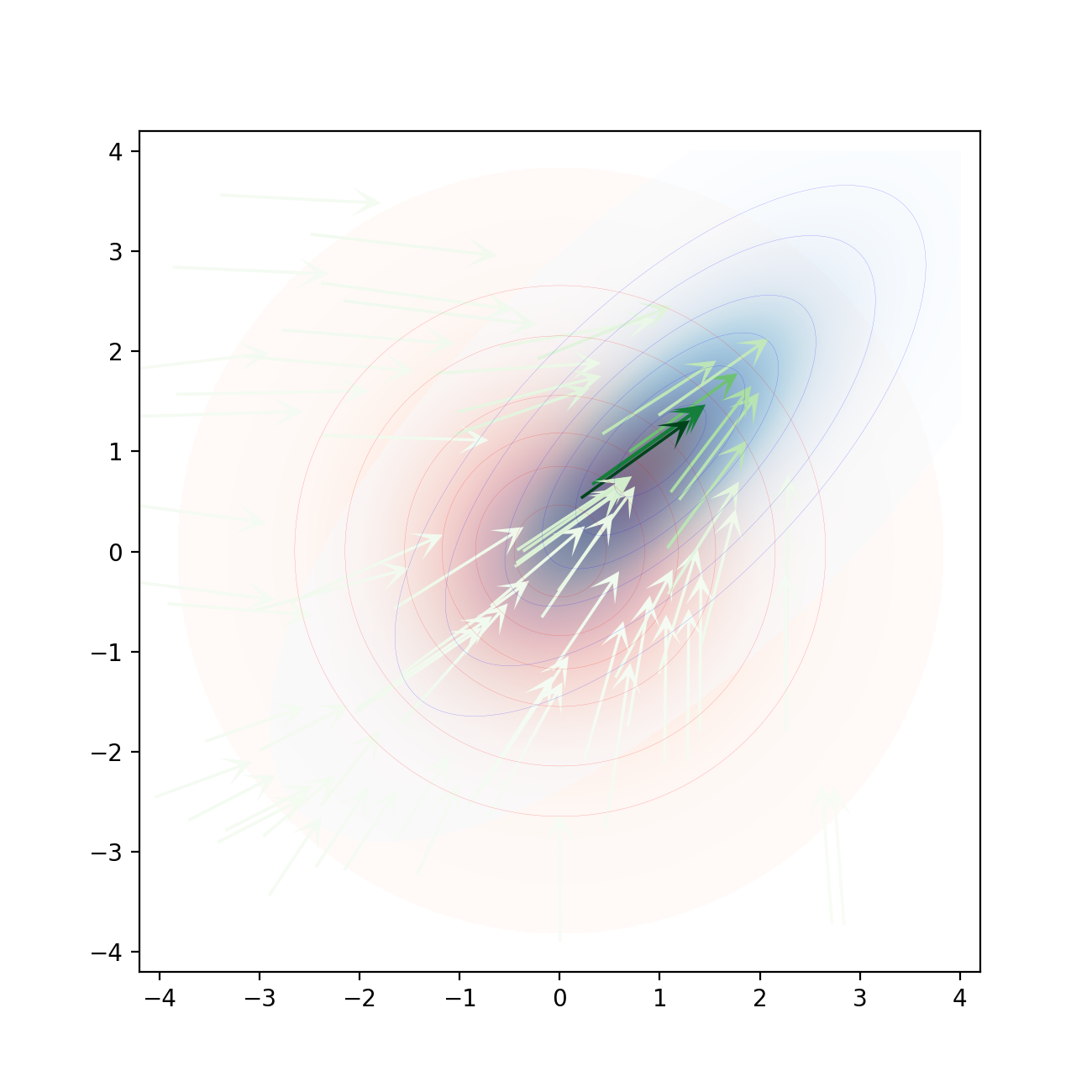}
      \caption{iteration 1200}
    \end{subfigure}
    \caption{Convergence for two 2D marginal laws with 36 test functions on each set for a quadratic cost \label{fig:configs2D}}
\end{figure}

\subsubsection{Martingale Optimal Transport numerical example}

We tested the algorithm for the marginal laws $\mu$ and $\nu$ being respectively the uniform random variables on $[\frac{1}{4}, \frac{3}{4}]$ and $[0,1]$,
with the cost $c(x,y)=|y-x|^3$. Note that $\int |y-x|^2\dd \pi(x,y)=\int |y|^2\dd\nu(y)-\int |x|^2\dd\mu(x)=1/16$ for any martingale coupling~$\pi$. By Jensen's inequality, we have
$ \int |y-x|^3 \dd \pi(x,y)\ge (1/16)^{3/2}=(1/4)^3$ and therefore $\dd \pi(x,y)= \dd \mu(x)(\frac 12 \dd \delta_{x+1/4}+\frac 12 \dd \delta_{x-1/4})$ is an optimal martingale coupling and the equality condition in Jensen's inequality shows that this is the unique optimal martingale coupling.

The two lines $y=x+1/4$ and $y=x-1/4$ characterizing the optimal martingale coupling are represented by the
red lines on Figure \ref{fig:configsMartN10}. We have made one minimization with $N=10$ and $1/\eta = 60$,
and $N'=10$ continuous piecewise affine moment constraints for the martingale constraint, see Problem~\eqref{MgINN'}.
The evolution of the configurations through the iterations are represented
in Figure \ref{fig:configsMartN10}.
The darker the particle $(x_k,y_k)$, the larger the value of its weight $p_k$.
The convergence of the numerical cost is illustrated in Figure \ref{fig:costCvMart}, where 
the pink line represents the cost of the exact martingale Optimal Transport problem we approximate.

\begin{figure}[htp]
  \centering
  \includegraphics[width=0.4\textwidth]{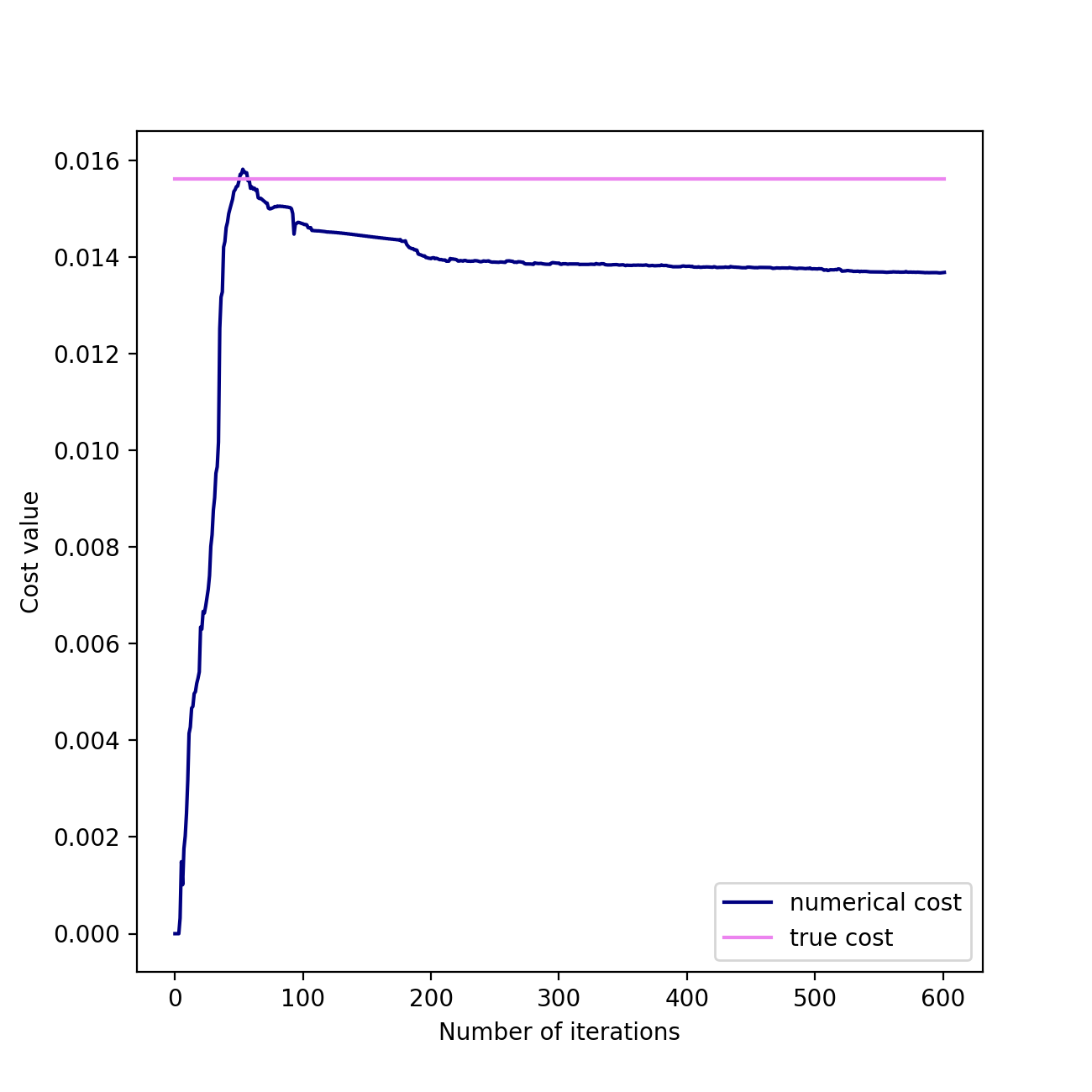}
  \caption{Cost  in function of the number of iterations in the gradient algorithm \label{fig:costCvMart}}
\end{figure}

\begin{figure}[htp]
  \centering
    \begin{subfigure}[h]{0.32\textwidth}
      \includegraphics[width=\textwidth]{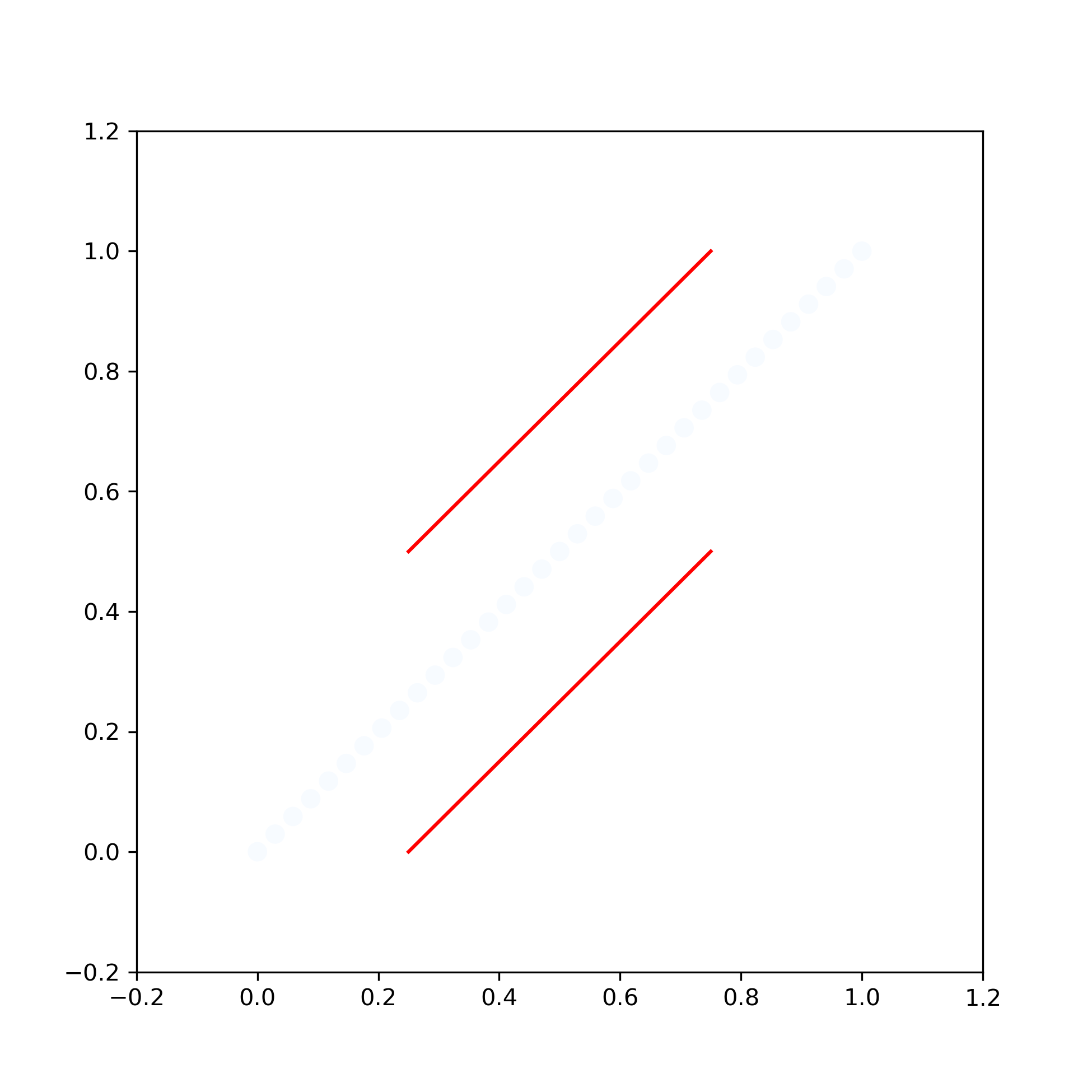}
      \caption{iteration 0}
    \end{subfigure}
    \begin{subfigure}[h]{0.32\textwidth}
      \includegraphics[width=\textwidth]{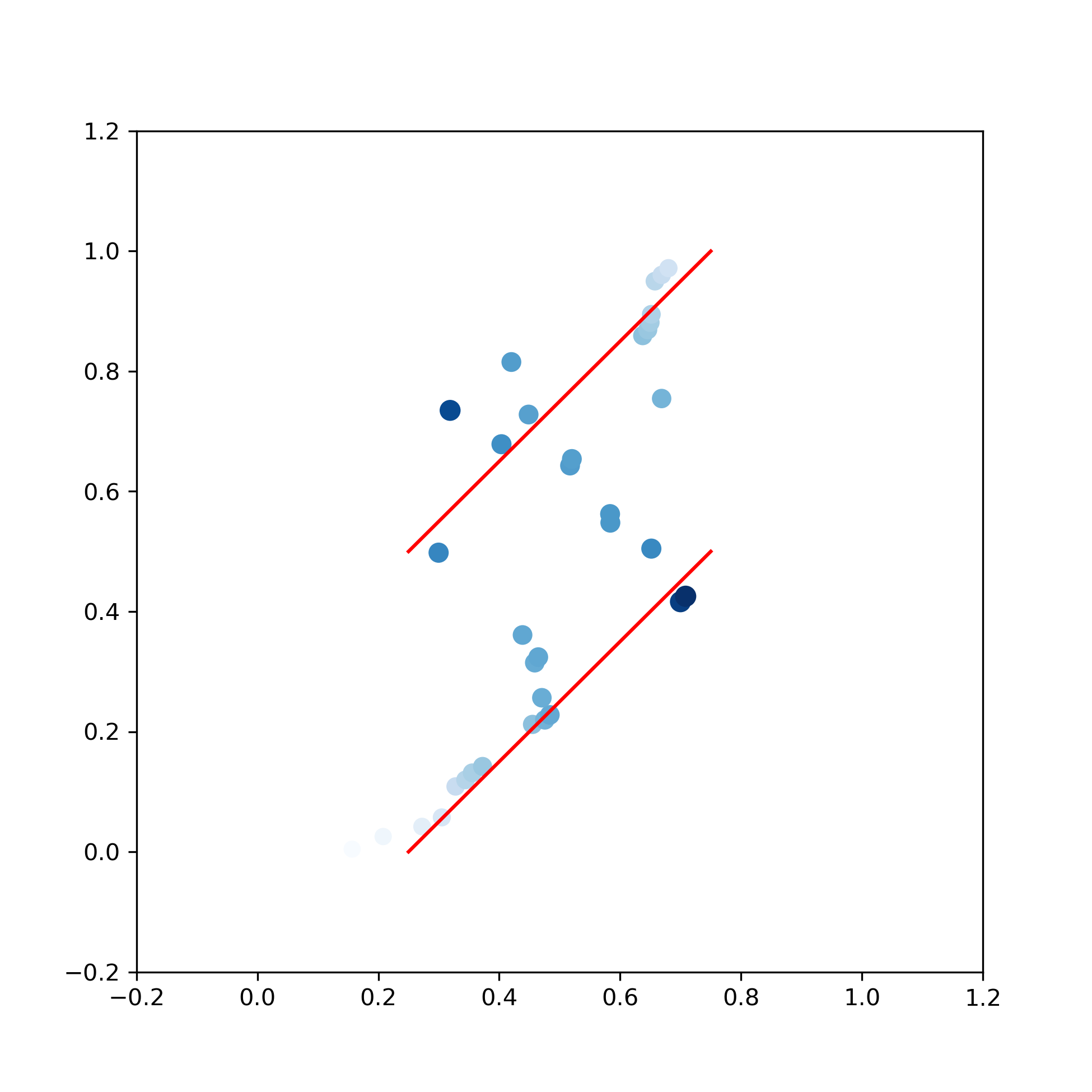}
      \caption{iteration 61}
    \end{subfigure}
    \begin{subfigure}[h]{0.32\textwidth}
      \includegraphics[width=\textwidth]{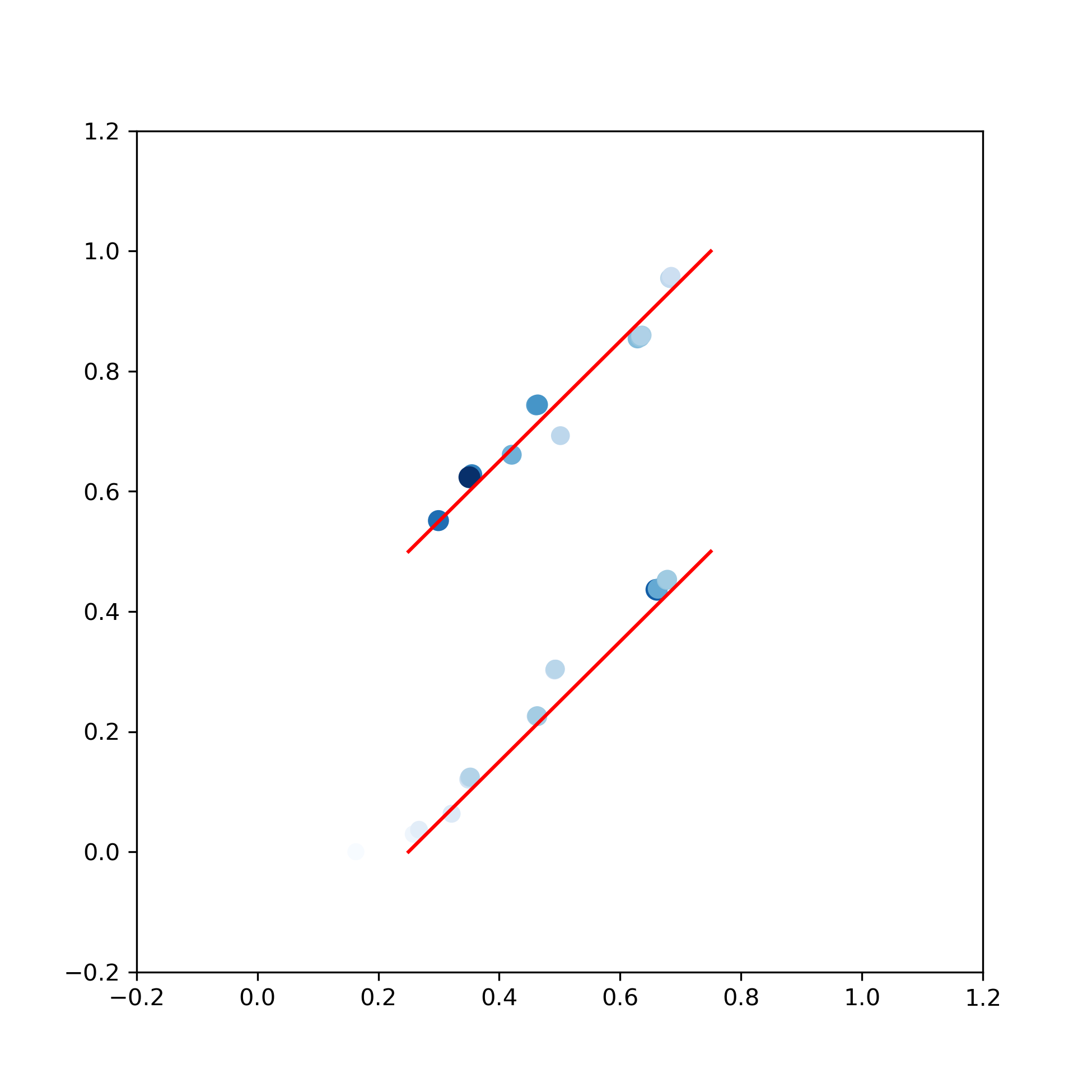}
      \caption{{iteration 601}}
    \end{subfigure}
    \caption{Convergence with 10 test functions on each set for a quadratic cost \label{fig:configsMartN10}}
\end{figure}

\appendix

\section{Technical proofs of Section~\ref{Sec:rateCV}}\label{sec_Appendix}

{\bf Proof of Proposition~\ref{prop:P1ControlRegularMarginal}.}
\begin{proof}
Let us first prove (\ref{eq:W1}). Lemma \ref{lem:P1MomentsEqConsqces} implies that
  \begin{equation}\label{eqn:P1MomentsEqInt}
\forall 1\leq m \leq N, \quad \int_{T_m^N} F_1(x) \dd x = \int_{T_m^N} F_2(x) \dd x
\end{equation}
and
\begin{equation}\label{eqn:P1MomentsEqVrtx}
\forall 1 \leq k \leq N, \quad F_1\left(\frac{k}{N}\right) = F_2\left(\frac{k}{N}\right).
\end{equation}
Then, using a Taylor expansion, as $F_1, F_2\in C^2([0,1])$, it holds that for all $1\leq m \leq N$, all $u\in T_m^N$, and all $l=1,2$,
\begin{equation}\label{eqn:P1MomentsEq1}
\left| F_l(u) - F_l\left(\frac{m}{N}\right) - F_l'\left(\frac{m}{N}\right)
				\left(
					u - \frac{m}{N}
				\right) \right|
				\leq \frac{\|F_l^{\prime \prime} \|_\infty }{2}
					\left(
						u - \frac{m}{N}
					\right)^2.
	\end{equation}
Integrating over $T_m^N$, one gets
	\begin{equation*}
		\left| \int_{T_m^N} F_l(u) \dd u
			- F_l\left(\frac{m}{N}\right) \frac{1}{N} + F_l'\left(\frac{m}{N}\right) \frac{1}{2N^2}
		\right|
		\leq \frac{\|F_l^{\prime \prime} \|_\infty }{6 N^3}.
	\end{equation*}
This implies, using \eqref{eqn:P1MomentsEqInt} and \eqref{eqn:P1MomentsEqVrtx}, that
	\begin{equation}\label{eqn:P1MomentsEq2}
		\left|F_1'\left(\frac{m}{N}\right) - F_2'\left(\frac{m}{N}\right) \right|
		\leq \frac{\|F_1^{\prime \prime} \|_\infty + \|F_2^{\prime \prime} \|_\infty }{3 N}.
	\end{equation}
	Thus, using \eqref{eqn:P1MomentsEq1}, for all $l=1,2$ and $u\in T_m^N$,
  \begin{equation*}
  F_l(u) = F_l\left(\frac{m}{N}\right) + \left( u - \frac{m}{N}\right) F_l'\left(\frac{m}{N}\right) + \varphi_l(u),
  \end{equation*}
  where $|\varphi_l(u)| \le \frac{\|F_l^{\prime \prime} \|_\infty }{2}
    \left(
      u - \frac{m}{N}
    \right)^2$.
 Then, using \eqref{eqn:P1MomentsEq2}, one gets that for all $u\in T_m^N$,
\begin{multline}\label{eqn:P1CDFMajExpansion}
		\left|
			F_1(u) - F_2(u)
		\right|
		\leq
		\frac{\|F_1^{\prime \prime} \|_\infty + \|F_2^{\prime \prime} \|_\infty}{3 N}
		\left(
		 \frac{m}{N}-u
		\right)
	 + \frac{\|F_1^{\prime \prime} \|_\infty + \|F_2^{\prime \prime} \|_\infty}{2}
			\left(
				u - \frac{m}{N}
			\right)^2.
	\end{multline}
Integrating over $T_m^N$ yields that
	\begin{equation}\label{eqn:P1W1ControlIntervalIneq}
		\int_{T_m^N} \left|
			F_1(u) - F_2(u)
		\right| \dd u
		\leq
			\frac{\|F_1^{\prime \prime} \|_\infty + \|F_2^{\prime \prime} \|_\infty}{3 N^3}.
	\end{equation}
Using the fact that
	\begin{equation*}
W_1(\mu_1, \mu_2)
    = \int_{[0,1]} \left| F_1(u) - F_2(u) \right| \dd u
		= \sum_{m = 1}^N \int_{T_m^N} \left| F_1(u) - F_2(u) \right| \dd u,
	\end{equation*}
we obtain that
\begin{equation*}
W_1(\mu_1, \mu_2)\leq \frac{\|F_1^{\prime \prime} \|_\infty + \|F_2^{\prime \prime} \|_\infty}{3 N^2}.
\end{equation*}

\medskip

Let us now prove (\ref{eq:Wp}).	The main result needed is the expression of the Wasserstein distance in
	term of the cumulative distribution functions (cdf) and not their inverse
	(see \cite{Jourdain2013} Lemma B.3), which holds for $p>1$,
	\begin{equation}\label{eqn:WPInTermOfCDF}
		W_p^p(F,G) =
			p (p-1) \int_{\Reel^2} \mathbf{1}_{\{ x < y \}}
			\left(
				\left[
					G(x) - F(y)
				\right]^+
				+ 
				\left[
					F(x) - G(y)
				\right]^+
			\right)
			(y - x)^{p-2}
			\dd x \dd y
	\end{equation}
	because the reasoning of the beginning of this proof introduced a control on
	the norm between the cdf of the marginal law and the cdf of a marginal law
	satisfying the same moments.

	Then, one can proceed with the following induction.
	Suppose that we know for $p \in \Natural^*$ that
	\begin{equation}\label{eqn:P1MomentsIndcHyp}
		W^p_p(\mu, \tilde{\mu})
		\leq \left( \frac{\|F^{\prime \prime} \|_\infty + \|\tilde{F}^{\prime \prime} \|_\infty}{3N^2}\right)^p
			\left(\frac{5}{2} \left(\frac{1}{m_\mu} + \frac{1}{m_{\tilde{\mu}}}\right) \right)^{p-1}
      p!,
	\end{equation}
which holds for $p=1$.	Then,
	\begin{equation*}
		\begin{split}
			&W_{p+1}^{p+1} (\mu, \tilde{\mu}) \\
			&= (p+1) p \int_{\Reel^2} \mathbf{1}_{\{ x < y \}}
			\left(
				\left[
					F(x) - \tilde{F}(y)
				\right]^+
				+
				\left[
					\tilde{F}(x) - F(y)
				\right]^+
			\right)
			(y - x)^{p-1}
			\dd x \dd y \\
			&= (p+1) p \int_0^1  \left( \int_x^1
				\left(
				\left[
					\tilde{F}(x) - F(y)
				\right]^+
				+
				\left[
					F(x) - \tilde{F}(y)
				\right]^+
				\right)
				(y-x)^{p-1}
				\dd y \right) \dd x \\
			&= (p+1) p \int_0^1
				\left(
					\int_x^1
					\left[
						F(x) - \tilde{F}(y)
					\right]^+
					(y-x)^{p-1}
					\dd y
					\right. \\
					&\quad + \left.
					\int_x^1
					\left[
						\tilde{F}(x) - F(y)
					\right]^+
					(y-x)^{p-1}
					\dd y
				\right) \dd x.
		\end{split}
	\end{equation*}
	Let us treat the first term of the sum, as the second one can be treated symmetrically.
	If $F(x) \geq \tilde{F}(x)$, we can define $y_x = \tilde{F}^{-1}\left(F(x)\right)$ and because $\tilde{F}:[0,1]\rightarrow [\tilde{F}(0),1]$ is continuous increasing, and we have
	\begin{equation*}
		\begin{split}
				\int_x^1
				\left[
					F(x) - \tilde{F}(y)
				\right]^+
				(y-x)^{p-1}
				\dd y
			&= \int_x^{y_x}
			\left(
				F(x) - \tilde{F}(y)
			\right)
			(y-x)^{p-1}
			\dd y \leq \frac{1}{p} \left| F(x) - \tilde{F}(x) \right| (y_x - x)^p.
		\end{split}
	\end{equation*}
Thus, by using~\eqref{eqn:P1MomentsEq1}, we get
	\begin{align*}		
&		\int_0^1
		\int_x^1 
		\left[
			F(x) - \tilde{F}(y)
		\right]^+
		(y-x)^{p-1}
		\dd y \dd x \\
		&\qquad \leq	\frac{1}{p} \int_0^1 \mathbf{1}_{\{F(x) \geq \tilde{F}(x)\}}
			\left| F(x) - \tilde{F}(x) \right| (y_x - x)^p \dd x \\
		&\qquad \leq \frac{1}{p}
			\sum_{m = 1}^N \int_{T^N_m} \mathbf{1}_{\{F(x) \geq \tilde{F}(x)\}}
				\left| F(x) - \tilde{F}(x) \right| (y_x - x)^p \dd x \\
		&\qquad \leq \frac{1}{p}
			\sum_{m = 1}^N \int_{T^N_m} \left(
				\frac{\|F^{\prime \prime} \|_\infty + \|\tilde{F}^{\prime \prime} \|_\infty}{3N}
				\left( \frac{m}{N}-x  \right)
				+ \frac{\|F^{\prime \prime} \|_\infty + \|\tilde{F}^{\prime \prime} \|_\infty}{2}
					\left( x - \frac{m}{N} \right)^2
				\right) \mathbf{1}_{\{F(x) \geq \tilde{F}(x)\}} (y_x - x)^p \dd x 
    \end{align*}
    \begin{align*}
    &\qquad \leq \frac{5}{6p} \frac{\|F^{\prime \prime} \|_\infty + \|\tilde{F}^{\prime \prime} \|_\infty}{N^2}
  	 \int_0^1 \mathbf{1}_{\{F(x) \geq \tilde{F}(x)\}} ( \tilde{F}^{-1}\left(F(x)\right) -  {F}^{-1}\left(F(x)\right) )^p \dd x \\
		&\qquad \leq \frac{5}{6p} \frac{\|F^{\prime \prime} \|_\infty + \|\tilde{F}^{\prime \prime} \|_\infty}{N^2}
			\int_{F(0)}^1 \mathbf{1}_{\{u \geq  \tilde{F}(F^{-1}(u))\}}
				\left( \tilde{F}^{-1}(u) - F^{-1}(u) \right)^p \frac{\dd u}{F'(F^{-1}(u))} \\
		&\qquad \leq \frac{5}{6p} \frac{\|F^{\prime \prime} \|_\infty + \|\tilde{F}^{\prime \prime} \|_\infty}{N^2}
			\frac{1}{\min_{u \in [0,1]}F'(F^{-1}(u))}
			\int_0^1 \left| \tilde{F}^{-1}(u) - F^{-1}(u) \right|^p \dd u,
	\end{align*}
	where we used the formula bounding the difference between the cdf \eqref{eqn:P1CDFMajExpansion}.

	Therefore,
	as $m_\mu > 0$ and $m_{\tilde{\mu}} > 0$,
	and using the symmetry of the the formula \eqref{eqn:WPInTermOfCDF}, one gets
	\begin{equation}
		W_{p+1}^{p+1}(\mu, \tilde{\mu})
		\leq \frac{5(p+1)}{2} \frac{\|F^{\prime \prime} \|_\infty + \|\tilde{F}^{\prime \prime} \|_\infty}{3N^2}
			\left(\frac{1}{m_\mu} + \frac{1}{m_{\tilde{\mu}}} \right) W_p^p(\mu, \tilde{\mu}).
	\end{equation}
	Hence, using the induction hypothesis \eqref{eqn:P1MomentsIndcHyp}, we obtain that \eqref{eqn:P1MomentsIndcHyp} holds for $p+1$, which gives the claim.
\end{proof}

\begin{lemma}\label{lem:cvgP1_W2}
  Let $\mu\in \cP([0,1])$ and $F_\mu$ its cumulative distribution function. Let $N\in \N^*$. Then, for any $1\le m \le N$, we define $x^N_m\in T^N_m$ by
\begin{align*}
  &x^N_m=\frac{m}{N} \text{ if } F_\mu\left( \frac{m}N \right)=F_\mu\left( \frac{m-1}N \right)\\
  &x^N_m=\frac{\int_{T^N_m}F_\mu +\frac{m-1}{N}F_\mu\left( \frac{m-1}N \right) -\frac{m}{N}F_\mu\left( \frac{m}N \right) }{F_\mu\left( \frac{m}N \right)-F_\mu\left( \frac{m-1}N \right)} \text{ if }F_\mu\left( \frac{m}N \right)>F_\mu\left( \frac{m-1}N \right),
\end{align*}
and $\hat{\mu}^N=F_\mu(0)\delta_0+\sum_{m=1}^N (F_\mu\left( \frac{m}N \right)-F_\mu\left( \frac{m-1}N \right))\delta_{x_m}$. Then, we have for all $1\le m \le N$,
$$ F_{\hat{\mu}^N}\left( \frac{m}N \right)=F_\mu\left( \frac{m}N \right), \ \int_{T^N_m}F_{\hat{\mu}^N}=\int_{T^N_m}F_\mu,\ \forall x \in T^N_m, \int_{\frac{m-1}N}^xF_\mu \ge  \int_{\frac{m-1}N}^xF_{\hat{\mu}^N}.$$
Besides, if $\mu(\dd x)=\rho_{\mu}(x)\dd x$ with a $\rho_{\mu}\in L^\infty([0,1],\dd x;\R_+)$ and $\tilde{\mu}\in\cP([0,1])$ is such that $F_{\tilde{\mu}}\left( \frac{m}N \right)=F_\mu\left( \frac{m}N \right)$ and $\int_{T^N_m}F_{\tilde{\mu}}=\int_{T^N_m}F_\mu$, we have
\begin{equation}
  \int_{T^N_m} \left(\int_{\frac{m-1}N}^xF_\mu\right) \dd x \le \int_{T^N_m} \left(\int_{\frac{m-1}N}^xF_{\tilde{\mu}}\right) \dd x + \frac{\|\rho_\mu\|_\infty}{6N^3}
\end{equation}

\end{lemma}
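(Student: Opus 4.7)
The three claimed identities and the inequality for $\hat{\mu}^N$ follow from the construction. The equality $F_{\hat{\mu}^N}(m/N)=F_{\mu}(m/N)$ is a telescoping sum: $\hat{\mu}^N$ places mass $F_\mu(0)$ at $0$ and mass $F_\mu(k/N)-F_\mu((k-1)/N)$ at $x^N_k$ for each $k$, so summing up to $m$ gives $F_\mu(m/N)$. On $T^N_m$, the CDF $F_{\hat{\mu}^N}$ is piecewise constant, equal to $F_\mu((m-1)/N)$ on $((m-1)/N,x^N_m)$ and to $F_\mu(m/N)$ on $[x^N_m,m/N]$, and an elementary computation shows that the definition of $x^N_m$ is equivalent to the matching condition $\int_{T^N_m}F_{\hat{\mu}^N}=\int_{T^N_m}F_\mu$. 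For the inequality $\int_{(m-1)/N}^x F_\mu\ge \int_{(m-1)/N}^x F_{\hat{\mu}^N}$ on $T^N_m$, I split into $x\le x^N_m$ (directly from $F_\mu\ge F_\mu((m-1)/N)$ on $T^N_m$) and $x\ge x^N_m$ (using the matching on $T^N_m$ to rewrite the difference as $\int_x^{m/N}(F_\mu(m/N)-F_\mu) \ge 0$).

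The main content is the last inequality. By Fubini one has $\int_{T^N_m}\!\int_{(m-1)/N}^x F_\mu\,dx=\int_{T^N_m}F_\mu(y)(m/N-y)\,dy$, and similarly for $F_{\tilde\mu}$. The plan is to insert $\hat{\mu}^N$ as an intermediate and bound
\[
\int_{T^N_m}(F_\mu-F_{\tilde\mu})(y)(m/N-y)\,dy
= \underbrace{\int_{T^N_m}(F_\mu-F_{\hat{\mu}^N})(y)(m/N-y)\,dy}_{(I)}+\underbrace{\int_{T^N_m}(F_{\hat{\mu}^N}-F_{\tilde\mu})(y)(m/N-y)\,dy}_{(II)}.
\]
For (I), I integrate by parts, using that $H_1(y):=\int_{(m-1)/N}^y(F_\mu-F_{\hat{\mu}^N})$ vanishes at both endpoints of $T^N_m$; this yields $(I)=\int_{T^N_m}H_1(y)\,dy$. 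The explicit form of $F_{\hat{\mu}^N}$ gives $H_1(y)=\int_{(m-1)/N}^y(F_\mu-F_\mu((m-1)/N))$ for $y\le x^N_m$ and $H_1(y)=\int_y^{m/N}(F_\mu(m/N)-F_\mu)$ for $y\ge x^N_m$. Since $F_\mu$ is $\|\rho_\mu\|_\infty$-Lipschitz, $H_1(y)\le \tfrac12\|\rho_\mu\|_\infty\min((y-(m-1)/N),(m/N-y))^2$, and integration plus the convex maximization $a^3+b^3\le (a+b)^3=1/N^3$ for $a,b\ge 0$, $a+b=1/N$ gives $(I)\le \|\rho_\mu\|_\infty/(6N^3)$.

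For (II), I set $K:=F_{\hat{\mu}^N}-F_{\tilde\mu}$. The hypothesis $F_{\tilde\mu}(m/N)=F_\mu(m/N)$ (for all $m$) together with the monotonicity of $F_{\tilde\mu}$ implies $F_{\tilde\mu}\ge F_\mu((m-1)/N)$ on $T^N_m$ (trivially so when $m=1$, since the right side is $F_\mu(0)=0$), hence $K\le 0$ on $((m-1)/N,x^N_m)$; symmetrically $F_{\tilde\mu}\le F_\mu(m/N)$ gives $K\ge 0$ on $[x^N_m,m/N]$. Moreover $\int_{T^N_m}K=0$ by the matching-integral hypothesis. Using this, I rewrite $\int K(y)(m/N-y)\,dy=\int K(y)(x^N_m-y)\,dy$ by subtracting $(m/N-x^N_m)\int K=0$. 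The integrand $K(y)(x^N_m-y)$ is now non-positive on both $((m-1)/N,x^N_m)$ (product of $\le 0$ and $>0$) and $[x^N_m,m/N]$ (product of $\ge 0$ and $\le 0$), so $(II)\le 0$. Combining (I) and (II) yields the announced bound.

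The main obstacle is the sign analysis for (II): it hinges on recognising that $\hat{\mu}^N$, being a single Dirac in each cell, is the extremal measure making $F_{\hat{\mu}^N}-F_{\tilde\mu}$ change sign exactly once across $x^N_m$, and it is precisely this monotone sign structure that, combined with $\int K=0$, lets one shift the weight $(m/N-y)$ to $(x^N_m-y)$ and obtain the correct sign without losing the factor $1/N^3$ needed for the final estimate.
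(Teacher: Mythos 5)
Your proposal is correct, and structurally it follows the same route as the paper: establish the elementary properties of $\hat{\mu}^N$, then insert $\hat{\mu}^N$ as an intermediate term between $F_\mu$ and $F_{\tilde\mu}$ and split the error into two pieces. The bound on $(I)=\int_{T^N_m}\bigl(\int_{(m-1)/N}^x(F_\mu-F_{\hat{\mu}^N})\bigr)\dd x$ is identical to the paper's (piecewise quadratic bound via the Lipschitz density, then $a^3+b^3\le(a+b)^3$), and your rewriting via Fubini and integration by parts is an equivalent bookkeeping. The genuine difference is how you establish $(II)\le 0$, i.e.\ $\int_{T^N_m}\int F_{\hat{\mu}^N}\le\int_{T^N_m}\int F_{\tilde\mu}$. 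The paper just asserts the dichotomy ``either $A\le B$ or $A\ge B\ge C$'' and leaves the inequality $B\ge C$ unjustified; the intended reasoning is presumably that the first part of the lemma applied to $\tilde\mu$ gives $\int_{(m-1)/N}^x F_{\tilde\mu}\ge\int_{(m-1)/N}^x F_{\hat{\tilde\mu}^N}$ and that $\hat{\tilde\mu}^N$ and $\hat\mu^N$ coincide because $\tilde\mu$ matches $\mu$ at the grid and in cell averages, but this last identification actually needs care at $m=1$ when $\tilde\mu$ charges $\{0\}$. Your direct sign analysis of $K=F_{\hat\mu^N}-F_{\tilde\mu}$ ($K\le 0$ left of $x^N_m$, $K\ge 0$ right of it, $\int_{T^N_m}K=0$, hence $\int K(y)(m/N-y)\dd y=\int K(y)(x^N_m-y)\dd y\le 0$) proves the same inequality self-containedly, handles the $m=1$ case transparently, and is a worthwhile clarification of a step the paper glosses over. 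One tiny slip: the bound $H_1(y)\le\tfrac12\|\rho_\mu\|_\infty\min((y-(m-1)/N),(m/N-y))^2$ you state is actually true but stronger than what your subsequent computation uses (you really integrate the two one-sided quadratics on their respective pieces); this does not affect the final constant.
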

\begin{proof}
  If $F_\mu\left( \frac{m}N \right)>F_\mu\left( \frac{m-1}N \right)$, we have $$\frac 1N F_\mu\left( \frac{m-1}N \right) \le \int_{T^N_m}F_\mu  < \frac 1N F_\mu\left( \frac{m}N \right)$$ since $F_{\mu}$ is non-decreasing and right-continuous. Therefore, there is a unique $x^N_m  \in T^N_m$ such that
  $$\left(x^N_m-\frac{m-1}{N} \right)F_\mu\left( \frac{m-1}N \right)+\left(\frac{m}{N} - x^N_m\right) F_\mu\left( \frac{m}N \right)=\int_{T^N_m}F_\mu, $$
  which is precisely the definition of $x^N_m$. By construction, we have  $F_{\hat{\mu}^N}\left( \frac{m}N \right)=F_\mu\left( \frac{m}N \right)$ and the previous equation gives
  $$\int_{T^N_m}F_{\hat{\mu}^N}=\int_{\frac{m-1}{N} }^{x^N_m}F_\mu\left( \frac{m-1}N \right)dx + \int_{x^N_m}^{\frac{m}{N} }F_\mu\left( \frac{m}N \right)dx =\int_{T^N_m}F_\mu$$
  when $F_\mu\left( \frac{m}N \right)>F_\mu\left( \frac{m-1}N \right)$ (this identity is obvious if $F_\mu\left( \frac{m}N \right)=F_\mu\left( \frac{m-1}N \right)$).
  Last, since for $x \in T^N_m$, $F_\mu\left( \frac{m-1}N \right)\le F_\mu(x)\le F_\mu\left( \frac{m}N \right)$, we get that $x\mapsto \int_{\frac{m-1}N}^x(F_\mu-F_{\hat{\mu}^N})$ is non-decreasing on $[\frac{m-1}N,x^N_m]$, non-increasing on $[x^N_m,\frac{m}N]$ and vanishes for $x\in \{\frac{m-1}N,\frac{m}N \}$: it is therefore non-negative on $T^N_m$.

  Now, let us assume that $\mu$ has a bounded density probability function $\rho_\mu$. We have
  \begin{align*}
    x\in \left[\frac{m-1}N,x^N_m\right], \int_{\frac{m-1}N}^x(F_\mu-F_{\hat{\mu}^N})&=\int_{\frac{m-1}N}^x\int_{\frac{m-1}N}^z \rho_{\mu}(u)\dd u \dd z \le \frac{\|\rho_\mu\|_\infty}{2}(x-\frac{m-1}N)^2,\\
    x\in \left[x^N_m,\frac{m}N\right], \int_{\frac{m-1}N}^x(F_\mu-F_{\hat{\mu}^N})&=-\int_x^{\frac{m}N}(F_\mu-F_{\hat{\mu}^N}) \\
    &=\int_x^{\frac{m}N}\int_z^{\frac{m}N}\rho_{\mu}(u)\dd u \dd z \le \frac{\|\rho_\mu\|_\infty}{2}(\frac{m}N-x)^2,
  \end{align*}
  and therefore
  \begin{equation}\label{majo_diff_Fmu} \int_{T^N_m} \left(\int_{\frac{m-1}N}^x(F_\mu-F_{\hat{\mu}^N})\right) \dd x \le \frac{\|\rho_\mu\|_\infty}{6}\left[\left(x^N_m-\frac{m-1}{N} \right)^3+\left(\frac{m}{N}-x^N_m \right)^3\right]\le \frac{\|\rho_\mu\|_\infty}{6N^3}.
  \end{equation}
  Now, we observe that we either have $ \int_{T^N_m} \left(\int_{\frac{m-1}N}^xF_\mu\right) \dd x \le \int_{T^N_m} \left(\int_{\frac{m-1}N}^xF_{\tilde{\mu}}\right) \dd x$ or  $\int_{T^N_m} \left(\int_{\frac{m-1}N}^xF_\mu\right) \dd x \ge \int_{T^N_m} \left(\int_{\frac{m-1}N}^xF_{\tilde{\mu}}\right) \dd x \ge  \int_{T^N_m} \left(\int_{\frac{m-1}N}^xF_{\hat{\mu}^N}\right) \dd x $. In the first case, the claim is obvious. In the second one, we then have
  $$\int_{T^N_m} \left(\int_{\frac{m-1}N}^xF_\mu\right) \dd x \le \int_{T^N_m} \left(\int_{\frac{m-1}N}^xF_{\tilde{\mu}}\right)+\int_{T^N_m} \left(\int_{\frac{m-1}N}^x(F_\mu-F_{\hat{\mu}^N})\right),$$
and we get the result using~\eqref{majo_diff_Fmu}.
\end{proof}

\section*{Acknowledgements}

The Labex B\'ezout is acknowledged for funding the PhD thesis of Rafa\"el Coyaud. Aur\'elien Alfonsi benefited from the support of the ``Chaire Risques Financiers'', Fondation du Risque. We are very grateful to Luca Nenna for stimulating discussions.

\bibliographystyle{plain}
\bibliography{bibli}
\vspace{1cm}
{\footnotesize

\begin{tabular}{rl}
A. Alfonsi & \textsc{Universit\'e Paris-Est, CERMICS (ENPC), INRIA,} \\
 &  F-77455 Marne-la-Vall\'ee, France\\
 &  \textsl{E-mail address}:  {\texttt{aurelien.alfonsi@enpc.fr}} \\
 \\
R. Coyaud & \textsc{Universit\'e Paris-Est, CERMICS (ENPC), INRIA,} \\
 &  F-77455 Marne-la-Vall\'ee, France\\
 &  \textsl{E-mail address}:  {\texttt{rafael.coyaud@enpc.fr}} \\
 \\
V. Ehrlacher & \textsc{Universit\'e Paris-Est, CERMICS (ENPC), INRIA,} \\
 &  F-77455 Marne-la-Vall\'ee, France\\
 &  \textsl{E-mail address}:  {\texttt{virginie.ehrlacher@enpc.fr}} \\
  \\
D. Lombardi & \textsc{INRIA Paris and Sorbonne Universit\'es, UPMC Univ Paris 6, UMR 7598 LJLL} \\
 &  F-75589 Paris Cedex 12, France\\
 &  \textsl{E-mail address}:  {\texttt{damiano.lombardi@inria.fr}} \\
 \\
\end{tabular}

}
\end{document}